\documentclass[ijoo]{informs4}

\RequirePackage{tgtermes}
\RequirePackage{newtxtext}
\RequirePackage{newtxmath}
\RequirePackage{endnotes}

\usepackage[hypertexnames=false,colorlinks=true,breaklinks=true,bookmarks=true,urlcolor=blue,citecolor=blue,linkcolor=blue,bookmarksopen=false,draft=false]{hyperref}

\SingleSpacedXI

\makeatletter
\def\theARTICLEABSTRACT{%
  \vspace*{18pt}
  \begin{minipage}{\textwidth}\parindent1em\ABSfont
  \smallskip\noindent\theABSTRACT\endgraf
  \vskip5pt
  \theFUNDING
  \vskip5pt
  \theKEYWORDS
  \noindent\hrulefill
  \end{minipage}%
  \vspace*{0pt}
}
\makeatother

\makeatletter
\renewcommand{\theARTICLETOPLEFT}{}
\makeatother

\RRHSecondLine{}
\LRHSecondLine{}

\renewcommand{\theJOURNAL}{}

\makeatletter
\renewcommand{\theARTICLETOPLEFT}{}
\renewcommand{\theARTICLETOPRIGHT}{}
\makeatother

\usepackage{type1cm}

\usepackage{tikz}
\usetikzlibrary{arrows.meta,patterns,patterns.meta}
\tikzset{
  ray/.style={line width=1pt, black, -{Stealth[length=2.6mm]}},
  hundred/.style={red!75!black, dashed, line width=1.4pt},
  breach/.style={blue!65!black, line width=0.9pt},
  origindot/.style={circle, fill=black, inner sep=1.6pt},
  bdot/.style={circle, fill=blue!70!black, inner sep=1.6pt},
  genpt/.style={circle, fill=black, inner sep=1.1pt},
  hatch/.style={pattern={Lines[angle=45,distance=4pt]}, pattern color=cyan!70!black}
}
\definecolor{crimson}{RGB}{190,20,45}

\usepackage{natbib}
 \bibpunct[, ]{[}{]}{,}{a}{}{,}%
 \def\bibfont{\small}%

 \makeatletter
\let\NAT@parse@orig\NAT@parse
\renewcommand\NAT@parse[1]{%
  \NAT@parse@orig{#1}%
  \ifNAT@swa
    \xdef\NAT@date{(\NAT@date)}%
  \fi
}
\makeatother

\EquationsNumberedThrough
\def\TheoremsNumberedBySectionShared{%
\theoremstyle{TH}%
\newtheorem{theorem}{Theorem}[section]%
\newtheorem{lemma}[theorem]{Lemma}%
\newtheorem{proposition}[theorem]{Proposition}%
\newtheorem{corollary}[theorem]{Corollary}%
\theoremstyle{EX}%
\newtheorem{remark}[theorem]{Remark}%
\newtheorem{example}[theorem]{Example}%
\def\ECHowTheorems{\setcounter{theorem}{0}}%
}
\TheoremsNumberedBySectionShared
\ECRepeatTheorems

\DeclareMathOperator{\conv}{conv}
\DeclareMathOperator{\cone}{cone}
\DeclareMathOperator{\ccone}{\overline{cone}}
\DeclareMathOperator{\vol}{vol}

\makeatletter
\def\FigureCaptionFontStyle{\mdseries}%
\def\TableCaptionFontStyle{\mdseries}%
\def\FigureNameFontStyle{\bfseries}%
\def\TableNameFontStyle{\bfseries}%
\long\def\@makefigurecaption#1#2{\EGT\FigureCaptionFontStyle
  \setbox9=\hbox{\FigureNameFontStyle #1\kern16pt}\hangindent\wd9\noindent
  \HD{9}{0}\hbox{\FigureNameFontStyle #1\kern16pt}\ignorespaces #2\HD{0}{9}\endgraf}
\long\def\@maketablecaption#1#2{\EGT\TableCaptionFontStyle
  \setbox9=\hbox{\TableNameFontStyle #1\kern16pt}\hangindent\wd9\noindent
  \HD{9}{0}\hbox{\TableNameFontStyle #1\kern16pt}\ignorespaces #2\HD{0}{5}\endgraf}
\makeatother

\usepackage{etoolbox}
\AtEndEnvironment{example}{%
  \unskip\nobreak\quad\hfill\nobreak$\clubsuit$%
  \parfillskip=0pt \finalhyphendemerits=0 \endgraf}

\makeatletter
\def\Halmos{%
  \unskip\nobreak\hfil\penalty50\hskip1em\null\nobreak\hfil
  $\square$%
  \parfillskip=0pt \finalhyphendemerits=0 \endgraf}
\makeatother

\makeatletter
\def\TheoremHeaderFont{\sffamily\mdseries\scshape\lfstyle}
\def\ExampleHeaderFont{\sffamily\mdseries\scshape\lfstyle}
\makeatother

\makeatletter
\renewcommand*{\top}{%
  {\mathpalette\@transpose{}}%
}
\newcommand*{\@transpose}[2]{%
  \raisebox{\depth}{\scriptsize$\m@th#1\mathsf{T}$}%
}
\makeatother

\MANUSCRIPTNO{}

\begin{document}
%%%%%%%%%%%%%%%%

\RUNAUTHOR{Fampa and Lee}

\RUNTITLE{On the 100\% Rule of Linear Programming}

\TITLE{On the 100\% Rule of Linear Programming}

\ARTICLEAUTHORS{%
\AUTHOR{Marcia Fampa}
\AFF{Universidade Federal do Rio de Janeiro, Rio de Janeiro, Brazil,
\EMAIL{fampa@cos.ufrj.br}}

\AUTHOR{Jon Lee}
\AFF{University of Michigan, Ann Arbor, Michigan 48109,
\EMAIL{jonxlee@umich.edu}}
}

\ABSTRACT{%
The so-called ``100\% rule'' is a somewhat classical and yet somehow obscure
rule that gives some local sensitivity-analysis information based on
the very limited local sensitivity-analysis information that is often made available
by linear-programming software, derived from an optimal basis that is hidden from the user.
Introduced in a classic undergraduate textbook in 1977,
half a century ago, it was only picked up
on by researchers in a very limited way, and while it appears in some
undergraduate textbooks, it tends not to appear in most modern graduate-level
textbooks. Our contributions are
to celebrate and revive interest in the ``100\% rule''
and to extend it in a meaningful way. We introduce a cone rule that
certifies primal feasibility for more right-hand sides than the 100\% rule
does, using the same information, and that can be applied in linear time.
We then study when this cone rule is tight, that is, when it
captures the entire cone of right-hand sides for which the basis remains
feasible. We characterize tightness in every dimension for right-hand sides
interior to that cone (when no column of the basis is a coordinate direction), make
the characterization explicit for $m=2$ and $m=3$, and settle the question for
tridiagonal bases in every dimension: tightness is impossible once
$m\geq4$, and for $m=3$ we give an explicit criterion in the entries of the basis. We give further results for symmetric,
integer and unimodular bases. 
Finally, when tightness fails, we measure the fraction of the feasibility cone captured by the rule using normalized solid angles. For the autoregressive family at $m=3$, we obtain a closed-form candidate for the infimum of this fraction over all right-hand sides, whose optimality is supported numerically. We also obtain a closed-form expression for the best case over most of the correlation range, subject to two numerically verified properties, and estimate the best-case value numerically more generally.
}%

\FUNDING{M. Fampa was supported in part by CNPq grant 304340/2026-0.
J. Lee was supported in part by ONR grant N00014-24-1-2694.}

\KEYWORDS{ linear programming, sensitivity analysis, basic solutions, 
 simplicial cone, polyhedron}

\maketitle

\section{Introduction}\label{sec:int}

Linear-programming solvers typically provide limited post-optimality information to casual users, often in the form of independent allowable ranges for individual right-hand side or objective coefficients. However, the optimal basis typically is inaccessible to such a user, particularly in the presence of degeneracy. As a consequence, such users may wish to certify optimal-basis preservation for simultaneous perturbations, using only the limited sensitivity information provided by the solver.

The classical 100\% rule addresses this issue by combining individual perturbation ranges into a simple sufficient condition for preserving primal or dual feasibility of an optimal basis. Although computationally attractive, the rule can be conservative because it effectively restricts perturbations to a generalized cross polytope (i.e., a generalized $\ell_1$ ball; see \S\ref{sec:100}). This motivates the search for larger certifiable perturbation regions that still avoid explicit knowledge of the underlying basis.

For concreteness and ease of
exposition, we refer to the
standard-form linear-programming formulation
\begin{equation}\tag{P}\label{P}
\min\{ c^\top x ~:~ Ax=b,~ x\geq \mathbf{0}\},
\end{equation}
and its dual
\begin{equation}\tag{D}\label{D}
\max\{ y^\top b ~:~ y^\top A \leq c^\top\},
\end{equation}
where $A\in\mathbb{R}^{m\times n}$
has full row-rank $m\geq 2$.
Following the notation of
\citep*{LeeLP},
a \emph{basis} is an ordered
set $\beta=\{\beta_1\,,\ldots,\beta_m\}
\subset \{1,\ldots,n\}$, such that $A_\beta:=[A_{\beta_1}|\cdots| A_{\beta_m}]$ is non-singular.
Let $\eta:=\{1,\ldots,n\}\setminus \beta$, in an arbitrary order, and then
 $A_\eta:=[A_{\eta_1}|\cdots| A_{\eta_{n-m}}]$.
The basis $\beta$ is \emph{primal feasible}
if $\bar{x}_\beta:=A_\beta^{-1}b\geq \mathbf{0}$, and
\emph{dual feasible} if $c_\eta^\top- c_\beta^\top
A_\beta^{-1}A_\eta\geq \mathbf{0}^\top$.
The basis $\beta$ is optimal if it is both
primal feasible and dual feasible.
As is very common, referring to \ref{P}, we call $b\in\mathbb{R}^m$ the nominal \emph{right-hand side}, and
we call $c \in\mathbb{R}^n$ the nominal \emph{objective vector}.

The classic 100\% rule is from \citep*[Sec. 3.7]{BHM}.\footnote{\citep*{BHM} was the textbook for OR 320, taught by Bob Bland at Cornell University in the Fall semester of 1979. The second author first learned linear programming as a student in that class.}
We note that while the 100\% rule appears in some other undergraduate textbooks, for example \citep*{hillier2001introduction} and \citep*{winston2004operations}, it tends not to appear in popular graduate-level textbooks, for example \citep*{chvatal1983linear} and \citep*{bertsimas1997introduction}.

The motivation for the 100\% rule, back in the 1970s,
was to take the existing available
sensitivity-analysis output from
linear-programming codes, on individual ranges of
right-hand side and objective coefficients, and give easily checkable
sufficient conditions for the optimal (unseen) basis
to be:
\begin{itemize}
\item primal feasible for a new right-hand side vector $\hat b\in\mathbb{R}^m$, and
\item dual feasible for a new objective-coefficient vector $\hat c\in\mathbb{R}^n$.
\end{itemize}
If both conditions hold, then
the basis $\beta$ is optimal. Specifically,
$\bar{x}\in\mathbb{R}^n$ defined by $\bar{x}_\beta:= A_\beta^{-1}\hat b\in\mathbb{R}^m$ and
$\bar{x}_\eta:=\mathbf{0}\in\mathbb{R}^{n-m}$ is optimal for \ref{P},
and $\bar{y}^\top:=\hat c_{\beta}^\top A_{\beta}^{-1}\in\mathbb{R}^m$
is optimal for \ref{D}.

We will focus our attention
on the right-hand side; we do not treat the objective-coefficient vector, or
simultaneous changes in both the right-hand side and objective-coefficient vector, but the approach to those is similar. 
Throughout, $\mathcal{B}_\beta$ denotes the cone of right-hand sides for
which $\beta$ remains primal feasible, and, for a given
$b\in\mathcal{B}_\beta$\,, $\mathcal{K}_\beta(b)$ will denote a specific
cone (to be formally defined), which can be computed using the same
information a solver provides for implementing the classical 100\% rule,
and which certifies primal feasibility for typically more right-hand sides
than the 100\% rule does. Our central question throughout is: when do we have
$\mathcal{K}_\beta(b)=\mathcal{B}_\beta$? That is, when does this certifiable
region capture the \emph{entire} feasibility cone?

In what follows, we assume familiarity with very basic concepts of polyhedral theory; see \citep*{ziegler}, for example.

\medskip

\noindent {\bf Organization and contributions.}
In \S\ref{sec:100}, we briefly review the classical 100\% rule for
right-hand side sensitivity analysis. 
In \S\ref{sec:conerule}, we describe our cone rule and establish its linear-time complexity. 
In \S\ref{sec:m2}, we make a detailed study of the cone (associated with our cone rule)
for $m=2$, seeking to understand when it verifies the entire set of right-hand sides for which a given basis is feasible
(a potentially bigger cone).
In \S\ref{sec:transfer}, we set down five short lemmas, valid in every dimension,
that transfer the defining relation into $\bar{x}_\beta$-coordinates, and combine
them into a characterization of when $\mathcal{K}_\beta(b)=\mathcal{B}_\beta$ for
right-hand sides interior to $\mathcal{B}_\beta$\,, in every dimension $m\geq3$
(for bases with no column a coordinate direction), which can be checked in
polynomial time; \S\ref{sec:general_m}, \S\ref{sec:m3} and \S\ref{subsec:tridiag}
all rest on them.
In \S\ref{sec:general_m}, armed with what we learn for the $m=2$ case, we study
the case of general $m$: we establish that $\mathcal{K}_\beta(b)\not=\mathcal{B}_\beta$
for almost every $b$ whenever $\mathcal{B}_\beta$ is not an orthant, and for every
$b$ for suitable cones, although equality remains possible in every dimension.
In \S\ref{sec:m3}, we make that characterization explicit for $m=3$, where it
reduces to closed-form conditions on $A_\beta^{-1}$ (with bases having a coordinate
direction among their columns treated separately), together with an orthant form of
the criterion, a
refinement describing when a \emph{non-negative} $b$ suffices, a duality that
recasts the criterion in terms of $A_\beta$ rather than $A_\beta^{-1}$, and an
explicit symmetric-basis construction.
In \S\ref{subsec:tridiag}, we study tridiagonal bases: we demonstrate that
$\mathcal{K}_\beta(b)\not=\mathcal{B}_\beta$ for every $b$ once $m\geq4$, give an
explicit criterion in the entries of $A_\beta$ for when an interior $b$ achieves
$\mathcal{K}_\beta(b)=\mathcal{B}_\beta$ at $m=3$, and show that every
tridiagonal M-matrix basis has
$\mathcal{K}_\beta(b)\not=\mathcal{B}_\beta$ for every $b$, for any $m\geq3$,
with the classical AR(1) precision matrix as one instance; for that family at
$m=3$ we then describe the best case, the infimum and the average of the solid-angle
ratio $\sigma_3$ as a function of the correlation $\rho$.
In \S\ref{subsec:unimodular}, we consider integer unimodular bases, and find
that they neither help nor hurt this picture; in particular
$\mathcal{K}_\beta(b)\not=\mathcal{B}_\beta$ persists for infinitely many interior
integer right-hand sides.
In \S\ref{sec:expr}, we carry out computational experiments. We estimate, as a
function of $m$ and for several ensembles of bases, a median radial expansion
factor of $\mathcal{K}_\beta(b)$ over the classical $\{b\}+\Delta_{100\%}$\,: it stays
bounded for unstructured bases, but grows rapidly in $m$ for generic tridiagonal
and for symmetric bases. We also measure, through normalized solid angle, what
fraction of $\mathcal{B}_\beta$ the cone rule captures when it falls short of
equality.
In \S\ref{sec:conc}, we discuss directions for further work.

\medskip

\noindent {\bf Notation.}
Throughout, $\subset$ denotes an arbitrary (not necessarily proper) subset.
For $i=1,\ldots,m$, $\mathbf{e}_i$ denotes the $i$-th standard unit vector,
$\mathbf{e}$ denotes the all-ones vector, and $\mathbf{0}$ denotes the zero
vector.
For $S\subset\mathbb{R}^m$, $\cone(S)$ (resp.,
$\overline{\cone}(S)$, $\conv(S)$) is the conical hull (resp. closure of the conical hull, convex hull)
of $S$. For $x\in\mathbb{R}^m$, $\operatorname{supp}(x):=\{i : x_i\not=0\}$.
For convenience, we write $H:=A_\beta^{-1}$, leaving its dependence on $\beta$ implicit;
context always makes it clear.
We say that a vector has \emph{uniform signs} if its entries are all
non-negative or all non-positive, and that it has \emph{mixed signs} otherwise.

%%%%%%%%%%%%%%%%%%%%%

\section{The 100\% rule for the right-hand side}\label{sec:100}

Let us first consider changes to a single right-hand side element $b_i$\,.
Suppose that $\beta$ is a primal-feasible basis for \ref{P}, and consider the problem
\begin{equation}\tag{$\mbox{P}_i$}\label{Pi}
\min\{ c^\top x ~:~ Ax=b+\delta_i \mathbf{e}_i\,,~ x\geq \mathbf{0}\},
\end{equation}
where $\delta_i\in\mathbb{R}$.
The basis $\beta$ is feasible for \ref{Pi} if
$A_\beta^{-1}(b + \delta_i \mathbf{e}_i)\geq \mathbf{0}$.
For convenience, let $h^i:=A_\beta^{-1}
\mathbf{e}_i$\,, and so
\[
H:=[h^1, h^2, \ldots, h^m] = A_\beta^{-1}.
\]
Then, the condition $A_\beta^{-1}(b + \delta_i\mathbf{e}_i)\geq \mathbf{0}$
can be re-expressed as $\bar{x}_{\beta} + \delta_i h^i \geq \mathbf{0}$.
It is straightforward to check that $\beta$ is
 feasible for \ref{Pi} if and only if
$\delta_i\in [L_i\,,U_i]$, where
\[
L_i := \max_{k ~:~ h^i_k > 0} \left\{ -\bar{x}_{\beta_k} /h^i_k \right\}\leq 0,
\]
and
\[
U_i := \min_{k ~:~ h^i_k < 0} \left\{ -\bar{x}_{\beta_k} /h^i_k \right\}\geq 0.
\]
It can be the case that $h^i_k \leq 0$ for all $k$,
in which case we define $L_i:=-\infty$, and it can be the case that
$h^i_k \geq 0$ for all $k$,
in which case we define $U_i:=+\infty$.
But notice that we cannot have
both $L_i=-\infty $ and $U_i=+\infty$ (for a given $i$), or we would have $h^i=\mathbf{0}$, which is not possible as $H$ is invertible.
In the language of \S\ref{sec:int}, then: $h^i$ has mixed signs precisely when
both $L_i$ and $U_i$ are finite, and $h^i$ has uniform signs precisely when
exactly one of them is infinite --- $L_i=-\infty$ if $h^i\leq\mathbf{0}$, and
$U_i=+\infty$ if $h^i\geq\mathbf{0}$. This is the only role the uniform/mixed terminology
plays, and it is how we use it in \S\ref{sec:m2} and \S\ref{sec:m3}.
It is important to emphasize that
so far we are referring to changing one right-hand side element
and holding all others constant.

A concise and geometric way to state the \emph{100\% rule} for primal feasibility of $\beta$ for
a right-hand side $b+\delta$,
$\delta\in \mathbb{R}^m$, is
\begin{align*}
&\delta\in \Delta_{100\%}:=\conv \left(
\{ L_i \mathbf{e}_i ~:~ L_i >-\infty\}
\cup
\{ U_i \mathbf{e}_i ~:~ U_i <+\infty\}
\right)\\
&\qquad\qquad\qquad +
\cone \left(
\{ -\mathbf{e}_i ~:~ L_i =-\infty\}
\cup
\{ \mathbf{e}_i ~:~ U_i =+\infty\}
\right),
\end{align*}
which is an axis-aligned ``generalized cross polytope''. Recall that a
standard \emph{cross polytope} is the convex hull of
$\{\pm r\mathbf{e}_i ~:~ i=1,\ldots,m\}$ --- equivalently, a ball in the
$\ell_1$ norm --- which is the case $L_i=-r$ and $U_i=r$, for
$i=1,\ldots,m$, for a constant $r>0$. The region above is
\emph{generalized} in two respects: the $2m$ bounds need be neither
symmetric about the origin (that is, $L_i=-U_i$) nor equal across $i$;
and the $\cone$ term, which is trivial when each $L_i$ and $U_i$ is
finite --- in which case $\Delta_{100\%}$ is a genuine polytope ---
contributes recession directions when some $L_i=-\infty$ or
$U_i=+\infty$, making $\Delta_{100\%}$ an unbounded polyhedron instead.
Now, we can define, for a given $\delta\in\mathbb{R}^m$, and for $i=1,\ldots,m$,
\[
\lambda_i
:=
\begin{cases}
\delta_i/L_i\,, & \mbox{if $\delta_i < 0$;}\\
0, & \mbox{if
$\delta_i =0$;}\\
\delta_i/U_i\,, & \mbox{if $\delta_i >0$,}
\end{cases}
\]
where we consider $\delta_i/L_i:=0$ if $L_i=-\infty$,
and $\delta_i/U_i:=0$ if $U_i=+\infty$.
If $\bar{x}_\beta$ has a zero entry, a finite bound can equal $0$; we then read
$L_i=0$ as $0^-$ and $U_i=0$ as $0^+$, so that $\delta_i/L_i:=+\infty$ for
$\delta_i<0$ and $\delta_i/U_i:=+\infty$ for $\delta_i>0$. This agrees with the
definition of $\Delta_{100\%}$\,: a zero bound forbids any move of $b_i$ in that
direction, and an infinite $\lambda_i$ makes $\sum_i\lambda_i\leq1$ fail.
It is easy to check that
 $\delta\in \Delta_{100\%}$ if and only if
 $\sum_{i=1}^m \lambda_i \leq 1$,
which then justifies the moniker
``100\% rule''.
Note that in the undergraduate textbook \citep*[Sec. 3.7]{BHM}, which introduced
the 100\% rule and coined the name, there is not any explicit discussion
about how to handle infinite bounds in that context, but our development
explicitly accommodates this possibility.
We also note that the idea of the 100\% rule was \emph{hiding in plain sight} even earlier, in the German-language book
\citep*[pp. 82--83,93,179--180]{Gal}.

\medskip

\noindent {\bf Some related literature.}
For the 100\% rule, \citep*{Cai} clarifies the treatment of infinite bounds, refines the
rule in the case of objective-coefficient sensitivity, and
also naturally extends the ideas to changes in a row or column of the
constraint matrix $A$.
\citep*{Wendell1,Wendell2} develop the idea of a so-called ``tolerance region'', incomparable
with the region of the 100\% rule, and requiring
knowledge of the optimal basis (and more demanding calculations) to apply;
these regions are \emph{boxes}, allowing for independent consideration of all coefficients involved. \citep*{Wondolowski} extends the tolerance-region approach to allow for asymmetric boxes and a good treatment of degeneracy, but again
the method requires knowledge of the optimal basis (and more demanding calculations) to apply (also see \citep*{Wang}, for a similar approach).
\citep*{Filippi} takes the tolerance approach even further, with similar shortcomings.

%%%%%%%%%%%%%%%%

\section{A cone right-hand side rule and its linear complexity}\label{sec:conerule}

For convenience, and without loss of generality, we can assume, when we like, that $b\geq\mathbf{0}$. Indeed, we can always achieve this by preprocessing \ref{P}, multiplying rows of the system $Ax=b$ by $-1$, as needed.

We can consider the ``trial
right-hand side'' $\hat{b}:=b+\delta$, and
then the set of trial right-hand sides
for which the 100\% rule certifies that $\beta$ is feasible
is just the Minkowski sum
$\{b\}+\Delta_{100\%}$\,.
But the full set of trial right-hand
sides for which $\beta$ is feasible
is just the pointed full-dimensional simplicial convex cone
$\mathcal{B}_\beta := \{\hat b ~:~ A_\beta^{-1} \hat b \geq \mathbf{0}\}$, and so
$\{b\}+\Delta_{100\%}\subset \mathcal{B}_\beta$\,. Indeed, $\mathcal{B}_\beta=\cone(\left\{g^1,\ldots,g^m\right\})$, where $g^i:=A_{\beta_i}$ (the $i$-th column of $A_\beta$\,, equivalently the $\beta_i$-th column of $A$), for $i=1,\ldots,m$; these are exactly the $m$ extreme rays of $\mathcal{B}_\beta$\,. We use this notation $g^1,\ldots,g^m$ for the generators of $\mathcal{B}_\beta$ throughout this work. 

Now, a nice and motivating feature of the 100\% rule is
that we can apply it to a trial right-hand side $\hat b$, using
only knowledge of $b$ and the $L_i$ and $U_i$ ($i=1,\ldots,m$),
as might readily be available as output of a linear-programming code.
While applying the 100\% rule needs only this limited information, checking membership in
$\mathcal{B}_\beta$ requires knowledge of $A_\beta$\,.
Note that we cannot, in general,
determine the $\beta$ found by a solver from a basic
solution provided by the solver.
Even with a knowledge
of $A$, in the presence of
degeneracy, there is not a
unique way to construct a basis $\beta$
yielding an optimal basic solution produced by a solver.
So, hot-starting a solver
with a different basis, say $\hat \beta$, than the unknown basis $\beta$ found by the solver could require the solver to enter a potentially long sequence of (dual-simplex) pivots to
recover an optimal basis
for a revised right-hand side
in $\mathcal{B}_\beta$\,.
So there is a real advantage
to working with $\{b\}+\Delta_{100\%}$ compared to the typically larger set $\mathcal{B}_\beta$\,.

We wish to consider larger sets
than $\{b\}+\Delta_{100\%}$
that $\mathcal{B}_\beta$
contains but that are
still only computed from
 $b$ and the $L_i$ and $U_i$\,.
 A simple such set is the convex polyhedral cone

 \begin{align*}
\mathcal{K}_\beta(b):=&\ccone(\{b\}+\Delta_{100\%})\\
=&\cone(
\{ b+L_i\mathbf{e}_i ~:~
   \mbox{ for $i$ such that $L_i>-\infty$} \} \\
&\qquad\qquad   \cup\{ -\mathbf{e}_i ~:~
   \mbox{ for $i$ such that $L_i=-\infty$}
   \}\\
&\qquad\qquad \cup \{ b+U_i\mathbf{e}_i ~:~
   \mbox{ for $i$ such that $U_i<+\infty$} \}\\
&\qquad\qquad   \cup\{ \mathbf{e}_i ~:~
   \mbox{ for $i$ such that $U_i=+\infty$}
   \}
   ).
 \end{align*}
We write $\mathcal{K}_\beta(b)$
because the
calculation of $\Delta_{100\%}$
(equivalently, the $L_i$ and $U_i$)
depends on both $\beta$ and $b$, while
we do wish to emphasize that we
are still motivated by the
situation where we do not know $\beta$.

Indeed, as $\mathcal{B}_{\beta}$ is a convex cone containing  $\{b\}+\Delta_{100\%}$\,, it also contains every non-negative conical combination of points in  $\{b\}+\Delta_{100\%}$\,. Hence, $\mathcal{K}_\beta(b)\subset \mathcal{B}_{\beta}$\,.
 Note that $\Delta_{100\%}$ may be unbounded when some of the bounds $L_i$ or $U_i$ are infinite. Consequently, it is natural to work with the closed conical hull operator
 $\ccone(\cdot)$.

For a given trial right-hand side  $\hat{b}\in\mathbb{R}^m$, checking whether
$\hat{b}\in \mathcal{K}_\beta(b)$ amounts to
solving a system of $m$ equations in $2m$ non-negative variables (weights on the $2m$ generators), which is solvable in
polynomial time by linear programming, assuming rational input.
This is more work than applying the 100\% rule,
but it does not require knowing
any more information than the 100\% rule employs.
If we had many trial right-hand sides $\hat b$ to process,
we could then treat $\hat b$ as a variable in $\mathbb{R}^m$, and apply Fourier-Motzkin elimination to the system, projecting out the $2m$ non-negative variables,
arriving at a linear-inequality system in the variable $\hat{b}$ alone. We could then remove redundant inequalities
using linear programming, to arrive finally at
a linear system with one inequality per facet of $\mathcal{K}_\beta(b)$. All of this is a lot of work,
but then we could test
many trial right-hand sides very quickly.

Alternatively, for a single trial right-hand side $\hat b$, we can test membership in $\mathcal{K}_\beta(b)$ in $O(m)$ time, without invoking a general-purpose linear-programming solver at all, by exploiting the special structure of the generators. Every generator of $\mathcal{K}_\beta(b)$ is either of the form $b+c_i\mathbf{e}_i$ (for finite $L_i$ or $U_i$) or a bare $\pm\mathbf{e}_i$ (for an infinite bound). Writing $\omega$ for the total weight assigned to the $b+c_i\mathbf{e}_i$-type generators, feasibility of $\hat b = \omega b + v$ decouples coordinate by coordinate: for the residual $v_i(\omega):=\hat b_i - \omega b_i$\,, the minimum weight $s_i(v_i)$ that coordinate $i$ must contribute toward $\omega$ is

\[
s_i(v_i) =
\begin{cases}
\max\!\left\{v_i/U_i\,,\ v_i/L_i\right\}, & \text{if $L_i\,,U_i$ are both finite;}\\[2pt]
\max\!\left\{0,\ v_i/L_i\right\}, & \text{if only $L_i$ is finite;}\\[2pt]
\max\!\left\{0,\ v_i/U_i\right\}, & \text{if only $U_i$ is finite,}
\end{cases}
\]
and we can always absorb any excess weight beyond this minimum without affecting $v_i$\,. Here a zero bound is read as in \S\ref{sec:100} ($L_i=0^-$, $U_i=0^+$, and $0/0:=0$), so that, for example, $v_i/U_i=+\infty$ when $U_i=0<v_i$\,. Each $s_i$\,, being a maximum of at most two linear functions, is convex and piecewise linear in $v_i$\,, and hence $f(\omega):=\sum_i s_i(v_i(\omega)) - \omega$ is convex and piecewise linear in the scalar $\omega$; moreover $\hat b\in\mathcal{K}_\beta(b)$ if and only if $\min_{\omega\geq0} f(\omega)\leq 0$, the constraint $\omega\geq0$ reflecting that the total weight is non-negative. (If some bound is $0$, the corresponding $s_i$ is $+\infty$ on one side of $v_i=0$; then $f$ is convex with values in $\mathbb{R}\cup\{+\infty\}$, finite on an interval of $\omega$ whose endpoints are breakpoints, and everything below applies on that interval.) For each coordinate with $b_i\not=0$, $s_i(v_i(\omega))$ has a single breakpoint, at $\omega_i^*:=\hat b_i/b_i$\,, with an explicit slope on either side; coordinates with $b_i=0$ instead contribute a constant term to $f$. The minimizer $\omega^*$ of $f$ over $\omega\geq0$ is the breakpoint at which the running sum of slopes, accumulated as we process the non-negative $\omega_i^*$ in increasing order, first becomes non-negative --- or $\omega^*=0$ if that sum is already non-negative at $\omega=0$, which is the case in particular when every breakpoint is negative. Because $\omega^*$ is really just a weighted median of the $m$ breakpoints, we can find it (and hence membership in $\mathcal{K}_\beta(b)$) not merely by sorting, but in linear time, via a prune-and-search adaptation of the classical median-of-medians selection algorithm (see \citep*{BFPRT1973}). A useful byproduct of the sweep: if the running slope never becomes non-negative, then $f(\omega)\to-\infty$ as $\omega\to\infty$, automatically certifying $\hat b\in\mathcal{K}_\beta(b)$, with no further evaluation of $f$ required. This puts \emph{a single membership query for the cone rule at the same $O(m)$ complexity as the classical 100\% rule.}

\begin{remark}\label{rem:dualsimplex}
It is worth recording how $L_i$ and $U_i$ read in pivoting terms, as they are
ratio tests of exactly the kind the dual simplex method performs. Fix an index
$i$ and recall from \S\ref{sec:100} that
$U_i=\min_{k\,:\,h^i_k<0}\bigl\{-\bar{x}_{\beta_k}/h^i_k\bigr\}$. As $\delta$
increases from $0$, the basic solution moves along $\bar{x}_\beta+\delta h^i$,
and the index $k^*$ attaining the minimum (any one of them, if several attain it)
is the first basic variable to reach $0$. Thus $b+U_i\mathbf{e}_i$ lies on the facet of $\mathcal{B}_\beta$ spanned by
$\{g^j:j\not=k^*\}$; moving beyond it makes $\beta$ infeasible, and $\beta_{k^*}$
is precisely the variable that a dual simplex method would then drive out of the
basis. Symmetrically for $L_i$\,. So the ratio tests behind $L_i$ and $U_i$ --- one
pass over $h^i$, that is $O(m)$ per index and $O(m^2)$ for all $m$ of them, given
$H$ --- determine, for each coordinate direction, both how far $b$ may move and
which basic variable would leave first. A solver performs these tests when it
reports the ranges, but typically reports only the bounds; the $O(m)$ membership
test above uses only those bounds, and so certifies feasibility of $\beta$ without
identifying any leaving variable.

This also anticipates why the question studied in this paper is delicate. A
finite-bound point $b+\theta_i\mathbf{e}_i$ lies on the ray generated by $g^j$
exactly when the $m-1$ basic variables other than $\beta_j$ vanish
\emph{simultaneously}. For $m=2$ that is a single variable, and occurs
generically; for $m\geq3$ it requires a coincidence, and it is that coincidence
which the later sections analyze.
\end{remark}

When $\mathcal{K}_\beta(b)\subsetneqq\mathcal{B}_\beta$\,, it is natural to ask how much of $\mathcal{B}_\beta$ the cone rule
does capture. As $\mathcal{K}_\beta(b)$ and $\mathcal{B}_\beta$ are both cones, this can
be made quantitative. Write $\tilde\Omega_m(\cdot)$ for the normalized solid angle
measure of a cone in $\mathbb{R}^m$ --- the fraction of $\mathbb{R}^m$ that it
occupies --- and set
\begin{equation}\label{eq:sigmam}
\sigma_m \;:=\; \frac{\tilde\Omega_m\bigl(\mathcal{K}_\beta(b)\bigr)}
                     {\tilde\Omega_m\bigl(\mathcal{B}_\beta\bigr)}\,.
\end{equation}
If $\bar{x}_\beta>\mathbf{0}$, every finite bound is non-zero, so $\Delta_{100\%}$
contains a neighborhood of $\mathbf{0}$ and $\mathcal{K}_\beta(b)$ is
full-dimensional; then $\sigma_m\in(0,1]$, and $\sigma_m=1$ precisely when
$\mathcal{K}_\beta(b)=\mathcal{B}_\beta$\,, as two nested closed cones with the same
solid angle coincide. Appendix~\ref{app:solidangle} explains how to compute
$\sigma_m$\,; it is studied for the AR(1) family in \S\ref{subsec:ar1} and
experimentally in \S\ref{sec:expr}.

%%%%%%%%%%%%%%%%%%%%%%%%%%%%%%

\section{The case of dimension $m=2$}\label{sec:m2}

The case of $m=2$ is quite interesting and even inspiring, so we investigate it exhaustively.
In what follows, we denote the
closed quadrants in counter-clockwise order, beginning from the non-negative quadrant, as 
$\mbox{\rm O}_{++}\,, 
\mbox{\rm O}_{-+}\,,
\mbox{\rm O}_{--}\,,
\mbox{\rm O}_{+-}$\,.

We begin with a very simple lemma, which we also use later, when we consider $m>2$. 

\begin{lemma}\label{lemma:main}
Suppose that $m\geq2$. If every extreme ray of the (pointed simplicial) cone
$\mathcal{B}_{\beta}$ contains a non-zero point of the set
\[
    \{b+L_i\mathbf{e}_i~:~L_i>-\infty\}\cup\{b+U_i\mathbf{e}_i~:~U_i<+\infty\},
\]
then $\mathcal{K}_\beta(b)=\mathcal{B}_{\beta}$\,.
\end{lemma}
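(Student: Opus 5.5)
The plan is to prove the two inclusions separately. The inclusion $\mathcal{K}_\beta(b)\subset\mathcal{B}_\beta$ has already been established in \S\ref{sec:conerule}: $\mathcal{B}_\beta$ is a closed convex cone containing $\{b\}+\Delta_{100\%}$, so it contains the closed conical hull of that set. It therefore remains to show $\mathcal{B}_\beta\subset\mathcal{K}_\beta(b)$. Because $\mathcal{B}_\beta=\cone(\{g^1,\ldots,g^m\})$ and $\mathcal{K}_\beta(b)$ is a convex cone, it suffices to show that each generator $g^j$ lies in $\mathcal{K}_\beta(b)$.

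Fix $j$. By hypothesis, the extreme ray $\cone(\{g^j\})$ contains a non-zero point $p$ of the form $b+L_i\mathbf{e}_i$ (with $L_i>-\infty$) or $b+U_i\mathbf{e}_i$ (with $U_i<+\infty$), for some index $i$. Since $p$ lies on that ray and $p\neq\mathbf{0}$, we can write $p=t g^j$ with $t>0$. Hence $g^j=t^{-1}p$ is a non-negative multiple of one of the explicit generators listed in the definition of $\mathcal{K}_\beta(b)$, so $g^j\in\mathcal{K}_\beta(b)$. (Alternatively, $p\in\{b\}+\Delta_{100\%}$ directly, since $L_i\mathbf{e}_i$ and $U_i\mathbf{e}_i$ are among the points whose convex hull defines $\Delta_{100\%}$.) Taking conical combinations over $j=1,\ldots,m$ then gives $\mathcal{B}_\beta\subset\mathcal{K}_\beta(b)$.

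There is no real obstacle. The only point needing care is the role of the hypothesis ``non-zero'': it is what guarantees $t>0$, so that $p$ actually spans the ray rather than being the apex. This matters because a point $b+L_i\mathbf{e}_i$ could equal $\mathbf{0}$, for example when $b$ is a multiple of $\mathbf{e}_i$. Note also that the argument never needs to determine which of the $m$ rays a given bound point falls on, nor whether different indices $i$ serve different rays, so the stated hypothesis is all that is used.
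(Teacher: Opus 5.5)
Your proposal is correct and follows essentially the same route as the paper's proof. Both use $\mathcal{K}_\beta(b)\subset\mathcal{B}_\beta$, observe that each extreme-ray generator $g^j$ is a positive multiple of a finite-bound generator of $\mathcal{K}_\beta(b)$, and conclude because $g^1,\ldots,g^m$ generate $\mathcal{B}_\beta$. You also make explicit that the non-zero hypothesis is what guarantees $t>0$.
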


\begin{proof}{Proof.}
Clearly $\mathcal{K}_\beta(b)\subset\mathcal{B}_{\beta}$\,, so it suffices to prove that $\mathcal{B}_{\beta}\subset\mathcal{K}_\beta(b)$.
Let $g$ be a generator of an extreme ray of $\mathcal{B}_\beta$\,. By hypothesis, some positive multiple of $g$ is one of the finite points
$b+L_i\mathbf{e}_i$ or $b+U_i\mathbf{e}_i$\,. As these points are generators of $\mathcal{K}_\beta(b)$, then $g\in \mathcal{K}_\beta(b)$.
Hence, $\mathcal{K}_\beta(b)$ contains every extreme ray of $\mathcal{B}_{\beta}$\,. Because $\mathcal{B}_\beta$ is a polyhedral cone, its extreme rays generate it. Therefore, $\mathcal{B}_{\beta}\subset\mathcal{K}_\beta(b)$.\Halmos
\end{proof}

For $m=2$, Lemma~\ref{lemma:main}  particularizes to: If the set of finite points
$
    \{b+L_i\mathbf{e}_i~:~L_i>-\infty\}\cup\{b+U_i\mathbf{e}_i~:~U_i<+\infty\}
$
    contains one non-zero point on each of the two extreme rays of $\mathcal{B}_{\beta}$\,, then $\mathcal{K}_\beta(b)=\mathcal{B}_{\beta}$\,.

In Figures~\ref{fig:one}--\ref{fig:four}, we demonstrate what can
happen in dimension 2. The figures tease out
when $\mathcal{K}_\beta(b)$ and $\mathcal{B}_{\beta}$ are the same.
What is also illustrated is how much larger $\mathcal{K}_\beta(b)$ can be,
in comparison to $\{b\}+\Delta_{100\%}$\,.

The black dot represents the origin,
and the blue dot represents the nominal right-hand side $b$ (always selected to be non-negative).
The dashed red lines indicate the boundary of the standard 100\%-rule region,
$\{b\}+\Delta_{100\%}$\,. We depict the triangular cone
$\mathcal{B}_\beta$ by its two extreme rays, shown in
black. A drawing having  $\mathcal{K}_\beta(b) \subsetneqq \mathcal{B}_\beta$
is the left-hand one in Figure~\ref{fig:three}. We indicate the difference of the sets
with cyan hatching. We see in Figure~\ref{fig:three}
that the equality of $\mathcal{B}_\beta$ (same in both) with $\mathcal{K}_\beta(b)$
depends on the placement of the nominal right-hand side $b$.

\begin{figure}
\FIGURE
{\begin{tikzpicture}[scale=1.5]
  \coordinate (O) at (0,0);
  \coordinate (B) at (1,1);
  \draw[ray] (O) -- (1.3,2.6);
  \draw[ray] (O) -- (2.6,1.3);
  \draw[breach] (0.5,1) -- (2,1);
  \draw[breach] (1,0.5) -- (1,2);
  \draw[hundred] (0.5,1) -- (1,2) -- (2,1) -- (1,0.5) -- cycle;
  \node[genpt] at (0.5,1) {};
  \node[genpt] at (2,1) {};
  \node[genpt] at (1,0.5) {};
  \node[genpt] at (1,2) {};
  \node[origindot] at (O) {};
  \node[below left=1pt] at (O) {$\mathbf{0}$};
  \node[bdot] at (B) {};
  \node at (1.45,2.7) {$g^1$};
  \node at (2.75,1.15) {$g^2$};
  \node[below right] at (B) {$b$};
\end{tikzpicture}}
{Example with all four bounds $L_1\,,L_2\,,U_1\,,U_2$ finite, and where $\mathcal{K}_\beta(b)= \mathcal{B}_\beta$\,.
 If we move $b$ to the origin, then $\mathcal{K}_\beta(b)=\{\mathbf{0}\}\not=\mathcal{B}_\beta$\,.\label{fig:one}}
{}
\end{figure}

\begin{figure}
\FIGURE
{\begin{tikzpicture}[scale=0.62]
  \coordinate (O) at (0,0);
  \coordinate (B) at (4,2);
  \draw[ray] (O) -- (2.6,5.2);
  \draw[ray] (O) -- (8.3,0);
  \draw[breach] (1,2) -- (4,2);
  \draw[breach,-{Stealth[length=2.6mm]}] (4,2) -- (8.3,2);
  \draw[breach] (4,0) -- (4,8);
  \node[genpt] at (1,2) {};
  \node[genpt] at (4,0) {};
  \node[genpt] at (4,8) {};
  \draw[hundred] (1,2) -- (4,8);
  \draw[hundred] (1,2) -- (4,0);
  \draw[hundred,-{Stealth[length=2.6mm]}] (4,8) -- (8.3,8);
  \draw[hundred,-{Stealth[length=2.6mm]}] (4,0) -- (8.3,0);
  \node[origindot] at (O) {};
  \node[left] at (O) {$\mathbf{0}$};
  \node[bdot] at (B) {};
  \node[below right] at (B) {$b$};
  \node[left=2pt] at (2.6,5.2) {$g^1$};
  \node at (8.6,0) {$g^2$};
\end{tikzpicture}}
{Example with exactly one infinite bound among $L_1\,,L_2\,,U_1\,,U_2$\,, and where $\mathcal{K}_\beta(b)= \mathcal{B}_\beta$\,. Moving $b$ to the origin gives instead $\mathcal{K}_\beta(b)=\cone(\{\mathbf{e}_1\})\not=\mathcal{B}_\beta$\,.\label{fig:two}}
{}
\end{figure}

\begin{figure}
\FIGURE
{\begin{tikzpicture}[scale=0.95]
  \begin{scope}
    \coordinate (O) at (0,0);
    \coordinate (B) at (0,1);
    \fill[hatch] (0,0) -- (3.6,-1.2) -- (3.6,0) -- cycle;
    \draw[ray] (O) -- (3.6,-1.2);
    \draw[ray] (O) -- (-1.8,3.6);
    \draw[ray, black!55] (O) -- (3.9,0);
    \draw[ray, black!55] (O) -- (0,3.4);
    \draw[breach] (-0.5,1) -- (0,1);
    \draw[breach,-{Stealth[length=2.6mm]}] (0,1) -- (3.9,1);
    \draw[breach,-{Stealth[length=2.6mm]}] (0,0) -- (0,3.4);
    \node[genpt] at (-0.5,1) {};
    \node[genpt] at (0,0) {};
    \draw[hundred] (-0.5,1) -- (0,0);
    \draw[hundred,-{Stealth[length=2.6mm]}] (-0.5,1) -- (-0.5,3.4);
    \draw[hundred,-{Stealth[length=2.6mm]}] (0,0) -- (3.9,0);
    \node[origindot] at (O) {};
    \node[left] at (O) {$\mathbf{0}$};
    \node[bdot] at (B) {};
    \node[right] at (B) {$b$};
    \node at (3.95,-1.35) {$g^1$};
    \node[above] at (-1.8,3.6) {$g^2$};
  \end{scope}
  \begin{scope}[xshift=6.6cm]
    \coordinate (O) at (0,0);
    \coordinate (B) at (1,1);
    \draw[ray] (O) -- (3.6,-1.2);
    \draw[ray] (O) -- (-1.8,3.6);
    \draw[ray, black!55] (O) -- (3.9,0);
    \draw[ray, black!55] (O) -- (0,3.4);
    \draw[breach] (-0.5,1) -- (1,1);
    \draw[breach,-{Stealth[length=2.6mm]}] (1,1) -- (3.9,1);
    \draw[breach] (1,-0.333) -- (1,1);
    \draw[breach,-{Stealth[length=2.6mm]}] (1,1) -- (1,3.4);
    \node[genpt] at (-0.5,1) {};
    \node[genpt] at (1,-0.333) {};
    \draw[hundred] (-0.5,1) -- (1,-0.333);
    \draw[hundred,-{Stealth[length=2.6mm]}] (-0.5,1) -- (-0.5,3.4);
    \draw[hundred,-{Stealth[length=2.6mm]}] (1,-0.333) -- (3.9,-0.333);
    \node[origindot] at (O) {};
    \node[left] at (O) {$\mathbf{0}$};
    \node[bdot] at (B) {};
    \node[above right] at (B) {$b$};
    \node at (3.95,-1.35) {$g^1$};
    \node[above] at (-1.8,3.6) {$g^2$};
  \end{scope}
\end{tikzpicture}}
{Example with exactly two infinite bounds among $L_1\,,L_2\,,U_1\,,U_2$\,, and where $\mathcal{K}_\beta(b) \subsetneqq \mathcal{B}_\beta$ (left) or $\mathcal{K}_\beta(b)= \mathcal{B}_\beta$
        (right).
  Note that $b_1=0$ for the left-hand drawing.
Furthermore, if we move $b$ to the origin, then $\mathcal{K}_\beta(b)=\cone(\{\mathbf{e}_1\,,\mathbf{e}_2\})\not=\mathcal{B}_\beta$\,.\label{fig:three}}
{}
\end{figure}

From Figures~\ref{fig:one}--\ref{fig:three}, we can see that
$b=\mathbf{0}$ can be rather problematic. But if $\mathcal{B}_\beta$
happens to be an orthant ($m\geq 2$), then $\mathcal{K}_\beta(b)=\mathcal{B}_\beta$\,, for all $b$ in that orthant (including $\mathbf{0}$): indeed, when $\mathcal{B}_\beta=\cone(\{\varepsilon_1\mathbf{e}_1\,,\ldots,\varepsilon_m\mathbf{e}_m\})$ is an orthant, with fixed signs $\varepsilon_i\in\{\pm1\}$, each column $h^i$ of $A_\beta^{-1}$ has a single non-zero entry, of sign $\varepsilon_i$\,; so one of $L_i\,,U_i$ is infinite for every $i$, and the corresponding bare generator is $\varepsilon_i\mathbf{e}_i$\,. It is present in $\mathcal K_\beta(b)$'s generating set \emph{regardless of $b$}, so $\mathcal K_\beta(b)\supset\cone(\{\varepsilon_1\mathbf{e}_1\,,\ldots,\varepsilon_m\mathbf{e}_m\})=\mathcal{B}_\beta$ for every $b$.

\begin{theorem}\label{thm:one}
    Suppose that $m=2$, $b\geq\mathbf{0}$, and at most one of $L_1\,, L_2\,, U_1\,, U_2$ is infinite. Then $\mathcal{K}_\beta(b)= \mathcal{B}_\beta$ if and only if $b\not=\mathbf{0}$.
\end{theorem}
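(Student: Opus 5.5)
The plan is to prove the two directions separately. The "only if" direction is short. For the "if" direction I will verify the hypothesis of Lemma~\ref{lemma:main}: each of the two extreme rays $\cone(\{g^1\})$ and $\cone(\{g^2\})$ of $\mathcal{B}_\beta$ should contain a non-zero finite-bound point $b+\theta\mathbf{e}_i$, with $\theta\in\{L_i,U_i\}$ finite.

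For "only if", take $b=\mathbf{0}$. Then $\bar{x}_\beta=\mathbf{0}$, so every finite bound is $0$ and every finite-bound generator $b+\theta\mathbf{e}_i$ equals $\mathbf{0}$. Hence $\mathcal{K}_\beta(\mathbf{0})$ is generated only by the bare vectors $\pm\mathbf{e}_i$ coming from infinite bounds. There is at most one such vector, so $\mathcal{K}_\beta(\mathbf{0})$ is at most a ray. Since $\mathcal{B}_\beta$ is full-dimensional in $\mathbb{R}^2$, the two cones differ.

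For "if", assume $b\geq\mathbf{0}$ with $b\neq\mathbf{0}$. Since at most one bound is infinite, at least one column, say $h^i$, has mixed signs. Because $m=2$, one entry of $h^i$ is strictly positive and the other strictly negative. By the ratio-test reading in Remark~\ref{rem:dualsimplex}, $b+U_i\mathbf{e}_i$ is the point where the unique basic variable $k$ with $h^i_k<0$ vanishes, and $b+L_i\mathbf{e}_i$ is the point where the other basic variable $k'$ vanishes. For $m=2$, vanishing of a single basic variable puts the point on the ray of the other generator, and these are two different rays. So the two finite points of $h^i$ land on the two distinct extreme rays, and Lemma~\ref{lemma:main} applies once both points are known to be non-zero.

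The main obstacle is this non-zero requirement, which is where degenerate placements of $b$ enter. A finite-bound point $b+\theta\mathbf{e}_i$ is $\mathbf{0}$ exactly when $b$ lies on the coordinate axis $\mathbb{R}\mathbf{e}_i$ and $\theta=-b_i$, that is, when the horizontal (or vertical) line through $b$ passes through the origin. Since $b\neq\mathbf{0}$ is non-negative, $b+U_i\mathbf{e}_i$ with $U_i\geq0$ cannot be $\mathbf{0}$; only an $L_i$ point can be. In that case $b=b_i\mathbf{e}_i$ and $b_j=0$ for the other index $j$. I would then switch to the direction $\mathbf{e}_j$, whose line through $b$ does not pass through the origin, so its finite points are automatically non-zero.

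It remains to handle this degenerate case in detail. If $h^j$ also has mixed signs, its two finite points cover both rays by the argument above, and we are done. If $h^j$ is uniform, then $L_j,U_j$ are the only infinite bound, so $h^i$ is mixed and $U_i$ supplies a non-zero point on one ray. The finite bound of $h^j$ supplies a non-zero point on the ray where the corresponding basic variable vanishes. I must check, with a short sign computation using $\bar{x}_\beta=b_iH\mathbf{e}_i$, that this is the ray not already covered by $U_i$. I expect this sign bookkeeping, together with the analogous sub-case where $b$ itself lies on an extreme ray (some $\bar{x}_{\beta_k}=0$, so a zero bound makes $b$ itself a generator), to be the only delicate step. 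Once both rays are covered by non-zero points, Lemma~\ref{lemma:main} gives $\mathcal{K}_\beta(b)=\mathcal{B}_\beta$.
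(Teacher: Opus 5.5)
Your outline matches the paper's: take an index $i$ with $h^i$ of mixed signs, note that $b+L_i\mathbf{e}_i$ and $b+U_i\mathbf{e}_i$ land on the two different extreme rays, and apply Lemma~\ref{lemma:main}. Your ``only if'' direction is the same as the paper's. The gap is the step you flag yourself. You never show that both finite-bound points are non-zero, and the case split you propose for this (switch to $\mathbf{e}_j$, then sign bookkeeping on $h^j$) is left undone. That case split is also aimed at a case that cannot occur. Suppose $b+L_i\mathbf{e}_i=\mathbf{0}$. Then $b=b_i\mathbf{e}_i$ with $b_i>0$, so $\bar{x}_\beta=Hb=b_ih^i$. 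Since $\bar{x}_\beta\geq\mathbf{0}$, this forces $h^i\geq\mathbf{0}$, i.e.\ uniform signs, which contradicts your choice of $i$.

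The clean way to get non-zeroness, and what the paper does, is to compute in $\bar{x}_\beta$-coordinates. Write $h^i_k<0<h^i_{k'}$. Then
$\bar{x}_\beta+U_ih^i=\bigl(\bar{x}_{\beta_{k'}}+\bar{x}_{\beta_k}h^i_{k'}/|h^i_k|\bigr)\mathbf{e}_{k'}$ and
$\bar{x}_\beta+L_ih^i=\bigl(\bar{x}_{\beta_k}+\bar{x}_{\beta_{k'}}|h^i_k|/h^i_{k'}\bigr)\mathbf{e}_{k}$.
Each surviving coefficient is a positive combination of the entries of $\bar{x}_\beta\geq\mathbf{0}$. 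These entries are not both zero, because $b=A_\beta\bar{x}_\beta\not=\mathbf{0}$. So the two points are positive multiples of $g^{k'}$ and $g^{k}$ respectively: non-zero and on different rays, with no use even of $b\geq\mathbf{0}$. The same computation also settles your second ``delicate'' sub-case, where some $\bar{x}_{\beta_k}=0$. There a zero bound just returns $b$ itself, which is non-zero and already on a ray. With this in place, the rest of your argument goes through and the detour through $\mathbf{e}_j$ is unnecessary.
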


\begin{proof}{Proof.}
$\mathcal{B}_\beta$ is a pointed 2-dimensional polyhedral cone, hence has two extreme rays; write $\bar{x}_\beta:=A_\beta^{-1}b=(\bar{x}_{\beta_1}\,,\bar{x}_{\beta_2})^\top\geq\mathbf{0}$ (well defined and non-negative, as $\beta$ is primal feasible for $b$), and $\bar k:=3-k$ for $k=1,2$.

\emph{Claim.} Suppose that $h^k$ has mixed signs for some $k$ (equivalently, both $L_k$ and $U_k$ are finite). A direct computation --- treating separately the two cases $h^k_k>0>h^k_{\bar k}$ and $h^k_k<0<h^k_{\bar k}$ --- demonstrates that $L_k$ and $U_k$ are exactly the perturbations that zero out, respectively, one of the two coordinates of $\bar{x}_\beta+\delta h^k$, and that at each of the two resulting points the remaining coordinate is strictly positive when $\bar{x}_\beta\not=\mathbf{0}$. Consequently, $b+L_k\mathbf{e}_k$ and $b+U_k\mathbf{e}_k$ lie, respectively, on the extreme rays of $g^k$ and $g^{\bar k}$ (in one order or the other, depending on the sign of $h^k_k$), and \emph{both are non-zero whenever $\bar{x}_\beta\not=\mathbf{0}$}, that is, whenever $b\not=\mathbf{0}$.

($\Rightarrow$) We demonstrate the contrapositive: if $b=\mathbf{0}$, then $\mathcal{K}_\beta(b)\not=\mathcal{B}_\beta$\,. If $b=\mathbf{0}$ then $\bar{x}_\beta=\mathbf{0}$, so every candidate value $-\bar{x}_{\beta_k}/h^i_k$ defining $L_i$ or $U_i$ ($i=1,2$) equals $0$; hence $L_i\in\{0,-\infty\}$ and $U_i\in\{0,+\infty\}$. By hypothesis at most one of $L_1\,,L_2\,,U_1\,,U_2$ is infinite, so at least three of the four equal $0$ exactly, contributing only the trivial generator $0$ to $\mathcal{K}_\beta(\mathbf{0})$; hence at most one non-trivial (signed unit) vector generates $\mathcal{K}_\beta(\mathbf{0})$, so $\dim\mathcal{K}_\beta(\mathbf{0})\leq1<2=\dim\mathcal{B}_\beta$\,, and $\mathcal{K}_\beta(\mathbf{0})\not=\mathcal{B}_\beta$\,.

($\Leftarrow$) Suppose that $b\not=\mathbf{0}$. By hypothesis at most one of $L_1\,,L_2\,,U_1\,,U_2$ is infinite, so at least one index $k$ has both $L_k\,,U_k$ finite, that is, $h^k$ has mixed signs. By the claim, $b+L_k\mathbf{e}_k$ and $b+U_k\mathbf{e}_k$ are non-zero points lying on the two \emph{different} extreme rays of $\mathcal{B}_\beta$\,. By Lemma~\ref{lemma:main}, $\mathcal{K}_\beta(b)=\mathcal{B}_\beta$\,.\Halmos
\end{proof}

\begin{example}\label{exa:four}
Figure~\ref{fig:four} depicts an interesting pair of related examples with two infinite bounds. It is a related pair of examples with the same two infinite bounds among $L_1\,,L_2\,,U_1\,,U_2$\,, and the same $b\in\mathbb{R}^2_{++}$\,.
The difference is in one
of the extreme rays of $\mathcal{B}_\beta$\,.
On the left, we have $\mathcal{K}_\beta(b) \subsetneqq \mathcal{B}_\beta$\,, and on the right, we have  $\mathcal{K}_\beta(b)= \mathcal{B}_\beta$\,.
Concretely, both drawings have $b=(1,1)^\top$, $L_1=-\tfrac12$\,, $U_1=+\infty$,
$L_2=-\infty$ and $U_2=1$, so that $v^1=(\tfrac12,1)^\top$ and $v^2=(1,2)^\top$ lie on
the same ray, that of $g^1$\,. On the left, $g^2$ lies in the open quadrant
$\mbox{\rm O}_{--}$\,, and no generator of $\mathcal{K}_\beta(b)$ lies on its ray;
indeed, by Theorem~\ref{thm:nonneg}(a) below, no non-negative $\tilde b$ in the
interior of this $\mathcal{B}_\beta$ has $\mathcal{K}_\beta(\tilde b)=
\mathcal{B}_\beta$\,. On the right, $g^2$ is a positive multiple of $-\mathbf{e}_2$\,, so
$A_\beta$ has a zero entry, and the bare generator $-\mathbf{e}_2$ of
$\mathcal{K}_\beta(b)$ (present because $L_2=-\infty$) spans the ray of $g^2$\,.
\end{example}

\begin{figure}
\FIGURE
{\begin{tikzpicture}[scale=0.9]
  \begin{scope}
    \coordinate (O) at (0,0);
    \coordinate (B) at (1,1);
    \fill[hatch] (0,0) -- (-0.4,-1.6) -- (0,-1.6) -- cycle;
    \draw[ray] (O) -- (1.5,3);
    \draw[ray] (O) -- (-0.5,-2.0);
    \draw[breach] (0.5,1) -- (1,1);
    \draw[breach,-{Stealth[length=2.6mm]}] (1,1) -- (2.9,1);
    \draw[breach] (1,2) -- (1,1);
    \draw[breach,-{Stealth[length=2.6mm]}] (1,1) -- (1,-1.7);
    \node[genpt] at (0.5,1) {};
    \node[genpt] at (1,2) {};
    \draw[hundred] (0,0) -- (1,2);
    \draw[hundred,-{Stealth[length=2.6mm]}] (1,2) -- (1.98,3.96);
    \draw[hundred,-{Stealth[length=2.6mm]}] (0,0) -- (0,-1.7);
    \node[origindot] at (O) {};
    \node[left] at (O) {$\mathbf{0}$};
    \node[bdot] at (B) {};
    \node[right] at (1.5,3) {$g^1$};
    \node[below] at (-0.5,-2.0) {$g^2$};
    \node[below right] at (B) {$b$};
  \end{scope}
  \begin{scope}[xshift=6.8cm]
    \coordinate (O) at (0,0);
    \coordinate (B) at (1,1);
    \draw[ray] (O) -- (1.5,3);
    \draw[ray] (O) -- (0,-2.0);
    \draw[breach] (0.5,1) -- (1,1);
    \draw[breach,-{Stealth[length=2.6mm]}] (1,1) -- (2.9,1);
    \draw[breach] (1,2) -- (1,1);
    \draw[breach,-{Stealth[length=2.6mm]}] (1,1) -- (1,-1.7);
    \node[genpt] at (0.5,1) {};
    \node[genpt] at (1,2) {};
    \draw[hundred] (0,0) -- (1,2);
    \draw[hundred,-{Stealth[length=2.6mm]}] (1,2) -- (1.98,3.96);
    \draw[hundred,-{Stealth[length=2.6mm]}] (0,0) -- (0,-1.7);
    \node[origindot] at (O) {};
    \node[left] at (O) {$\mathbf{0}$};
    \node[bdot] at (B) {};
    \node[right] at (1.5,3) {$g^1$};
    \node[below] at (0,-2.0) {$g^2$};
    \node[below right] at (B) {$b$};
  \end{scope}
\end{tikzpicture}}
{Related pair of examples with exactly two infinite bounds among $L_1\,,L_2\,,U_1\,,U_2$\,, $b\in\mathbb{R}^2_{++}$\,, and where $\mathcal{K}_\beta(b) \subsetneqq \mathcal{B}_\beta$ (left) or $\mathcal{K}_\beta(b)= \mathcal{B}_\beta$
        (right).\label{fig:four}}
{}
\end{figure}

Unfortunately, as Figure~\ref{fig:four} illustrates,
we cannot always distinguish between these two cases
based solely on $b$ and the $L_i$ and $U_i$\,,
when exactly two of $L_1\,, L_2\,, U_1\,, U_2$ are infinite: the two drawings
differ only in whether a column of $A_\beta$ is a coordinate direction. When all
entries of $A_\beta$ are non-zero, the next result, a characterization for exactly two
infinite bounds (compare Figures~\ref{fig:three}--\ref{fig:four}), demonstrates that $b$
and the $L_i$ and $U_i$ do decide the question.

\begin{theorem}\label{thm:two}
 Suppose that $m=2$, $b\geq\mathbf{0}$, all entries of $A_\beta$ are non-zero, and exactly two of $L_1\,, L_2\,, U_1\,, U_2$ are infinite (and so
 exactly one of $L_i$ and $U_i$ is finite for each $i$).
Let $v^i$ be the finite one of $b+L_i\mathbf{e}_i$ and $b+U_i\mathbf{e}_i$\,, for $i=1,2$.
Then $\mathcal{K}_\beta(b)= \mathcal{B}_\beta$ if and only if $v^1$ and $v^2$ are both non-zero and $v^1$ is not a positive multiple of $v^2$.
\end{theorem}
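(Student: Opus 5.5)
The plan is to classify the generators of $\mathcal{K}_\beta(b)$ relative to the boundary of $\mathcal{B}_\beta$, and then use the fact that an extreme ray of $\mathcal{B}_\beta\supset\mathcal{K}_\beta(b)$ can only be reached by generators lying on that ray. Concretely, I want to show two things:
\begin{itemize}
\item each $v^i$ lies on one of the two (closed) extreme rays of $\mathcal{B}_\beta$, possibly at the origin;
\item the two bare generators $\pm\mathbf{e}_i$ lie in the \emph{interior} of $\mathcal{B}_\beta$.
\end{itemize}
With these, both directions are short.

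\emph{Step 1 (sign structure).} For $m=2$ we have $H=A_\beta^{-1}=\frac{1}{\det A_\beta}\left[\begin{smallmatrix} a_{22} & -a_{12}\\ -a_{21} & a_{11}\end{smallmatrix}\right]$. Since all entries of $A_\beta$ are non-zero, all entries of $H$ are non-zero. Exactly one of $L_i,U_i$ is finite, so $h^i$ has uniform signs; with non-zero entries, it is therefore strictly positive or strictly negative.

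\emph{Step 2 (bare generators are interior).} Let $s_i\mathbf{e}_i$ be the bare generator for index $i$. Here $s_i=+1$ when $U_i=+\infty$, i.e.\ $h^i>\mathbf{0}$, and $s_i=-1$ when $L_i=-\infty$, i.e.\ $h^i<\mathbf{0}$. Then $H(s_i\mathbf{e}_i)=s_ih^i>\mathbf{0}$, so $s_i\mathbf{e}_i$ lies in the interior of $\mathcal{B}_\beta$ and on neither extreme ray.

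\emph{Step 3 ($v^i$ lies on a ray).} Say $U_i$ is finite, so $h^i<\mathbf{0}$; the case of finite $L_i$ is symmetric. Let $k$ attain the minimum defining $U_i$. Then $H v^i=\bar{x}_\beta+U_ih^i$ has $k$-th coordinate $0$ and the other coordinate non-negative, by feasibility (cf.\ Remark~\ref{rem:dualsimplex}). Hence $v^i$ is a non-negative multiple of $g^{\bar k}$, i.e.\ it lies on an extreme ray of $\mathcal{B}_\beta$. This step and Step 2 are where the hypothesis on $A_\beta$ is really used, and they are the crux. Without that hypothesis a bare generator could itself span a ray, as in Example~\ref{exa:four} (right).

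($\Leftarrow$) Suppose $v^1,v^2$ are non-zero and not positive multiples of each other. By Step 3 each spans an extreme ray of $\mathcal{B}_\beta$. Since $\mathcal{B}_\beta$ is pointed, two non-zero points on the same ray are positive multiples of each other, so $v^1,v^2$ lie on the two distinct rays. Lemma~\ref{lemma:main} then gives $\mathcal{K}_\beta(b)=\mathcal{B}_\beta$.

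($\Rightarrow$) Suppose $\mathcal{K}_\beta(b)=\mathcal{B}_\beta$. Then $g^1$ is a non-negative combination of $v^1$, $v^2$ and the two bare generators. Because the ray $R_1$ of $g^1$ is a face of $\mathcal{B}_\beta$ and all generators lie in $\mathcal{B}_\beta$, every generator with positive weight must lie in $R_1$. By Step 2 no bare generator qualifies, so some non-zero $v^i$ lies on $R_1$; likewise some non-zero $v^j$ lies on the ray $R_2$ of $g^2$. As $R_1\cap R_2=\{\mathbf{0}\}$, we get $i\neq j$. Hence $v^1,v^2$ are both non-zero and lie on different rays, so neither is a positive multiple of the other.
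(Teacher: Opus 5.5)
Your proof is correct and takes essentially the paper's approach. Each $v^i$ lies on a closed extreme ray because the finite bound zeroes one coordinate of $\bar{x}_\beta+\theta h^i$, and the bare generators lie on no extreme ray; Lemma~\ref{lemma:main} then gives one direction, and the fact that each extreme ray of $\mathcal{K}_\beta(b)=\mathcal{B}_\beta$ must carry a generator gives the other. Two small differences: your Step~2 (interiority of $s_i\mathbf{e}_i$, since $s_ih^i>\mathbf{0}$) is a slightly stronger form of the paper's remark that no extreme ray is a coordinate direction, and Step~3 does not actually need the non-zero-entries hypothesis, contrary to your aside.
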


\begin{proof}{Proof.}
Because each $h^i$ has uniform signs, the finite bound for index $i$ zeroes out
one coordinate of $\bar{x}_\beta+\theta h^i$ (the first to reach $0$ as $\theta$ moves
from $0$ to that bound), so each non-zero $v^i$ lies on an extreme ray of
$\mathcal{B}_\beta$\,; and $v^i=\mathbf{0}$ exactly when both coordinates vanish
together. The generators of $\mathcal{K}_\beta(b)$ are $v^1$, $v^2$ and two bare
vectors $\pm\mathbf{e}_1\,,\pm\mathbf{e}_2$\,. As all entries of $A_\beta$ are
non-zero, neither extreme ray of $\mathcal{B}_\beta$ is a coordinate direction, so
the bare vectors lie on no extreme ray. If $v^1,v^2$ are non-zero and on different
extreme rays, Lemma~\ref{lemma:main} gives $\mathcal{K}_\beta(b)=\mathcal{B}_\beta$\,.
Conversely, if $\mathcal{K}_\beta(b)=\mathcal{B}_\beta$\,, then both extreme rays of
$\mathcal{B}_\beta$ are extreme rays of $\mathcal{K}_\beta(b)$, hence each is the ray of
some generator; the bare vectors are excluded, so each ray carries one of $v^1,v^2$,
which forces $v^1,v^2$ non-zero and on different rays. Finally, two non-zero
$v^i$ on extreme rays lie on different rays if and only if neither is a positive
multiple of the other.\Halmos
\end{proof}

We do have the following existential result, which is constructive but uses knowledge of $\mathcal{B}_\beta$\,.

\begin{theorem}\label{thm:exist}
For $m=2$, there is always a choice of $b$ (not necessarily non-negative), so that $\mathcal{K}_\beta(b)=\mathcal{B}_\beta$\,.
\end{theorem}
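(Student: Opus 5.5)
The plan is to pick $b$ in the interior of $\mathcal{B}_\beta$, that is, $b:=A_\beta\bar{x}_\beta$ with $\bar{x}_\beta>\mathbf{0}$ to be chosen. We then verify the hypothesis of Lemma~\ref{lemma:main}: each of the two extreme rays of $\mathcal{B}_\beta$ must contain a non-zero finite point $b+L_i\mathbf{e}_i$ or $b+U_i\mathbf{e}_i$. We use one basic fact throughout. If a finite bound for index $i$ zeroes exactly one coordinate $k$ of $\bar{x}_\beta+\theta h^i$, and the other coordinate $\bar k$ stays strictly positive, then the corresponding point $A_\beta(\bar{x}_\beta+\theta h^i)$ is a positive multiple of $g^{\bar k}$. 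It therefore lies, non-zero, on the ray of $g^{\bar k}$. Nothing in this uses $b\geq\mathbf{0}$, which is why the statement allows an arbitrary $b$.

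\emph{Case 1: some column $h^k$ of $H$ has mixed signs.} Take any $\bar{x}_\beta>\mathbf{0}$, for instance $b:=g^1+g^2$. The Claim in the proof of Theorem~\ref{thm:one} only used $\bar{x}_\beta\not=\mathbf{0}$ and the mixed signs of $h^k$. It therefore gives that $b+L_k\mathbf{e}_k$ and $b+U_k\mathbf{e}_k$ are non-zero points on the two different extreme rays. Lemma~\ref{lemma:main} then finishes this case.

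\emph{Case 2: both columns $h^1,h^2$ have uniform signs.} Then exactly one bound is finite for each $i$; call the corresponding point $v^i$. For $v^i$, the coordinate that is zeroed is the index $k$ minimizing $\bar{x}_{\beta_k}/|h^i_k|$ over $k$ with $h^i_k\not=0$. When this minimizer is unique, the other coordinate remains strictly positive, so $v^i$ is non-zero and lies on the ray of the other generator. We want $v^1$ to zero coordinate $1$ and $v^2$ to zero coordinate $2$, or the reverse. Assuming all entries of $H$ are non-zero, the first choice amounts to the pair of strict inequalities
\[
\frac{|h^2_1|}{|h^2_2|}\;<\;\frac{\bar{x}_{\beta_1}}{\bar{x}_{\beta_2}}\;<\;\frac{|h^1_1|}{|h^1_2|}.
\]
If instead $|h^2_1|/|h^2_2|>|h^1_1|/|h^1_2|$, we swap the roles of the two columns and choose the ratio in the reversed interval. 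Equality of the two ratios is impossible: with uniform signs it would make $h^1$ and $h^2$ proportional, contradicting invertibility of $H$. So an open interval of admissible ratios $\bar{x}_{\beta_1}/\bar{x}_{\beta_2}\in(0,\infty)$ exists. We pick any ratio in it and set $b:=A_\beta\bar{x}_\beta$. Now $v^1$ and $v^2$ are non-zero points on different extreme rays, and Lemma~\ref{lemma:main} applies. The bare generators $\pm\mathbf{e}_i$ play no role, since they also lie in $\mathcal{B}_\beta$.

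The step needing care is the degenerate sub-case of Case~2 in which $H$ has a zero entry. Say $h^1_2=0$. Then $h^1$ can only ever zero coordinate $1$, so we need $h^2$ to zero coordinate $2$. If $h^2_1\not=0$, this holds once $\bar{x}_{\beta_1}/\bar{x}_{\beta_2}>|h^2_1|/|h^2_2|$; if $h^2_1=0$, it holds automatically, with the unique minimizer giving a non-zero point as before. Zeros in both columns within the same row are excluded by invertibility. If $H$ is diagonal, $\mathcal{B}_\beta$ is an orthant, and the remark preceding Theorem~\ref{thm:one} gives $\mathcal{K}_\beta(b)=\mathcal{B}_\beta$ for every $b$. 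I expect this bookkeeping of zero entries, and checking uniqueness of the minimizer in every sub-case, to be the only real obstacle; the main construction itself is the interval-of-ratios choice above.
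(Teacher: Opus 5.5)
Your proof is correct, but it takes a different route from the paper's.

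The paper argues from the picture of $\mathcal{B}_\beta$. It splits on whether the angle between $g^1$ and $g^2$ is acute, right or obtuse. In the obtuse case it splits further on whether $\mathcal{B}_\beta$ contains a quadrant or only a single half-axis. It then places $b$ in one of three ways: on a horizontal or vertical line meeting both rays (possibly at an endpoint, so on the boundary), in the interior of the contained quadrant, or at $\mathbf{e}_2$ after a reflection. Lemma~\ref{lemma:main} finishes each case.

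You instead work in $\bar{x}_\beta$-coordinates and split on the sign patterns of the columns of $H$, that is, on the number of infinite bounds. A mixed column is handled by the Claim in the proof of Theorem~\ref{thm:one}. Two uniform columns are handled by a ratio-threshold choice, which is exactly the rule of Lemma~\ref{lemma:ray}: your interval for $\bar{x}_{\beta_1}/\bar{x}_{\beta_2}$ is $t=\bar{x}_{\beta_2}/\bar{x}_{\beta_1}\in(r_1\,,r_2)$ or $(r_2\,,r_1)$, and $\det H\not=0$ supplies $r_1\not=r_2$. Your bookkeeping of zero entries is complete; the anti-diagonal pattern is the mirror image of the diagonal one and is likewise an orthant.

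Your route yields more information:
\begin{itemize}
\item the witness is always interior;
\item every interior $b$ works as soon as some column of $H$ has mixed signs;
\item in the uniform case you obtain the whole open set of interior witnesses, which is precisely what Theorem~\ref{thm:nonneg} later intersects with $b\geq\mathbf{0}$.
\end{itemize}
The paper's route reads a witness directly off the two generators without inverting $A_\beta$, and gives very simple explicit choices such as $b=\mathbf{e}_2$. The price is a case analysis on angles and reflections.
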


\begin{proof}{Proof.}
We consider the extreme-ray generators $g^i$ of
$\mathcal{B}_\beta$\,, for $i=1,2$.

If the angle between $g^1$ and $g^2$ is acute, then
there exists a vertical or horizontal line
that meets $\cone(\{g^i\})$ at a non-zero point,
say $b^i$, for $i=1,2$.
Then, choosing $b$ to be anywhere on the line segment
between $b^1$ and $b^2$ (even an endpoint will do),
for such a point $b$, we have $\mathcal{K}_\beta(b)=\mathcal{B}_\beta$\,.

If the angle between $g^1$ and $g^2$ is $\pi/2$,
then the same construction works, unless
$\mathcal{B}_\beta$ is a quadrant, in which case we simply
choose $b$ to be any point in the interior of $\mathcal{B}_\beta$\,,
and we have $\mathcal{K}_\beta(b)=\mathcal{B}_\beta$\,.

So, we may assume now that the angle between $g^1$ and $g^2$ is obtuse.
Suppose first that $\mathcal{B}_\beta$ contains a quadrant $Q$, say
$Q=\mbox{\rm O}_{++}$ after reflecting coordinates, and choose $b$ in the interior
of $Q$. As $\mathcal{B}_\beta$ is pointed with angle less than $\pi$, its two
extreme rays lie in the two quadrants adjacent to $Q$ (possibly on the boundary
axes of $Q$): one, say $g^1$, has $g^1_1>0$ and $g^1_2\leq0$, and the other has
$g^2_1\leq0$ and $g^2_2>0$. Moving $b$ along $-\mathbf{e}_2$ stays in
$\mathcal{B}_\beta$ until the vertical line $y_1=b_1>0$ meets the ray of $g^1$,
which it does at a non-zero point because $g^1_1>0$; so that point is the
finite-bound point for index $2$, and it lies on the ray of $g^1$. Symmetrically,
moving $b$ along $-\mathbf{e}_1$ reaches a non-zero point on the ray of $g^2$.
Hence both extreme rays are hit, and $\mathcal{K}_\beta(b)=\mathcal{B}_\beta$ by
Lemma~\ref{lemma:main}.

Otherwise $\mathcal{B}_\beta$ contains no quadrant. As its angle exceeds $\pi/2$,
it contains at least one coordinate direction; it cannot contain two, as two
adjacent ones would give a quadrant and two opposite ones would contradict
pointedness. So $\mathcal{B}_\beta$ contains exactly one half-axis, say the
positive $\mathbf{e}_2$-axis in its interior after reflecting coordinates, and
its two extreme rays then lie in the open quadrants $\mbox{\rm O}_{++}$ and
$\mbox{\rm O}_{-+}$ adjacent to that half-axis, so that $g^1_2\,,g^2_2>0$.
Choose $b:=\mathbf{e}_2$\,. Moving $b$ along $\pm\mathbf{e}_1$ stays in
$\mathcal{B}_\beta$ until the horizontal line $y_2=1$ meets the ray of $g^1$ or of
$g^2$\,, which it does at non-zero points because $g^1_2\,,g^2_2>0$; these are the
two finite-bound points for index $1$. Hence both extreme rays are hit, and
$\mathcal{K}_\beta(b)=\mathcal{B}_\beta$ by Lemma~\ref{lemma:main}.\Halmos
\end{proof}

Theorem~\ref{thm:exist} guarantees some $b$ (possibly with a negative
entry) achieves $\mathcal{K}_\beta(b)=\mathcal{B}_\beta$\,, but says
nothing about whether a \emph{non-negative} $b$ can do so ---
which is exactly the regime of Theorems~\ref{thm:one} and
\ref{thm:two}. We now settle this, for the two-infinite-bounds case,
in terms of the sign pattern of $A_\beta$ alone; this also explains
the phenomenon illustrated by Figure~\ref{fig:four} algebraically, rather
than only pictorially.

Recall $A_\beta=[g^1\mid g^2]$, so that
$\det A_\beta = g^1_1g^2_2-g^2_1g^1_2$\,. Then
\[
h^1=\tfrac{1}{\det A_\beta}(g^2_2\,,-g^1_2)^\top\,,\qquad
h^2=\tfrac{1}{\det A_\beta}(-g^2_1\,,g^1_1)^\top\,.
\]
We are in the two-infinite-bounds case of Theorem~\ref{thm:two}
precisely when $h^1$ and $h^2$ each have uniform signs; the displayed
formulae then force $g^1_1\,,g^2_1$ to have opposite signs and
$g^1_2\,,g^2_2$ to have opposite signs. Equivalently, $g^1$ and $g^2$
lie in \emph{antipodal quadrants} of $\mathbb{R}^2$: either one of them lies
in $\mbox{\rm O}_{++}$ and the other in $\mbox{\rm O}_{--}$ (that is,
$g^1_1g^1_2>0$ and $g^2_1g^2_2>0$), or one of them lies in
$\mbox{\rm O}_{+-}$ and the other in $\mbox{\rm O}_{-+}$ (that is,
$g^1_1g^1_2<0$ and $g^2_1g^2_2<0$). To avoid
trivial degeneracies, we also assume that all four entries of
$A_\beta$ are non-zero.

For $b\geq\mathbf{0}$ with $\bar{x}_\beta:=A_\beta^{-1}b=(\bar{x}_{\beta_1}\,,\bar{x}_{\beta_2})^\top$ satisfying
$\bar{x}_{\beta_1}>0$\,, let
\[
t:=\bar{x}_{\beta_2}/\bar{x}_{\beta_1}\,\qquad
r_1:=\left|g^1_2/g^2_2\right|,\qquad
r_2:=\left|g^1_1/g^2_1\right|.
\]

\begin{lemma}\label{lemma:ray}
In the two-infinite-bounds case, $v^1$ lies on the ray of $g^1$ if
$t<r_1$ and on the ray of $g^2$ if $t>r_1$\,; symmetrically,
$v^2$ lies on the ray of $g^1$ if $t<r_2$ and on the ray of $g^2$
if $t>r_2$\,. Consequently, $v^1$ and $v^2$ lie on \emph{different}
extreme rays of $\mathcal{B}_\beta$ if and only if $t$ lies strictly
between $r_1$ and $r_2$\,.
\end{lemma}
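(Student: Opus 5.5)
The plan is to compute the finite bound for each index explicitly and to read off which coordinate of $\bar{x}_\beta+\theta h^i$ reaches zero first. By the sign analysis just above the lemma, each $h^i$ has uniform signs. Hence, for index $1$, both candidate ratios $-\bar{x}_{\beta_k}/h^1_k$ ($k=1,2$) are finite and of the same sign, namely the sign that determines which of $L_1$ or $U_1$ is the finite one. As in the proof of Theorem~\ref{thm:two}, the finite bound $\theta_1$ is the one of smaller absolute value: it is the first point along $\bar{x}_\beta+\theta h^1$ at which a coordinate vanishes. If coordinate $k$ vanishes there, then $A_\beta^{-1}v^1=\bar{x}_\beta+\theta_1h^1$ is supported on the other index. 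So $v^1$ lies on the ray of $g^{\bar k}$, provided the surviving coordinate is positive.

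Next I plug in $h^1=\tfrac{1}{\det A_\beta}(g^2_2,-g^1_2)^\top$.
\begin{itemize}
\item Coordinate $1$ vanishes at $|\theta|=\bar{x}_{\beta_1}\,|\det A_\beta|/|g^2_2|$.
\item Coordinate $2$ vanishes at $|\theta|=\bar{x}_{\beta_2}\,|\det A_\beta|/|g^1_2|$.
\end{itemize}
Both denominators are non-zero because all entries of $A_\beta$ are non-zero. Coordinate $1$ vanishes strictly first if and only if $\bar{x}_{\beta_1}/|g^2_2|<\bar{x}_{\beta_2}/|g^1_2|$. Dividing by $\bar{x}_{\beta_1}>0$, this is equivalent to $t>|g^1_2/g^2_2|=r_1$. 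In that case $A_\beta^{-1}v^1=(0,\ast)$ with $\ast>0$, so $v^1$ is a non-zero point on the ray of $g^2$. If instead $t<r_1$, coordinate $2$ vanishes strictly first, and the surviving first coordinate is positive. Positivity holds because we stopped strictly before coordinate $1$ reached $0$, starting from $\bar{x}_{\beta_1}>0$. So $v^1$ is a non-zero point on the ray of $g^1$. This also covers $t=0$, since $r_1>0$. The same computation with $h^2=\tfrac{1}{\det A_\beta}(-g^2_1,g^1_1)^\top$ gives the threshold $|g^1_1/g^2_1|=r_2$ with the same orientation.

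For the consequence, I also note what happens at a threshold. If $t=r_1$, both coordinates vanish simultaneously, so $v^1=\mathbf{0}$ lies on no extreme ray in the relevant sense; likewise for $t=r_2$. Therefore $v^1$ and $v^2$ are non-zero and lie on different rays exactly when $t\not=r_1$, $t\not=r_2$, and $t$ lies on opposite sides of $r_1$ and $r_2$. That is precisely the condition that $t$ lies strictly between $r_1$ and $r_2$. In particular, the condition is impossible if $r_1=r_2$.

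The only step needing care is the case bookkeeping. One must confirm that the finite bound really is the smaller-in-magnitude ratio irrespective of whether it is $L_i$ or $U_i$ and of the sign of $\det A_\beta$. Taking absolute values as above disposes of all these sign cases at once, so I expect no genuine obstacle beyond that.
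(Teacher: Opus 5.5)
Your proof is correct and follows essentially the same route as the paper's. Both arguments identify the finite bound for index $i$ as the smaller of the magnitudes $\bar{x}_{\beta_k}/|h^i_k|$ and note that the coordinate vanishing first determines which ray carries $v^i$. Both then reduce that comparison to comparing $t$ with $|h^i_2|/|h^i_1|$, which equals $r_1$ or $r_2$ by the explicit formulas for $h^1$ and $h^2$. Your explicit handling of the thresholds $t=r_i$ (where $v^i=\mathbf{0}$) and of the case $r_1=r_2$ simply spells out the case check that the paper leaves to the reader.
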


\begin{proof}{Proof.}
In the two-infinite-bounds case each $h^i$ has uniform signs and, as all entries of
$A_\beta$ are non-zero, no zero entry. If $h^i\geq\mathbf{0}$, then
$L_i=\max_k\{-\bar{x}_{\beta_k}/h^i_k\}=-\min_k\{\bar{x}_{\beta_k}/|h^i_k|\}$, and if
$h^i\leq\mathbf{0}$, then $U_i=\min_k\{-\bar{x}_{\beta_k}/h^i_k\}=\min_k\{\bar{x}_{\beta_k}/|h^i_k|\}$.
In either case, as $\delta$ moves from $0$ to the finite bound $\theta$, the first
coordinate of $\bar{x}_\beta+\delta h^i$ to reach $0$ is
$k^*_i:=\operatorname*{argmin}_k\{\bar{x}_{\beta_k}/|h^i_k|\}$. So the $k^*_i$-th coordinate
of $\bar{x}_\beta+\theta h^i$ vanishes, the other coordinate $3-k^*_i$ is non-negative,
and $v^i$ lies on the ray of $g^{3-k^*_i}$ (and $v^i=\mathbf{0}$ exactly when both
coordinates vanish together). Now $k^*_i=1$ if and only if
$\bar{x}_{\beta_1}/|h^i_1|<\bar{x}_{\beta_2}/|h^i_2|$, that is, if and only if
$t>|h^i_2|/|h^i_1|$; and $|h^1_2|/|h^1_1|=r_1$\,, $|h^2_2|/|h^2_1|=r_2$ by the
formulae for $h^1,h^2$ above. This gives the stated threshold rule, and the
``different extreme rays'' statement follows by checking the cases.\Halmos
\end{proof}

By Lemma~\ref{lemma:ray} and Theorem~\ref{thm:two},
$t\in\left(\min\{r_1\,,r_2\},\max\{r_1\,,r_2\}\right)$ is exactly the
condition for $\mathcal{K}_\beta(b)=\mathcal{B}_\beta$\,.
So existence of a non-negative $b$ achieving equality reduces to
whether some \emph{achievable} $t$ --- that is, some $t\geq 0$ with
$b=g^1+tg^2\geq \mathbf{0}$ after normalizing $\bar{x}_{\beta_1}=1$ --- falls in that open
interval. Let
\[
T:=\{t\geq 0 ~:~ g^1+tg^2\geq \mathbf{0}\}.
\]

\begin{theorem}\label{thm:nonneg}
Suppose that $m=2$, exactly two of $L_1\,,L_2\,,U_1\,,U_2$ are infinite,
and all entries of $A_\beta$ are non-zero.
\begin{itemize}
\item[(a)] If one of $g^1,g^2$ lies in quadrant $\mbox{\rm O}_{++}$ and the
other in quadrant $\mbox{\rm O}_{--}$ (that is, $g^1_1g^1_2>0$ and
$g^2_1g^2_2>0$), then
$\mathcal{K}_\beta(b)\subsetneqq \mathcal{B}_\beta$ for \emph{every}
$b\geq\mathbf{0}$ with $\bar{x}_{\beta_1}\,,\bar{x}_{\beta_2}>0$: no non-negative interior right-hand
side achieves equality.
\item[(b)] If one of $g^1,g^2$ lies in quadrant $\mbox{\rm O}_{+-}$ and the
other in quadrant $\mbox{\rm O}_{-+}$ (that is, $g^1_1g^1_2<0$ and
$g^2_1g^2_2<0$), label them so that $g^1\in\mbox{\rm O}_{+-}$ and
$g^2\in\mbox{\rm O}_{-+}$\,. Then there exists $b\geq\mathbf{0}$ with
$\bar{x}_{\beta_1}\,,\bar{x}_{\beta_2}>0$ and $\mathcal{K}_\beta(b)=\mathcal{B}_\beta$ if and
only if
$
\det A_\beta = g^1_1g^2_2-g^2_1g^1_2 \; > \;0,
$
equivalently if and only if $r_1<r_2$\,, equivalently if and only if
$\mathcal{B}_\beta$ meets the open quadrant $\mathbb{R}^2_{++}$\,. If so, a
right-hand side $b\geq\mathbf{0}$ with $\bar{x}_{\beta_1}\,,\bar{x}_{\beta_2}>0$ has
$\mathcal{K}_\beta(b)=\mathcal{B}_\beta$ if and only if $b>\mathbf{0}$, that is,
if and only if $t=\bar{x}_{\beta_2}/\bar{x}_{\beta_1}\in(r_1\,,r_2)$.
\end{itemize}
\end{theorem}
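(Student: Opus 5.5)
The plan is to reduce everything to the scalar criterion already established. By Lemma~\ref{lemma:ray} and Theorem~\ref{thm:two}, for a right-hand side with $\bar{x}_{\beta_1},\bar{x}_{\beta_2}>0$ we have $\mathcal{K}_\beta(b)=\mathcal{B}_\beta$ if and only if $t=\bar{x}_{\beta_2}/\bar{x}_{\beta_1}$ lies strictly between $r_1$ and $r_2$. The nonzero conditions on $v^1,v^2$ hold automatically: $v^i=\mathbf{0}$ requires both coordinates to vanish at once, which happens only when $t$ equals a threshold. After normalizing $\bar{x}_{\beta_1}=1$, so that $b=g^1+tg^2$, the question becomes whether the achievable set $T\cap(0,\infty)$ meets the open interval $I$ between $r_1$ and $r_2$. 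So in each case I will compute $T$ explicitly from the sign pattern and compare it with $r_1,r_2$.

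\textbf{Case (a).} Up to swapping labels, take $g^1\in\mathrm{O}_{++}$ and $g^2\in\mathrm{O}_{--}$; the reverse labeling is handled symmetrically, and $t$ is simply replaced by $1/t$. The condition $g^1+tg^2\geq\mathbf{0}$ reads $g^1_k-t|g^2_k|\geq0$ for $k=1,2$, that is, $t\leq\min\{g^1_1/|g^2_1|,\,g^1_2/|g^2_2|\}=\min\{r_2,r_1\}$. Thus every achievable $t>0$ satisfies $t\leq\min\{r_1,r_2\}$ and cannot lie strictly between $r_1$ and $r_2$, which gives strict containment. I will also check that the same computation, with the roles of $g^1$ and $g^2$ exchanged, gives $t\geq\max\{r_1,r_2\}$.

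\textbf{Case (b).} With $g^1\in\mathrm{O}_{+-}$ and $g^2\in\mathrm{O}_{-+}$, the condition $b\geq\mathbf{0}$ becomes $g^1_1-t|g^2_1|\geq0$ and $-|g^1_2|+tg^2_2\geq0$, that is, $t\in[r_1,r_2]$. Likewise, $b>\mathbf{0}$ corresponds to $t\in(r_1,r_2)$. Hence $T\cap I$ is nonempty if and only if $r_1<r_2$, and in that case the achievable $t$ giving equality are exactly those in the open interval, namely those with $b>\mathbf{0}$. If $r_1\geq r_2$, then $T$ is at most a point and $I$ is empty (or, when $r_1>r_2$, $T$ is empty and no $b$ exists at all).

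It then remains to verify the stated equivalences. Here $\det A_\beta=g^1_1g^2_2-g^2_1g^1_2=g^1_1g^2_2-|g^2_1||g^1_2|$, which is positive if and only if $|g^1_2|/g^2_2<g^1_1/|g^2_1|$, that is, $r_1<r_2$. Moreover, $\mathcal{B}_\beta$ meets $\mathbb{R}^2_{++}$ if and only if some $g^1+tg^2$ with $t>0$ is strictly positive, which by the computation above holds if and only if $(r_1,r_2)\neq\emptyset$.

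The main obstacle is bookkeeping rather than depth. I must make sure the normalization $\bar{x}_{\beta_1}=1$ and the labeling in (a) do not lose cases. In (b), the fixed labeling interacts with the definition of $t$, so I need to confirm that the thresholds in Lemma~\ref{lemma:ray} are exactly $r_1,r_2$ as defined, with no absolute-value sign slips.
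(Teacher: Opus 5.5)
Your proposal is correct and follows the paper's proof step for step. You reduce, via Lemma~\ref{lemma:ray} and Theorem~\ref{thm:two}, to whether the achievable set $T$ meets the open interval between $r_1$ and $r_2$. You then read $T$ off the sign pattern: $T=[0,\min\{r_1,r_2\}]$ or $[\max\{r_1,r_2\},\infty)$ in (a), and $T=[r_1,r_2]$ in (b). Finally you translate $r_1<r_2$ into $\det A_\beta>0$.

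The one detail to make explicit is in the last equivalence. The rays of $g^1$ and $g^2$ themselves miss $\mathbb{R}^2_{++}$. So any strictly positive point of $\mathcal{B}_\beta$ has both coordinates of $\bar{x}_\beta$ positive, and can therefore be normalized to $g^1+tg^2$ with $t>0$.
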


\begin{proof}{Proof.}
(a) With $g^1\in \mbox{\rm O}_{++}$ (so $g^1_1\,,g^1_2>0$) and
$g^2\in \mbox{\rm O}_{--}$ (so $g^2_1\,,g^2_2<0$), the two defining
inequalities of $T$ both become upper bounds:
\[
g^1_1+tg^2_1\geq 0 \iff t\leq g^1_1/|g^2_1|=r_2\,,\qquad
g^1_2+tg^2_2\geq 0 \iff t\leq g^1_2/|g^2_2|=r_1\,,
\]
so $T=[0,\min\{r_1\,,r_2\}]$\,. This interval lies entirely at or
below $\min\{r_1\,,r_2\}$\,, so it cannot meet the open interval
$(\min\{r_1\,,r_2\},\max\{r_1\,,r_2\})$ required by Lemma~\ref{lemma:ray}. (Labeling instead $g^1\in \mbox{\rm O}_{--}$\,,
$g^2\in\mbox{\rm O}_{++}$ gives, by the same computation,
$T=[\max\{r_1\,,r_2\},\infty)$\,, which again only touches the
required open interval at its left endpoint, never entering it.)
Hence no admissible $t$ works, for either labeling.

(b) With $g^1\in\mbox{\rm O}_{+-}$ (so $g^1_1>0>g^1_2$) and
$g^2\in\mbox{\rm O}_{-+}$ (so $g^2_1<0<g^2_2$):
\[
g^1_1+tg^2_1\geq 0 \iff t\leq g^1_1/|g^2_1|=r_2\,,\qquad
g^1_2+tg^2_2\geq 0 \iff t\geq |g^1_2|/g^2_2=r_1\,,
\]
so $T=[r_1\,,r_2]$ (empty if $r_1>r_2$). Thus
$T\cap(r_1\,,r_2)\not=\varnothing$ if and only if
$r_1<r_2$\,, in which case
$T\cap(r_1\,,r_2)=(r_1\,,r_2)$ itself. Finally,
\[
r_1<r_2
\iff |g^1_2||g^2_1|<|g^1_1||g^2_2|
\iff g^1_2g^2_1<g^1_1g^2_2
\iff \det A_\beta>0\,,
\]
using that, under this labeling ($g^1\in\mbox{\rm O}_{+-}$ and
$g^2\in\mbox{\rm O}_{-+}$), we have $g^1_2\,,g^2_1<0$ and
$g^1_1\,,g^2_2>0$, so that $g^1_2g^2_1=|g^1_2||g^2_1|$ and
$g^1_1g^2_2=|g^1_1||g^2_2|$. For the geometric reading, normalize
$\bar{x}_{\beta_1}=1$, so $b=g^1+tg^2$ with $t>0$; the two inequalities above,
taken strictly, say that $b_2>0$ if and only if $t>r_1$ and $b_1>0$ if and only
if $t<r_2$\,. Hence $t\in(r_1\,,r_2)$ if and only if $b>\mathbf{0}$, and
$r_1<r_2$ if and only if some $b\in\mathcal{B}_\beta$ with
$\bar{x}_{\beta_1}\,,\bar{x}_{\beta_2}>0$ is strictly positive, that is, if and only
if $\mathcal{B}_\beta$ meets $\mathbb{R}^2_{++}$ (a point of $\mathbb{R}^2_{++}$ on
the boundary of $\mathcal{B}_\beta$ would lie on the ray of $g^1$ or $g^2$\,,
neither of which meets $\mathbb{R}^2_{++}$).\Halmos
\end{proof}

\begin{remark}\label{rem:m2boundary}
Part (a) complements Theorem~\ref{thm:exist}: for a whole family of
cones, no non-negative $b$ achieves $\mathcal{K}_\beta(b)=\mathcal{B}_\beta$\,, so the $b$ guaranteed by
Theorem~\ref{thm:exist} for such cones must have a negative entry. Part
(b) has a simple geometric reading: within the other family, a non-negative
interior $b$ achieves $\mathcal{K}_\beta(b)=\mathcal{B}_\beta$ exactly when $b$ is
strictly positive, and such a $b$ exists exactly when $\mathcal{B}_\beta$
contains a strictly positive vector; the sign of $\det A_\beta$ under the stated
labeling is just an algebraic test for this. (If $\det A_\beta<0$ in case (b),
then $\mathcal{B}_\beta\cap\mathbb{R}^2_+=\{\mathbf{0}\}$, so no non-zero
non-negative right-hand side is feasible at all.) Figure~\ref{fig:four} is not
an instance of part (b): its left-hand cone is an instance of part (a), and its
right-hand cone is discussed below.

The boundary cases excluded above behave quite differently from each other. If
$\bar{x}_\beta$ has a zero entry, then both finite bounds are $0$, because every
$h^i_k$ is non-zero; so $v^1=v^2=b$, and Theorem~\ref{thm:two} gives
$\mathcal{K}_\beta(b)\not=\mathcal{B}_\beta$\,. Such right-hand sides never achieve
equality, and requiring $\bar{x}_\beta>\mathbf{0}$ loses nothing.

A zero entry of $A_\beta$\,, by contrast, changes the picture. At $m=2$ it makes a
column a coordinate direction, $g^j=c\,\mathbf{e}_i$\,, and then, as in
Proposition~\ref{prp:m3axis}(a), $h^i=\mathbf{e}_j/c$: one bound for index $i$ is
infinite, and its bare generator spans the ray of $g^j$, while the finite bound
moves $b$ to $\bar{x}_{\beta_{j'}}g^{j'}$, which lies on the other ray ($j'\not=j$).
Hence $\mathcal{K}_\beta(b)=\mathcal{B}_\beta$ for every $b$ with
$\bar{x}_\beta>\mathbf{0}$. If instead $b\not=\mathbf{0}$ lies on the boundary of
$\mathcal{B}_\beta$\,, then $\mathcal{K}_\beta(b)=\mathcal{B}_\beta$ exactly when $b$
lies on the ray of the column that is not a coordinate direction. The right-hand cone of Figure~\ref{fig:four} is of this
kind: it is a limit of cones in case (a), for which no non-negative interior $b$
achieves equality, yet every interior $b$ achieves it there. So a zero entry of
$A_\beta$ can reverse the conclusion of part (a), and the outcome is not continuous
in $A_\beta$\,.
\end{remark}

%%%%%%%%%%%%%%%%%%%%%%%%%%%

\section{Transferring to $\bar{x}_\beta$-coordinates}\label{sec:transfer}

The results of this section hold for every $m$ ($m\geq3$ where stated), and the three sections that follow,
\S\ref{sec:general_m}, \S\ref{sec:m3} and \S\ref{subsec:tridiag}, all rest on them. We write $h^i_k$ for the $k$-th entry of the $i$-th column
of $H$, that is, $h^i_k=H_{ki}$\,.

Everything rests on transferring the defining relation into
$\bar{x}_\beta$-coordinates.

\begin{lemma}\label{lem:transfer}
If $b+\theta\mathbf{e}_i=\mu g^j$, then $\bar{x}_\beta+\theta h^i=\mu\mathbf{e}_j$\,.
\end{lemma}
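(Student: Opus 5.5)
The plan is to apply $H=A_\beta^{-1}$ to both sides of the assumed identity $b+\theta\mathbf{e}_i=\mu g^j$ and read off each term using only the definitions already fixed in the paper. Since $H$ is linear, the left-hand side becomes $Hb+\theta H\mathbf{e}_i$. By definition $\bar{x}_\beta=A_\beta^{-1}b=Hb$, and, as recorded in \S\ref{sec:100}, $h^i=A_\beta^{-1}\mathbf{e}_i=H\mathbf{e}_i$. So the left-hand side maps to $\bar{x}_\beta+\theta h^i$.

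On the right-hand side, $g^j=A_{\beta_j}$ is the $j$-th column of $A_\beta$, that is, $g^j=A_\beta\mathbf{e}_j$. Hence $Hg^j=A_\beta^{-1}A_\beta\mathbf{e}_j=\mathbf{e}_j$, and so $H(\mu g^j)=\mu\mathbf{e}_j$. Equating the two images gives $\bar{x}_\beta+\theta h^i=\mu\mathbf{e}_j$, which is the claim.

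No real obstacle is expected, since the whole argument is the invertibility and linearity of $A_\beta$. The only point needing care is bookkeeping. The index $i$ labels a coordinate direction of $b$-space, namely the right-hand-side entry being perturbed, while $j$ labels a column of the basis, that is, a basic variable. Keeping these two roles distinct is exactly what makes the transferred identity useful later.

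Because $A_\beta$ is invertible, the same computation run backwards, applying $A_\beta$ instead of $H$, gives the converse. I would mention this, since the subsequent lemmas presumably use the equivalence: $b+\theta\mathbf{e}_i$ lies on the ray of $g^j$ precisely when all coordinates of $\bar{x}_\beta+\theta h^i$ other than the $j$-th vanish.
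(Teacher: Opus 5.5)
Your proof is correct and is exactly the paper's argument: apply $H$ to both sides and use $Hb=\bar{x}_\beta$, $H\mathbf{e}_i=h^i$ and $Hg^j=\mathbf{e}_j$. One minor point about your closing aside on the converse: vanishing of the coordinates other than the $j$-th only puts $b+\theta\mathbf{e}_i$ on the line through $g^j$, and landing on the ray also requires the $j$-th coordinate $\mu$ to be non-negative.
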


\begin{proof}{Proof.}
Apply $H$ to both sides. The left side becomes $Hb+\theta H\mathbf{e}_i=
\bar{x}_\beta+\theta h^i$, and the right side becomes $\mu Hg^j=\mu\mathbf{e}_j$\,,
because $g^j=A_\beta \mathbf{e}_j$ gives $Hg^j=\mathbf{e}_j$\,.\Halmos
\end{proof}

Reading off coordinates, $\bar{x}_{\beta_k}=-\theta h^i_k$ for every $k\not=j$,
and $\mu=\bar{x}_{\beta_j}+\theta h^i_j$\,. So landing on the ray of $g^j$
requires the move along $\mathbf{e}_i$ to drive \emph{every} coordinate of
$\bar{x}_\beta$ other than the $j$-th to zero simultaneously; equivalently, the
vector $(\bar{x}_{\beta_k})_{k\not=j}$ must be parallel to $(h^i_k)_{k\not=j}$\,.

\begin{lemma}\label{lem:boundattained}
Let $\bar{x}_\beta\geq\mathbf{0}$, and suppose that $\bar{x}_\beta+\theta h^i=\mu\mathbf{e}_j$
with $\theta\not=0$, $\mu>0$, and $\bar{x}_{\beta_k}>0$ for some $k\not=j$.
Then $\theta=L_i$ if $\theta<0$, and $\theta=U_i$ if $\theta>0$; in particular
that bound is finite.
\end{lemma}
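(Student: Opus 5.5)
We treat the case $\theta>0$ in full; the case $\theta<0$ is the mirror image, obtained by exchanging the roles of $U_i$ and $L_i$ and reversing the sign conventions.

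\emph{Step 1: the index $k$ is a candidate in the ratio test.} Reading off coordinates as after Lemma~\ref{lem:transfer}, we have $\bar{x}_{\beta_k}+\theta h^i_k=0$ for every $k\not=j$. For the index $k\not=j$ with $\bar{x}_{\beta_k}>0$ this gives $h^i_k=-\bar{x}_{\beta_k}/\theta<0$. So the set $\{k:h^i_k<0\}$ is non-empty, $U_i$ is finite, and $\theta=-\bar{x}_{\beta_k}/h^i_k$ is one of the values over which the minimum defining $U_i$ is taken. Hence $U_i\leq\theta$.

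\emph{Step 2: no smaller candidate exists.} Take any $k'$ with $h^i_{k'}<0$. We show $-\bar{x}_{\beta_{k'}}/h^i_{k'}\geq\theta$, which is equivalent to $\bar{x}_{\beta_{k'}}+\theta h^i_{k'}\geq 0$.
\begin{itemize}
\item If $k'\not=j$, the $k'$-th coordinate equals $0$, so the candidate value is exactly $\theta$.
\item If $k'=j$, the $j$-th coordinate equals $\mu>0$, so $-\bar{x}_{\beta_j}/h^i_j>\theta$.
\end{itemize}
Therefore $U_i\geq\theta$, and combined with Step 1, $U_i=\theta$.

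Equivalently, the whole segment $\bar{x}_\beta+\delta h^i$, $\delta\in[0,\theta]$, stays non-negative. This holds by convexity, since both endpoints $\bar{x}_\beta\geq\mathbf{0}$ and $\mu\mathbf{e}_j\geq\mathbf{0}$ are non-negative. At $\delta=\theta$ the $k$-th coordinate hits $0$ while having been strictly positive at $\delta=0$, so no larger move is feasible. This linear-interpolation view gives a cleaner write-up.

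\emph{Case $\theta<0$.} The same argument gives $h^i_k=-\bar{x}_{\beta_k}/\theta>0$, so $L_i$ is finite and $\theta$ is a candidate, hence $L_i\geq\theta$. Every other candidate is at most $\theta$, with the $j$-th again handled by $\mu>0$. Hence $L_i=\theta$.

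\emph{Role of the hypotheses.} The only delicate point is noticing exactly where each hypothesis enters.
\begin{itemize}
\item The condition $\bar{x}_{\beta_k}>0$ for some $k\not=j$ guarantees that a negative (respectively positive) entry of $h^i$ exists. Without it, $\bar{x}_\beta$ would be a multiple of $\mathbf{e}_j$ and the bound could be infinite or zero.
\item The condition $\mu>0$ controls the $j$-th coordinate.
\item The condition $\theta\not=0$ is used to divide by $\theta$.
\end{itemize}
Beyond this, the argument is routine.
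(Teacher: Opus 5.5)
Your proof is correct and is essentially the paper's argument. The paper obtains $\theta\in[L_i\,,U_i]$ in one line from $\bar{x}_\beta+\theta h^i=\mu\mathbf{e}_j\geq\mathbf{0}$, which is your Step~2 (you verify it candidate by candidate), and then uses the index $k$ with $\bar{x}_{\beta_k}=-\theta h^i_k>0$ to make $\theta$ itself a candidate in the ratio test, giving the reverse inequality as in your Step~1.
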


\begin{proof}{Proof.}
Because $\mu\mathbf{e}_j\geq\mathbf{0}$, we have $\bar{x}_\beta+\theta h^i\geq\mathbf{0}$, so
$\theta\in[L_i\,,U_i]$. Fix $k\not=j$ with $\bar{x}_{\beta_k}>0$; then
$\bar{x}_{\beta_k}=-\theta h^i_k>0$. If $\theta>0$, this forces $h^i_k<0$, so by
definition $U_i\leq-\bar{x}_{\beta_k}/h^i_k=\theta$; combined with $\theta\leq U_i$
this gives $\theta=U_i$\,. If $\theta<0$, then $h^i_k>0$, so
$L_i\geq-\bar{x}_{\beta_k}/h^i_k=\theta$, and $\theta=L_i$\,.\Halmos
\end{proof}

Lemma~\ref{lemma:main} says that $\mathcal{K}_\beta(b)=\mathcal{B}_\beta$ as soon as
each extreme ray $g^j$ is hit by a finite-bound point $b+\theta_i\mathbf{e}_i$\,.
The converse needs one proviso, because a bare generator $\pm\mathbf{e}_i$ of
$\mathcal{K}_\beta(b)$ can itself lie on an extreme ray of $\mathcal{B}_\beta$ when
that ray is a coordinate direction.

\begin{lemma}\label{lem:converse}
Suppose that no extreme ray of $\mathcal{B}_\beta$ is a coordinate direction (that
is, no column of $A_\beta$ has exactly one non-zero entry). If
$\mathcal{K}_\beta(b)=\mathcal{B}_\beta$\,, then every extreme ray $g^j$ is hit by a
finite-bound point $b+\theta_i\mathbf{e}_i$ with $\theta_i\in\{L_i\,,U_i\}$.
\end{lemma}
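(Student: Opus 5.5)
The plan is to use the extreme-ray structure of polyhedral cones. Assume $\mathcal{K}_\beta(b)=\mathcal{B}_\beta$, fix an extreme ray $\rho_j:=\cone(\{g^j\})$ of $\mathcal{B}_\beta$, and show that some generator of $\mathcal{K}_\beta(b)$ lies on $\rho_j$. Since no extreme ray is a coordinate direction, that generator cannot be a bare $\pm\mathbf{e}_i$. It therefore has to be a finite-bound point $b+\theta_i\mathbf{e}_i$ with $\theta_i\in\{L_i\,,U_i\}$.

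\emph{Step 1 (an extreme ray of the whole cone is spanned by one generator).} $\mathcal{K}_\beta(b)$ is the cone over the finite generating list $G$ displayed in \S\ref{sec:conerule}. Here $\ccone$ and $\cone$ agree, because the list is finite. Since $g^j\in\mathcal{K}_\beta(b)$, write $g^j=\sum_{v\in G}\alpha_v v$ with $\alpha\geq\mathbf{0}$. Every $v\in G$ lies in $\mathcal{K}_\beta(b)=\mathcal{B}_\beta$. Because $\rho_j$ is a face of $\mathcal{B}_\beta$, every $v$ with $\alpha_v>0$ must lie in $\rho_j$. To see this, take a supporting linear functional $\ell$ with $\ell\geq0$ on $\mathcal{B}_\beta$ and $\ell^{-1}(0)\cap\mathcal{B}_\beta=\rho_j$; concretely, $\ell(y)=\sum_{k\not=j}(Hy)_k$. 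Then $0=\ell(g^j)=\sum_v\alpha_v\ell(v)$ with each term non-negative, so $\ell(v)=0$ whenever $\alpha_v>0$. Since $g^j\not=\mathbf{0}$, some such $v$ is non-zero. That $v$ is a non-zero generator lying on $\rho_j$, so it is a positive multiple of $g^j$.

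\emph{Step 2 (excluding bare generators).} If $v=\pm\mathbf{e}_i$, then $g^j$ would be a positive multiple of a coordinate direction, contrary to hypothesis. So $v=b+L_i\mathbf{e}_i$ with $L_i>-\infty$, or $v=b+U_i\mathbf{e}_i$ with $U_i<+\infty$, for some $i$. This is exactly a finite-bound point $b+\theta_i\mathbf{e}_i$, $\theta_i\in\{L_i\,,U_i\}$, that hits $g^j$ (with $\mu>0$). One could additionally invoke Lemma~\ref{lem:transfer} to record $\bar{x}_\beta+\theta_i h^i=\mu\mathbf{e}_j$, but the statement does not require it.

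\emph{Main obstacle.} The only delicate point is the bookkeeping about zero bounds and the zero vector. If a bound is $0$, the corresponding generator is $b$ itself, and it may be $\mathbf{0}$ when $b=\mathbf{0}$. Step 1 already handles this, because it selects a \emph{non-zero} $v$ with $\alpha_v>0$, and zero generators contribute nothing. The other point to check is that the face argument really uses equality of the two cones, namely that every generator of $\mathcal{K}_\beta(b)$ lies in $\mathcal{B}_\beta$. This holds because $\mathcal{K}_\beta(b)\subset\mathcal{B}_\beta$ always, as shown in \S\ref{sec:conerule}.
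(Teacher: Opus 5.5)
Your proposal is correct and follows essentially the same route as the paper: each extreme ray of $\mathcal{B}_\beta=\mathcal{K}_\beta(b)$ must be the ray of one of the finitely many generators, and the hypothesis rules out the bare $\pm\mathbf{e}_i$. The only difference is that you prove this standard fact about extreme rays of finitely generated cones explicitly, via the supporting functional $\ell(y)=\sum_{k\not=j}(Hy)_k$, whereas the paper simply cites it.
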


\begin{proof}{Proof.}
If $\mathcal{K}_\beta(b)=\mathcal{B}_\beta$\,, then each extreme ray of
$\mathcal{B}_\beta$ is an extreme ray of the finitely generated cone
$\mathcal{K}_\beta(b)$, hence is the ray of one of its generators. By hypothesis
that generator is not a bare $\pm\mathbf{e}_i$\,, so it is a finite-bound
point.\Halmos
\end{proof}

The hypothesis holds automatically when $H$ has no zero entry: if
$g^j=c\,\mathbf{e}_i$\,, then $h^i=H\mathbf{e}_i=\mathbf{e}_j/c$ has $m-1$ zero
entries. Under it, the two lemmas together say that
$\mathcal{K}_\beta(b)=\mathcal{B}_\beta$ exactly when each extreme ray $g^j$ is hit
by a finite-bound point; this assigns to each ray $j$ an index $\tau(j)$, so that
ray $j$ is hit by moving $b$ along $\mathbf{e}_{\tau(j)}$\,. For interior $b$ the
assignment $\tau$ cannot collapse.

\begin{lemma}\label{lem:bijection}
Let $m\geq3$, and suppose that no extreme ray of $\mathcal{B}_\beta$ is a coordinate
direction. If $\bar{x}_\beta>\mathbf{0}$ and $\mathcal{K}_\beta(b)=\mathcal{B}_\beta$\,,
then $\tau$ is a bijection, that is, a permutation of $\{1,\ldots,m\}$.
\end{lemma}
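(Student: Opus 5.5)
The plan is to show that $\tau$ is injective. Since it maps the $m$ extreme rays into the $m$ indices $\{1,\ldots,m\}$, injectivity immediately makes it a permutation. Each index $i$ supplies at most two finite-bound points, $b+L_i\mathbf{e}_i$ and $b+U_i\mathbf{e}_i$, so a priori one index could hit two different rays. The whole task is to rule this out when $m\geq3$ and $\bar{x}_\beta>\mathbf{0}$.

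First I fix what "hit" means. By Lemma~\ref{lem:converse}, under the hypothesis $\mathcal{K}_\beta(b)=\mathcal{B}_\beta$, each ray $g^j$ is the ray of a generator $b+\theta\mathbf{e}_{\tau(j)}$ of $\mathcal{K}_\beta(b)$. This means $b+\theta\mathbf{e}_{\tau(j)}=\mu g^j$ with $\mu>0$; the point is non-zero because it spans an extreme ray. Moreover $\theta\not=0$: otherwise $b=\mu g^j$ would give $\bar{x}_\beta=\mu\mathbf{e}_j$, contradicting $\bar{x}_\beta>\mathbf{0}$ with $m\geq2$.

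Next, suppose for contradiction that $\tau(j)=\tau(j')=i$ with $j\not=j'$. Then there are $\theta,\theta'$ and $\mu,\mu'>0$ with $b+\theta\mathbf{e}_i=\mu g^j$ and $b+\theta'\mathbf{e}_i=\mu' g^{j'}$. By Lemma~\ref{lem:transfer},
\[
\bar{x}_\beta+\theta h^i=\mu\mathbf{e}_j \qquad\text{and}\qquad \bar{x}_\beta+\theta' h^i=\mu'\mathbf{e}_{j'}.
\]
Because $m\geq3$, I can pick an index $k\notin\{j,j'\}$. Reading off coordinate $k$ in both equations gives $\bar{x}_{\beta_k}=-\theta h^i_k=-\theta' h^i_k$. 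Since $\bar{x}_{\beta_k}>0$, we get $h^i_k\not=0$, and hence $\theta=\theta'$.

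Substituting back, the two left-hand sides coincide, so $\mu\mathbf{e}_j=\mu'\mathbf{e}_{j'}$. With $j\not=j'$ this forces $\mu=\mu'=0$, contradicting $\mu>0$. So $\tau$ is injective, and therefore a bijection.

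The only delicate step is the existence of a third coordinate $k$. This is exactly where $m\geq3$ enters: for $m=2$ a single mixed-sign $h^k$ does hit both rays, as in Theorem~\ref{thm:one}. Interiority is used there too, to guarantee $\bar{x}_{\beta_k}>0$ and hence $h^i_k\not=0$. The remaining care is bookkeeping, namely ensuring $\mu>0$ via Lemma~\ref{lem:converse}.
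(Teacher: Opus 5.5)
Your proof is correct and is essentially the paper's argument. Both apply Lemma~\ref{lem:transfer} to the two hits, use a third index $k\notin\{j,j'\}$ (where $m\geq3$ enters), and use $\bar{x}_{\beta_k}>0$. You only order the steps differently: you deduce $\theta=\theta'$ from coordinate $k$ and then contradict $\mu\mathbf{e}_j=\mu'\mathbf{e}_{j'}$, whereas the paper first excludes $\theta=\theta'$ and then shows that the difference forces $h^i_k=0$, hence $\bar{x}_{\beta_k}=0$.
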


\begin{proof}{Proof.}
Suppose that $\tau(j)=\tau(j')=i$ with $j\not=j'$, say with bounds $\theta$ and $\theta'$.
By Lemma~\ref{lem:transfer}, $\bar{x}_\beta+\theta h^i=\mu_j\mathbf{e}_j$ and
$\bar{x}_\beta+\theta'h^i=\mu_{j'}\mathbf{e}_{j'}$\,. If $\theta=\theta'$ then
$\mu_j\mathbf{e}_j=\mu_{j'}\mathbf{e}_{j'}$\,, impossible for $j\not=j'$ with
$\mu_j\,,\mu_{j'}>0$; so $\theta\not=\theta'$. Subtracting,
$(\theta-\theta')h^i=\mu_j\mathbf{e}_j-\mu_{j'}\mathbf{e}_{j'}$\,. As $m\geq3$
there is an index $l\notin\{j,j'\}$, and the $l$-th coordinate of the right side
vanishes, so $h^i_l=0$. But then the $l$-th coordinate of
$\bar{x}_\beta+\theta h^i=\mu_j\mathbf{e}_j$ (note $l\not=j$) gives
$\bar{x}_{\beta_l}=-\theta h^i_l=0$, contradicting $\bar{x}_\beta>\mathbf{0}$.\Halmos
\end{proof}

The interiority hypothesis of Lemma~\ref{lem:bijection} is itself automatic as
soon as $H$ has no zero entry, which is the generic situation.

\begin{lemma}\label{lem:interiorforced}
Let $m\geq3$ and suppose that $H$ has no zero entry. If $b\in\mathcal{B}_\beta$
and $\mathcal{K}_\beta(b)=\mathcal{B}_\beta$\,, then $\bar{x}_\beta>\mathbf{0}$.
\end{lemma}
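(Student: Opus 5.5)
The plan is to combine Lemma~\ref{lem:converse} with Lemma~\ref{lem:transfer}, and then use the hypothesis $m\geq3$ to rule out a zero coordinate of $\bar{x}_\beta$ by a counting argument.

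First, I check the proviso of Lemma~\ref{lem:converse}. As remarked after that lemma, if $H$ has no zero entry then no column of $A_\beta$ is a coordinate direction: $g^j=c\,\mathbf{e}_i$ would force $h^i=\mathbf{e}_j/c$, which has zero entries. Since $b\in\mathcal{B}_\beta$ gives $\bar{x}_\beta\geq\mathbf{0}$, and $\mathcal{K}_\beta(b)=\mathcal{B}_\beta$ by assumption, Lemma~\ref{lem:converse} applies. For every $j$ it gives an index $i$ and a bound $\theta\in\{L_i\,,U_i\}$ (finite) with $b+\theta\mathbf{e}_i=\mu_j g^j$ for some $\mu_j>0$. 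Here $\mu_j>0$ holds because the generator spans the ray of $g^j$ and so is non-zero. By Lemma~\ref{lem:transfer},
\[
\bar{x}_\beta+\theta h^i=\mu_j\mathbf{e}_j\,,\qquad\text{so}\qquad \bar{x}_{\beta_k}=-\theta h^i_k \quad\text{for every } k\not=j.
\]

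Second, I split on whether $\theta=0$, which gives a dichotomy for each ray $j$. If $\theta\not=0$, then every $h^i_k$ is non-zero, so $\bar{x}_{\beta_k}\not=0$ for all $k\not=j$; combined with $\bar{x}_\beta\geq\mathbf{0}$ this gives $\bar{x}_{\beta_k}>0$ for all $k\not=j$. If $\theta=0$, then $\bar{x}_\beta=\mu_j\mathbf{e}_j$ with $\mu_j>0$. So for each $j$, either $\bar{x}_\beta$ is positive off coordinate $j$, or $\bar{x}_\beta$ is a positive multiple of $\mathbf{e}_j$.

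Third, I argue by contradiction. Suppose $\bar{x}_{\beta_l}=0$ for some $l$. For any $j\not=l$, the first alternative fails at $k=l$, so $\bar{x}_\beta=\mu_j\mathbf{e}_j$. Since $m\geq3$, there are two distinct indices $j,j'\not=l$. This gives $\mu_j\mathbf{e}_j=\bar{x}_\beta=\mu_{j'}\mathbf{e}_{j'}$ with $\mu_j,\mu_{j'}>0$, which is impossible. This also covers $b=\mathbf{0}$. Hence $\bar{x}_\beta>\mathbf{0}$.

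The only delicate step is the degenerate possibility $\theta=0$, where the finite-bound point is $b$ itself sitting on an extreme ray. The no-zero-entry hypothesis does not exclude this directly. It is handled by the fact that $b$ can lie on at most one extreme ray, while at least two rays avoid the index $l$ once $m\geq3$. This is exactly why the statement needs $m\geq3$; at $m=2$ a boundary $b$ can indeed work (compare Remark~\ref{rem:m2boundary}).
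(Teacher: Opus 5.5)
Your proof is correct and is essentially the paper's argument: a zero coordinate $\bar{x}_{\beta_l}=0$ forces $\theta=0$ for the bound hitting any ray $j\not=l$ (because $h^i_l\not=0$), hence $\bar{x}_\beta=\mu_j\mathbf{e}_j$, and two such rays give incompatible conclusions. The only difference is that you pick both rays $j,j'\not=l$ at once, whereas the paper picks $j$ first and then $j'$ together with an auxiliary index $l'\notin\{j,j'\}$.

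One small correction to your closing aside. The boundary witnesses in Remark~\ref{rem:m2boundary} come from bases with a zero entry (so $H$ has one too), which is outside this lemma's hypothesis. The $m=2$ examples showing that $m\geq3$ is needed come instead from Theorem~\ref{thm:one}, e.g.\ a mixed-sign $h^k$ with $b\not=\mathbf{0}$ on an extreme ray.
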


\begin{proof}{Proof.}
Suppose that $\bar{x}_{\beta_l}=0$ for some $l$. Choose any ray $j\not=l$; by
Lemma~\ref{lem:converse} (applicable, as $H$ has no zero entry) it is hit, say for index $i$, so
$\bar{x}_\beta+\theta_i h^i=\mu_j\mathbf{e}_j$ with $\mu_j>0$ by
Lemma~\ref{lem:transfer}. Its $l$-th coordinate (note $l\not=j$) reads
$\theta_ih^i_l=0$, and $h^i_l\not=0$, so $\theta_i=0$ and hence
$\bar{x}_\beta=\mu_j\mathbf{e}_j$\,. Now choose a ray $j'\not=j$ and, as
$m\geq3$, an index $l'\notin\{j,j'\}$; then $\bar{x}_{\beta_{l'}}=0$, and the
same argument applied to $j'$ gives $\bar{x}_\beta=\mu_{j'}\mathbf{e}_{j'}$\,,
which is incompatible with $\bar{x}_\beta=\mu_j\mathbf{e}_j$ and
$\mu_j\,,\mu_{j'}>0$. So no coordinate of $\bar{x}_\beta$ vanishes.\Halmos
\end{proof}

The lemmas combine into a characterization of tightness for interior
right-hand sides, in every dimension $m\geq3$. Here and in \S\ref{sec:m3}, when we
look for a witness we treat $\bar{x}_\beta$ as the unknown and recover the
right-hand side as $b:=A_\beta\bar{x}_\beta$\,, so that $\bar{x}_\beta=A_\beta^{-1}b$
as always.

\begin{theorem}\label{thm:generalchar}
Let $m\geq3$, and suppose that no column of $A_\beta$ has exactly one non-zero entry.
There exists $b$ with $\bar{x}_\beta>\mathbf{0}$ and
$\mathcal{K}_\beta(b)=\mathcal{B}_\beta$ if and only if there are a permutation
$\tau$ of $\{1,\ldots,m\}$, a vector $\bar{x}_\beta\in\mathbb{R}^m$ with
$\bar{x}_\beta>\mathbf{0}$, and real numbers $\theta_1\,,\ldots,\theta_m$ such that, for
every $j$,
\begin{equation}\label{eq:generalchar}
\bar{x}_{\beta_k}+\theta_{\tau(j)}h^{\tau(j)}_k=0\quad(k\not=j)
\qquad\text{and}\qquad
\mu_j:=\bar{x}_{\beta_j}+\theta_{\tau(j)}h^{\tau(j)}_j>0.
\end{equation}
Equivalently, $h^{\tau(j)}=(\mu_j\mathbf{e}_j-\bar{x}_\beta)/\theta_{\tau(j)}$ for every
$j$: up to the order of its columns and non-zero scalings of them, $H$ is the matrix
with columns $\mu_j\mathbf{e}_j-\bar{x}_\beta$ $(j=1,\ldots,m)$, for some
$\bar{x}_\beta>\mathbf{0}$ and $\mu>\mathbf{0}$. In that case $b:=A_\beta\bar{x}_\beta$ works,
ray $j$ is hit by moving $b$ along $\mathbf{e}_{\tau(j)}$\,, and, for each $\tau$, the
vector $\bar{x}_\beta$ is unique up to positive scaling.
\end{theorem}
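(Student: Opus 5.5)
The plan is to prove the two directions separately, assembling the lemmas of this section, and then to read off the reformulation and the uniqueness claim directly from \eqref{eq:generalchar}.

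For ($\Rightarrow$), take $b$ with $\bar{x}_\beta>\mathbf{0}$ and $\mathcal{K}_\beta(b)=\mathcal{B}_\beta$. The hypothesis on the columns of $A_\beta$ says that no extreme ray is a coordinate direction. So Lemma~\ref{lem:converse} gives, for each ray $j$, an index $\tau(j)$ and a finite bound $\theta_{\tau(j)}\in\{L_{\tau(j)},U_{\tau(j)}\}$ with $b+\theta_{\tau(j)}\mathbf{e}_{\tau(j)}=\mu_j g^j$, where $\mu_j>0$ because the point is a non-zero point on the ray. Lemma~\ref{lem:bijection} (using $m\geq3$ and interiority) makes $\tau$ a permutation. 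Since $\tau$ is injective, each index $i=\tau(j)$ carries a single $\theta_i$, so the notation $\theta_1,\ldots,\theta_m$ is consistent. Lemma~\ref{lem:transfer} then converts each relation into $\bar{x}_\beta+\theta_{\tau(j)}h^{\tau(j)}=\mu_j\mathbf{e}_j$, which is exactly \eqref{eq:generalchar} read coordinatewise.

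For ($\Leftarrow$), start from $\tau$, $\bar{x}_\beta>\mathbf{0}$ and the $\theta_i$ satisfying \eqref{eq:generalchar}, and set $b:=A_\beta\bar{x}_\beta$. Applying $A_\beta$ to $\bar{x}_\beta+\theta_{\tau(j)}h^{\tau(j)}=\mu_j\mathbf{e}_j$ gives $b+\theta_{\tau(j)}\mathbf{e}_{\tau(j)}=\mu_jg^j$. Each $\theta_{\tau(j)}$ is non-zero: otherwise $\bar{x}_\beta=\mu_j\mathbf{e}_j$, which would contradict $\bar{x}_\beta>\mathbf{0}$ since $m\geq3$. Since $m\geq2$, there is also some $k\neq j$ with $\bar{x}_{\beta_k}>0$. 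Lemma~\ref{lem:boundattained} therefore identifies $\theta_{\tau(j)}$ with $L_{\tau(j)}$ or $U_{\tau(j)}$ and shows that bound is finite. Hence each $\mu_jg^j$ is a non-zero generator of $\mathcal{K}_\beta(b)$ lying on ray $j$, and Lemma~\ref{lemma:main} yields $\mathcal{K}_\beta(b)=\mathcal{B}_\beta$. This also establishes the claims that this $b$ works and that ray $j$ is hit along $\mathbf{e}_{\tau(j)}$.

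For the equivalent form, solve \eqref{eq:generalchar} for $h^{\tau(j)}=(\mu_j\mathbf{e}_j-\bar{x}_\beta)/\theta_{\tau(j)}$, which is legitimate since $\theta_{\tau(j)}\neq0$. Thus the columns of $H$, permuted by $\tau$ and rescaled by non-zero scalars, are the vectors $\mu_j\mathbf{e}_j-\bar{x}_\beta$. Conversely, any such representation gives \eqref{eq:generalchar} with $\theta_{\tau(j)}$ equal to the reciprocal of the scale factor.

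The step needing most care is uniqueness of $\bar{x}_\beta$ up to positive scaling for fixed $\tau$. I would argue it as follows: fix $j$, and note that for $k\neq j$ the entries $\bar{x}_{\beta_k}$ are proportional to $-h^{\tau(j)}_k$. Since $m\geq3$, any two indices $k,k'$ lie together in some set $\{1,\ldots,m\}\setminus\{j\}$, so every ratio $\bar{x}_{\beta_k}/\bar{x}_{\beta_{k'}}$ is determined by $H$ and $\tau$. Hence $\bar{x}_\beta$ is fixed up to a positive scalar, and positivity of the scalar follows from $\bar{x}_\beta>\mathbf{0}$.
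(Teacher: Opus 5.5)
Your proposal is correct and follows essentially the paper's own route. Necessity uses Lemmas~\ref{lem:converse}, \ref{lem:bijection} and~\ref{lem:transfer}; sufficiency applies $A_\beta$ and then Lemmas~\ref{lem:boundattained} and~\ref{lemma:main}; the equivalent form comes from solving \eqref{eq:generalchar} for $h^{\tau(j)}$. For uniqueness you fix each ratio $\bar{x}_{\beta_k}/\bar{x}_{\beta_{k'}}$ via a ray $j\notin\{k,k'\}$, which is a slightly more symmetric version of the paper's use of rays $1$ and $2$.
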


\begin{proof}{Proof.}
($\Leftarrow$) Set $b:=A_\beta\bar{x}_\beta$\,, so that $A_\beta^{-1}b=\bar{x}_\beta>\mathbf{0}$.
For each $j$, \eqref{eq:generalchar} says that
$\bar{x}_\beta+\theta_{\tau(j)}h^{\tau(j)}=\mu_j\mathbf{e}_j$\,, and
$\theta_{\tau(j)}\not=0$ because $\bar{x}_{\beta_k}>0$ for $k\not=j$. Applying $A_\beta$
gives $b+\theta_{\tau(j)}\mathbf{e}_{\tau(j)}=\mu_jg^j$ with $\mu_j>0$, and by
Lemma~\ref{lem:boundattained}, $\theta_{\tau(j)}$ is the finite bound
$L_{\tau(j)}$ or $U_{\tau(j)}$\,. So every extreme ray of $\mathcal{B}_\beta$ carries
a finite-bound point, and $\mathcal{K}_\beta(b)=\mathcal{B}_\beta$ by
Lemma~\ref{lemma:main}.

($\Rightarrow$) By Lemma~\ref{lem:converse} every ray $j$ is hit by a finite-bound
point, and by Lemma~\ref{lem:bijection} the assignment $\tau$ is a permutation. By
Lemma~\ref{lem:transfer},
$\bar{x}_\beta+\theta_{\tau(j)}h^{\tau(j)}=\mu_j\mathbf{e}_j$ with $\mu_j>0$, which
is \eqref{eq:generalchar}.

The equivalent form is \eqref{eq:generalchar} solved for $h^{\tau(j)}$. For
uniqueness, fix $\tau$. The conditions for ray $1$ make $(\bar{x}_{\beta_k})_{k\not=1}$
proportional to $(h^{\tau(1)}_k)_{k\not=1}$\,, and those for ray $2$ fix
$\bar{x}_{\beta_1}/\bar{x}_{\beta_3}=h^{\tau(2)}_1/h^{\tau(2)}_3$ (here $m\geq3$ is used); together
they determine $\bar{x}_\beta$ up to scaling.\Halmos
\end{proof}

If $H$ has no zero entry, Lemma~\ref{lem:interiorforced} makes the interiority
hypothesis automatic, and Theorem~\ref{thm:generalchar} then describes every
$b\in\mathcal{B}_\beta$ with $\mathcal{K}_\beta(b)=\mathcal{B}_\beta$\,.

Theorem~\ref{thm:generalchar} is also an efficient test. As the uniqueness argument
shows, the candidate $\bar{x}_\beta$ is determined by $\tau(1)$ and $\tau(2)$ alone, so at
most $m(m-1)$ candidate directions arise, rather than one for each of the $m!$
permutations. For each, checking that $\bar{x}_\beta>\mathbf{0}$ and that every extreme
ray is hit --- that is, that each ray $j$ receives a finite-bound point
$\bar{x}_\beta+\theta h^i$ with a single non-zero entry, in position $j$ --- takes
$O(m^2)$ operations given $H$. So, once $H$ is known, whether some interior $b$
achieves equality is decided in $O(m^4)$ operations.

The equivalent form also measures how special tightness becomes as $m$ grows. The
bases admitting an interior witness are those whose inverses have the stated
columns, which depend on $3m-1$ parameters ($\bar{x}_\beta$ up to scaling, together with
$\mu$ and $\theta$) and the discrete choice of $\tau$, against the $m^2$ entries of
$H$. So one expects them to form a family of codimension $m^2-3m+1$: one at $m=3$,
where it is the single equation (ii) of Corollary~\ref{thm:m3char}, five at $m=4$,
and eleven at $m=5$. Tightness nevertheless occurs in every dimension:
Example~\ref{ex:two} below gives, for each $m\geq2$, a basis and an interior $b$
with $\mathcal{K}_\beta(b)=\mathcal{B}_\beta$\,.

None of the proofs above uses a particular value of $m$ beyond $m\geq3$ where
stated, and we will draw on them at that generality: Lemmas~\ref{lem:bijection}
and~\ref{lem:interiorforced} and Theorem~\ref{thm:generalchar} are used in
\S\ref{sec:general_m}, in \S\ref{sec:m3} and again in \S\ref{subsec:tridiag}.

\section{General dimension $m$}\label{sec:general_m}

Theorem~\ref{thm:exist} does not extend to $m\geq3$.
Theorem~\ref{thm:nonexist} exhibits, in every dimension $m\geq3$, cones for which
no $b$ works; Lemma~\ref{lem:zero} settles $b=\mathbf{0}$; and
Theorem~\ref{thm:notequal} demonstrates that, for every cone that is not an orthant,
equality fails for almost every $b$. For rational cones, Theorem~\ref{thm:rational}
gives a sufficient condition for failure stated in terms of $b$ and the bounds
$L_i\,,U_i$ alone. Example~\ref{ex:two} demonstrates the other side: at $b=\mathbf{0}$
the cone rule can capture only about a $2^{-(m-1)}$ fraction of
$\mathcal{B}_\beta$\,, yet an arbitrarily small move of $b$ restores equality.

\begin{theorem}\label{thm:nonexist}
For every $m\geq 3$, there exists a
(pointed simplicial) cone
$\mathcal{B}_\beta$\,, so that
$\mathcal{K}_\beta(b)\not=\mathcal{B}_\beta$\,,
for all $b\in\mathcal{B}_\beta$\,.
\end{theorem}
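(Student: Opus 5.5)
We need to exhibit, for each $m\geq3$, a cone for which no $b\in\mathcal{B}_\beta$ gives equality. The plan is to choose $A_\beta$ so that $H=A_\beta^{-1}$ has no zero entry. Then no column of $A_\beta$ has exactly one non-zero entry, since such a column $g^j=c\,\mathbf{e}_i$ would force $h^i=\mathbf{e}_j/c$ to have zeros. With that choice, the transfer lemmas of \S\ref{sec:transfer} apply to every $b$.

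The argument proceeds in three steps.

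\textbf{Step 1 (reduce to interior $b$).} If $\mathcal{K}_\beta(b)=\mathcal{B}_\beta$ for some $b\in\mathcal{B}_\beta$, then Lemma~\ref{lem:interiorforced} gives $\bar{x}_\beta>\mathbf{0}$. So we only need to rule out interior right-hand sides.

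\textbf{Step 2 (use the characterization).} By Theorem~\ref{thm:generalchar}, equality at an interior $b$ requires the following structure. Up to column order and non-zero column scalings, $H$ must have columns $\mu_j\mathbf{e}_j-\bar{x}_\beta$ with $\bar{x}_\beta>\mathbf{0}$ and $\mu>\mathbf{0}$. So it suffices to pick $H$ with no zero entry whose columns cannot be rescaled into that form.

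\textbf{Step 3 (the construction).} Take $H$ with all entries strictly positive and invertible, for instance $H=I+\mathbf{e}\mathbf{e}^\top$, and set $A_\beta:=H^{-1}$. Suppose column $h^{\tau(j)}$ equals $(\mu_j\mathbf{e}_j-\bar{x}_\beta)/\theta_{\tau(j)}$.
\begin{itemize}
\item Since $m\geq3$, each such column has at least $m-1\geq2$ entries of the form $-\bar{x}_{\beta_k}/\theta$. All of these must have the common sign of $-1/\theta$.
\item Positivity of $h^{\tau(j)}$ therefore forces $\theta_{\tau(j)}<0$.
\item The $j$-th entry is $(\mu_j-\bar{x}_{\beta_j})/\theta_{\tau(j)}$. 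It must also be positive, which requires $\mu_j<\bar{x}_{\beta_j}$.
\end{itemize}
That alone is not yet a contradiction, so we need more. Work with the direction of $\bar{x}_\beta$. For each $j$, the off-diagonal entries $(h^{\tau(j)}_k)_{k\not=j}$ must be proportional to $(\bar{x}_{\beta_k})_{k\not=j}$. For $H=I+\mathbf{e}\mathbf{e}^\top$:
\begin{itemize}
\item A column $h^i$ has entries $2$ in position $i$ and $1$ elsewhere.
\item If $\tau(j)=j$, the off-diagonal part is constant. This forces all $\bar{x}_{\beta_k}$ with $k\not=j$ to be equal.
\item If $\tau(j)\not=j$, proportionality forces $\bar{x}_{\beta_{\tau(j)}}=2\bar{x}_{\beta_k}$ for the other $k\not=j$.
\end{itemize}
Combining these constraints over all $j$ is where the contradiction must come from, and it needs care.

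The main obstacle is that the constraints might be mutually consistent. For $\tau=\mathrm{id}$, the vector $\bar{x}_\beta=\mathbf{e}$ satisfies all the proportionality conditions. Then $\theta_j=-1$ and $\mu_j=\bar{x}_{\beta_j}+\theta_j\cdot 2=1-2<0$, so $\mu_j>0$ fails for $\tau=\mathrm{id}$. For non-identity $\tau$, some $j$ has $\tau(j)\not=j$, which forces an entry of $\bar{x}_\beta$ to be twice the others. Some other $j'$ then forces equality among coordinates including that doubled one, giving a contradiction (this uses $m\geq3$).

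The plan is to verify this case analysis cleanly. If it becomes messy, a fallback is to choose $H$ positive with pairwise non-proportional off-diagonal patterns, e.g.\ generic positive entries with $m\geq3$. The proportionality conditions for rays $1,2,3$ then overdetermine $\bar{x}_\beta$, matching the codimension count $m^2-3m+1\geq1$, so a generic positive $H$ violates them for all $m(m-1)$ candidate pairs $(\tau(1),\tau(2))$ simultaneously.
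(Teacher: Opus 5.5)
Your reduction is sound, and the construction itself works. Since $H=I+\mathbf{e}\mathbf{e}^\top$ has no zero entry, Lemma~\ref{lem:interiorforced} and Theorem~\ref{thm:generalchar} apply, and what remains is to exclude every $\tau$. The gap is in that last step. It carries the whole proof, you leave it unverified, and the reason you sketch for non-identity $\tau$ is false at $m=3$.

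Take $\tau(1)=2$, $\tau(2)=1$, $\tau(3)=3$. Rays $1$ and $2$ require $\bar{x}_{\beta_2}=2\bar{x}_{\beta_3}$ and $\bar{x}_{\beta_1}=2\bar{x}_{\beta_3}$, and ray $3$ requires $\bar{x}_{\beta_1}=\bar{x}_{\beta_2}$. The equality ``including the doubled coordinate'' is between two doubled coordinates, so nothing contradicts. All proportionality conditions hold for $\bar{x}_\beta=(2,2,1)^\top$, with $\theta_2=\theta_1=-1$, $\theta_3=-2$ and $\mu_1=\mu_2=1$. This $\tau$ fails only on the sign condition: $\mu_3=\bar{x}_{\beta_3}+\theta_3h^3_3=1-4=-3<0$. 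Indeed the only finite bound $L_3=-\tfrac12$ stops on the facet $\bar{x}_{\beta_3}=0$. Your fallback does not close the gap either. The codimension count after Theorem~\ref{thm:generalchar} is only a heuristic (``one expects''), and you give no argument that the relevant equations are non-trivial for each $\tau$.

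The repair is short. Put $c_j:=-\theta_{\tau(j)}>0$. If $\tau(j)=j$, ray $j$ requires $\bar{x}_{\beta_k}=c_j$ for all $k\not=j$, and $\mu_j=\bar{x}_{\beta_j}-2c_j$. If $\tau(j)=i\not=j$, it requires $\bar{x}_{\beta_i}=2c_j$ and $\bar{x}_{\beta_k}=c_j$ for $k\notin\{i,j\}$, and $\mu_j=\bar{x}_{\beta_j}-c_j$. There are three cases.
(a) If $\tau=\mathrm{id}$, then as $m\geq3$ all coordinates are equal, so $\mu_j=-c_j<0$.
(b) Suppose $\tau$ has a cycle $j\mapsto i\mapsto i'$ with $i'\not=j$. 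For $m\geq4$, choose $k\notin\{i,i',j\}$; rays $j$ and $i$ give $c_j=\bar{x}_{\beta_k}=c_i$ and $c_j=\bar{x}_{\beta_{i'}}=2c_i$, a contradiction. For $m=3$, going around the $3$-cycle gives $\bar{x}_{\beta_i}=8\bar{x}_{\beta_i}$.
(c) Suppose $\tau$ swaps $i$ and $j$. Then $\bar{x}_{\beta_i}=\bar{x}_{\beta_j}=2c$ and $\bar{x}_{\beta_k}=c$ for $k\notin\{i,j\}$. Take $l\notin\{i,j\}$. If $l$ is a fixed point, ray $l$ needs all other coordinates equal; this is impossible if $m\geq4$, and gives $\mu_l=c-4c<0$ if $m=3$. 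If instead $\tau(l)=l'\not=l$, then $l'\notin\{i,j\}$ and ray $l$ forces $\bar{x}_{\beta_{l'}}=2\bar{x}_{\beta_i}=4c\not=c$.
Every non-identity $\tau$ falls under (b) or (c).

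Completed this way, your route is a genuine alternative to the paper's. The paper uses a transposed Vandermonde matrix with algebraically independent nodes. There the proportionality conditions alone fail for every $\tau$ (three ratio identities force $t_ut_w=t_v^2$), so $\mu_j>0$ is never needed; but the example is transcendental, and one must check that $H$ has no zero entry. Yours is explicit and rational, with $A_\beta$ a positive multiple of $(m+1)I-\mathbf{e}\mathbf{e}^\top$. The price is that you genuinely need the sign conditions $\mu_j>0$, which cannot be avoided for $\tau=\mathrm{id}$ or, at $m=3$, for transpositions.
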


\begin{proof}{Proof.}
We define $\mathcal{B}_\beta$ via its
generators $g^1,g^2,\ldots,g^m$, as follows.
We choose algebraically independent
real numbers
$0<t_1<t_2<\cdots<t_m$\,.
Then we let
\[
g^k := (t_k^0=1,t_k^1,\ldots,t_k^{m-1})^\top,
\]
for $k=1,\ldots,m$.
The matrix $[g^1|g^2|\cdots| g^m]$ is
a (transposed) Vandermonde matrix, hence invertible, and so
$\mathcal{B}_\beta$ is well defined.
Ratios of components of $g^k$ are powers of
$t_k$ with integer exponents (in
$\{\pm 1,\pm 2,\ldots,\pm (m-1)\}$).

Moreover $H:=A_\beta^{-1}$ has no zero entry: each entry of $H$ is a cofactor of
a Vandermonde matrix divided by its determinant, and those cofactors are
non-zero polynomials in $t_1\,,\ldots,t_m$\,, so none can vanish at algebraically
independent arguments.

Now, suppose that for some $b\in \mathcal{B}_\beta$\, we have
$\mathcal{K}_\beta(b)=\mathcal{B}_\beta$\,.
By Lemma~\ref{lem:interiorforced}, $\bar{x}_\beta>\mathbf{0}$, and then by
Lemma~\ref{lem:bijection} the ray-to-index assignment supplied by
Lemma~\ref{lem:converse} (applicable, as $H$ has no zero entry) is a permutation
$\tau$ of $\{1,\ldots,m\}$: for each ray $j$ we have
$b+\theta_{\tau(j)}\mathbf{e}_{\tau(j)}=\mu_j g^j$ for some $\theta_{\tau(j)}\in\mathbb{R}$
and $\mu_j>0$.
Now, we can see that
$b$ and $\mu_jg^j$ are identical on
all components not indexed by $\tau(j)$, that is on
$m-1(\geq 2)$ components. If we consider the
sets of pairwise ratios between these
components, varying $j$, we will see a problem.
Just consider the rays $u,v,w$ hit along $\mathbf{e}_1\,,\mathbf{e}_2\,,\mathbf{e}_3$\,, that is,
$\tau(u)=1$, $\tau(v)=2$, $\tau(w)=3$. From ray $u$,
we can see that $b_2/b_3=
g^{u}_2/g^{u}_3$\,.
From ray $v$,
we can see that $b_3/b_1=
g^{v}_3/g^{v}_1$\,.
Multiplying, we can see that
$b_2/b_1= g^{u}_2/g^{u}_3 \times
g^{v}_3/g^{v}_1
$. But from ray $w$, we have
$b_2/b_1=g^{w}_2/g^{w}_1$\,.
So we conclude that
$g^{w}_2/g^{w}_1
= g^{u}_2/g^{u}_3 \times
g^{v}_3/g^{v}_1
$, which is a contradiction,
because ratios of $g^k$ are powers of $t_k$ with non-zero integer exponents, and the above equality yields a non-trivial polynomial relation with rational coefficients among distinct algebraically independent numbers.
Therefore $\mathcal{K}_\beta(b)\neq \mathcal{B}_{\beta}$\,.\Halmos
\end{proof}

\begin{lemma}\label{lem:zero}
If $b=\mathbf{0}$, then $\mathcal{K}_\beta(b)=\mathcal{B}_{\beta}$
if and only if $\mathcal{B}_\beta$ is an orthant.
\end{lemma}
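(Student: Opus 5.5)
The plan is to compute $\mathcal{K}_\beta(\mathbf{0})$ explicitly and compare it with $\mathcal{B}_\beta$. When $b=\mathbf{0}$ we have $\bar{x}_\beta=\mathbf{0}$. Every candidate ratio $-\bar{x}_{\beta_k}/h^i_k$ defining $L_i$ or $U_i$ is therefore $0$, so $L_i\in\{0,-\infty\}$ and $U_i\in\{0,+\infty\}$, as in the proof of Theorem~\ref{thm:one}. More precisely:
\begin{itemize}
\item $L_i=-\infty$ exactly when $h^i\leq\mathbf{0}$;
\item $U_i=+\infty$ exactly when $h^i\geq\mathbf{0}$;
\item otherwise the bound is $0$, and its finite-bound generator $b+0\cdot\mathbf{e}_i=\mathbf{0}$ is trivial.
\end{itemize}
Hence $\mathcal{K}_\beta(\mathbf{0})=\cone(\{\mathbf{e}_i: h^i\geq\mathbf{0}\}\cup\{-\mathbf{e}_i: h^i\leq\mathbf{0}\})$. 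Equivalently, it is the cone generated by those signed unit vectors $\pm\mathbf{e}_i$ that lie in $\mathcal{B}_\beta$, since $H(\pm\mathbf{e}_i)=\pm h^i$. Because $h^i\neq\mathbf{0}$, at most one sign occurs for each $i$, so $\mathcal{K}_\beta(\mathbf{0})$ is generated by at most $m$ vectors $\varepsilon_i\mathbf{e}_i$ with distinct indices $i$.

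For ($\Leftarrow$) I will simply cite the discussion preceding Theorem~\ref{thm:one}. If $\mathcal{B}_\beta=\cone(\{\varepsilon_1\mathbf{e}_1,\ldots,\varepsilon_m\mathbf{e}_m\})$, then each $h^i$ has a single non-zero entry, of sign $\varepsilon_i$. The bare generator $\varepsilon_i\mathbf{e}_i$ is then present for every $i$ regardless of $b$, so $\mathcal{K}_\beta(\mathbf{0})\supset\mathcal{B}_\beta$, and the reverse inclusion is automatic.

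For ($\Rightarrow$), suppose $\mathcal{K}_\beta(\mathbf{0})=\mathcal{B}_\beta$. Then $\mathcal{K}_\beta(\mathbf{0})$ must be full-dimensional, so it needs $m$ linearly independent generators. From the description above, there must be exactly one generator $\varepsilon_i\mathbf{e}_i$ for every $i=1,\ldots,m$. So $\mathcal{B}_\beta=\mathcal{K}_\beta(\mathbf{0})=\cone(\{\varepsilon_1\mathbf{e}_1,\ldots,\varepsilon_m\mathbf{e}_m\})$, which is an orthant. Alternatively, one could argue through extreme rays, as in Lemma~\ref{lem:converse}: each extreme ray of $\mathcal{B}_\beta$ must be the ray of some non-trivial generator, and all such generators are coordinate directions. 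The dimension count is shorter, though.

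The only delicate point is the bookkeeping for zero bounds. I need to confirm that a bound equal to $0$ (read as $0^\pm$) contributes exactly the zero vector as a generator of $\mathcal{K}_\beta(\mathbf{0})$, and not some recession direction. This holds because the closure of the conical hull of $\{\mathbf{0}\}+\Delta_{100\%}$ only gains the bare $\pm\mathbf{e}_i$ terms from infinite bounds. I will note this explicitly, referring to the generator list in the definition of $\mathcal{K}_\beta(b)$. Beyond that, the argument is routine.
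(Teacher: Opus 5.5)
Your proposal is correct and follows the paper's proof essentially step for step. Like the paper, you show that $\mathcal{K}_\beta(\mathbf{0})$ is the cone on the signed unit vectors $\varepsilon_i\mathbf{e}_i$ for those $i$ with $h^i$ of uniform sign, prove ($\Rightarrow$) by a dimension count, and prove ($\Leftarrow$) from the single-non-zero-entry structure of the columns of $H$ when $\mathcal{B}_\beta$ is an orthant. Your extra remark that these are exactly the $\pm\mathbf{e}_i$ lying in $\mathcal{B}_\beta$ is a correct reformulation, though not needed.
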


\begin{proof}{Proof.}
For each $i=1,\ldots,m$, because $\bar{x}_\beta=\mathbf{0}$ when $b=\mathbf{0}$, every candidate value $-\bar{x}_{\beta_k}/h^i_k$ defining $L_i$ or $U_i$ equals $0$. Hence: if $h^i$ has uniform signs, exactly one of $L_i\,,U_i$ is infinite (the other equal to $0$), so index $i$ contributes the single generator $\pm\mathbf{e}_i$ (matching the sign of $h^i$) to $\mathcal{K}_\beta(\mathbf{0})$; if instead $h^i$ has mixed signs, both $L_i=U_i=0$, and index $i$ contributes nothing. Consequently
\[
\mathcal{K}_\beta(\mathbf{0})=\cone(\{\varepsilon_i\mathbf{e}_i ~:~ i\in S\})
\]
for some subset $S\subset\{1,\ldots,m\}$ and signs $\varepsilon_i\in\{\pm1\}$ ($i\in S$), where $S$ is the set of indices $i$ such that $h^i$ has uniform signs.

($\Rightarrow$) If $\mathcal{K}_\beta(\mathbf{0})=\mathcal{B}_\beta$\,, then because $\mathcal{B}_\beta$ is $m$-dimensional and $|S|\leq m$ coordinate vectors generate $\mathcal{K}_\beta(\mathbf{0})$, we must have $|S|=m$; hence $\mathcal{K}_\beta(\mathbf{0})=
\cone(\{\varepsilon_1\mathbf{e}_1\,,\ldots,\varepsilon_m\mathbf{e}_m\})=\mathcal{B}_\beta$ is an orthant.

($\Leftarrow$) Suppose that $\mathcal{B}_\beta$ is an orthant, so its $m$ generators (the columns of $A_\beta$) are, as rays, exactly $\varepsilon_1\mathbf{e}_1\,,\ldots,\varepsilon_m\mathbf{e}_m$ for some signs $\varepsilon_i$; equivalently, $A_\beta$ has exactly one non-zero entry in each row and column. Then $H=A_\beta^{-1}$ has the same structure, so each column $h^i$ has a single non-zero entry --- hence uniform signs --- and $i\in S$ for every $i$, giving $\mathcal{K}_\beta(\mathbf{0})=
\cone(\{\varepsilon_1\mathbf{e}_1\,,\ldots,\varepsilon_m\mathbf{e}_m\})=\mathcal{B}_\beta$\,.\Halmos
\end{proof}

Lemma~\ref{lem:zero} already yields, for every $\mathcal{B}_\beta$ that is not an
orthant, a right-hand side (namely $b=\mathbf{0}$) with
$\mathcal{K}_\beta(b)\not=\mathcal{B}_\beta$\,. The next result establishes that failure
is the typical behavior, for interior right-hand sides as well.

\begin{theorem}\label{thm:notequal}
Suppose that $m\geq 3$ and that $\mathcal{B}_\beta$ is not an orthant, and let
$g^j$ be a column of $A_\beta$ with at least two non-zero entries. If
$\mathcal{K}_\beta(b)=\mathcal{B}_\beta$\,, then
\[
b\in\bigcup_{i=1}^m \operatorname{span}\{g^j,\mathbf{e}_i\}.
\]
Consequently, the set of $b\in\mathbb{R}^m$ with
$\mathcal{K}_\beta(b)=\mathcal{B}_\beta$ is contained in a finite union of linear
subspaces of dimension $2<m$; in particular it is nowhere dense and has Lebesgue
measure zero, and $\mathcal{K}_\beta(b)\not=\mathcal{B}_\beta$ for almost every $b$
in the interior of $\mathcal{B}_\beta$\,.
\end{theorem}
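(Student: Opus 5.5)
The plan is to argue directly from the extreme-ray structure, in the spirit of Lemma~\ref{lem:converse}, but only for the single column $g^j$. This avoids any interiority hypothesis on $b$.

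First I would check that a column as in the statement exists. The columns of $A_\beta$ are non-zero because $A_\beta$ is non-singular. If every column had exactly one non-zero entry, invertibility would force these entries to lie in distinct rows. Then $\mathcal{B}_\beta=\cone(\{\varepsilon_1\mathbf{e}_1\,,\ldots,\varepsilon_m\mathbf{e}_m\})$ would be an orthant, contrary to hypothesis. So some $g^j$ has at least two non-zero entries, and in particular its ray is not a coordinate direction.

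Now suppose $\mathcal{K}_\beta(b)=\mathcal{B}_\beta$. The cone $\mathcal{K}_\beta(b)$ is generated by finitely many vectors, so it is closed. Each of its extreme rays is spanned by one of its non-zero generators; zero generators such as $b+\theta_i\mathbf{e}_i=\mathbf{0}$ can be discarded without changing the cone. The ray of $g^j$ is an extreme ray of $\mathcal{B}_\beta=\mathcal{K}_\beta(b)$, so $\mu g^j$ equals some non-zero generator for some $\mu>0$.

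That generator cannot be a bare $\pm\mathbf{e}_i$, since $g^j$ has two non-zero entries. Hence it is a finite-bound point: $b+\theta_i\mathbf{e}_i=\mu g^j$ for some $i$ and some finite $\theta_i\in\{L_i,U_i\}$. Rearranging gives $b=\mu g^j-\theta_i\mathbf{e}_i\in\operatorname{span}\{g^j,\mathbf{e}_i\}$, which is the displayed inclusion.

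For the consequences, the set of such $b$ lies in the union of the $m$ subspaces $\operatorname{span}\{g^j,\mathbf{e}_i\}$, each of dimension at most $2$. Since $m\geq3$, each is a proper subspace, hence closed with empty interior and of Lebesgue measure zero. A finite union of such sets is again closed, nowhere dense and null. The interior of $\mathcal{B}_\beta$ is a non-empty open set of positive measure, so equality fails for almost every interior $b$.

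The only delicate step is the extreme-ray argument: that an extreme ray of a finitely generated cone is the ray of one of its generators. I would justify it via the standard fact that such a cone is generated by its extreme rays, together with the extremality of the ray of $g^j$. Everything else is bookkeeping.
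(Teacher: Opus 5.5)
Your proposal is correct and follows essentially the same argument as the paper: the orthant remark gives the column $g^j$, and the extreme ray of $g^j$ must be spanned by a non-bare generator $b+\theta_i\mathbf{e}_i=\mu g^j$ of $\mathcal{K}_\beta(b)$, which places $b$ in $\operatorname{span}\{g^j,\mathbf{e}_i\}$. The consequences then follow from the finite union of proper subspaces. Your extra care in discarding zero generators and in justifying the extreme-ray step is a harmless refinement of what the paper leaves implicit.
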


\begin{proof}{Proof.}
Such a column exists: if every column of the non-singular matrix $A_\beta$ had
exactly one non-zero entry, then $\mathcal{B}_\beta$ would be an orthant.
Suppose that $\mathcal{K}_\beta(b)=\mathcal{B}_\beta$\,. The ray of $g^j$ is an
extreme ray of $\mathcal{B}_\beta=\mathcal{K}_\beta(b)$, and because
$\mathcal{K}_\beta(b)$ is generated by finitely many vectors, it is the ray of
one of those generators. That generator is not a bare $\pm\mathbf{e}_i$\,,
because $g^j$ has two non-zero entries; so it is $b+\theta\mathbf{e}_i$ for some
index $i$ and some finite $\theta\in\{L_i\,,U_i\}$, with
$b+\theta\mathbf{e}_i=\mu g^j$ for some $\mu>0$. Hence
$b=\mu g^j-\theta\mathbf{e}_i\in\operatorname{span}\{g^j,\mathbf{e}_i\}$. Each
of these $m$ subspaces has dimension $2$, as $g^j$ is not parallel to
$\mathbf{e}_i$\,, and $2<m$. A finite union of proper linear subspaces is closed,
nowhere dense and of measure zero, and the interior of $\mathcal{B}_\beta$ is a
non-empty open set, which gives the remaining claims.\Halmos
\end{proof}

Theorem~\ref{thm:notequal} uses no hypothesis on $A_\beta$ beyond
$\mathcal{B}_\beta$ not being an orthant. It is consistent with the positive
results that follow: for each assignment $\tau$ of rays to indices, an interior
witness is unique up to scaling (Theorem~\ref{thm:generalchar}), and the witness $b=\frac{\epsilon}{m-\epsilon}\mathbf{e}$ of
Example~\ref{ex:two} below lies in $\operatorname{span}\{g^i,\mathbf{e}_i\}$ for
every $i$.

\begin{theorem}\label{thm:rational}
Suppose that $m\geq 3$. Let $\mathcal{B}_\beta$
be a (pointed simplicial) rational cone that is not an orthant.
Let $b$ be an interior point of  $\mathcal{B}_\beta$\,, such that every point in
$
\{b+L_i\mathbf{e}_i~:~L_i>-\infty\}\cup\{b+U_i\mathbf{e}_i~:~U_i<+\infty\}
$
is not proportional to a rational vector. Then, $\mathcal{K}_\beta(b)\not=\mathcal{B}_\beta$\,.
\end{theorem}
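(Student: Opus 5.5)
We argue by contradiction, in the same way as the proof of Theorem~\ref{thm:notequal}. Suppose that $\mathcal{K}_\beta(b)=\mathcal{B}_\beta$\,. Because $\mathcal{B}_\beta$ is not an orthant, some column $g^j$ of $A_\beta$ has at least two non-zero entries. Fix such a $j$. Since $\mathcal{B}_\beta$ is rational, we may scale $g^j$ to be a rational (indeed integer) vector. The ray of $g^j$ is an extreme ray of $\mathcal{B}_\beta=\mathcal{K}_\beta(b)$, and $\mathcal{K}_\beta(b)$ is finitely generated. Hence this ray is the ray of one of the generators of $\mathcal{K}_\beta(b)$, namely one of the finite-bound points or one of the bare vectors $\pm\mathbf{e}_i$\,.

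The key step is to exclude both possibilities.
\begin{itemize}
\item[(i)] The generator cannot be a bare $\pm\mathbf{e}_i$\,, because $g^j$ has two non-zero entries and so is not parallel to any coordinate direction.
\item[(ii)] It must therefore be a finite-bound point $b+\theta\mathbf{e}_i$ with $\theta\in\{L_i\,,U_i\}$ finite, satisfying $b+\theta\mathbf{e}_i=\mu g^j$ for some $\mu>0$. This point is non-zero: $\mu>0$, and a point of $\mathcal{K}_\beta(b)$ spanning an extreme ray cannot be $\mathbf{0}$. The equality says it is a positive multiple of the rational vector $g^j$, that is, proportional to a rational vector. This contradicts the hypothesis that no point of $\{b+L_i\mathbf{e}_i\}\cup\{b+U_i\mathbf{e}_i\}$ is proportional to a rational vector.
\end{itemize}
So $\mathcal{K}_\beta(b)\not=\mathcal{B}_\beta$\,.

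Two small points need care. First, ``proportional to a rational vector'' should be read as ``a non-zero real multiple of a rational vector''. The zero vector would trivially be proportional to any vector, but the hypothesis implicitly excludes it, and in any case the point we produce is non-zero. Second, interiority of $b$ ensures $\bar{x}_\beta>\mathbf{0}$, so every finite bound is non-zero and the generators are genuinely distinct from $b$. The argument above does not actually need this. The hypotheses $m\geq3$ and interiority mainly make the statement non-vacuous: for interior $b$ the finite-bound points typically have irrational directions even though $b$ lies in a rational cone.

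I expect no real obstacle; the proof is a direct adaptation of the argument of Theorem~\ref{thm:notequal}. The only delicate step is justifying that each extreme ray of $\mathcal{K}_\beta(b)$ is spanned by one of its listed generators. This is the standard fact that the extreme rays of a finitely generated pointed cone are among the rays of its generators, already used in Lemma~\ref{lem:converse}. It also suffices to use a single non-axis column $g^j$, so no permutation argument is needed.
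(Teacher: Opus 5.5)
Your proof is correct and follows essentially the same argument as the paper: an extreme ray of $\mathcal{B}_\beta$ that is not a coordinate direction (one exists because the cone is not an orthant) must be spanned by a finite-bound generator of $\mathcal{K}_\beta(b)$, which is then proportional to a rational vector, contradicting the hypothesis. Your choice of a specific column $g^j$ with two non-zero entries simply makes explicit the paper's step that the bare vectors $\pm\mathbf{e}_i$ cannot account for every extreme ray.
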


\begin{proof}{Proof.}
 Assume, for contradiction, that $\mathcal{K}_\beta(b)=\mathcal{B}_{\beta}$\,.
 As $\mathcal{K}_\beta(b)=\mathcal{B}_{\beta}$\,, each extreme ray of $\mathcal{B}_\beta$ must be generated by one of the $2m$ vectors appearing in the representation of $\mathcal{K}_\beta(b)$\,. As $\mathcal{B}_{\beta}$ is not an orthant, vectors among $\{\pm \mathbf{e}_i\}_{i=1}^{m}$ cannot generate all its extreme rays. Hence a vector of the form $b+L_i\mathbf{e}_i$ or $b+U_i\mathbf{e}_i$ must generate at least one extreme ray of $\mathcal{B}_{\beta}$\,.
 By hypothesis, no such vector is proportional to a rational vector, whereas every extreme ray of the rational cone $\mathcal{B}_\beta$ is spanned by a rational vector. Thus, no vector of the form $b+L_i\mathbf{e}_i$ or $b+U_i\mathbf{e}_i$ can generate an extreme ray of $\mathcal{B}_{\beta}$\,, a contradiction. Therefore $\mathcal{K}_\beta(b)\neq \mathcal{B}_{\beta}$\,.\Halmos
\end{proof}

Motivated by the two drawings of Figure~\ref{fig:three},
we next see that $\mathcal{K}_\beta(b)$ can be a rather small fraction
of $\mathcal{B}_\beta$\,, but with an arbitrarily small perturbation of $b$,
we can have $\mathcal{K}_\beta(b)= \mathcal{B}_\beta$\,.

\begin{example}\label{ex:two}
For $m\geq 2$, consider the open half-space $\mathcal{H}:=\{y\in\mathbb{R}^m ~:~ \allowbreak \sum_{i=1}^m y_i> 0\}$.
For $0<\epsilon<m$, consider the linearly-independent points $g^{i}:=\mathbf{e}-(m-\epsilon)\mathbf{e}_i\in\mathcal{H}$, for
$i=1,\ldots,m$. It is easy to check that
these points (and hence also their conical hull, which we define to be $\mathcal{B}_\beta$) are arbitrarily close to $\mathcal{H}$
as $\epsilon\to0$. Here $H=A_\beta^{-1}=\bigl(\mathbf{e}\mathbf{e}^\top/\epsilon-I\bigr)/(m-\epsilon)$,
which is entrywise non-negative when $\epsilon\leq1$.
Now, for $b:=0$ and $0<\epsilon\leq1$, $\mathcal{K}_\beta(b)$ is precisely the non-negative orthant
(because $L_i=0$ and $U_i=+\infty$, for $i=1,\ldots,m$),
so we can make the ratio of the measure of $\mathcal{K}_\beta(b)$ to the measure
of $\mathcal{B}_\beta$ arbitrarily close to $1/2^{m-1}$ by choosing $\epsilon$ small.

But notice that this example is very sensitive to the choice of $b$.
If we choose instead $b:=\frac{\epsilon}{m-\epsilon}\mathbf{e}$
(for any $0<\epsilon<m$),
in the positive orthant, which we can make arbitrarily close to the origin
(by choosing $\epsilon$ small), we will have $L_i=-\epsilon$,
and in fact $b+L_i\mathbf{e}_i= \frac{\epsilon}{m-\epsilon}\mathbf{e}
-\epsilon \mathbf{e}_i=
\frac{\epsilon}{m-\epsilon}(\mathbf{e}-(m-\epsilon)\mathbf{e}_i)
=\frac{\epsilon}{m-\epsilon}g^i$;
as $\bar{x}_\beta=H b=\mathbf{e}/(m-\epsilon)>\mathbf{0}$, Lemma~\ref{lem:boundattained}
confirms that this step is indeed $L_i$\,, whatever the signs of $H$. Therefore $b+L_i\mathbf{e}_i$
is on the ray generated by $g^i$, for each generator $g^i$ of
$\mathcal{B}_\beta$\,, and hence $\mathcal{K}_\beta(b)= \mathcal{B}_\beta$\,.
\end{example}

%%%%%%%%%%%%%%%%%%%%%%%

\section{The case of dimension $m=3$}\label{sec:m3}

For $m=2$ and $b\geq\mathbf{0}$, Theorems~\ref{thm:one}, \ref{thm:two}
and~\ref{thm:nonneg}, together with Remark~\ref{rem:m2boundary} for bases with a zero
entry, settle completely when $\mathcal{K}_\beta(b)=\mathcal{B}_\beta$\,. We now do the same
for $m=3$. This is the first dimension in which the question has real content:
moving $b$ along a coordinate direction $\mathbf{e}_i$ changes only one entry of
$\bar{x}_\beta:=A_\beta^{-1}b$, so it generically drives $\bar{x}_\beta$ to the
relative interior of a \emph{facet} of $\mathcal{B}_\beta$ (where a single entry
of $\bar{x}_\beta$ vanishes), not to an \emph{extreme ray} (where $m-1\geq2$
entries must vanish simultaneously). For $m=2$ a facet already is an extreme ray,
which is exactly why Theorem~\ref{thm:exist} holds there; and
\S\ref{sec:general_m} demonstrated that for $m\geq3$ equality can fail for \emph{every}
$b$, and Theorem~\ref{thm:generalchar} says exactly when it holds for interior $b$.
For $m=3$ its conditions can be solved in closed form, which is what this section
does.

Throughout this section $m=3$ and $A_\beta$ is non-singular, and we use the
notation and the lemmas of \S\ref{sec:transfer}.

\subsection{The characterization}

As in \S\ref{sec:transfer}, $\tau$ denotes the assignment of rays to indices: ray
$j$ is hit by moving $b$ along $\mathbf{e}_{\tau(j)}$\,.

\begin{corollary}\label{thm:m3char}
Let $m=3$ with $A_\beta$ non-singular and no column of $A_\beta$ having exactly one
non-zero entry. There exists $b$ with $\bar{x}_\beta>\mathbf{0}$
and $\mathcal{K}_\beta(b)=\mathcal{B}_\beta$ if and only if there is a permutation
$\tau$ of $\{1,2,3\}$ such that
\begin{itemize}
\item[(i)] for each $j$, the two entries $h^{\tau(j)}_k$ $(k\not=j)$ are non-zero
and share a sign;
\item[(ii)] $h^{\tau(1)}_2h^{\tau(2)}_3h^{\tau(3)}_1=h^{\tau(1)}_3h^{\tau(2)}_1h^{\tau(3)}_2$\,;
\item[(iii)] $\mu_j:=\bar{x}_{\beta_j}+\theta_{\tau(j)}h^{\tau(j)}_j>0$ for each $j$,
where $\theta_{\tau(j)}:=-\bar{x}_{\beta_k}/h^{\tau(j)}_k$ for any $k\not=j$ and
\[
\bar{x}_\beta:=\bigl(h^{\tau(2)}_1/h^{\tau(2)}_3\,,\ h^{\tau(1)}_2/h^{\tau(1)}_3\,,\ 1\bigr)^\top.
\]
\end{itemize}
In that case $b:=A_\beta\bar{x}_\beta$ works, and $\bar{x}_\beta$ is unique up to positive
scaling.
\end{corollary}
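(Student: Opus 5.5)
The plan is to specialize Theorem~\ref{thm:generalchar} to $m=3$ and solve its system \eqref{eq:generalchar} explicitly. That theorem already gives the equivalence between existence of an interior witness and existence of $\tau$, $\bar{x}_\beta>\mathbf{0}$, and $\theta$ satisfying \eqref{eq:generalchar}. So it suffices to show that, for a fixed permutation $\tau$ of $\{1,2,3\}$, such $\bar{x}_\beta,\theta$ exist if and only if (i)--(iii) hold. The final claims, that $b:=A_\beta\bar{x}_\beta$ works and that $\bar{x}_\beta$ is unique up to positive scaling, are then inherited directly from Theorem~\ref{thm:generalchar}.

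\emph{Necessity.} Fix $\tau$ and suppose \eqref{eq:generalchar} holds with $\bar{x}_\beta>\mathbf{0}$. For each $j$, the two equations $\bar{x}_{\beta_k}=-\theta_{\tau(j)}h^{\tau(j)}_k$ for $k\neq j$ have positive left sides. This forces $\theta_{\tau(j)}\neq0$, and it forces both $h^{\tau(j)}_k$ to be non-zero with the sign opposite to $\theta_{\tau(j)}$, which is (i). Dividing the two equations gives three ratio identities:
\[
\bar{x}_{\beta_2}/\bar{x}_{\beta_3}=h^{\tau(1)}_2/h^{\tau(1)}_3,\qquad
\bar{x}_{\beta_1}/\bar{x}_{\beta_3}=h^{\tau(2)}_1/h^{\tau(2)}_3,\qquad
\bar{x}_{\beta_1}/\bar{x}_{\beta_2}=h^{\tau(3)}_1/h^{\tau(3)}_2.
\]
Normalizing $\bar{x}_{\beta_3}=1$, the first two identities give exactly the displayed $\bar{x}_\beta$. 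Substituting that $\bar{x}_\beta$ into the third identity and clearing denominators gives (ii). Finally, $\theta_{\tau(j)}$ is recovered as $-\bar{x}_{\beta_k}/h^{\tau(j)}_k$, and the value is the same for either $k\neq j$. The positivity of $\mu_j$ is then (iii).

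\emph{Sufficiency.} Assume (i)--(iii) for some $\tau$, and define $\bar{x}_\beta$ as displayed. By (i) both ratios in $\bar{x}_\beta$ are positive, so $\bar{x}_\beta>\mathbf{0}$. Next, define $\theta_{\tau(j)}$ via one index $k\neq j$, and check that the other equation for $k'\neq j$ also holds. This amounts to the corresponding ratio identity:
\begin{itemize}
\item for $j=1$ and $j=2$ it holds by construction of $\bar{x}_\beta$;
\item for $j=3$ it is precisely (ii).
\end{itemize}
This shows the choice of $k$ in (iii) is immaterial. Together with (iii), \eqref{eq:generalchar} holds, and Theorem~\ref{thm:generalchar} ($\Leftarrow$) yields the witness $b=A_\beta\bar{x}_\beta$.

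The main obstacle is the bookkeeping around the ray $j=3$ and (ii). One must verify carefully that the cyclic product identity (ii) is exactly equivalent to the third ratio identity once $\bar{x}_\beta$ is fixed by rays $1$ and $2$. One must also check that no denominators vanish, which (i) guarantees. Everything else is a direct transcription of Theorem~\ref{thm:generalchar}.
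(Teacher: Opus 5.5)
Your proposal is correct and follows essentially the same route as the paper's proof. Both specialize Theorem~\ref{thm:generalchar} to $m=3$, read off (i) from the signs forced by $\bar{x}_\beta>\mathbf{0}$, obtain the displayed $\bar{x}_\beta$ from the ratio identities for rays $1$ and $2$, identify (ii) with consistency of $\theta_{\tau(3)}$ for ray $3$, and inherit the witness and uniqueness claims from that theorem.
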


\begin{proof}{Proof.}
This is Theorem~\ref{thm:generalchar} at $m=3$, with its conditions solved. If those
conditions hold for $\tau$, then for each $j$ the two entries
$h^{\tau(j)}_k=-\bar{x}_{\beta_k}/\theta_{\tau(j)}$ $(k\not=j)$ are non-zero and share a
sign, which is (i); eliminating $\theta_{\tau(j)}$ gives
$\bar{x}_{\beta_2}/\bar{x}_{\beta_3}=h^{\tau(1)}_2/h^{\tau(1)}_3$\,,
$\bar{x}_{\beta_1}/\bar{x}_{\beta_3}=h^{\tau(2)}_1/h^{\tau(2)}_3$ and
$\bar{x}_{\beta_1}/\bar{x}_{\beta_2}=h^{\tau(3)}_1/h^{\tau(3)}_2$\,; the first two, normalized by
$\bar{x}_{\beta_3}=1$, give the stated $\bar{x}_\beta$\,, the third is then (ii), and $\mu_j>0$ is
(iii). Conversely, by (i) the stated $\bar{x}_\beta$ is well defined and positive, and
$\theta_{\tau(j)}:=-\bar{x}_{\beta_k}/h^{\tau(j)}_k$ does not depend on $k\not=j$ --- for
$j=1,2$ by definition of $\bar{x}_\beta$\,, and for $j=3$ by (ii); so the conditions of
Theorem~\ref{thm:generalchar} hold, with $\mu_j>0$ by (iii). Uniqueness is part of
Theorem~\ref{thm:generalchar}.\Halmos
\end{proof}

Condition (i) fixes the common sign of the two entries $h^{\tau(j)}_k$ $(k\not=j)$,
so the only remaining freedom in the column $h^{\tau(j)}$ is the sign of its
$j$-th entry. This splits the four admissible orthants into two antipodal pairs,
and the split is exactly the one that decides whether (iii) is a real condition.

\begin{corollary}\label{crl:m3orthant}
Assume (i) and (ii) hold for $\tau$, and fix $j\in\{1,2,3\}$.
\begin{itemize}
\item[(a)] If the entry $h^{\tau(j)}_j$ has the \emph{opposite}
sign to $h^{\tau(j)}_k$ $(k\not=j)$, then $\mu_j>0$ holds automatically.
\item[(b)] Otherwise the column $h^{\tau(j)}$ has uniform signs, and $\mu_j>0$ is
equivalent to the magnitude bound
$\bigl|h^{\tau(j)}_j\bigr|<\bar{x}_{\beta_j}/\bigl|\theta_{\tau(j)}\bigr|$\,.
\end{itemize}
In particular, if (a) applies to every $j$, then (iii) holds automatically, so
$\mathcal{K}_\beta(b)=\mathcal{B}_\beta$ for $b=A_\beta\bar{x}_\beta$\,.
\end{corollary}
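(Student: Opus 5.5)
The plan is to compute $\mu_j$ directly from its definition in Corollary~\ref{thm:m3char}(iii) and track signs. Fix $j$ and pick any $k\not=j$. By (i), $h^{\tau(j)}_k\not=0$, and by construction $\bar{x}_\beta>\mathbf{0}$, since the ratios in its definition are positive by (i). Then
\[
\theta_{\tau(j)}=-\bar{x}_{\beta_k}/h^{\tau(j)}_k
\]
is non-zero, and its sign is opposite to the common sign $s_j\in\{\pm1\}$ of the entries $h^{\tau(j)}_k$ $(k\not=j)$. (That this value does not depend on the choice of $k$ is exactly what (ii) and the definition of $\bar{x}_\beta$ guarantee; this was shown in the proof of Corollary~\ref{thm:m3char}, and I will simply quote it.) Hence
\[
\mu_j=\bar{x}_{\beta_j}+\theta_{\tau(j)}h^{\tau(j)}_j=\bar{x}_{\beta_j}-s_j\,|\theta_{\tau(j)}|\,h^{\tau(j)}_j\,,
\]
and the two cases split according to the sign of $h^{\tau(j)}_j$ relative to $s_j$.

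For (a), suppose $h^{\tau(j)}_j$ has sign $-s_j$. Then $-s_j h^{\tau(j)}_j=|h^{\tau(j)}_j|>0$, so $\mu_j=\bar{x}_{\beta_j}+|\theta_{\tau(j)}||h^{\tau(j)}_j|>\bar{x}_{\beta_j}>0$. For (b), the remaining possibility is that $h^{\tau(j)}_j$ is zero or has sign $s_j$. In that case all three entries of $h^{\tau(j)}$ are $\geq0$, or all are $\leq0$, so the column has uniform signs. We then get $\mu_j=\bar{x}_{\beta_j}-|\theta_{\tau(j)}||h^{\tau(j)}_j|$, and dividing by $|\theta_{\tau(j)}|>0$ shows that $\mu_j>0$ is equivalent to the stated magnitude bound. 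When $h^{\tau(j)}_j=0$ both sides hold trivially, so the equivalence still stands.

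For the final assertion: if (a) applies to every $j$, then (iii) of Corollary~\ref{thm:m3char} holds for this $\tau$. Together with the assumed (i) and (ii), Corollary~\ref{thm:m3char} (equivalently the $(\Leftarrow)$ direction of Theorem~\ref{thm:generalchar}, via Lemma~\ref{lem:boundattained} and Lemma~\ref{lemma:main}) then gives $\mathcal{K}_\beta(b)=\mathcal{B}_\beta$ for $b=A_\beta\bar{x}_\beta$. The standing hypothesis of Corollary~\ref{thm:m3char}, that no column of $A_\beta$ is a coordinate direction, is only needed for the converse direction, so it costs nothing here.

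None of these steps is really an obstacle; the argument is sign bookkeeping. The one point needing care is the claim that $\theta_{\tau(j)}$ has sign $-s_j$ independently of the choice of $k$, which rests on (ii) and the positivity of $\bar{x}_\beta$. A second, minor point is the degenerate possibility $h^{\tau(j)}_j=0$, which I will place under (b) as above.
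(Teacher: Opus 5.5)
Your argument is correct and is essentially the paper's: both read off $\operatorname{sign}\theta_{\tau(j)}=-\operatorname{sign}h^{\tau(j)}_k$ from $\bar{x}_{\beta_k}>0$ and then split on the sign of $h^{\tau(j)}_j$, and your explicit handling of $h^{\tau(j)}_j=0$ and of the final assertion via the $(\Leftarrow)$ direction of Theorem~\ref{thm:generalchar} are fine additions. One small correction to your closing comment: the sign of $\theta_{\tau(j)}$ is $-s_j$ for every choice of $k\not=j$ directly from (i) and $\bar{x}_\beta>\mathbf{0}$, so (ii) is needed only to make the value of $\theta_{\tau(j)}$ (hence $\mu_j$) well defined, not for its sign.
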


\begin{proof}{Proof.}
From $\bar{x}_{\beta_k}=-\theta_{\tau(j)}h^{\tau(j)}_k>0$ we get
$\operatorname{sign}\theta_{\tau(j)}=-\operatorname{sign}h^{\tau(j)}_k$\,. In case (a),
$\operatorname{sign}h^{\tau(j)}_j=-\operatorname{sign}h^{\tau(j)}_k=\operatorname{sign}\theta_{\tau(j)}$\,,
so $\theta_{\tau(j)}h^{\tau(j)}_j>0$ and
$\mu_j=\bar{x}_{\beta_j}+\theta_{\tau(j)}h^{\tau(j)}_j>\bar{x}_{\beta_j}>0$. In case (b) the
product is non-positive and $\mu_j>0$ reads
$\bar{x}_{\beta_j}>\bigl|\theta_{\tau(j)}h^{\tau(j)}_j\bigr|$\,.\Halmos
\end{proof}

The two branches have a familiar reading: $h^{\tau(j)}$ has mixed signs in case (a),
so both $L_{\tau(j)}$ and $U_{\tau(j)}$ are finite, while in case (b) it has
uniform signs, so exactly one of them is infinite. Indices with two finite
bounds therefore impose only a sign condition; an index with an infinite bound
imposes a magnitude condition as well.

Corollary~\ref{thm:m3char} excludes bases having a column with a single non-zero
entry. That case is not harder; it is easier, because such a column removes one
ray and one index from the problem at once.

\begin{proposition}\label{prp:m3axis}
Let $m=3$ with $A_\beta$ non-singular, and suppose that some column is a
coordinate direction, $g^j=c\,\mathbf{e}_i$ with $c\not=0$.
\begin{itemize}
\item[(a)] Then $h^i=\mathbf{e}_j/c$. For every $b$, the bare generator
$\operatorname{sign}(c)\,\mathbf{e}_i$ of $\mathcal{K}_\beta(b)$ spans the ray of
$g^j$; and, when $\bar{x}_\beta>\mathbf{0}$, the finite-bound point for index
$i$ is $\sum_{k\not=j}\bar{x}_{\beta_k}g^k$, which lies on no extreme ray.
\item[(b)] If all three columns are coordinate directions, then
$\mathcal{B}_\beta$ is an orthant, and therefore
$\mathcal{K}_\beta(b)=\mathcal{B}_\beta$ holds for every $b\in\mathcal{B}_\beta$\,.
\item[(c)] If exactly two columns are coordinate directions,
$g^j=c\,\mathbf{e}_i$ and $g^{j'}=c'\mathbf{e}_{i'}$\,, and $j''$, $i''$ are the
remaining indices, then there exists $b$ with $\bar{x}_\beta>\mathbf{0}$ and
$\mathcal{K}_\beta(b)=\mathcal{B}_\beta$ if and only if $h^{i''}_j$ and
$h^{i''}_{j'}$ are non-zero and share a sign --- equivalently, if and only if
$g^{j''}_i/c$ and $g^{j''}_{i'}/c'$ are non-zero and share a sign. In that case
every $\bar{x}_\beta>\mathbf{0}$ with
$\bar{x}_{\beta_j}/\bar{x}_{\beta_{j'}}=h^{i''}_j/h^{i''}_{j'}$ and
$\bar{x}_{\beta_{j''}}$ sufficiently large works.
\item[(d)] If exactly one column is a coordinate direction, $g^j=c\,\mathbf{e}_i$\,,
and $\{j',j''\}$, $\{i',i''\}$ are the remaining indices, then there exists $b$
with $\bar{x}_\beta>\mathbf{0}$ and $\mathcal{K}_\beta(b)=\mathcal{B}_\beta$ if and
only if, for one of the two bijections $\tau\colon\{j',j''\}\to\{i',i''\}$,
condition (i) of Corollary~\ref{thm:m3char} holds for $j'$ and $j''$, and condition
(iii) holds for $j'$ and $j''$ with
\[
\bar{x}_{\beta_j}:=1,\qquad
\bar{x}_{\beta_{j''}}:=h^{\tau(j')}_{j''}\big/h^{\tau(j')}_{j}\,,\qquad
\bar{x}_{\beta_{j'}}:=h^{\tau(j'')}_{j'}\big/h^{\tau(j'')}_{j}
\]
(positive by (i)). In that case $b:=A_\beta\bar{x}_\beta$ works, and $\bar{x}_\beta$ is unique
up to positive scaling for that $\tau$.
\end{itemize}
\end{proposition}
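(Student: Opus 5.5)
The plan is to prove (a) directly and then derive (b)--(d) from it, combined with Lemmas~\ref{lemma:main}, \ref{lem:transfer}, \ref{lem:boundattained} and the counting argument behind Lemma~\ref{lem:bijection}.

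For (a), since $A_\beta\mathbf{e}_j=g^j=c\,\mathbf{e}_i$, applying $H$ gives $\mathbf{e}_j=cH\mathbf{e}_i=c\,h^i$, so $h^i=\mathbf{e}_j/c$. This column has uniform signs of sign $\operatorname{sign}(c)$. If $c>0$ then $U_i=+\infty$, and if $c<0$ then $L_i=-\infty$. In either case the bare generator is $\operatorname{sign}(c)\mathbf{e}_i$, a positive multiple of $g^j$, and this holds for every $b$. The finite bound for index $i$ moves $\bar{x}_\beta+\theta\mathbf{e}_j/c$ until its $j$-th coordinate vanishes, so the point reached is $\sum_{k\ne j}\bar{x}_{\beta_k}g^k$. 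When $\bar{x}_\beta>\mathbf{0}$ this point has two positive coordinates in $\bar{x}_\beta$-space, so it lies on no extreme ray.

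Part (b) is immediate: the cone is an orthant, and the orthant discussion preceding Theorem~\ref{thm:one} applies.

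For (c), by (a) the rays $j$ and $j'$ are already covered by bare generators for every $b$. Indices $i$ and $i'$ only reach facet points, so they cannot hit ray $j''$. Hence ray $j''$ must be hit by a finite-bound point along $\mathbf{e}_{i''}$; the extreme-ray argument of Lemma~\ref{lem:converse} shows that ray $j''$ (which is not a coordinate direction) must carry a non-bare generator. By Lemma~\ref{lem:transfer} this means $\bar{x}_\beta+\theta h^{i''}=\mu\mathbf{e}_{j''}$, which forces $\bar{x}_{\beta_j}=-\theta h^{i''}_j$ and $\bar{x}_{\beta_{j'}}=-\theta h^{i''}_{j'}$. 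Positivity then requires these two entries of $h^{i''}$ to be non-zero and of the same sign, fixing the ratio. Conversely, we choose that ratio, and $\mu=\bar{x}_{\beta_{j''}}+\theta h^{i''}_{j''}>0$ holds once $\bar{x}_{\beta_{j''}}$ is large. Lemma~\ref{lem:boundattained} shows $\theta$ is the actual bound, and Lemma~\ref{lemma:main} together with (a) finishes.

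For the equivalence in terms of $g^{j''}$, we read off $h^{i''}$ via cofactors. Since $A_\beta$ has columns $c\mathbf{e}_i$, $c'\mathbf{e}_{i'}$ and $g^{j''}$, solving $A_\beta h^{i''}=\mathbf{e}_{i''}$ gives $h^{i''}_{j''}=1/g^{j''}_{i''}$, $h^{i''}_j=-g^{j''}_i/(c\,g^{j''}_{i''})$ and $h^{i''}_{j'}=-g^{j''}_{i'}/(c'g^{j''}_{i''})$. So sharing a sign is exactly the stated condition.

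For (d), rays $j'$ and $j''$ are not coordinate directions, so each must be hit by a finite-bound point. By (a), index $i$ only reaches a facet point, so the hits come from $i'$ and $i''$. The main obstacle is showing that the assignment is a bijection onto $\{i',i''\}$, that is, ruling out one index hitting both rays. For this I would adapt the proof of Lemma~\ref{lem:bijection}. If $(\theta-\theta')h^{l}=\mu\mathbf{e}_{j'}-\mu'\mathbf{e}_{j''}$, then $h^l_j=0$, and hence $\bar{x}_{\beta_j}=0$, contradicting interiority. The case $\theta=\theta'$ is impossible as before.

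With $\tau$ a bijection, the conditions for ray $j'$ give $\bar{x}_{\beta_j}/\bar{x}_{\beta_{j''}}=h^{\tau(j')}_j/h^{\tau(j')}_{j''}$. The conditions for ray $j''$ give $\bar{x}_{\beta_j}/\bar{x}_{\beta_{j'}}=h^{\tau(j'')}_j/h^{\tau(j'')}_{j'}$. This yields the displayed $\bar{x}_\beta$ after normalizing $\bar{x}_{\beta_j}=1$. Unlike Corollary~\ref{thm:m3char}, no consistency condition (ii) arises, since only two rays impose constraints. Conditions (i) and (iii) are then necessary. Sufficiency follows as in Theorem~\ref{thm:generalchar}: apply Lemma~\ref{lem:boundattained} to confirm the steps are true bounds, then use Lemma~\ref{lemma:main} with the bare generator covering ray $j$. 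Uniqueness up to scaling comes from the two ratio equations.
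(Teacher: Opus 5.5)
Your proposal is correct and takes the same route as the paper. You compute $h^i$ directly for (a) and use the orthant remark for (b). For (c) and (d) you argue, as the paper does, that bare generators span the axis rays and that axis indices reach only facet points, so the remaining rays must be hit injectively, via the argument of Lemma~\ref{lem:bijection}, by finite-bound points of the remaining indices; you then use the same parallelism and cofactor computations. The only difference is presentational: the paper handles (c) and (d) together through the sets $R$ of non-axis rays and $I$ of unused indices, whereas you treat them separately.
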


\begin{proof}{Proof.}
(a) From $Hg^j=\mathbf{e}_j$ and $g^j=c\,\mathbf{e}_i$ we get $h^i=\mathbf{e}_j/c$.
So moving along $\mathbf{e}_i$ changes only the $j$-th coordinate of $\bar{x}_\beta$\,: if $c>0$
then $L_i=-c\,\bar{x}_{\beta_j}$ and $U_i=+\infty$, and if $c<0$ then
$L_i=-\infty$ and $U_i=-c\,\bar{x}_{\beta_j}$\,. In either case the bare generator
is $\operatorname{sign}(c)\,\mathbf{e}_i$\,, the direction of $g^j$, and the finite
bound gives $\bar{x}_\beta-\bar{x}_{\beta_j}\mathbf{e}_j$\,, whose image under
$A_\beta$ is $\sum_{k\not=j}\bar{x}_{\beta_k}g^k$; with two positive coefficients
this is in the relative interior of a facet.

(b) This is the remark preceding Theorem~\ref{thm:one}.

(c), (d) Write $R$ for the rays that are not coordinate directions and $I$ for
the indices $i$ with $\mathbf{e}_i$ parallel to no column; by
non-singularity the axis columns use distinct unit vectors, so $|R|=|I|$. By
(a), every ray outside $R$ is spanned by a bare generator for every $b$, and no
index outside $I$ contributes a point on any extreme ray. Suppose
that $\mathcal{K}_\beta(b)=\mathcal{B}_\beta$ with $\bar{x}_\beta>\mathbf{0}$. Each ray
in $R$ is an extreme ray of $\mathcal{K}_\beta(b)$ that is not a coordinate
direction, so as in Lemma~\ref{lem:converse} it is hit by a finite-bound point,
necessarily for an index in $I$; and the argument of
Lemma~\ref{lem:bijection}, which uses only $\bar{x}_\beta>\mathbf{0}$ and $m\geq3$,
establishes that one index hits at most one ray. Hence the hits define a bijection
$\tau\colon R\to I$, and for each $j\in R$ the computation in the proof of
Corollary~\ref{thm:m3char} gives condition (i) and the parallelism
$(\bar{x}_{\beta_k})_{k\not=j}\parallel(h^{\tau(j)}_k)_{k\not=j}$\,, together with
$\mu_j>0$, which is (iii). Conversely, given such $\tau$ and $\bar{x}_\beta$\,, the
finite-bound points hit every ray in $R$ (Lemma~\ref{lem:boundattained}), the bare
generators span the rays outside $R$, and the argument of Lemma~\ref{lemma:main}
gives $\mathcal{K}_\beta(b)=\mathcal{B}_\beta$\,.

In (c), $R=\{j''\}$ and $I=\{i''\}$: the single parallelism fixes only
$\bar{x}_{\beta_j}/\bar{x}_{\beta_{j'}}$\,, positive by (i); and
$\theta_{i''}=-\bar{x}_{\beta_j}/h^{i''}_j$ does not involve
$\bar{x}_{\beta_{j''}}$\,, so $\mu_{j''}=\bar{x}_{\beta_{j''}}+\theta_{i''}h^{i''}_{j''}>0$
holds once $\bar{x}_{\beta_{j''}}$ is large enough. For the reformulation, take
$j=1$, $i=1$, $j'=2$, $i'=2$ after relabeling, so
$A_\beta=\begin{pmatrix}c&0&g^3_1\\0&c'&g^3_2\\0&0&g^3_3\end{pmatrix}$ with
$g^3_3\not=0$; solving $A_\beta h^3=\mathbf{e}_3$ gives
$h^3_1=-g^3_1/(c\,g^3_3)$ and $h^3_2=-g^3_2/(c'g^3_3)$, whose product has the sign
of $(g^3_1/c)(g^3_2/c')$. In (d), $R=\{j',j''\}$: the two parallelisms fix
$\bar{x}_{\beta_{j''}}/\bar{x}_{\beta_j}$ and $\bar{x}_{\beta_{j'}}/\bar{x}_{\beta_j}$\,,
hence $\bar{x}_\beta$ up to scale, and no compatibility condition like (ii) arises.\Halmos
\end{proof}

Together, Corollary~\ref{thm:m3char} and Proposition~\ref{prp:m3axis} decide, for
every non-singular $A_\beta$ at $m=3$, whether some interior $b$ achieves
$\mathcal{K}_\beta(b)=\mathcal{B}_\beta$\,.

Requiring $b$ itself to be non-negative costs only a further sign condition on the
entries of $A_\beta$\,, and no condition on magnitudes.

\begin{theorem}\label{thm:m3nonneg}
In the setting of Corollary~\ref{thm:m3char}, there exists $b\geq\mathbf{0}$ with
$\bar{x}_\beta>\mathbf{0}$ and $\mathcal{K}_\beta(b)=\mathcal{B}_\beta$ if and only if some
$\tau$ satisfies (i), (ii), (iii) and
\begin{itemize}
\item[(iv)] $g^j_k\geq0$ for every $j$ and every $k\not=\tau(j)$.
\end{itemize}
Equivalently, every negative entry of $A_\beta$ lies on the transversal
$\{(\tau(j),j):j=1,2,3\}$.
\end{theorem}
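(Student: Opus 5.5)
The plan is to reduce everything to Corollary~\ref{thm:m3char} and then read off the sign of $b$ coordinatewise from the defining relations $b+\theta_{\tau(j)}\mathbf{e}_{\tau(j)}=\mu_jg^j$. The point is that, for a fixed ray $j$, this relation says that $b$ and $\mu_jg^j$ agree on every coordinate $k\not=\tau(j)$. Since $\mu_j>0$, the sign of $b_k$ for such $k$ is exactly the sign of $g^j_k$. So non-negativity of $b$ should translate directly into a sign condition on the off-transversal entries of $A_\beta$, with no magnitude condition. This matches the remark preceding the theorem.

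($\Rightarrow$) Suppose $b\geq\mathbf{0}$, $\bar{x}_\beta>\mathbf{0}$ and $\mathcal{K}_\beta(b)=\mathcal{B}_\beta$.
\begin{itemize}
\item By Corollary~\ref{thm:m3char}, or more directly by Lemmas~\ref{lem:converse} and~\ref{lem:bijection} with Lemma~\ref{lem:transfer}, there is a permutation $\tau$ satisfying (i)--(iii). For each $j$ this gives $b+\theta_{\tau(j)}\mathbf{e}_{\tau(j)}=\mu_jg^j$ with $\mu_j>0$.
\item Fix $j$ and $k\not=\tau(j)$. Reading coordinate $k$ of that relation gives $b_k=\mu_jg^j_k$, hence $g^j_k=b_k/\mu_j\geq0$. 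This is (iv).
\end{itemize}
I will note that this uses the given $b$ directly, so I never need to identify it with the normalized $\bar{x}_\beta$ of the corollary. That identification holds anyway up to positive scaling, by the uniqueness clause.

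($\Leftarrow$) Suppose $\tau$ satisfies (i)--(iv).
\begin{itemize}
\item Corollary~\ref{thm:m3char} provides $b:=A_\beta\bar{x}_\beta$ with $\bar{x}_\beta>\mathbf{0}$ and $\mathcal{K}_\beta(b)=\mathcal{B}_\beta$. It also gives, for every $j$, $b+\theta_{\tau(j)}\mathbf{e}_{\tau(j)}=\mu_jg^j$ with $\mu_j>0$: apply $A_\beta$ to $\bar{x}_\beta+\theta_{\tau(j)}h^{\tau(j)}=\mu_j\mathbf{e}_j$.
\item Now fix a coordinate $k$. Because $\tau$ is a permutation of $\{1,2,3\}$, exactly one ray $j$ has $\tau(j)=k$, so at least one ray $j$ has $\tau(j)\not=k$ (in fact two).
\item For such a $j$, coordinate $k$ of the relation reads $b_k=\mu_jg^j_k$, which is $\geq0$ by (iv) and $\mu_j>0$. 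Hence $b\geq\mathbf{0}$.
\end{itemize}

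The reformulation is immediate. The entries $g^j_k$ with $k\not=\tau(j)$ are exactly the entries of $A_\beta$ off the positions $(\tau(j),j)$. So (iv) says that every negative entry of $A_\beta$ sits on the transversal $\{(\tau(j),j):j=1,2,3\}$, for the same $\tau$.

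There is no real obstacle here; the only step needing care is the converse. There I must check that every coordinate $b_k$ is actually controlled by some ray, which is exactly where the permutation property of $\tau$ is used.
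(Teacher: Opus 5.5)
Your proof is correct and follows the paper's argument. Both read off $b_k=\mu_jg^j_k$ for $k\neq\tau(j)$ from $b+\theta_{\tau(j)}\mathbf{e}_{\tau(j)}=\mu_jg^j$ with $\mu_j>0$, and both use that $\tau$ is a permutation, so every coordinate is covered by some ray with $\tau(j)\neq k$. You simply spell out the two directions separately, taking $\tau$ from Lemmas~\ref{lem:converse} and~\ref{lem:bijection} for the given $b$ (the right choice), where the paper compresses them into the single equivalence $b_k\geq0\iff g^j_k\geq0$.
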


\begin{proof}{Proof.}
From $b+\theta_{\tau(j)}\mathbf{e}_{\tau(j)}=\mu_jg^j$ we get
$b_k=\mu_jg^j_k$ for every $k\not=\tau(j)$, and $\mu_j>0$, so
$b_k\geq0\iff g^j_k\geq0$. Every coordinate $k$ is covered, because there are two
rays $j$ with $\tau(j)\not=k$. This is (iv). As $\tau$ is a permutation, the
exempted positions $(\tau(j),j)$ form a transversal of $A_\beta$\,.\Halmos
\end{proof}

Condition (iv) is easy to test, because containment in a transversal is a purely
local condition on the sign pattern of $A_\beta$\,.

\begin{corollary}\label{crl:twoneg}
If some row or some column of $A_\beta$ contains two or more negative entries,
then there is no $b\geq\mathbf{0}$ with $\bar{x}_\beta>\mathbf{0}$ and
$\mathcal{K}_\beta(b)=\mathcal{B}_\beta$\,.
\end{corollary}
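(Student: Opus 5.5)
The plan is to argue by contraposition from Theorem~\ref{thm:m3nonneg}. Suppose there is a $b\geq\mathbf{0}$ with $\bar{x}_\beta>\mathbf{0}$ and $\mathcal{K}_\beta(b)=\mathcal{B}_\beta$. We are in the setting of Corollary~\ref{thm:m3char}, so the statement is implicitly at $m=3$ with no column of $A_\beta$ a coordinate direction. Theorem~\ref{thm:m3nonneg} then yields a permutation $\tau$ of $\{1,2,3\}$ satisfying (iv). In its equivalent form, every negative entry of $A_\beta$ lies on the transversal $\{(\tau(j),j):j=1,2,3\}$. The whole task is to show that such a transversal cannot cover two negative entries in one row or one column.

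The key step is a counting observation about transversals. Because $\tau$ is a permutation, the set $\{(\tau(j),j)\}$ contains exactly one position in each column $j$. It also contains exactly one position in each row $k$, namely $(k,\tau^{-1}(k))$.

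Now suppose column $j$ has two negative entries, at rows $k\not=k'$. Both positions $(k,j)$ and $(k',j)$ would have to lie on the transversal, which would force $\tau(j)=k$ and $\tau(j)=k'$ at once, a contradiction. Directly in terms of (iv): at least one of $k,k'$ differs from $\tau(j)$, and (iv) requires $g^j$ to be non-negative there.

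Symmetrically, suppose row $k$ has negative entries in columns $j\not=j'$. Then $\tau(j)=\tau(j')=k$, which contradicts injectivity of $\tau$. In either case no $\tau$ satisfies (iv), so no such $b$ exists.

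I expect no real obstacle, since this is a direct consequence of Theorem~\ref{thm:m3nonneg}. The only point needing care is the scope of the hypotheses. The conclusion should be read within the setting of Corollary~\ref{thm:m3char}, because Theorem~\ref{thm:m3nonneg} was proved only there. Even so, the derivation of (iv) really uses only that each ray $j$ is hit along a distinct index $\tau(j)$ with $\mu_j>0$. Lemmas~\ref{lem:converse} and \ref{lem:bijection} guarantee this for interior $b$ whenever no column of $A_\beta$ is a coordinate direction.
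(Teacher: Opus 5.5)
Your proof is correct and matches the paper's: both invoke Theorem~\ref{thm:m3nonneg} and then use the fact that a transversal $\{(\tau(j),j)\}$ has exactly one cell in each row and in each column. The paper also proves the converse, that any set with at most one cell per row and per column extends to a transversal; the corollary does not need it, but it supports the paper's remark that the transversal test is a purely local condition on the sign pattern.
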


\begin{proof}{Proof.}
A set $S$ of cells of an $m\times m$ array is contained in a transversal
$\{(\tau(j),j):j=1,\ldots,m\}$ for some permutation $\tau$ if and only if $S$ has
at most one cell in each row and at most one in each column. The forward
implication is immediate, as $\tau$ is a bijection. Conversely, such an $S$ is
$\{(\sigma(j),j):j\in J\}$ for some $J\subset\{1,\ldots,m\}$ and some injective
$\sigma\colon J\to\{1,\ldots,m\}$; as $\sigma$ is injective, the unused columns
$\{1,\ldots,m\}\setminus J$ and the unused rows
$\{1,\ldots,m\}\setminus\sigma(J)$ are equinumerous, so any bijection between
them extends $\sigma$ to a permutation $\tau$ of $\{1,\ldots,m\}$.

Now suppose that some $b\geq\mathbf{0}$ as in the statement exists. By
Theorem~\ref{thm:m3nonneg} there is a permutation $\tau$ for which every negative
entry of $A_\beta$ lies on the transversal $\{(\tau(j),j)\}$, and hence, by the
above, $A_\beta$ has at most one negative entry in each row and in each column.\Halmos
\end{proof}

The converse of Corollary~\ref{crl:twoneg} fails, because a $\tau$ witnessing the
sign pattern must also satisfy (i)--(iii). Nonetheless the corollary rules out
non-negative right-hand sides at a glance, without examining any $\tau$.

For $m=2$ the transversal test is consistent with Theorem~\ref{thm:nonneg} but,
being only necessary, does not reproduce it. In case~(a) there, both negative
entries of $A_\beta$ lie in one column, which is not a transversal, and indeed no
non-negative interior witness exists. In case~(b) they lie on the anti-diagonal, a
transversal, so the test is passed; but witnesses exist only when
$\det A_\beta>0$, that is, when $\mathcal{B}_\beta$ meets $\mathbb{R}^2_{++}$\,. When
$\det A_\beta<0$ the test is passed although no non-zero non-negative right-hand
side is feasible at all.

\medskip

\begin{remark}\label{rem:m3boundary}
Corollary~\ref{thm:m3char} assumes $\bar{x}_\beta>\mathbf{0}$, that is, $b$ interior to
$\mathcal{B}_\beta$\,. On the boundary the combinatorics genuinely change. If
$\bar{x}_{\beta_l}=0$, then for any ray $j\not=l$ the relation
$\bar{x}_{\beta_l}=-\theta_i h^i_l$ forces $h^i_l=0$ or $\theta_i=0$, and, as in the
proof of Lemma~\ref{lem:interiorforced}, $H$ must then have a zero in row $l$; and Lemma~\ref{lem:bijection} can fail, because a single index $i$
with $h^i_l=0$ may then hit two rays, once via $L_i$ and once via $U_i$\,. For
instance, with
\[
H=\begin{pmatrix}1&2&0\\-1&3&0\\0&-1&1\end{pmatrix},\qquad
b=(0,1,1)^\top,\qquad \bar{x}_\beta=(2,3,0)^\top,
\]
index $1$ hits the ray of $g^1$ (with $\theta=3$) and the ray of $g^2$ (with
$\theta=-2$), while index $2$ hits the ray of $g^3$; so
$\mathcal{K}_\beta(b)=\mathcal{B}_\beta$ although $b$ lies on the boundary. Such
behavior is confined to the measure-zero set of bases whose inverse $H$ has a
zero entry.
\end{remark}

\subsection{Reading the criterion in $A_\beta$\,}

It is useful to record the same criterion in terms of $A_\beta$ rather than $H$.
For a permutation $\tau$ of $\{1,2,3\}$, let $u,v,w$ be the rays with
$\tau(u)=1$, $\tau(v)=2$, $\tau(w)=3$, and ask when a single $b$ can be moved along
$\mathbf{e}_{\tau(j)}$ to land on the ray of $g^j$, for each $j$, that is, when
$b+\theta_{\tau(j)}\mathbf{e}_{\tau(j)}=\mu_jg^j$ has a solution with $\mu_j>0$.
Comparing, for each $i$, the two components of $b$ not indexed by $i$ with those of
the ray hit along $\mathbf{e}_i$ gives three homogeneous linear equations in $b$,
collected as $M(\tau)\,b=\mathbf{0}$, where
\[
M(\tau):=
\begin{pmatrix}
0 & g^u_3 & -g^u_2\\
g^v_3 & 0 & -g^v_1\\
g^w_2 & -g^w_1 & 0
\end{pmatrix}.
\]

\begin{lemma}\label{lemma:detM}
$\det M(\tau) = g^u_2 g^v_3 g^w_1 - g^u_3 g^v_1 g^w_2$\,. The system
$M(\tau)b=\mathbf{0}$, which records the proportionality conditions for the pattern
$\tau$, has a solution $b\not=\mathbf{0}$ if and only if $\det M(\tau) = 0$\,. Such a
solution realizes $\tau$, with every $\mu_j>0$, only under further sign
conditions, which Corollary~\ref{thm:m3char} makes explicit.
\end{lemma}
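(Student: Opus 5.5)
The plan has three steps: compute the determinant directly, invoke the rank criterion for a square homogeneous system, and then justify the sign caveat by an example.

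\emph{Deriving the rows of $M(\tau)$.} For ray $u$ (hit along $\mathbf{e}_1$), the relation $b+\theta_1\mathbf{e}_1=\mu_u g^u$ gives $b_2=\mu_u g^u_2$ and $b_3=\mu_u g^u_3$. Eliminating $\mu_u$ yields $g^u_3b_2-g^u_2b_3=0$, which is the first row. The same elimination for $v$ (components $1,3$) and for $w$ (components $1,2$) gives the other two rows. This confirms that $M(\tau)b=\mathbf{0}$ is exactly the set of proportionality conditions, as in the remark after Lemma~\ref{lem:transfer} and in the proof of Theorem~\ref{thm:nonexist}.

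\emph{Computing the determinant.} I will expand along the first row; the Sarrus rule works equally well. The $(1,1)$ entry is zero, so only two terms survive. The first is $-g^u_3\,\det\begin{pmatrix}g^v_3&-g^v_1\\ g^w_2&0\end{pmatrix}=-g^u_3\,g^v_1g^w_2$. The second is $(-g^u_2)\,\det\begin{pmatrix}g^v_3&0\\ g^w_2&-g^w_1\end{pmatrix}=g^u_2\,g^v_3g^w_1$. Their sum is the stated expression. As a sanity check, setting $\det M(\tau)=0$ should reproduce the ratio identity derived in the proof of Theorem~\ref{thm:nonexist}.

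\emph{The equivalence and the caveat.} For a square $3\times3$ homogeneous system, a non-zero solution exists if and only if the matrix is singular, which is the standard rank criterion. The final sentence of the lemma is qualitative, so I only need to show that the converse direction genuinely requires more. A non-zero $b$ in $\ker M(\tau)$ satisfies the proportionalities, but it does not ensure that $\mu_j>0$: the common factor may be zero or negative. It also does not ensure that $b+\theta\mathbf{e}_{\tau(j)}$ is non-zero with $b$ interior. I will point to conditions (i) and (iii) of Corollary~\ref{thm:m3char}, and to Lemma~\ref{lem:boundattained}, as exactly the missing requirements. I will also note that under the dictionary $H=A_\beta^{-1}$, the vanishing of $\det M(\tau)$ plays the role of condition (ii).

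\emph{Main obstacle.} The computation itself is routine. The only delicate point is stating the ``only under further sign conditions'' clause precisely. I would illustrate it with a small example in which the kernel vector yields some $\mu_j<0$, for instance by flipping the sign of one column $g^j$, rather than trying to prove any new equivalence.
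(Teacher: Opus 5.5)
Your proposal is correct and follows the same route as the paper: expand $\det M(\tau)$ along the first row, then apply the singularity criterion for a square homogeneous system. The paper does not argue the final sentence and treats it as a pointer to Corollary~\ref{thm:m3char}, so your derivation of the rows and the sign-flip example are optional extras rather than required steps.
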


\begin{proof}{Proof.}
Expanding $\det M(\tau)$ along its first row gives
$-g^u_3(g^v_1 g^w_2) - g^u_2(g^v_3g^w_1)\cdot(-1)= g^u_2 g^v_3 g^w_1 - g^u_3 g^v_1
g^w_2$\,, as claimed. The system $M(\tau)b=\mathbf{0}$ has a non-trivial solution if and only
if $\det M(\tau)=0$\,.\Halmos
\end{proof}

Eliminating $\theta$ instead of $\mu$ turns the same system into three
homogeneous equations in $\bar{x}_\beta$\,, with matrix
\[
N(\tau):=
\begin{pmatrix}
0 & h^{\tau(1)}_3 & -h^{\tau(1)}_2\\
h^{\tau(2)}_3 & 0 & -h^{\tau(2)}_1\\
h^{\tau(3)}_2 & -h^{\tau(3)}_1 & 0
\end{pmatrix},
\]
which has the form of $M(\tau)$ with $g$ replaced by $h$ and $(u,v,w)$ by
$(\tau(1),\tau(2),\tau(3))$; expanding,
$\det N(\tau)$ is the difference of the two sides of condition (ii). 

\begin{proposition}\label{prp:duality}
We have
$N(\tau)=-\frac{1}{\det A_\beta}\,D\,\Pi_\tau\,D\,M(\tau)\,A_\beta$\,, where
$D:=\operatorname{diag}(1,-1,1)$ and $\Pi_\tau$ is the permutation matrix with
$(\Pi_\tau v)_j=v_{\tau(j)}$\,. Consequently
\[
\det N(\tau)=-\operatorname{sign}(\tau)\,\frac{\det M(\tau)}{(\det A_\beta)^2}\,,
\]
so condition (ii) holds if and only if $\det M(\tau)=0$\,.
\end{proposition}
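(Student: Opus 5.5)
Both $M(\tau)$ and $N(\tau)$ encode the same three proportionality conditions, one in $b$-coordinates and one in $\bar{x}_\beta$-coordinates. So the plan is to identify them row by row through the substitution $\bar{x}_\beta=Hb$, read off the matrix identity, and then take determinants.

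The key is to understand the rows of $M(\tau)$ as cross products. Row $i$ of $M(\tau)$ is, up to a sign, the vector $\mathbf{e}_i\times g^{\tau^{-1}(i)}$. For example, row 1 is $(0,g^u_3,-g^u_2)$, and $\mathbf{e}_1\times g^u=(0,-g^u_3,g^u_2)$, so row 1 equals $-\mathbf{e}_1\times g^u$. Checking the other rows:
\begin{itemize}
\item row 2 is $+\,\mathbf{e}_2\times g^v$;
\item row 3 is $-\,\mathbf{e}_3\times g^w$.
\end{itemize}
Hence $M(\tau)=-D\,C$, where $C$ has rows $(\mathbf{e}_i\times g^{\tau^{-1}(i)})^\top$. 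Similarly, row $j$ of $N(\tau)$ is $\pm(\mathbf{e}_j\times h^{\tau(j)})^\top$ with the same sign pattern, so $N(\tau)=-D\,C'$ with $C'$ having rows $(\mathbf{e}_j\times h^{\tau(j)})^\top$.

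Now I relate $C'$ to $C$ via the cofactor identity $(Hx)\times(Hy)=\det H\,A_\beta^\top(x\times y)$, which holds because $H^{-1}=A_\beta$. Since $Hg^j=\mathbf{e}_j$ and $H\mathbf{e}_i=h^i$,
\[
\mathbf{e}_j\times h^{\tau(j)}=(Hg^j)\times(H\mathbf{e}_{\tau(j)})=\tfrac{1}{\det A_\beta}A_\beta^\top\bigl(g^j\times\mathbf{e}_{\tau(j)}\bigr)=-\tfrac{1}{\det A_\beta}A_\beta^\top\bigl(\mathbf{e}_{\tau(j)}\times g^j\bigr).
\]
Transposing this row by row gives $C'=-\tfrac{1}{\det A_\beta}\,\Pi_\tau C A_\beta$, since the row of $C$ indexed by $i=\tau(j)$ moves to position $j$. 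Then
\[
N(\tau)=-DC'=\tfrac{1}{\det A_\beta}D\Pi_\tau CA_\beta ,
\]
and substituting $C=-DM(\tau)$ (which holds because $D^2=I$) yields the claimed $N(\tau)=-\tfrac{1}{\det A_\beta}D\Pi_\tau DM(\tau)A_\beta$.

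The determinant formula follows by multiplicativity: $\det(-I/\det A_\beta)=-(\det A_\beta)^{-3}$, $\det D^2=1$, $\det\Pi_\tau=\operatorname{sign}(\tau)$, and the final factor contributes $\det A_\beta$. Together these give $\det N(\tau)=-\operatorname{sign}(\tau)\det M(\tau)/(\det A_\beta)^2$.

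For the last claim, the paragraph before the proposition notes that $\det N(\tau)$ is the difference of the two sides of (ii). So (ii) holds if and only if $\det N(\tau)=0$, which by the formula is equivalent to $\det M(\tau)=0$.

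The main obstacle is sign bookkeeping: the per-row signs, the orientation of each cross product, and whether $\Pi_\tau$ or $\Pi_\tau^\top$ appears. I will pin these down by checking the identity on $A_\beta=I$, where $H=I$, and on one non-identity $\tau$ such as a transposition, before trusting the general derivation.
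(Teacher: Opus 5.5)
Your proof is correct and follows essentially the paper's route: write the rows of $M(\tau)$ and $N(\tau)$ as $D$-signed cross products, pass between them with the cofactor identity $(Hx)\times(Hy)=\det(H)\,A_\beta^{\top}(x\times y)$, and then take determinants. The only difference is cosmetic. You apply the identity to $(Hg^j)\times(H\mathbf{e}_{\tau(j)})$ and get the rows as vectors directly, whereas the paper applies it to $(Hb)\times(H\mathbf{e}_{\tau(j)})$ and reads off the $j$-th coordinate; your signs and the placement of $\Pi_\tau$ all check out, so the planned sanity check is not needed.
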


\begin{proof}{Proof.}
Row $i$ of $M(\tau)$ applied to $b$ is $D_{ii}(b\times g^{j})_i$ for the ray $j$ with
$\tau(j)=i$, and row
$j$ of $N(\tau)$ applied to $\bar{x}_\beta$ is
$D_{jj}(\bar{x}_\beta\times h^{\tau(j)})_j$\,, by inspection. Using
$(Ax)\times(Ay)=\det(A)A^{-\top}(x\times y)$ with $A=H$, and $\bar{x}_\beta=Hb$,
$h^{\tau(j)}=H\mathbf{e}_{\tau(j)}$\,,
\[
\bar{x}_\beta\times h^{\tau(j)}
=\det(H)\,H^{-\top}\bigl(b\times\mathbf{e}_{\tau(j)}\bigr)
=\tfrac{1}{\det A_\beta}\,A_\beta^\top\bigl(b\times\mathbf{e}_{\tau(j)}\bigr).
\]
Row $j$ of $A_\beta^\top$ is $g^j$, so the $j$-th coordinate is
\[
\tfrac{1}{\det A_\beta}\,g^j\cdot(b\times\mathbf{e}_{\tau(j)})
=\tfrac{1}{\det A_\beta}\det[g^j,b,\mathbf{e}_{\tau(j)}]
=-\tfrac{1}{\det A_\beta}(b\times g^j)_{\tau(j)}\,.
\]
With $i=\tau(j)$ this is
$-\frac{1}{\det A_\beta}$ times the corresponding entry for $M(\tau)$, which is the
stated matrix identity. Taking determinants of these $3\times3$ matrices gives
\[
\det N(\tau)=\Bigl(-\frac{1}{\det A_\beta}\Bigr)^{3}(\det D)^{2}\,\det\Pi_\tau\,
\det M(\tau)\,\det A_\beta\,,
\]
and with $(\det D)^2=1$ and $\det\Pi_\tau=\operatorname{sign}(\tau)$
this is the displayed formula; as $\det A_\beta\not=0$, the two conditions
coincide.\Halmos
\end{proof}

Corollary~\ref{thm:m3char} thus refines Lemma~\ref{lemma:detM}: the vanishing of
$\det M(\tau)$ is exactly condition (ii), and (i) and (iii) are the positivity
requirements that decide whether the resulting $b$ is usable. Non-generic bases
can force $\det M(\tau)=0$, and the most natural such family is the symmetric one.

\begin{proposition}\label{prop:symmetric}
Suppose that $m=3$ and $A_\beta$ is symmetric, with positive off-diagonal entries
$p:=A_{12}=A_{21}$\,, $q:=A_{13}=A_{31}$\,, and $r:=A_{23}=A_{32}$\,. Let
$
b:=(pq,\ pr,\ qr)^\top.
$
Then $b+\theta_i\mathbf{e}_i = \mu_i g^i$ has a solution with $\mu_i>0$, for
each $i=1,2,3$ (specifically, $\mu_1=r$, $\mu_2=q$, $\mu_3=p$). Hence,
by Lemmas~\ref{lem:boundattained} and~\ref{lemma:main}, if in addition $\bar{x}_\beta:=A_\beta^{-1}b>\mathbf{0}$ (so that $b$ is an
interior point of $\mathcal{B}_\beta$), then $\mathcal{K}_\beta(b)=\mathcal{B}_\beta$\,.
\end{proposition}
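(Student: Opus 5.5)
The plan is a direct verification, coordinate by coordinate, followed by an appeal to the two earlier lemmas. Write the symmetric basis as
\[
A_\beta=\begin{pmatrix}a&p&q\\p&c&r\\q&r&d\end{pmatrix},
\]
with arbitrary diagonal entries $a,c,d$. Its columns are $g^1=(a,p,q)^\top$, $g^2=(p,c,r)^\top$ and $g^3=(q,r,d)^\top$. For each $i$ I want $\theta_i$ and $\mu_i>0$ with $b+\theta_i\mathbf{e}_i=\mu_ig^i$. This requires the two coordinates $k\not=i$ of $b$ to equal $\mu_i g^i_k$; the $i$-th coordinate then just defines $\theta_i:=\mu_ig^i_i-b_i$. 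The diagonal entries never enter any constraint, which is why they are unrestricted.

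The proportionality checks are as follows. For $i=1$, take $\mu_1=r$: then $r g^1_2=rp=b_2$ and $r g^1_3=rq=b_3$. For $i=2$, take $\mu_2=q$: then $q g^2_1=qp=b_1$ and $q g^2_3=qr=b_3$. For $i=3$, take $\mu_3=p$: then $p g^3_1=pq=b_1$ and $p g^3_2=pr=b_2$. Positivity of $p,q,r$ gives $\mu_i>0$. This is simply $b=(pq,pr,qr)^\top$ being the vector whose off-$i$ coordinates are pairwise products; it is also the solution of $M(\tau)b=\mathbf{0}$ for $\tau=\mathrm{id}$, since symmetry makes $\det M(\mathrm{id})=g^1_2g^2_3g^3_1-g^1_3g^2_1g^3_2=prq-qpr=0$, consistent with Lemma~\ref{lemma:detM}.

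To conclude, assume $\bar{x}_\beta>\mathbf{0}$. Applying $H$ to $b+\theta_i\mathbf{e}_i=\mu_ig^i$ gives $\bar{x}_\beta+\theta_ih^i=\mu_i\mathbf{e}_i$ by Lemma~\ref{lem:transfer}. Since $m=3$, some $k\not=i$ exists with $\bar{x}_{\beta_k}>0$, so $\theta_i\not=0$; otherwise $\bar{x}_\beta=\mu_i\mathbf{e}_i$ would have zero entries. Lemma~\ref{lem:boundattained} then identifies $\theta_i$ as the finite bound $L_i$ or $U_i$ according to its sign. Hence $b+\theta_i\mathbf{e}_i$ is a generator of $\mathcal{K}_\beta(b)$, and it is a nonzero point on the ray of $g^i$. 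Lemma~\ref{lemma:main} then yields $\mathcal{K}_\beta(b)=\mathcal{B}_\beta$.

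The only step needing care is ruling out $\theta_i=0$ before invoking Lemma~\ref{lem:boundattained}, and this is handled by interiority as above. Everything else is a one-line check, so I expect no real obstacle.
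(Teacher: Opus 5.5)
Your proposal is correct and follows the paper's proof: the paper also reads off $\mu_1=r$, $\mu_2=q$, $\mu_3=p$ from the two off-$i$ coordinates, uses $\bar{x}_\beta>\mathbf{0}$ to get $\theta_i\not=0$, and concludes with Lemmas~\ref{lem:boundattained} and~\ref{lemma:main}. The only difference is that the paper first notes $\det M(\mathrm{id})=0$ by symmetry and solves $M(\mathrm{id})b=\mathbf{0}$, a step your direct coordinate check makes unnecessary.
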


\begin{proof}{Proof.}
Take $\tau=\mathrm{id}$, so that $u=1,v=2,w=3$ in Lemma~\ref{lemma:detM}. By
symmetry of $A_\beta$\,,
\[
\det M(\mathrm{id}) = g^1_2g^2_3g^3_1 - g^1_3g^2_1g^3_2 = p\,r\,q - q\,p\,r = 0
\]
identically, with no genericity assumption on $p,q,r$ needed. Solving $M(\mathrm{id})b=\mathbf{0}$
directly gives, up to a positive scaling, $b=(pq,pr,qr)^\top$: indeed, the three rows of
$M(\mathrm{id})$ become $q\,b_2 - p\,b_3=0$, $r\,b_1-p\,b_3=0$, and $r\,b_1-q\,b_2=0$
(the third being a consequence of the first two), and $b=(pq,pr,qr)^\top$ satisfies all
three. Recovering the scalars: from $b_2=p\,r=\mu_1 g^1_2=\mu_1 p$ we get
$\mu_1=r$; from $b_3=q\,r=\mu_1g^1_3=\mu_1 q$ we get $\mu_1=r$ again
(consistently); similarly $\mu_2=q$ and $\mu_3=p$. As $p,q,r>0$ by
hypothesis, all three $\mu_i>0$\,. The points $b+\theta_i\mathbf{e}_i$
($i=1,2,3$) then generate positive multiples of $g^1,g^2,g^3$ respectively, that is,
one non-zero point on each extreme ray of $\mathcal{B}_\beta$\,. If
$\bar{x}_\beta>\mathbf{0}$, then each $\theta_i\not=0$ (as $b$ itself lies on no
extreme ray), so by Lemma~\ref{lem:boundattained} each $\theta_i$ is the finite bound
$L_i$ or $U_i$\,; these points are therefore generators of $\mathcal{K}_\beta(b)$, and
the conclusion follows from Lemma~\ref{lemma:main}.\Halmos
\end{proof}

In the language of Corollary~\ref{thm:m3char} this is the pattern $\tau=\mathrm{id}$,
for which (ii) holds identically; the auxiliary hypothesis $\bar{x}_\beta>\mathbf{0}$ of
Proposition~\ref{prop:symmetric} is exactly condition (i), and unwinding it gives
an explicit test: writing $X:=d_1r-pq$, $Y:=d_2q-pr$ and $Z:=d_3p-qr$ for the
diagonal entries $d_1\,,d_2\,,d_3$ of $A_\beta$\,, Cramer's rule gives
$\bar{x}_\beta=(YZ,XZ,XY)^\top/\det A_\beta$\,, so $\bar{x}_\beta>\mathbf{0}$ holds precisely
when $X,Y,Z$ share a sign and $\det A_\beta>0$.

\begin{example}\label{exa:sym3}
Let
\[
A_\beta:=
\begin{pmatrix}
10 & 1 & 2\\
1 & 10 & 3\\
2 & 3 & 10
\end{pmatrix},
\]
so $p=1$, $q=2$, $r=3$, and Proposition~\ref{prop:symmetric} gives
$b:=(pq,pr,qr)^\top=(2,3,6)^\top$. One computes
$\bar{x}_\beta=A_\beta^{-1}b=\tfrac{1}{218}(17,28,119)^\top>\mathbf{0}$\,, so $\beta$ is primal feasible
and $b$ is an interior point of $\mathcal{B}_\beta$\,, as required. Directly:
moving along $\mathbf{e}_1$\,, components $2,3$ of $b$ are $(3,6)^\top$, proportional to
$(g^1_2\,,g^1_3)^\top=(1,2)^\top$ with ratio $\mu_1=3=r$, giving $\theta_1=\mu_1
A_{11}-b_1=3\cdot 10-2=28$, and indeed $A_\beta^{-1}(b+28\,\mathbf{e}_1)=3\,\mathbf{e}_1$\,,
landing exactly on the extreme ray of $g^1$ (both other entries of $\bar{x}_\beta$ vanishing
simultaneously, not merely one). Similarly, index $2$ gives $\mu_2=2=q$,
$\theta_2=17$, landing on $2\,g^2$; and index $3$ gives $\mu_3=1=p$,
$\theta_3=4$, landing exactly on $g^3$\,. Thus $\mathcal{K}_\beta(b)=\mathcal{B}_\beta$
for this genuinely three-dimensional, non-orthant cone.
\end{example}

%%%%%%%%%%%%%%%%%%%%%%%%%%%

\section{Tridiagonal bases}\label{subsec:tridiag}

The negative results of \S\ref{sec:general_m} leave open whether some natural
family of bases might do better than the fully generic ones used there. A
natural candidate is bandwidth-1 (\emph{tridiagonal}) $A_\beta$: such bases
arise, for instance, from path-structured or interval-indexed LPs, and from
discretized second-order operators. We assume throughout this section that
$A_\beta$ is tridiagonal with all $2(m-1)$ off-diagonal entries \emph{and}
all $m$ diagonal entries non-zero (so that the sparsity pattern is not
itself degenerate), and we write
\[
W_j := \{j-1,j,j+1\}\cap\{1,\ldots,m\}
\]
for the (row) support of column $g^j$; note $|W_j|=3$ for $2\leq j\leq m-1$
and $|W_j|=2$ for $j\in\{1,m\}$.

\begin{lemma}\label{lemma:windowint}
For non-empty $J\subset\{1,\ldots,m\}$,
\[
\bigcap_{j\in J} W_j = [\max\{J\}-1,\,\min\{J\}+1]\cap\{1,\ldots,m\}.
\]
\end{lemma}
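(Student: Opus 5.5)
This is a direct set-membership argument: each $W_j$ is a window of consecutive integers, and an intersection of such windows is again a window, cut out by the tightest lower and upper endpoints. Concretely, $W_j=\{k\in\{1,\ldots,m\}: j-1\leq k\leq j+1\}$; the truncation to $\{1,\ldots,m\}$ automatically handles the shortened windows $|W_1|=|W_m|=2$. So I will write every $W_j$ in this form and treat the intersection as a system of interval constraints on an integer $k\in\{1,\ldots,m\}$.

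First, $k\in\bigcap_{j\in J}W_j$ if and only if $k\in\{1,\ldots,m\}$ and $j-1\leq k\leq j+1$ for every $j\in J$. Second, I rewrite each pair as $k\geq j-1$ and $k\leq j+1$. Third, taking all $j\in J$ together (valid because $J$ is non-empty and finite, so the maximum and minimum exist), the lower constraints amount to $k\geq\max\{J\}-1$ and the upper constraints to $k\leq\min\{J\}+1$. The family of conditions is therefore equivalent to $k\in[\max\{J\}-1,\min\{J\}+1]\cap\{1,\ldots,m\}$, which is exactly the stated set.

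No step here is a real obstacle; the argument is a chain of equivalences. The only points to check are that non-emptiness of $J$ makes $\max\{J\}$ and $\min\{J\}$ well defined, and the reading of the notation. I take $[a,b]$ to denote the real or integer interval intersected with $\{1,\ldots,m\}$, and I interpret it as empty when $a>b$. That case occurs precisely when $\max\{J\}-\min\{J\}\geq3$, and then no window contains a common index, consistent with the equivalence.
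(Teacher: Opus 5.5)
Your proposal is correct and follows the paper's own proof: both rewrite membership in every $W_j$ as $j-1\leq k\leq j+1$ for all $j\in J$ and collapse these constraints to $k\geq\max\{J\}-1$ and $k\leq\min\{J\}+1$. Your remarks on why $J\neq\emptyset$ is needed and on the empty case $\max\{J\}-\min\{J\}\geq3$ are accurate additions that the paper leaves implicit.
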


\begin{proof}{Proof.}
A point $k$ lies in $W_j$ for every $j\in J$ if and only if $|k-j|\leq1$
for every $j\in J$, that is, $k\geq j-1$ and $k\leq j+1$ for every $j\in J$;
equivalently $k\geq\max\{J\}-1$ and $k\leq\min\{J\}+1$.\Halmos
\end{proof}

\begin{theorem}\label{thm:tridiagchar}
Suppose that $A_\beta$ is tridiagonal with all diagonal and off-diagonal entries
non-zero.
\begin{itemize}
\item[(a)] If $m\geq4$, then $\mathcal{K}_\beta(b)\not=\mathcal{B}_\beta$ for every
$b\in\mathcal{B}_\beta$\,.
\item[(b)] If $m=3$, write
$A_\beta=\begin{pmatrix}d_1&p&0\\q&d_2&r\\0&s&d_3\end{pmatrix}$. There exists
$b$ with $\bar{x}_\beta>\mathbf{0}$ and $\mathcal{K}_\beta(b)=\mathcal{B}_\beta$ if
and only if
\begin{itemize}
\item[(i)] $pq\,d_3=rs\,d_1$\,;
\item[(ii)] $p/d_1>0$ and $s/d_3>0$\,;
\item[(iii)] $d_1d_2/(pq)<1$\,.
\end{itemize}
In that case $b$ is unique up to positive scaling, namely
$b=(p,\,pq/d_1\,,\,s)^\top$, and the extreme rays of $g^1,g^2,g^3$ are hit by
moving $b$ along $\mathbf{e}_3\,,\mathbf{e}_2\,,\mathbf{e}_1$\,, respectively.
\item[(c)] If $m=2$, there exists $b\in\mathcal{B}_\beta$ with
$\mathcal{K}_\beta(b)=\mathcal{B}_\beta$\,.
\end{itemize}
In particular, some tridiagonal $A_\beta$ of order $m$ admits a $b$ with
$\mathcal{K}_\beta(b)=\mathcal{B}_\beta$ if and only if $m\leq3$; but at $m=3$
whether a given $A_\beta$ does depends on $A_\beta$\,, through (i)--(iii).
\end{theorem}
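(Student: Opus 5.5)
The plan is to work directly with the proportionality relations $b+\theta\mathbf{e}_i=\mu g^j$, $\mu>0$, in $b$-coordinates, and to exploit the fact that each column $g^j$ is supported exactly on the window $W_j$. Every column of a tridiagonal $A_\beta$ with non-zero diagonal and off-diagonal entries has at least two non-zero entries, so no extreme ray is a coordinate direction. Lemma~\ref{lem:converse} therefore applies to every $b\in\mathcal{B}_\beta$, with no interiority or zero-free-$H$ hypothesis needed. If $\mathcal{K}_\beta(b)=\mathcal{B}_\beta$, each ray $j$ is hit by moving $b$ along some $\mathbf{e}_{i_j}$, and $b$ agrees with $\mu_jg^j$ on every coordinate $k\not=i_j$. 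Consequently $b_k=0$ for $k\notin W_j\cup\{i_j\}$, while $b_k\not=0$ for $k\in W_j\setminus\{i_j\}$.

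For (a), with $m\geq4$, I would use only the two end rays. From $j=1$ we have $W_1=\{1,2\}$, so $b$ is non-zero on the non-empty set $\{1,2\}\setminus\{i_1\}$. From $j=m$ we get $b_k=0$ for every $k\leq m-2$ except possibly $k=i_m$. Since $m\geq4$ gives $\{1,2\}\subset\{1,\ldots,m-2\}$, every element of $\{1,2\}\setminus\{i_1\}$ must equal $i_m$, and so $i_m\in\{1,2\}$. The symmetric argument, run from $j=m$ against $j=1$, gives $i_m\in\{m-1,m\}$ instead (indeed $\{m-1,m\}\setminus\{i_m\}$ must consist of $i_1$, and then $i_m$ is the other element). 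These two conclusions are incompatible, because $\{1,2\}\cap\{m-1,m\}=\varnothing$ when $m\geq4$. This covers every $b\in\mathcal{B}_\beta$, including $b=\mathbf{0}$, so no case split on interiority is needed.

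For (b), with $m=3$, I would run the same support bookkeeping with $g^1=(d_1,q,0)^\top$, $g^2=(p,d_2,s)^\top$ and $g^3=(0,r,d_3)^\top$. The first step is to show that the only admissible assignment is $i_1=3$, $i_2=2$, $i_3=1$. Suppose $i_1\not=3$; then $b_3=0$, and ray $3$ (with $W_3=\{2,3\}$) then forces $i_3=3$. Chasing these constraints through ray $2$, whose support is all of $\{1,2,3\}$, should give a contradiction, and by symmetry the same holds if $i_3\not=1$. Once $i_1=3$ and $i_3=1$ are fixed, $i_2=2$ follows from Lemma~\ref{lem:bijection}, or directly because $b_1,b_3\not=0$. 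With this assignment the three relations read $(b_1,b_2)\propto(d_1,q)$, $(b_1,b_3)\propto(p,s)$ and $(b_2,b_3)\propto(r,d_3)$. Normalizing $\mu_2=1$ gives $b=(p,pq/d_1,s)^\top$; the compatibility of the third relation is exactly (i); and $\mu_1=p/d_1>0$, $\mu_3=s/d_3>0$ is (ii). It remains to show that, given (i) and (ii), $\bar{x}_\beta=Hb>\mathbf{0}$ is equivalent to (iii). I would compute $\bar{x}_\beta$ from $b=\mu_1g^1-\theta_3\mathbf{e}_3$ and its companions, or simply by Cramer's rule, using (i) to simplify. Sufficiency then follows from Lemma~\ref{lem:boundattained} together with Lemma~\ref{lemma:main}. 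For necessity of interiority, I would argue that a boundary $b$ of the above form cannot work, for instance via Theorem~\ref{thm:generalchar} together with the uniqueness of $b$ just derived.

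I expect the main obstacles to be this last computation, namely reducing $\bar{x}_\beta>\mathbf{0}$ to the single inequality $d_1d_2<pq$ (which may require the sign information from (ii)), and the case chase excluding the other assignments at $m=3$. For (c), I would simply cite Theorem~\ref{thm:exist}, whose constructed $b$ lies in $\mathcal{B}_\beta$. The final sentence of the theorem is then immediate: (a) rules out $m\geq4$, while Example~\ref{exa:sym3}-type choices satisfying (i)--(iii), together with (c), give existence for $m\leq3$.
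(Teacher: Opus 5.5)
Your proof is correct. For (a) it takes a different and shorter route than the paper; for (b) and (c) it follows the paper, with a few slips noted below.

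\textbf{Comparison for (a).} The paper first disposes of $b=\mathbf{0}$ via Lemma~\ref{lem:zero}. It then splits the rays into own-hit and cross-hit ones and derives $C\subset\operatorname{supp}(b)\subset\bigcap_{j\in J}W_j$. Finally it runs a case analysis on $\min J$ and $\max J$ using Lemma~\ref{lemma:windowint}, with $J=\varnothing$ handled separately. You use only the two end rays: ray $1$ makes $b$ non-zero on $\{1,2\}\setminus\{i_1\}$, ray $m$ makes $b$ vanish off $\{m-1,m,i_m\}$, and playing each against the other gives $i_m\in\{1,2\}\cap\{m-1,m\}=\varnothing$.

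What your route buys:
\begin{itemize}
\item It needs no separate treatment of $b=\mathbf{0}$, since a hit ray already forces $b\neq\mathbf{0}$.
\item It needs no case analysis.
\item It carries over verbatim to bandwidth $k$ with all band entries non-zero whenever $W_1\cap W_m=\varnothing$, that is, $m\geq2k+2$. For $k\geq2$, ray $m$ alone is already incompatible with the $\geq k$ non-zero entries forced by ray $1$. This gives a short proof of the extension asserted in Remark~\ref{rem:whym3}.
\end{itemize}
What the paper's own-hit/cross-hit bookkeeping buys in exchange is a description of all admissible support patterns, which it reuses in Remark~\ref{rem:tridiagpatterns}. For (b) your route is essentially the paper's; your exclusion of $i_1\neq3$ through ray $2$ even avoids the permutation property. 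Part (c) is identical.

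\textbf{Three small corrections.}

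First, $i_2=2$ does not follow ``directly because $b_1,b_3\neq0$''. The assignments $i_2\in\{1,3\}$ are compatible with that, and only Lemma~\ref{lem:bijection} excludes them, that is, the hypothesis $\bar{x}_\beta>\mathbf{0}$. For instance, $i_2=i_3=1$ forces $d_2d_3=rs$, i.e.\ $H_{11}=0$, and then $\bar{x}_{\beta_1}=0$.

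Second, drop the ``necessity of interiority'' step, for three reasons.
\begin{itemize}
\item The statement only concerns $b$ with $\bar{x}_\beta>\mathbf{0}$, and you already use that hypothesis to obtain (iii).
\item Theorem~\ref{thm:generalchar} says nothing about boundary points, so it could not deliver this step.
\item Without (iii) the claim is false. Take $A_\beta=\begin{pmatrix}1&1&0\\1&1&1\\0&1&1\end{pmatrix}$, which satisfies (i) and (ii) with $d_1d_2/(pq)=1$. The boundary point $b=(p,pq/d_1,s)^\top=g^2$ has $\bar{x}_\beta=\mathbf{e}_2$, and $b+L_1\mathbf{e}_1=g^3$, $b+U_1\mathbf{e}_1=g^2$, $b+L_3\mathbf{e}_3=g^1$. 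So $\mathcal{K}_\beta(b)=\mathcal{B}_\beta$.
\end{itemize}
If you want uniqueness over all of $\mathcal{B}_\beta$ under (i)--(iii), argue as follows. Conditions (i) and (iii) make $H_{11}$ and $H_{33}$ non-zero, and the other entries of $H$ are automatically non-zero. Lemma~\ref{lem:interiorforced} then makes every witness interior.

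Third, Example~\ref{exa:sym3} is not tridiagonal. For existence at $m=3$, cite Example~\ref{exa:tridiag3} instead.
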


\begin{proof}{Proof.}
Part (c) is Theorem~\ref{thm:exist}. For (a), let $m\geq4$ and suppose, for contradiction, that $\mathcal{K}_\beta(b)=\mathcal{B}_\beta$ for
some $b$; necessarily $b\not=\mathbf{0}$, because $\mathcal{B}_\beta$ is not an orthant
(each column has $|W_j|\geq2$ non-zero entries) and Lemma~\ref{lem:zero}
applies to $b=\mathbf{0}$. As no extreme ray is a coordinate direction (again because $|W_j|\geq2$),
Lemma~\ref{lem:converse} establishes that every extreme ray
$g^j$ is hit by a finite-bound point for some index $\tau(j)$; comparing
coordinates $k\not=\tau(j)$ forces $b_k=0$ for every $k\notin W_j\cup\{\tau(j)\}$.
We say that ray $j$ is \emph{own-hit} if $\tau(j)=j$, and \emph{cross-hit}
if $\tau(j)\not=j$. (We do not need $j\mapsto\tau(j)$ to be injective here; the
argument below uses only the support constraints it produces. When
$\bar{x}_\beta>\mathbf{0}$ it is in fact a bijection, by Lemma~\ref{lem:bijection}.)

Let $C:=\{j : \tau(j)\not=j\}$ (the cross-hit rays) and
$J:=\{1,\ldots,m\}\setminus C$ (the own-hit rays). For $j\in J$ we have $\tau(j)=j\in W_j$\,, so the
displayed constraint reads $b_k=0$ for $k\notin W_j$\,. For $j\in C$, because
$\tau(j)\not=j$, the coordinate $k=j$ is among those compared, giving
$b_j=\mu_j (A_\beta)_{jj}$ with $\mu_j>0$ and $(A_\beta)_{jj}\not=0$,
so $b_j\not=0$ (we refer to this last deduction as the \emph{cross-hit
observation}). Combining,
\[
C \ \subset\ \operatorname{supp}(b)\ \subset\ \bigcap_{j\in J} W_j\,,
\]
with the right side understood as $\{1,\ldots,m\}$ if $J=\emptyset$.

\emph{Case $J\not=\emptyset$.} Let $a:=\min\{J\}$, $c:=\max\{J\}$. By Lemma~\ref{lemma:windowint}, $\bigcap_{j\in J}W_j=[c-1,a+1]\cap\{1,\ldots,m\}$, so
$C\subset[c-1,a+1]$ forces
\[
J\ \supset\ \{1,\ldots,m\}\setminus[c-1,a+1]\ =\ [1,c-2]\cup[a+2,m].
\]
If $c\geq3$, then $1\in[1,c-2]\subset J$, forcing $a=1$; equivalently
(contrapositive), $a\not=1 \Rightarrow c\leq2$. Symmetrically, if $a\leq
m-2$, then $c=m$; equivalently, $c\not=m \Rightarrow a\geq m-1$. We check
the four cases for $(a,c)$ against these two implications: if $a=1$ and
$c\not=m$, the second implication forces $a\geq m-1$, so $1\geq m-1$,
i.e., $m\leq2$; if $a\not=1$ and $c=m$, the first forces $c\leq2$, so
$m\leq2$; if $a\not=1$ and $c\not=m$, both force $c\leq2$ and $a\geq m-1$,
so (as $a\leq c$) $m-1\leq2$, i.e., $m\leq3$. Each of these three cases
contradicts $m\geq4$, leaving only $a=1$ and $c=m$. Then
$J\supset[1,m-2]\cup[3,m]$, and because $m\geq4$, these two intervals intersect, covering $\{1,\ldots,m\}$ entirely, forcing $J=\{1,\ldots,m\}$,
i.e., $C=\emptyset$. But then $\operatorname{supp}(b)\subset
\bigcap_{j=1}^m W_j=[m-1,2]=\emptyset$ (as $m\geq4>3$), so $b=\mathbf{0}$, a
contradiction.

\emph{Case $J=\emptyset$} (every ray cross-hit): then
$\operatorname{supp}(b)=\{1,\ldots,m\}$ entirely, by the cross-hit
observation applied to every $j$. But we also need
$\{1,\ldots,m\}\subset W_j\cup\{\tau(j)\}$ for every $j$, that is,
$m-|W_j|\leq1$; for boundary $j\in\{1,m\}$, $|W_j|=2$, forcing $m\leq3$,
contradicting $m\geq4$.

Both cases are contradictory, so no such $b$ exists when $m\geq4$, which is (a).

For (b), let $m=3$, so $g^1=(d_1\,,q,0)^\top$, $g^2=(p,d_2\,,s)^\top$,
$g^3=(0,r,d_3)^\top$, and no column of $A_\beta$ is a coordinate direction.
Suppose first that $\bar{x}_\beta>\mathbf{0}$ and
$\mathcal{K}_\beta(b)=\mathcal{B}_\beta$\,. By Lemmas~\ref{lem:converse}
and~\ref{lem:bijection}, ray $j$ is hit by a finite-bound point
$b+\theta_{\tau(j)}\mathbf{e}_{\tau(j)}=\mu_jg^j$ with $\mu_j>0$, for a
permutation $j\mapsto\tau(j)$\,; comparing coordinates gives $b_k=\mu_jg^j_k$
for every $k\not=\tau(j)$\,. If $\tau(1)\not=3$, then $b_3=\mu_1g^1_3=0$; hence
$\tau(3)=3$ (otherwise $b_3=\mu_3d_3\not=0$), so $\tau(3)\not=1$ and
$b_1=\mu_3g^3_1=0$; by the same argument $\tau(1)=1$, so $\tau(2)=2$ and
$b_1=\mu_2p\not=0$, a contradiction. So $\tau(1)=3$; symmetrically $\tau(3)=1$; and
therefore $\tau(2)=2$. The comparisons now read
\[
b_1=\mu_1d_1=\mu_2p\,,\qquad b_2=\mu_1q=\mu_3r\,,\qquad
b_3=\mu_2s=\mu_3d_3\,.
\]
The first and third give $\mu_1/\mu_2=p/d_1$ and
$\mu_3/\mu_2=s/d_3$\,, which must be positive: this is (ii).
Substituting into the second gives $\mu_2pq/d_1=\mu_2rs/d_3$\,, which is
(i), and then $b=\mu_2(p,\,pq/d_1\,,\,s)^\top$. Write $u:=d_1d_2/(pq)$. Under
(i) we have $\det A_\beta=d_3\,pq\,(u-2)$, so $u\not=2$, and a direct computation
gives
\begin{equation}\label{eq:tridiagxbar}
\bar{x}_\beta=A_\beta^{-1}b=\frac{\mu_2}{2-u}\Bigl(\frac{p}{d_1}(1-u),\;1,\;
\frac{s}{d_3}(1-u)\Bigr)^{\!\top}.
\end{equation}
Given (ii), $\bar{x}_\beta>\mathbf{0}$ holds if and only if $2-u>0$ and $1-u>0$,
that is, $u<1$, which is (iii). This proves necessity, and uniqueness of $b$ up
to positive scaling.

Conversely, assume (i)--(iii) and let $b:=(p,\,pq/d_1\,,\,s)^\top$. By
\eqref{eq:tridiagxbar} with $\mu_2=1$, $\bar{x}_\beta>\mathbf{0}$. Moreover,
using $rs/d_3=pq/d_1$ (from (i)) in the last identity,
\[
b-s\,\mathbf{e}_3=\frac{p}{d_1}\,g^1\,,\qquad
b+\Bigl(d_2-\frac{pq}{d_1}\Bigr)\mathbf{e}_2=g^2\,,\qquad
b-p\,\mathbf{e}_1=\frac{s}{d_3}\,g^3\,,
\]
with positive multipliers by (ii), and with non-zero steps $-s$,
$d_2-pq/d_1=(pq/d_1)(u-1)$ and $-p$. By Lemma~\ref{lem:boundattained}, each
step is the finite bound $L_i$ or $U_i$ for its index, so every extreme ray of
$\mathcal{B}_\beta$ carries a finite-bound point, and
$\mathcal{K}_\beta(b)=\mathcal{B}_\beta$ by Lemma~\ref{lemma:main}.\Halmos
\end{proof}

In the language of \S\ref{sec:m3}, (b) uses the pairing $\tau(1)=3$,
$\tau(2)=2$, $\tau(3)=1$, and (i) is condition (ii) of Corollary~\ref{thm:m3char}
for it, equivalently $\det M(\tau)=0$
(Lemma~\ref{lemma:detM}). Condition (i) holds automatically when $A_\beta$ is
persymmetric ($d_1=d_3$\,, $p=r$, $q=s$), but it is a genuine equation on the
entries in general, so a generic tridiagonal $3\times3$ basis admits no interior
$b$ with $\mathcal{K}_\beta(b)=\mathcal{B}_\beta$\,. The proof also demonstrates why
Proposition~\ref{prop:symmetric}'s identity pairing cannot be used here: the
corner zeros of $A_\beta$ force each end ray to be hit along the
\emph{opposite} end coordinate. By (i), condition (iii) can equally be written
$d_2d_3/(rs)<1$, which makes the symmetry under reversal of the coordinates
visible.

\begin{example}\label{exa:tridiag3}
Let
\[
A_\beta:=\begin{pmatrix}10&2&0\\3&-10&2\\0&3&10\end{pmatrix},\qquad
b:=(10,3,15)^\top.
\]
One computes $\bar{x}_\beta=A_\beta^{-1}b=(53/56,\,15/56,\,159/112)^\top>0$, so $b$ is an
interior point of $\mathcal{B}_\beta$\,. Direct calculation gives
$L_1=-10$, landing on the ray of $g^3$; $L_2=-53$, landing on the ray of
$g^2$; and $L_3=-15$, landing on the ray of $g^1$ --- each finite bound
lands exactly on a different extreme ray (using only the $L_i$'s), so
$\mathcal{K}_\beta(b)=\mathcal{B}_\beta$ by Lemma~\ref{lemma:main}. This agrees
with Theorem~\ref{thm:tridiagchar}(b): here $pq\,d_3=rs\,d_1=60$, $p/d_1\,,s/d_3>0$,
$d_1d_2/(pq)=-50/3<1$, and $b=5\,(p,\,pq/d_1\,,\,s)^\top$. A diagonal of mixed
signs is not needed: for instance,
$A_\beta=\begin{pmatrix}1&1&0\\2&1&2\\0&1&1\end{pmatrix}$ and $b=(1,2,1)^\top$
satisfy (i)--(iii), with $d_1d_2/(pq)=1/2$ and
$\bar{x}_\beta=\frac13(1,2,1)^\top>\mathbf{0}$.
\end{example}

\begin{remark}\label{rem:tridiagpatterns}
Theorem~\ref{thm:tridiagchar} settles the tridiagonal case: equality fails for
every $b$ once $m\geq4$, and at $m=3$ it holds for some interior $b$ exactly
under (i)--(iii). For $b$ on the boundary of $\mathcal{B}_\beta$ at $m=3$, note
that every entry of $H$ other than $H_{11}=(d_2d_3-rs)/\det A_\beta$ and
$H_{33}=(d_1d_2-pq)/\det A_\beta$ is automatically non-zero; so whenever these two
are non-zero, Lemma~\ref{lem:interiorforced} shows that any $b$ with
$\mathcal{K}_\beta(b)=\mathcal{B}_\beta$ is interior, and (b) covers every $b$.
In the language of the proof of (a), an exhaustive check (over all $2^m$ subsets
$C$) demonstrates that, for $m=3$, the support condition
$C\subset\operatorname{supp}(b)\subset\bigcap_{j\in J}W_j$ is satisfied by four
patterns, $C=\emptyset$, $\{2\}$, $\{1,3\}$ and $\{1,2,3\}$, and for $m\geq4$ by
none. Comparing \emph{every} coordinate $k\not=\tau(j)$ in
$b+\theta_{\tau(j)}\mathbf{e}_{\tau(j)}=\mu_jg^j$, zero and non-zero alike, gives the
sharper condition $\operatorname{supp}(b)\setminus\{\tau(j)\}=W_j\setminus\{\tau(j)\}$ for
every $j$, which rules out $C=\emptyset$ and $C=\{2\}$ and leaves $C=\{1,3\}$ and
$C=\{1,2,3\}$. For interior $b$, the assignment $j\mapsto\tau(j)$ is a bijection, and
the proof of (b) shows that only $C=\{1,3\}$ survives, with
$(\tau(1)\,,\tau(2)\,,\tau(3))=(3,2,1)$: both derangements realizing $C=\{1,2,3\}$ fail by
tridiagonal sparsity alone.
\end{remark}

We can identify a natural, easily verified sufficient condition for a
tridiagonal basis to have an entrywise-positive inverse, by turning to a
classical class of matrices \citep*{BP1994}. Recall that a \emph{Z-matrix} is a square
matrix with non-positive off-diagonal entries, and that a (non-singular)
\emph{M-matrix} is a Z-matrix whose inverse exists and is entrywise
non-negative. A standard sufficient
condition for a Z-matrix to be an M-matrix is that it
be strictly diagonally dominant with positive diagonal. A classical
consequence of Perron--Frobenius theory is that an \emph{irreducible}
M-matrix --- one whose associated directed graph is strongly connected ---
has an entrywise \emph{strictly} positive inverse,
not merely a non-negative one; because a tridiagonal matrix with non-zero
off-diagonal entries is automatically irreducible (its associated graph is
a path), every tridiagonal M-matrix therefore has an entrywise-positive
inverse.

\begin{theorem}\label{thm:posinv}
Suppose that $A_\beta$ is tridiagonal, with all diagonal and off-diagonal
entries non-zero, and $H:=A_\beta^{-1}$ is entrywise positive. Then
$\mathcal{K}_\beta(b)\not=\mathcal{B}_\beta$ for every $b\in\mathcal{B}_\beta$
and every $m\geq3$.
\end{theorem}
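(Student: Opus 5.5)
For $m\geq4$ there is nothing new to prove: the hypotheses (tridiagonal, all diagonal and off-diagonal entries non-zero) are exactly those of Theorem~\ref{thm:tridiagchar}(a), which already gives $\mathcal{K}_\beta(b)\not=\mathcal{B}_\beta$ for every $b\in\mathcal{B}_\beta$\,. So the plan is to treat only $m=3$. There, the positivity of $H$ should contradict the sign condition (ii) of Theorem~\ref{thm:tridiagchar}(b).

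\emph{Reduction to interior $b$.} Since $H$ is entrywise positive, it has no zero entry. Suppose some $b\in\mathcal{B}_\beta$ had $\mathcal{K}_\beta(b)=\mathcal{B}_\beta$\,. Then Lemma~\ref{lem:interiorforced} gives $\bar{x}_\beta>\mathbf{0}$, and Theorem~\ref{thm:tridiagchar}(b) applies: conditions (i)--(iii) must hold for
\[
A_\beta=\begin{pmatrix}d_1&p&0\\q&d_2&r\\0&s&d_3\end{pmatrix}.
\]
In particular, (ii) requires $s/d_3>0$ and $p/d_1>0$.

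\emph{Sign information from $H>0$.} Compute a few entries of $H=A_\beta^{-1}$ by cofactors, with $\Delta:=\det A_\beta$. This is the main, though routine, step: one has to keep the cofactor signs straight.
\[
H_{13}=\frac{pr}{\Delta},\quad H_{12}=-\frac{p\,d_3}{\Delta},\quad H_{31}=\frac{qs}{\Delta},\quad H_{21}=-\frac{q\,d_3}{\Delta},\quad H_{32}=-\frac{d_1 s}{\Delta},\quad H_{23}=-\frac{d_1 r}{\Delta}.
\]
Positivity of $H_{13}$ and $H_{12}$ means $pr$ and $p\,d_3$ have opposite signs, so $r$ and $d_3$ have opposite signs. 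Similarly, positivity of $H_{31}$ and $H_{21}$ gives that $s$ and $d_3$ have opposite signs, that is, $s/d_3<0$. This contradicts (ii). (Symmetrically, $H_{13},H_{23}>0$ give $p/d_1<0$, contradicting the other half of (ii).)

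Hence no $b\in\mathcal{B}_\beta$ achieves equality at $m=3$, and combined with Theorem~\ref{thm:tridiagchar}(a) this settles every $m\geq3$. The only point I would double-check is the cofactor signs. If one prefers to avoid them, the same conclusion follows from Corollary~\ref{crl:m3orthant}-style reasoning applied to the forced pairing $\tau=(3,2,1)$. Under that pairing, condition (i) of Corollary~\ref{thm:m3char} holds trivially for positive $H$. However, the multipliers $\mu_1/\mu_2=p/d_1$ computed in the proof of Theorem~\ref{thm:tridiagchar}(b) are negative, which is the same contradiction.
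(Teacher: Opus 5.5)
Your proof is correct and takes the paper's own route: Theorem~\ref{thm:tridiagchar}(a) handles $m\geq4$, and at $m=3$ Lemma~\ref{lem:interiorforced} forces $\bar{x}_\beta>\mathbf{0}$, after which a sign deduced from $H>0$ contradicts condition (ii) of Theorem~\ref{thm:tridiagchar}(b). Your cofactor formulas and sign deductions all check out; the paper gets the sign more quickly from the $(1,2)$ entry of $A_\beta H=I$, namely $d_1H_{12}+pH_{22}=0$, so $p/d_1=-H_{12}/H_{22}<0$, and your closing ``alternative'' is not independent of this, because showing that $\mu_1/\mu_2=p/d_1$ is negative requires exactly that sign computation.
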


\begin{proof}{Proof.}
For $m\geq4$, this is the special case of Theorem~\ref{thm:tridiagchar}(a)
where $A_\beta$ happens to have an entrywise-positive inverse; the
conclusion there does not use any sign information about $H$. It remains to
rule out $m=3$; write $A_\beta=\begin{pmatrix}d_1&p&0\\q&d_2&r\\0&s&d_3
\end{pmatrix}$, so $g^1=(d_1\,,q,0)^\top$, $g^2=(p,d_2\,,s)^\top$, $g^3=(0,r,d_3)^\top$. (At
$m=3$, non-zero diagonal entries are in fact automatic: e.g., if $d_1=0$,
the identity $A_\beta H=I$ restricted to row $1$, column $2$ reads
$pH_{22}=0$, forcing $p=0$ because $H_{22}>0$, contradicting $p\not=0$; so
the hypothesis is redundant here, although we state it for uniformity with
the general-$m$ case.) As $H$ is entrywise positive it has no zero entry, so
Lemma~\ref{lem:interiorforced} shows that any $b$ with
$\mathcal{K}_\beta(b)=\mathcal{B}_\beta$ has $\bar{x}_\beta>\mathbf{0}$, and then
Theorem~\ref{thm:tridiagchar}(b) requires $p/d_1>0$. But the $(1,2)$ entry of
$A_\beta H=I$ reads $d_1H_{12}+pH_{22}=0$ (row $1$ of $A_\beta$ is
$(d_1\,,p,0)$), so $p/d_1=-H_{12}/H_{22}<0$ because $H_{12}\,,H_{22}>0$. Hence
$\mathcal{K}_\beta(b)\not=\mathcal{B}_\beta$ for every $b$, at $m=3$ as
well.\Halmos
\end{proof}

\begin{corollary}\label{crl:mmatrix}
Suppose that $A_\beta$ is tridiagonal, with all off-diagonal entries
non-zero and non-positive, and strictly diagonally dominant with positive
diagonal (that is, $A_\beta$ is a non-singular M-matrix). Then
$\mathcal{K}_\beta(b)\not=\mathcal{B}_\beta$ for every
$b\in\mathcal{B}_\beta$ and every $m\geq3$.
\end{corollary}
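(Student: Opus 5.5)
The plan is to reduce the corollary to Theorem~\ref{thm:posinv}, whose hypotheses are that $A_\beta$ is tridiagonal with all diagonal and off-diagonal entries non-zero and that $H=A_\beta^{-1}$ is entrywise positive. The tridiagonal structure and the non-zero entries are immediate: the off-diagonal entries are non-zero by assumption, and the diagonal entries are positive. The only real work is to show that $H$ is entrywise \emph{strictly} positive. Once that is done, Theorem~\ref{thm:posinv} gives $\mathcal{K}_\beta(b)\not=\mathcal{B}_\beta$ for every $b\in\mathcal{B}_\beta$ and every $m\geq3$.

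\textbf{Step 1: $A_\beta$ is a non-singular M-matrix.} The off-diagonal entries are non-positive, so $A_\beta$ is a Z-matrix. Write $A_\beta=D(I-N)$, where $D$ is its positive diagonal and $N:=I-D^{-1}A_\beta$ is entrywise non-negative with zero diagonal. Strict diagonal dominance says every row sum of $N$ is less than $1$, so $\|N\|_\infty<1$. The Neumann series then gives
\[
A_\beta^{-1}=\sum_{k\geq0}N^kD^{-1}\geq 0 .
\]
This is the standard sufficient condition recalled before Theorem~\ref{thm:posinv}, and I would cite \citep*{BP1994} for it.

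\textbf{Step 2: $H$ is strictly positive.} The pattern of $N$ is exactly the off-diagonal pattern of $A_\beta$, and all those entries are non-zero. So the directed graph of $N$ is the path $1\leftrightarrow2\leftrightarrow\cdots\leftrightarrow m$ with arcs in both directions. For any $k,l$ there is a walk of length $|k-l|\leq m-1$ from $k$ to $l$, so $(N^{|k-l|})_{kl}>0$. Every term of the series is non-negative and $D^{-1}$ has a positive diagonal, so $H_{kl}\geq (N^{|k-l|})_{kl}/(A_\beta)_{ll}>0$. This is the irreducibility and Perron--Frobenius remark made before Theorem~\ref{thm:posinv}, but done by hand, which avoids any appeal to general theory.

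\textbf{Step 3: conclude.} All hypotheses of Theorem~\ref{thm:posinv} now hold, and applying it gives the corollary.

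The main point needing care is Step 2, since the inverse must be \emph{strictly} positive and not merely non-negative. The walk argument on the Neumann series settles it cleanly. Nothing in the argument is specific to $m=3$ versus $m\geq4$; that split is already handled inside Theorem~\ref{thm:posinv}.
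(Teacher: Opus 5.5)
Your proof is correct and follows the paper's route: reduce to Theorem~\ref{thm:posinv} by showing that $H=A_\beta^{-1}$ is entrywise strictly positive. The only difference is that the paper cites irreducibility and the Perron--Frobenius fact for M-matrices, whereas you check positivity directly via the Neumann series $H=\sum_{k\geq0}N^kD^{-1}$ and path walks of length $|k-l|$, which makes the argument self-contained.
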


\begin{proof}{Proof.}
Such an $A_\beta$ is irreducible, because its off-diagonal entries are all
non-zero; by the Perron--Frobenius fact above, $H:=A_\beta^{-1}$ is
entrywise positive, and the conclusion follows from
Theorem~\ref{thm:posinv}.\Halmos
\end{proof}

A particularly natural family of tridiagonal M-matrices arises from time
series: if the random variables $X_1\,,\ldots,X_m$ follow a stationary
AR(1) (first-order autoregressive) process with correlation
$\rho\in(0,1)$, the correlation matrix $\Sigma_{k\ell}:=\rho^{|k-\ell|}$
has inverse exactly the tridiagonal ``precision matrix''
\[
A_\beta := \Sigma^{-1} = \frac{1}{1-\rho^2}
\begin{pmatrix}
1&-\rho& & &\\
-\rho&1+\rho^2&-\rho& &\\
 &\ddots&\ddots&\ddots&\\
 & &-\rho&1+\rho^2&-\rho\\
 & & &-\rho&1
\end{pmatrix}.
\]

\begin{example}\label{exa:ar1}
The AR(1) precision matrix above is a genuine instance of
Corollary~\ref{crl:mmatrix}: its off-diagonal entries $-\rho/(1-\rho^2)$
are negative, and at every row the diagonal dominance margin is at least
$(1+\rho^2)-2\rho=(1-\rho)^2>0$; being additionally symmetric, it is in
fact a \emph{Stieltjes matrix} --- that is, a symmetric M-matrix
\citep*{BP1994}. So $\mathcal{K}_\beta(b)\not=\mathcal{B}_\beta$
for every $b\in\mathcal{B}_\beta$ and every $m\geq3$, with no need to compute
$\Sigma=A_\beta^{-1}$ explicitly --- although doing so directly confirms
it: every entry $\rho^{|k-\ell|}$ is indeed positive.
\end{example}

\begin{remark}
Theorem~\ref{thm:posinv} establishes that entrywise positivity of $A_\beta^{-1}$ alone
forces $\mathcal{K}_\beta(b)\not=\mathcal{B}_\beta$\,, for every $b$, for
tridiagonal bases at every $m\geq3$; every tridiagonal M-matrix
(Corollary~\ref{crl:mmatrix}), with the AR(1) precision matrix
(Example~\ref{exa:ar1}) as one instance, satisfies this automatically, but
Theorem~\ref{thm:posinv} needs no sign pattern on $A_\beta$ at all, only
positivity of $H$ directly, so its reach extends beyond the M-matrix
class. At $m=3$ the division of labor is clean: tridiagonal sparsity alone
forces the pairing of Theorem~\ref{thm:tridiagchar}(b), regardless of sign, and
positivity of $H$ then violates its sign condition (ii), via $A_\beta H=I$
restricted to the sparse first row of $A_\beta$\,.
\end{remark}

\begin{remark}[Why is $m=3$ special?]\label{rem:whym3}
The idea used throughout this section --- pairing each ray with an
index via a single bound --- gives $m(m-1)$ homogeneous linear
equations in the $2m$ unknowns $b_1\,,\ldots,b_m\,,\mu_1\,,\ldots,\mu_m$\,,
and a witness needs a non-trivial solution. For $m=2$ there are fewer equations than
unknowns, so non-trivial solutions always exist. For $m=3$ the system is square,
so a non-trivial solution exists exactly when its determinant vanishes: a single
polynomial condition on $A_\beta$\,, namely $\det M(\tau)=0$ of
Lemma~\ref{lemma:detM}, which is condition (ii) of Corollary~\ref{thm:m3char}. For
$m\geq4$ the system is overdetermined, and a non-trivial solution requires several
independent conditions at once: by the count after Theorem~\ref{thm:generalchar},
the bases admitting an interior witness should have codimension $m^2-3m+1$. So $m=3$
is not the last dimension in which tightness can be decided, or achieved ---
Theorem~\ref{thm:generalchar} decides it in every dimension, and
Example~\ref{ex:two} achieves it in every dimension. It is the last dimension in
which tightness is cut out by a single equation, which is why equality at $m=3$
fails for generic bases yet holds for structured families that satisfy that
equation identically. Two such families appear above: persymmetric tridiagonal bases
(symmetric about the anti-diagonal, that is, invariant under
$A_{ij}\mapsto A_{4-j\,,4-i}$), for which condition (i) of
Theorem~\ref{thm:tridiagchar}(b) holds automatically, with
Example~\ref{exa:tridiag3} as one instance; and the symmetric bases of
Proposition~\ref{prop:symmetric}, with no bandedness at all. Wider bandwidth does
not help. The obstruction of part (a) of Theorem~\ref{thm:tridiagchar} extends:
for bandwidth $k$, the support argument rules out every $m>2k+1$, as confirmed by
exhaustive search up to $m=12$. But the converse fails: already for $k=2$ and $m=5$,
no $b$ achieves $\mathcal{K}_\beta(b)=\mathcal{B}_\beta$ for any pentadiagonal
$A_\beta$ with every entry in the band non-zero, because the sharper support
condition of Remark~\ref{rem:tridiagpatterns} is then satisfied by no pattern at
all. Random dense $4\times4$ bases fare no better: when $H$ has no zero entry,
Theorem~\ref{thm:generalchar} requires, for some bijection $\tau$, a non-trivial
solution of the $12$ homogeneous parallelism equations
$\bar{x}_{\beta_k}+\theta_jh^{\tau(j)}_k=0$ $(k\not=j)$ in the $8$ unknowns
$(\bar{x}_\beta\,,\theta)$, and for each of $250$ random integer bases, checking all
$24$ bijections exactly, none had one --- as Theorem~\ref{thm:notequal} leads one to
expect, although particular dense bases do achieve equality in every dimension
(Example~\ref{ex:two}). So part (a) of Theorem~\ref{thm:tridiagchar} --- no tridiagonal $A_\beta$
achieves $\mathcal{K}_\beta(b)=\mathcal{B}_\beta$ once $m\geq4$ --- is genuinely
tridiagonal, resting on narrow bandwidth, while part (b) --- equality at $m=3$
on the set of bases cut out by the single equation (i) --- reflects $m=3$'s
general amenability to this mechanism rather than bandedness as such.
\end{remark}

\subsection{The AR(1) family at $m=3$}\label{subsec:ar1}

Corollary~\ref{crl:mmatrix} tells us that the AR(1) precision matrix never
attains $\mathcal{K}_\beta(b)=\mathcal{B}_\beta$\,, for any $b$ and any $m\geq3$.
That is a qualitative statement, and it invites a quantitative one: how much of
$\mathcal{B}_\beta$ does the cone rule capture for this family? The answer is the
solid-angle ratio $\sigma_3$ of \eqref{eq:sigmam}, which here lies in $(0,1)$ for
every $b$ with $\bar{x}_\beta>\mathbf{0}$, because equality never holds. At $m=3$ the
behavior of $\sigma_3$ over $b$ can
be described almost completely, and we do so here. Throughout,
\[
A_\beta=\begin{pmatrix} 1 & -\rho & 0\\ -\rho & 1+\rho^2 & -\rho\\ 0 & -\rho & 1\end{pmatrix},
\qquad
H=A_\beta^{-1}=\frac{1}{1-\rho^2}\,\Sigma,\qquad \Sigma_{kl}=\rho^{|k-l|},
\]
with $0<\rho<1$, and we write $\bar{x}_\beta=(\xi_1\,,\xi_2\,,\xi_3)^\top$ with $\xi_1\,,\xi_2\,,\xi_3>0$. (We
have dropped the positive factor $1/(1-\rho^{2})$ from the precision matrix
displayed above; scaling $A_\beta$ by a positive constant changes neither
$\mathcal{B}_\beta$ nor $\mathcal{K}_\beta(b)$, because the ratios
$-\bar{x}_{\beta_k}/h^i_k$ defining $L_i\,,U_i$ are invariant.)

\smallskip
\noindent{\bf The bounds and the shape of $\mathcal{K}_\beta(b)$.}

\begin{proposition}\label{prp:ar1bounds}
For every $b$ with $\bar{x}_\beta>\mathbf{0}$ we have $U_i=+\infty$ for
$i=1,2,3$, and
\[
L_i=-(1-\rho^2)\min_{k}\ \frac{\bar{x}_{\beta_k}}{\rho^{|k-i|}}\,.
\]
Consequently $\mathcal{K}_\beta(b)=\cone(\{v_1\,,v_2\,,v_3\})+\mathbb{R}^3_{+}$ with
$v_i=b+L_i\mathbf{e}_i$\,; in particular $\mathcal{K}_\beta(b)$ contains the
non-negative orthant.
\end{proposition}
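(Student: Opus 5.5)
The proof reads everything off the columns of $H$, using the definitions of $L_i$ and $U_i$ from \S\ref{sec:100}. We work with the scaled matrix $A_\beta$ displayed just before the statement. Its inverse is $H=\Sigma/(1-\rho^2)$, so $h^i_k=\rho^{|k-i|}/(1-\rho^2)>0$ for every $i,k$, because $0<\rho<1$.

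\textbf{The bounds.} Since every column $h^i$ is entrywise positive, the index set $\{k: h^i_k<0\}$ is empty. Hence $U_i=+\infty$ by the convention of \S\ref{sec:100}. The lower bound is a maximum over all $k$:
\[
L_i=\max_k\bigl\{-\bar{x}_{\beta_k}/h^i_k\bigr\}=-\min_k\,\bar{x}_{\beta_k}/h^i_k=-(1-\rho^2)\min_k\,\bar{x}_{\beta_k}/\rho^{|k-i|}.
\]
This is the displayed formula. It is finite and strictly negative because $\bar{x}_\beta>\mathbf{0}$.

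\textbf{The shape of $\mathcal{K}_\beta(b)$.} We substitute into the definition of $\mathcal{K}_\beta(b)$ in \S\ref{sec:conerule}. Each index $i$ contributes exactly two generators. The first is $b+L_i\mathbf{e}_i=v_i$, since $L_i$ is finite. The second is the bare vector $\mathbf{e}_i$, since $U_i=+\infty$. So $\mathcal{K}_\beta(b)=\cone(\{v_1,v_2,v_3,\mathbf{e}_1,\mathbf{e}_2,\mathbf{e}_3\})$.

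A conical hull of a union is the Minkowski sum of the conical hulls. This gives $\cone(\{v_1,v_2,v_3\})+\cone(\{\mathbf{e}_1,\mathbf{e}_2,\mathbf{e}_3\})=\cone(\{v_1,v_2,v_3\})+\mathbb{R}^3_+$.

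The closure in $\ccone$ needs no extra argument. A finitely generated cone is closed, so $\mathcal{K}_\beta(b)$ coincides with the finitely generated cone above.

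Containment of the non-negative orthant follows by taking zero weights on $v_1,v_2,v_3$.

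\textbf{Where care is needed.} There is no real obstacle here. The one point to check is that the rescaling of $A_\beta$ is harmless. The ratios $-\bar{x}_{\beta_k}/h^i_k$ are invariant under scaling $A_\beta$ by a positive constant: $\bar{x}_\beta$ and $h^i$ both scale by the reciprocal factor, so their ratio is unchanged. Hence the formula for $L_i$ and the cone are the same whichever normalization of the precision matrix is used.
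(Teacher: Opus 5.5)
Your proof is correct and is essentially the paper's argument: entrywise positivity of $H=\Sigma/(1-\rho^2)$ leaves no candidates for $U_i$ and turns $L_i$ into the displayed minimum, and the shape of $\mathcal{K}_\beta(b)$ is then read off its generators. The paper phrases the last step as $\Delta_{100\%}=\conv(\{L_i\mathbf{e}_i ~:~ i=1,2,3\})+\mathbb{R}^3_+$ and takes the conical hull after translating by $b$, which gives the same generator list you use.
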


\begin{proof}{Proof.}
$H$ is a positive multiple of $\Sigma$, which is entrywise positive, so every
$h^i_k>0$. Hence no ratio $-\bar{x}_{\beta_k}/h^i_k$ is a candidate for $U_i$\,,
giving $U_i=+\infty$, while $L_i=\max_k\{-\bar{x}_{\beta_k}/h^i_k\}$ is the
displayed minimum. As $U_i=+\infty$, each $\mathbf{e}_i$ is a generator of
$\mathcal{K}_\beta(b)$, and $\Delta_{100\%}=\conv(\{L_i\mathbf{e}_i ~:~ i=1,2,3\})+\mathbb{R}^3_+$\,,
whose conical hull after translation by $b$ is as stated.\Halmos
\end{proof}

The minimum in Proposition~\ref{prp:ar1bounds} makes $L_i$ piecewise linear in
$\bar{x}_\beta$\,, and the pieces are what organize everything below. Call the
\emph{regime} of $\bar{x}_\beta$ the triple of indices attaining the three
minima. In the regime in which each $L_i$ binds at its own coordinate, that is
$L_i=-(1-\rho^2)\bar{x}_{\beta_i}$\,, the generators are polynomial: writing
$b=(b_1\,,b_2\,,b_3)^\top$,
\begin{equation}\label{eq:ar1gens}
v_1=\bigl(\rho(\rho\xi_1-\xi_2),\,b_2\,,\,b_3\bigr)^\top,\quad
v_2=\bigl(b_1\,,\,\rho(2\rho\xi_2-\xi_1-\xi_3),\,b_3\bigr)^\top,\quad
v_3=\bigl(b_1\,,\,b_2\,,\,\rho(\rho\xi_3-\xi_2)\bigr)^\top.
\end{equation}

\smallskip
\noindent{\bf The denominator.}

\begin{proposition}\label{prp:ar1denom}
For $0<\rho<1$,
\[
\tilde\Omega_3(\mathcal{B}_\beta)=\frac18+\frac{1}{4\pi}
\left[\arcsin\frac{3\rho^{2}}{\rho^{4}+\rho^{2}+1}
+2\arcsin\frac{\rho(\rho^{2}+2)}{\sqrt{2\rho^{6}+3\rho^{4}+3\rho^{2}+1}}\right].
\]
\end{proposition}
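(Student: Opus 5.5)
The plan is to compute the solid angle of the simplicial cone $\mathcal{B}_\beta$ with Girard's theorem. The intersection of $\mathcal{B}_\beta$ with the unit sphere is a spherical triangle. Its area, which is the full (unnormalized) solid angle $\Omega$, equals its spherical excess $\alpha_{12}+\alpha_{13}+\alpha_{23}-\pi$. Here $\alpha_{ab}$ is the interior dihedral angle of $\mathcal{B}_\beta$ along the edge where facets $a$ and $b$ meet. Since $\mathcal{B}_\beta=\{\hat b: H\hat b\geq\mathbf{0}\}$, the rows of $H$ are inward facet normals. As in Proposition~\ref{prp:ar1bounds}, we may replace $H$ by $\Sigma$, since a positive scalar changes no angles. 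The interior dihedral angle between two facets with inward normals $n_a,n_b$ is $\pi-\arccos c_{ab}$, where $c_{ab}:=n_a^\top n_b/(\|n_a\|\|n_b\|)$. Substituting, and writing $\arccos c=\pi/2-\arcsin c$, gives
\[
\tilde\Omega_3(\mathcal{B}_\beta)=\frac{\Omega}{4\pi}=\frac{1}{4\pi}\Bigl(2\pi-\sum_{a<b}\arccos c_{ab}\Bigr)=\frac18+\frac{1}{4\pi}\sum_{a<b}\arcsin c_{ab}\,.
\]
This already has the shape of the statement.

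The next step is to evaluate the three cosines from the rows of $\Sigma$, namely $r_1=(1,\rho,\rho^2)$, $r_2=(\rho,1,\rho)$ and $r_3=(\rho^2,\rho,1)$. Their squared norms are $\|r_1\|^2=\|r_3\|^2=1+\rho^2+\rho^4$ and $\|r_2\|^2=1+2\rho^2$. Their inner products are $r_1^\top r_3=3\rho^2$ and $r_1^\top r_2=r_2^\top r_3=\rho(\rho^2+2)$. Hence $c_{13}=3\rho^2/(\rho^4+\rho^2+1)$. Also $c_{12}=c_{23}=\rho(\rho^2+2)/\sqrt{(1+\rho^2+\rho^4)(1+2\rho^2)}$, and expanding the product under the root gives $2\rho^6+3\rho^4+3\rho^2+1$. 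The two equal terms account for the factor $2$ in the statement.

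The one point that needs care, and the step I expect to be the main obstacle, is the sign convention for the dihedral angles. I must verify that the inward normals are exactly the rows of $H$, not their negatives. I must also verify that the interior angle is the supplement of the angle between the normals, not the angle itself. I would settle this by a sanity check at $\rho\to0$. There $\mathcal{B}_\beta$ becomes the orthant, all $c_{ab}\to0$, and the formula gives $1/8$, which is correct. A second check is $\rho\to1$, where all $c_{ab}\to1$ and the formula gives $1/8+3/8=1/2$. This is consistent with the columns of $A_\beta$ degenerating into a plane. A cross-check with the Van Oosterom--Strackee formula at one value of $\rho$ would also confirm it. Finally, I would note that each $c_{ab}\in(0,1)$ for $0<\rho<1$, so every $\arcsin$ lies in the principal range and no branch issues arise.
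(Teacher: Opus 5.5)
Your argument is correct and arrives at exactly the paper's three coefficients, but it rests on a different key fact. The paper applies Proposition~\ref{prp:orthantprob} with $V=A_\beta$. This gives $\tilde\Omega_3(\mathcal{B}_\beta)=\Pr(Y\geq\mathbf{0})$ with covariance $(A_\beta^\top A_\beta)^{-1}=HH^\top$, a positive multiple of $\Sigma^2$. The paper then cites the classical trivariate Gaussian orthant formula $\frac18+\frac1{4\pi}\sum_{i<j}\arcsin r_{ij}$. Since $\Sigma$ is symmetric, $(\Sigma^2)_{ij}=r_i^\top r_j$ for the rows $r_i$ of $\Sigma$, so the paper's correlations $r_{ij}$ are precisely your $c_{ab}$.

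Your Girard-theorem computation, with interior dihedral angles $\pi-\arccos c_{ab}$, amounts to a self-contained proof of that classical formula, which can itself be derived this way. So you trade a citation and a probabilistic detour for elementary spherical geometry. The paper's route, by contrast, is one line given Proposition~\ref{prp:orthantprob}, and it reuses the machinery the paper needs anyway for $\sigma_m$ in higher dimensions.

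On the obstacle you flag, the inward-versus-outward question is moot: negating both normals leaves $c_{ab}$ unchanged. Only the supplement matters, and you can confirm it directly on a planar cross-section orthogonal to an edge. Your $\rho\to0$ check cannot detect an error in the supplement, since $c_{ab}=0$ gives $\pi/2$ either way. It is the $\rho\to1$ check that excludes the unsupplemented version, which would give $\tilde\Omega_3\to\frac18-\frac38<0$.
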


\begin{proof}{Proof.}
Apply Proposition~\ref{prp:orthantprob} with $V=A_\beta=(1-\rho^2)\Sigma^{-1}$, so
that $(V^{\top}V)^{-1}$ is a positive multiple of $\Sigma^{2}$; as an orthant
probability depends only on the correlation matrix, we may use $\Sigma^2$ itself.
The correlation matrix of $\Sigma^2$ has
$(1,3)$ entry $3\rho^2/(\rho^4+\rho^2+1)$ and $(1,2)=(2,3)$ entry
$\rho(\rho^2+2)/\sqrt{2\rho^6+3\rho^4+3\rho^2+1}$. The classical formula for a
trivariate Gaussian orthant probability,
$\frac18+\frac{1}{4\pi}\sum_{i<j}\arcsin r_{ij}$ --- from 
\citep*{David1953}, and given as equation~(8.2.7) of \citep*{Tong1990} --- gives the
statement.\Halmos
\end{proof}

Two checks: as $\rho\to0$ the expression tends to $1/8$, correctly, because
$A_\beta\to I$ and $\mathcal{B}_\beta$ tends to an orthant; and as $\rho\to1$ it
tends to $1/2$, because $\mathcal{B}_\beta$ degenerates to a half-space.

\smallskip
\noindent{\bf Reduction by persymmetry.}

Let $\Pi $ be the coordinate-reversing permutation matrix.

\begin{lemma}\label{lem:ar1persym}
$\Pi A_\beta \Pi =A_\beta$\,. Hence $\Pi $ maps $\mathcal{B}_\beta$ to itself and
$\mathcal{K}_\beta(b)$ to $\mathcal{K}_\beta(\Pi b)$; as $\Pi $ is an isometry,
$\sigma_3(\Pi b)=\sigma_3(b)$.
\end{lemma}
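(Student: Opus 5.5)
The plan is to verify the matrix identity by direct computation, and then transport every ingredient of the construction of $\mathcal{K}_\beta(b)$ through $\Pi$.

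\textbf{The identity.} Here $\Pi$ is the $3\times3$ anti-identity, with $\Pi^2=I$ and $\Pi^\top=\Pi$. Conjugating by $\Pi$ sends entry $(k,l)$ to entry $(4-k,4-l)$. The matrix $A_\beta$ displayed at the start of this subsection is invariant under this map: its diagonal is $(1,1+\rho^2,1)$, its off-diagonal entries all equal $-\rho$, and its corner entries are $0$. So $\Pi A_\beta\Pi=A_\beta$, and therefore also $\Pi H\Pi=H$.

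\textbf{Invariance of $\mathcal{B}_\beta$.} We have $\hat b\in\mathcal{B}_\beta$ if and only if $H\hat b\geq\mathbf{0}$. Since $H\Pi\hat b=\Pi H\hat b$ and $\Pi$ only permutes coordinates, $H\Pi\hat b\geq\mathbf{0}$ exactly when $H\hat b\geq\mathbf{0}$. Hence $\Pi\mathcal{B}_\beta=\mathcal{B}_\beta$.

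\textbf{Transport of the bounds.} Write $b':=\Pi b$. Then $\bar{x}'_\beta=H\Pi b=\Pi\bar{x}_\beta$, that is, $\bar{x}'_{\beta_k}=\bar{x}_{\beta_{4-k}}$. Also $h'^{\,i}:=h^i$ computed for $b'$ is unchanged, but the relation $\Pi H\Pi=H$ gives $h^{4-i}_{4-k}=h^i_k$. Substituting into the definitions of $L_i$ and $U_i$ and reindexing $k\mapsto 4-k$ yields
\[
L_i(b')=L_{4-i}(b),\qquad U_i(b')=U_{4-i}(b).
\]
Consequently $b'+L_i(b')\mathbf{e}_i=\Pi\bigl(b+L_{4-i}(b)\mathbf{e}_{4-i}\bigr)$, and likewise for the $U$-generators. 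The bare generators $\pm\mathbf{e}_i$ are permuted in the same way, because infiniteness of a bound is transported along with its value. So the generating set of $\mathcal{K}_\beta(b')$ is exactly $\Pi$ applied to the generating set of $\mathcal{K}_\beta(b)$, which gives $\mathcal{K}_\beta(\Pi b)=\Pi\,\mathcal{K}_\beta(b)$. The zero-bound conventions transport in the same way, since they depend only on the values of the ratios.

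\textbf{The ratio $\sigma_3$.} $\Pi$ is orthogonal, so it preserves normalized solid angle. Therefore $\tilde\Omega_3(\mathcal{K}_\beta(\Pi b))=\tilde\Omega_3(\mathcal{K}_\beta(b))$, while the denominator $\tilde\Omega_3(\mathcal{B}_\beta)$ is unchanged because $\Pi\mathcal{B}_\beta=\mathcal{B}_\beta$. Hence $\sigma_3(\Pi b)=\sigma_3(b)$.

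\textbf{Main care point.} The only step needing attention is the index bookkeeping in the transport of the bounds, namely checking that $\Pi H\Pi=H$ produces the pairing $i\leftrightarrow 4-i$ for the bounds rather than some other permutation. This follows directly from the entrywise form $h^i_k=H_{ki}=H_{4-k,4-i}$.
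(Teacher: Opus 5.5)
Your proposal is correct and follows the paper's proof essentially step for step: the matrix identity by inspection, then $\bar{x}_\beta(\Pi b)=\Pi\bar{x}_\beta(b)$ and the reindexing $k\mapsto 4-k$ to get $L_i(\Pi b)=L_{4-i}(b)$ and $U_i(\Pi b)=U_{4-i}(b)$, then mapping the generators (finite-bound and bare) through $\Pi$, and finally orthogonality of $\Pi$. The only cosmetic difference is that you obtain $\Pi\mathcal{B}_\beta=\mathcal{B}_\beta$ from the inequality description $H\hat b\geq\mathbf{0}$, whereas the paper uses the generator description $A_\beta\mathbb{R}^3_+$; both are equally valid.
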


\begin{proof}{Proof.}
$\Pi A_\beta \Pi =A_\beta$ by inspection, as $A_\beta$ is symmetric and persymmetric;
hence also $\Pi H\Pi=H$, that is, $H_{4-k,4-i}=H_{ki}$\,. Then
$\Pi \mathcal{B}_\beta=\Pi A_\beta\mathbb{R}^3_+=A_\beta \Pi \mathbb{R}^3_+=
A_\beta\mathbb{R}^3_+=\mathcal{B}_\beta$\,. For the right-hand side $\Pi b$ we have
$\bar{x}_\beta(\Pi b)=H\Pi b=\Pi \bar{x}_\beta(b)$, so substituting $k'=4-k$ in the
definitions of \S\ref{sec:100} gives $L_i(\Pi b)=L_{4-i}(b)$ and
$U_i(\Pi b)=U_{4-i}(b)$. Hence $\Pi b+L_i(\Pi b)\mathbf{e}_i=\Pi \bigl(b+L_{4-i}(b)\mathbf{e}_{4-i}\bigr)$,
and likewise for the $U_i$ and the bare generators, so
$\mathcal{K}_\beta(\Pi b)=\Pi \,\mathcal{K}_\beta(b)$. As $\Pi $ is orthogonal it preserves
normalized solid angle.\Halmos
\end{proof}

The fixed points of $\Pi $ are the $b$ with $\bar{x}_\beta=(t,1-2t,t)^\top$, on which we
now concentrate. Here \eqref{eq:ar1gens} becomes, up to positive scaling,
\begin{equation}\label{eq:ar1line}
v_1=(\alpha,\gamma,\zeta)^\top,\qquad v_2=\zeta\,(1,-2\rho,1)^\top,\qquad v_3=(\zeta,\gamma,\alpha)^\top,
\end{equation}
\[
\alpha=\rho\bigl(t(2+\rho)-1\bigr),\qquad \zeta=t(1+2\rho)-\rho,\qquad
\gamma=(1+\rho^{2})(1-2t)-2\rho t\,.
\]
The two regime boundaries on this line are exactly the vanishing loci of $\zeta$
and of $\alpha$:
\[
t^{*}:=\frac{\rho}{1+2\rho}\quad(\zeta=0),\qquad
t^{**}:=\frac{1}{2+\rho}\quad(\alpha=0),\qquad t^{*}<t^{**}.
\]
Note that $v_2$ is always a multiple of the \emph{fixed} direction
$(1,-2\rho,1)^\top$, whose sign is that of $\zeta$; so $v_2$ passes through the origin
at $t^{*}$ and its direction reverses there. This is the source of the
discontinuity of $\sigma_3$ at $t^{*}$.

\smallskip
\noindent{\bf The critical value $\rho=1/\sqrt2$.}

\begin{proposition}\label{prp:ar1threshold}
At $t=t^{*}$ we have $b=\bigl(0,\tfrac{1-\rho^{2}}{1+2\rho},0\bigr)^\top$ and
$v_2=\mathbf{0}$, so that
\[
\mathcal{K}_\beta(b)=\cone\bigl((-\rho,1,0)^\top,\,(0,1,-\rho)^\top,\,\mathbf{e}_1\,,\mathbf{e}_3\bigr).
\]
The limiting cone as $t\downarrow t^{*}$ is
$\cone\bigl((-\rho,1,0)^\top,(0,1,-\rho)^\top,(1,-2\rho,1)^\top\bigr)$ together with
$\mathbf{e}_1\,,\mathbf{e}_2\,,\mathbf{e}_3$\,, and it is simplicial --- that is,
$\mathbf{e}_1\,,\mathbf{e}_2\,,\mathbf{e}_3$ are all redundant --- if and only if
$\rho\geq1/\sqrt2$.
\end{proposition}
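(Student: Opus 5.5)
The plan is to compute everything directly from Proposition~\ref{prp:ar1bounds}, working in $\bar{x}_\beta$-coordinates on the persymmetric line $\bar{x}_\beta=(t,1-2t,t)^\top$. I would not rely on the regime formula \eqref{eq:ar1gens} at $t=t^*$ itself, because $t^*$ is a regime boundary. First, $b=A_\beta\bar{x}_\beta$ at $t^*=\rho/(1+2\rho)$, where $1-2t^*=1/(1+2\rho)$. Its first and third coordinates are $t^*-\rho(1-2t^*)=0$, and its middle coordinate is $-2\rho t^*+(1+\rho^2)(1-2t^*)=(1-\rho^2)/(1+2\rho)$. This gives the stated $b$.

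Next I would evaluate the three minima $\min_k\bar{x}_{\beta_k}/\rho^{|k-i|}$ explicitly.
\begin{itemize}
\item[] For $i=2$, the candidates are $t^*/\rho$, $1-2t^*$ and $t^*/\rho$. All three equal $1/(1+2\rho)$, so $L_2=-(1-\rho^2)/(1+2\rho)=-b_2$. Hence $v_2=b+L_2\mathbf{e}_2=\mathbf{0}$.
\item[] For $i=1$, the minimum is $t^*$, since $(1-2t^*)/\rho=1/(\rho(1+2\rho))>t^*$ and $t^*/\rho^2>t^*$. So $v_1=b-(1-\rho^2)t^*\mathbf{e}_1=\tfrac{1-\rho^2}{1+2\rho}(-\rho,1,0)^\top$.
\item[] For $i=3$, the same computation (or Lemma~\ref{lem:ar1persym}) gives $v_3=\tfrac{1-\rho^2}{1+2\rho}(0,1,-\rho)^\top$.
\end{itemize}
By Proposition~\ref{prp:ar1bounds}, $\mathcal{K}_\beta(b)$ is generated by $v_1,v_3,\mathbf{e}_1,\mathbf{e}_2,\mathbf{e}_3$. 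The generator $\mathbf{e}_2=(-\rho,1,0)^\top+\rho\,\mathbf{e}_1$ is redundant, which yields the first displayed cone.

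For the limit $t\downarrow t^*$, I would use \eqref{eq:ar1line}, valid in the regime $t^*<t<t^{**}$ adjacent to $t^*$. On that interval $\zeta>0$, so $v_2$ is a positive multiple of $(1,-2\rho,1)^\top$. As $t\to t^*$, the rays of $v_1=(\alpha,\gamma,\zeta)^\top$ and $v_3$ converge to those of $(\alpha,\gamma,0)^\top$ and $(0,\gamma,\alpha)^\top$. At $t^*$ one has $\alpha=\rho(\rho^2-1)/(1+2\rho)$ and $\gamma=(1-\rho^2)/(1+2\rho)>0$, so these limiting rays are those of $(-\rho,1,0)^\top$ and $(0,1,-\rho)^\top$. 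Adding the recession directions $\mathbf{e}_1,\mathbf{e}_2,\mathbf{e}_3$, which are present for every $t$, gives the stated limiting cone. I would also check that the limit of the cones is the cone of the limiting generators: all generators are non-zero and vary continuously in direction on the regime, so this holds.

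Simpliciality is the real content, and I expect it to be the main (if elementary) step. Set $a=(-\rho,1,0)^\top$, $c=(0,1,-\rho)^\top$ and $d=(1,-2\rho,1)^\top$. These are linearly independent, since the determinant is $2\rho^2-1-\dots$, to be checked non-zero except possibly at the threshold. I would solve $\mathbf{e}_1=xa+yc+zd$. The three coordinates give $z-\rho x=1$, $x+y-2\rho z=0$ and $z=\rho y$. Substituting, $z=1/(2(1-\rho^2))>0$, $y=z/\rho>0$ and $x=(z-1)/\rho$. Hence $x\geq0$ if and only if $z\geq1$, which is equivalent to $\rho^2\geq1/2$. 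By the symmetry $\Pi$ of Lemma~\ref{lem:ar1persym}, which swaps $a$ and $c$ and fixes $d$, the same conclusion holds for $\mathbf{e}_3$. Finally, $\mathbf{e}_2=a+\rho\,\mathbf{e}_1$ lies in $\cone(a,c,d)$ whenever $\mathbf{e}_1$ does. So all three unit vectors are redundant if and only if $\rho\geq1/\sqrt2$. Conversely, when $\rho<1/\sqrt2$, $\mathbf{e}_1$ has the negative coefficient $x<0$ in the unique representation. It therefore lies outside $\cone(a,c,d)$, and the cone is not simplicial with those three rays.
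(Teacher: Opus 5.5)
Your proposal is correct and follows the paper's proof essentially step for step: you evaluate the three minima at $t^*$ to get $v_2=\mathbf{0}$ and the rays $(-\rho,1,0)^\top$, $(0,1,-\rho)^\top$, discard $\mathbf{e}_2=(-\rho,1,0)^\top+\rho\,\mathbf{e}_1$, then solve $\mathbf{e}_1=xa+yc+zd$ to get $x=(2\rho^2-1)/(2\rho(1-\rho^2))$ with $y,z>0$, and use $\Pi$ for $\mathbf{e}_3$ (the paper checks $\mathbf{e}_2$ directly and finds positive coefficients for every $\rho$, but your reduction to $\mathbf{e}_1$ suffices for the equivalence). One small correction: $\det[a\,|\,c\,|\,d]=-2\rho(1-\rho^2)$ is non-zero for every $\rho\in(0,1)$, including at the threshold, as your unique solution of the linear system already shows, so there is no exceptional value left to check.
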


\begin{proof}{Proof.}
At $t=t^{*}$ the minimum defining $L_1$ is attained at $k=1$, because
$t^{*}<1/(\rho(1+2\rho))$ reduces to $\rho<1/\rho$. This gives
$L_1=L_3=-(1-\rho^2)\rho/(1+2\rho)$ and $L_2=-(1-\rho^{2})/(1+2\rho)=-b_2$\,, so
$v_2=\mathbf{0}$ and, up to positive scaling, $v_1=(-\rho,1,0)^\top$ and
$v_3=(0,1,-\rho)^\top$. Also $\mathbf{e}_2=v_1+\rho\mathbf{e}_1$ is redundant, leaving
the four stated rays.

For $t\downarrow t^{*}$, $v_2=\zeta(1,-2\rho,1)^\top$ with $\zeta\downarrow0$, so the
ray of $v_2$ tends to that of $w:=(1,-2\rho,1)^\top$. Solving
$\mathbf{e}_1=a\,v_1+c\,v_3+d\,w$ gives
\[
a=\frac{2\rho^{2}-1}{2\rho(1-\rho^{2})},\qquad
c=\frac{1}{2\rho(1-\rho^{2})},\qquad
d=\frac{1}{2(1-\rho^{2})}\,,
\]
and $c,d>0$ for $0<\rho<1$, so $\mathbf{e}_1\in\cone(\{v_1\,,v_3\,,w\})$ if and only if
$2\rho^{2}-1\geq0$. The case of $\mathbf{e}_3$ is the mirror image under $\Pi $, and
for $\mathbf{e}_2$ all three coefficients equal $1/(2(1-\rho^{2}))$ or
$\rho/(2(1-\rho^{2}))$, hence are positive for every $\rho<1$.\Halmos
\end{proof}

\smallskip
\noindent{\bf The best case.}

On an initial sub-interval of the branch $(t^{*},t^{**})$ --- starting at $t^{*}$
itself when $\rho\geq1/\sqrt2$, by Proposition~\ref{prp:ar1threshold}, and from
some larger $t$ otherwise --- the cone $\mathcal{K}_\beta(b)$ is simplicial, with
extreme rays $v_1\,,v_2\,,v_3$ only; further along the branch a fourth extreme ray
appears (numerically, near the midpoint of the branch for every $\rho$ we
examined). Where the cone is simplicial, $\tilde\Omega_3$ is given by the classical formula
for the solid angle of a trihedral cone \citep*{VOS1983}, applied to
\eqref{eq:ar1line}, and eliminating the radical
$|v_1|=|v_3|$ from the stationarity condition yields a polynomial condition.

\begin{theorem}\label{thm:ar1quartic}
A stationary point of $\tilde\Omega_3$ at which $\mathcal{K}_\beta(b)$ is
simplicial, on $(t^{*},t^{**})$, satisfies $\sum_{k=0}^{4}c_k(\rho)\,t^{k}=0$, where
\[
\begin{aligned}
c_4&=2\rho^{10}-32\rho^{9}-185\rho^{8}-556\rho^{7}-1038\rho^{6}-1356\rho^{5}
-1292\rho^{4}-820\rho^{3}-412\rho^{2}-116\rho-27,\\
c_3&=6\,(2\rho^{4}+3\rho^{3}+8\rho^{2}+3\rho+2)(4\rho^{5}+11\rho^{4}+20\rho^{3}
+24\rho^{2}+8\rho+5),\\
c_2&=-24\rho^{9}-83\rho^{8}-264\rho^{7}-568\rho^{6}-792\rho^{5}-1038\rho^{4}
-552\rho^{3}-424\rho^{2}-96\rho-47,\\
c_1&=4\,(\rho^{4}+4\rho^{2}+1)(\rho^{5}+4\rho^{4}+7\rho^{3}+16\rho^{2}+4\rho+4),\\
c_0&=-2\,(\rho^{4}+4\rho^{2}+1)^{2}.
\end{aligned}
\]
This quartic has exactly one root in $(t^{*},t^{**})$, for every
$\rho\in(0,1)$.
\end{theorem}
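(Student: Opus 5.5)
We would work directly on the symmetric line $\bar{x}_\beta=(t,1-2t,t)^\top$ with the three generators \eqref{eq:ar1line}, and use the Van Oosterom--Strackee formula for a trihedral cone:
\[
\tan\Bigl(\tfrac{\Omega}{2}\Bigr)=\frac{|\det[v_1,v_2,v_3]|}{|v_1||v_2||v_3|+(v_1\!\cdot\! v_2)|v_3|+(v_1\!\cdot\! v_3)|v_2|+(v_2\!\cdot\! v_3)|v_1|}.
\]
By \eqref{eq:ar1line}, $v_2=\zeta w$ with $w=(1,-2\rho,1)^\top$ fixed and $\zeta>0$ on $(t^*,t^{**})$. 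Since the solid angle depends only on rays, we may replace $v_2$ by $w$. Also $v_3=\Pi v_1$, so $|v_1|=|v_3|=:R$, where $R^2=2\alpha^2+\gamma^2$ after dropping a positive factor common to $\alpha,\gamma,\zeta$ via rescaling. The ingredients are then:
\begin{itemize}
\item $\det[v_1,w,v_3]$, a polynomial $P(t)$ in $\alpha,\gamma,\zeta$, which factors and is quadratic-ish in $t$;
\item $v_1\cdot w=v_3\cdot w=\alpha+\zeta-2\rho\gamma=:q$, linear in $t$;
\item $v_1\cdot v_3=2\alpha\zeta+\gamma^2=:s$;
\item $|w|=\sqrt{2+4\rho^2}$, a constant $c$.
\end{itemize}
So $\tan(\Omega/2)=P/(R^2c+2qR+sc)$ with only one radical, $R$.

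Stationarity is $P'D-PD'=0$ with $D=cR^2+2qR+sc$. Here $D'=c(R^2)'+2q'R+2qR'+cs'$ and $R'=(R^2)'/(2R)$. Multiplying by $R$ gives an identity of the form $E_0(t)+E_1(t)R=0$, with $E_0,E_1$ polynomials in $t$ (using $R^2$ to reduce powers). The constant $c$ appears only as $c^2$ after we square: $E_0^2=E_1^2R^2$. Squaring may introduce spurious roots, which is harmless because the theorem only asserts a necessary condition. The resulting polynomial should then factor. We would carry this out symbolically, factoring out $\zeta$, $\alpha$, $(2+4\rho^2)$ and other factors that do not vanish on the open interval. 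What remains should be the stated quartic, or the quartic times factors of fixed sign on $(t^*,t^{**})$, which we would exhibit and discard. The main obstacle is this elimination and factorization; we would do it by computer algebra and present the surviving factor, checking that the discarded factors (such as $\alpha$, $\zeta$, and $c^2R^2-q^2$-type expressions) are nonzero on the interval by their explicit signs there.

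For the unique-root claim, write $f(t)=\sum c_kt^k$. We would first evaluate $f(t^*)$ and $f(t^{**})$ as rational functions of $\rho$, clearing the positive denominators $(1+2\rho)^4$ and $(2+\rho)^4$, and show they have opposite signs for all $\rho\in(0,1)$. Each sign claim is a univariate polynomial in $\rho$ with sign-definite coefficients or a Sturm-sequence check on $(0,1)$. This gives at least one root. For uniqueness we would substitute $t=t^*+(t^{**}-t^*)u/(1+u)$, mapping $u\in(0,\infty)$ onto the interval, and clear denominators. If the resulting quartic in $u$ has exactly one sign change in its coefficients for every $\rho\in(0,1)$, Descartes' rule gives exactly one root. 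Each coefficient sign is again a univariate polynomial inequality in $\rho$ on $(0,1)$. If Descartes fails for some coefficient, the fallback is to show $f$ is monotone on the interval, or to use a Sturm/discriminant argument in $\rho$. This sign verification, not the derivation, is where we expect most of the case work.
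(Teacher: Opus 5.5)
Your derivation of the quartic and your endpoint-sign step are the paper's own route. The paper likewise applies the Van Oosterom--Strackee formula to $v_1$, $w=(1,-2\rho,1)^\top$, $v_3=\Pi v_1$, removes the single radical $|v_1|=|v_3|$, and its proof then consists of factoring $Q(t^*)$, $Q(t^{**})$ and the discriminant. Fix one slip before running the algebra: $R^2=\alpha^2+\gamma^2+\zeta^2$, not $2\alpha^2+\gamma^2$. There is no common factor to drop, and your expression would not yield the stated quartic. It also helps that $\det[v_1,w,v_3]=2t(1-\rho^2)^2\bigl(1-(2+\rho)t\bigr)$, which is positive on the open interval.

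The genuine gap is your uniqueness step: Descartes' rule after the M\"obius map fails. Take $\rho=\tfrac12$, so $t^*=\tfrac14$, $t^{**}=\tfrac25$, and $Q(t)\approx-434.97t^4+659.25t^3-370.93t^2+92.04t-8.51$. Substitute $t=\tfrac14+\tfrac{3}{20}\,u/(1+u)$ and multiply by $(1+u)^4$. The coefficients, from $u^0$ to $u^4$, are approximately $-0.0797$, $0.1305$, $-0.0214$, $0.0038$, $0.0152$. That is three sign changes, so Descartes only gives one or three positive roots.

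No renormalization rescues this. Any M\"obius map taking the interval onto $(0,\infty)$ endpoint-to-endpoint differs from yours by $u\mapsto\lambda u$ or $u\mapsto\lambda/u$, and neither changes the sign-change count. Your monotonicity fallback is a genuinely two-variable inequality in $(t,\rho)$, and it is tight: at $\rho=\tfrac12$, $\min Q'$ on the interval is only about $0.05$.

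The argument that works is the discriminant, which you list last; make it the main line. $\operatorname{disc}_t(Q)$ factors as $-32(\rho-1)^{12}(\rho+1)^{12}(\rho^2-\rho+1)(\rho^2+\rho+1)(2\rho^2+1)^2(\rho^4+4\rho^2+1)^3\Psi(\rho)$, where $\Psi$ is a polynomial in $\rho^2$ with positive coefficients. Hence the discriminant is negative on $(0,1)$. A real quartic with negative discriminant has exactly two real roots, both simple. Since $Q(t^*)<0<Q(t^{**})$ forces an odd number of them in $(t^*,t^{**})$, there is exactly one, with no case analysis or Sturm sequences needed.
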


\begin{proof}{Proof.}
Write $Q(t)=\sum_k c_k t^{k}$. Evaluating at the endpoints and factoring,
\[
Q(t^{*})=\frac{(\rho-1)^{4}(\rho+1)^{4}(2\rho^{2}+1)(\rho^{4}-3\rho^{2}-2)}{(2\rho+1)^{4}}\,,
\]
\[
Q(t^{**})=\frac{(\rho-1)^{4}(\rho+1)^{4}(\rho^{2}+1)(2\rho^{2}+1)}{(\rho+2)^{4}}\,.
\]
Every factor is positive on $(0,1)$ except $\rho^{4}-3\rho^{2}-2$: setting
$x=\rho^{2}\in(0,1)$, the quadratic $x^{2}-3x-2$ has roots
$(3\pm\sqrt{17})/2\approx-0.56$ and $3.56$, both outside $(0,1)$, so it is
negative throughout. Hence $Q(t^{*})<0<Q(t^{**})$ and a root exists in the
interval.

For uniqueness, the discriminant of $Q$ in $t$ factors as
\[
\operatorname{disc}(Q)=-32\,(\rho-1)^{12}(\rho+1)^{12}(\rho^{2}-\rho+1)(\rho^{2}+\rho+1)
(2\rho^{2}+1)^{2}(\rho^{4}+4\rho^{2}+1)^{3}\,\Psi(\rho),
\]
\[
\Psi(\rho)=216\rho^{12}+1352\rho^{10}+2785\rho^{8}+3198\rho^{6}+2134\rho^{4}
+710\rho^{2}+81.
\]
The two twelfth powers are even; $\rho^{2}\pm\rho+1$ have discriminant $-3$;
$(2\rho^{2}+1)^{2}$ and $(\rho^{4}+4\rho^{2}+1)^{3}$ are manifestly positive; and
$\Psi$ is a polynomial in $\rho^{2}$ with all coefficients positive. Hence $\operatorname{disc}(Q)<0$ on $(0,1)$, so $Q$ has
exactly two real roots. A sign change forces an odd number of roots in
$(t^{*},t^{**})$, and two real roots in total allow only one.\Halmos
\end{proof}

\smallskip
\noindent{\bf The worst case.}

\begin{proposition}\label{prp:ar1worst}
At both endpoints of the symmetry line all $L_i\to0$, so all $v_i\to b$ and a
single extra ray is left beside the orthant. At $t\to0$ that ray is
$g:=(-\rho,1+\rho^{2},-\rho)^\top$, and $\mathbf{e}_2$ is then redundant, because
\[
\mathbf{e}_2=\tfrac{1}{1+\rho^{2}}\,g+\tfrac{\rho}{1+\rho^{2}}\,\mathbf{e}_1
+\tfrac{\rho}{1+\rho^{2}}\,\mathbf{e}_3
\]
has non-negative coefficients; the limiting cone is thus the simplicial
$\cone(\{\mathbf{e}_1\,,g,\mathbf{e}_3\})$. At $t\to\frac12$ the extra ray is
$(1,-2\rho,1)^\top$ and none of the $\mathbf{e}_i$ is redundant. Hence
\[
\tilde\Omega_3\big|_{t\to0}=\frac{1}{2\pi}\arctan
\frac{1+\rho^{2}}{\sqrt{\rho^{4}+4\rho^{2}+1}-2\rho}\,,
\]
\[
\tilde\Omega_3\big|_{t\to\frac12}=\frac{1}{\pi}\arctan
\frac{1/\sqrt2}{\sqrt{2\rho^{2}+1}-\sqrt2\,\rho+1/\sqrt2}\,.
\]
\end{proposition}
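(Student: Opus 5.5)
The plan is to work directly on the symmetry line $\bar{x}_\beta=(t,1-2t,t)^\top$ with the explicit bounds of Proposition~\ref{prp:ar1bounds}, take the limits of the generators, and then evaluate solid angles with the trihedral formula of \citep*{VOS1983}.

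\textbf{Step 1: the limiting generators.} By Proposition~\ref{prp:ar1bounds}, $L_i=-(1-\rho^2)\min_k \bar{x}_{\beta_k}/\rho^{|k-i|}$. The minimum over $k$ is at most its $k=1$ term, which is $t/\rho^{|1-i|}$ and tends to $0$ as $t\to0$. Likewise it is at most its $k=2$ term, which is $(1-2t)/\rho^{|2-i|}$ and tends to $0$ as $t\to\frac12$. So $L_i\to0$ at both ends, whatever the regime, and hence $v_i=b+L_i\mathbf{e}_i\to b=A_\beta\bar{x}_\beta$. At $t\to0$ this limit is $A_\beta\mathbf{e}_2=g=(-\rho,1+\rho^2,-\rho)^\top$. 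At $t\to\frac12$ it is $A_\beta(\tfrac12,0,\tfrac12)^\top=(\tfrac12,-\rho,\tfrac12)^\top$, a positive multiple of $w:=(1,-2\rho,1)^\top$.

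To pass to the limit in $\tilde\Omega_3$, I would note two facts. First, $\mathcal{K}_\beta(b)$ is generated by $v_1,v_2,v_3,\mathbf{e}_1,\mathbf{e}_2,\mathbf{e}_3$, and these generators converge to non-zero vectors. Second, the limiting cone is pointed and full-dimensional. Together these make the normalized solid angle continuous, for instance via the Gaussian-measure representation of Appendix~\ref{app:solidangle}.

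\textbf{Step 2: the end $t\to0$.} The displayed identity for $\mathbf{e}_2$ is checked coordinate by coordinate. Its coefficients are non-negative, so the limit cone is $\cone(\{\mathbf{e}_1,g,\mathbf{e}_3\})$. This cone is simplicial because $\det[\mathbf{e}_1,g,\mathbf{e}_3]=-(1+\rho^2)\not=0$.

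Now apply Van Oosterom--Strackee, $\tan(\Omega/2)=|\det[a,b,c]|/(|a||b||c|+(a\cdot b)|c|+(a\cdot c)|b|+(b\cdot c)|a|)$. Here $|g|=\sqrt{\rho^4+4\rho^2+1}$, $\mathbf{e}_1\cdot g=\mathbf{e}_3\cdot g=-\rho$, and $\mathbf{e}_1\cdot\mathbf{e}_3=0$. The denominator is therefore $|g|-2\rho$, which is positive since $|g|^2-4\rho^2=(1-\rho^2)^2+2\rho^2>0$. Dividing $\Omega$ by $4\pi$ gives the first displayed formula.

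\textbf{Step 3: the end $t\to\frac12$.} First I show that none of the four rays $\mathbf{e}_1,\mathbf{e}_2,\mathbf{e}_3,w$ is redundant:
\begin{itemize}
\item $w\notin\cone(\{\mathbf{e}_1,\mathbf{e}_2,\mathbf{e}_3\})$, because $w_2<0$;
\item $\mathbf{e}_2\notin\cone(\{\mathbf{e}_1,\mathbf{e}_3,w\})$, because every element of that cone has non-positive second coordinate;
\item $\mathbf{e}_1\notin\cone(\{\mathbf{e}_2,\mathbf{e}_3,w\})$, because a positive first coordinate forces a positive weight on $w$, and then the first and third coordinates of the result cannot equal $(1,0)$ (symmetrically for $\mathbf{e}_3$).
\end{itemize}

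Since $\mathbf{e}_2$ and $w$ lie on opposite sides of the plane $\operatorname{span}\{\mathbf{e}_1,\mathbf{e}_3\}$, the cone splits along the face $\cone(\{\mathbf{e}_1,\mathbf{e}_3\})$. The two pieces are the orthant, contributing $\tfrac18$, and the trihedral cone $\cone(\{\mathbf{e}_1,\mathbf{e}_3,w\})$. For the latter, $|\det|=2\rho$, $|w|=\sqrt{2+4\rho^2}$, and the Van Oosterom--Strackee denominator is $|w|+2$. This gives $\tfrac18+\tfrac1{2\pi}\arctan\frac{2\rho}{\sqrt{2+4\rho^2}+2}$.

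\textbf{Main obstacle.} The remaining step is to reconcile this with the paper's form $\frac1\pi\arctan\frac{1/\sqrt2}{\sqrt{2\rho^2+1}-\sqrt2\rho+1/\sqrt2}$. Both expressions are sums of arctangents that combine by the addition formula, since $\tfrac18=\tfrac1{2\pi}\arctan 1$. So I would halve the combined tangent using $\tan(\phi/2)=\tan\phi/(1+\sec\phi)$ and simplify the radical $\sqrt{2\rho^2+1}$. I expect this algebraic identity, together with justifying continuity of $\tilde\Omega_3$ across the regime changes of the minima near the endpoints, to be the only delicate part.
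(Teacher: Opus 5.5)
Your proof is correct. At $t\to0$ it coincides with the paper's reasoning, with one harmless slip: $\det[\mathbf{e}_1,g,\mathbf{e}_3]=+(1+\rho^2)$, but only its absolute value enters.

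The continuity point you flag as delicate is already settled by your Step~1. Every $L_i\to0$ whichever term attains the minimum, so no regime bookkeeping is needed. The limit cone lies in the pointed cone $\mathcal{B}_\beta$. Convergence of finitely many generators to non-zero limits then gives convergence of the solid angles, which is exactly the argument used in the proof of Proposition~\ref{prp:ar1inf}.

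The genuine difference is at $t\to\frac12$. There you cut the four-ray cone along the diagonal $\cone(\{\mathbf{e}_1,\mathbf{e}_3\})$ into the orthant and $\cone(\{\mathbf{e}_1,\mathbf{e}_3,w\})$. The stated closed form comes from the other diagonal. The mirror plane $y_1=y_3$ contains $\mathbf{e}_2$ and $w$, and cuts the cone into the $\Pi$-congruent pieces $\cone(\{\mathbf{e}_1,\mathbf{e}_2,w\})$ and $\cone(\{\mathbf{e}_3,\mathbf{e}_2,w\})$. Each has $|\det|=1$ and Van Oosterom--Strackee denominator $|w|+1-2\rho$. Doubling and dividing through by $\sqrt2$ gives the displayed $\frac1\pi\arctan$ expression with no further algebra.

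Your split buys a reading of the value as $\frac18$ plus an explicit excess over the orthant, in the same shape as Proposition~\ref{prp:ar1inf}. The price is the identity you call the main obstacle, and it is routine. Set $s:=\sqrt{2+4\rho^2}$, $u:=2\rho/(s+2)$, and $v:=1/(s+1-2\rho)$, which is the paper's argument of $\arctan$. Then $\tan(\frac\pi4+\arctan u)=\frac{s+2+2\rho}{s+2-2\rho}$ and $\tan(2\arctan v)=\frac{2(s+1-2\rho)}{(s-2\rho)(s+2-2\rho)}$. These agree because $(s+2+2\rho)(s-2\rho)=s^2+2s-4\rho-4\rho^2=2s+2-4\rho$. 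Since $u,v\in(0,1)$, both angles lie in $(0,\frac\pi2)$, where $\tan$ is injective. Hence $\frac18+\frac1{2\pi}\arctan u=\frac1\pi\arctan v$.
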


Comparing these two closed forms, the second is the smaller for every
$\rho\in(0,1)$ that we computed, so along the symmetry line the infimum of
$\sigma_3$ is its limit as $t\to\frac12$. It is not, however, the infimum over all
$b$: that lies off the symmetry line, at the boundary of $\mathcal{B}_\beta$\,.

\begin{proposition}\label{prp:ar1inf}
Let $b^{*}:=(1,0,-\rho^{2})^\top$. Then $b^{*}$ lies on the facet of
$\mathcal{B}_\beta$ where $\bar{x}_{\beta_3}=0$, and as $b\to b^{*}$ through the
interior of $\mathcal{B}_\beta$\,,
\[
\sigma_3(b)\ \longrightarrow\ \sigma_3^{*}(\rho):=
\frac{\tfrac18+\tfrac{1}{4\pi}\arctan\rho^{2}}{\tilde\Omega_3(\mathcal{B}_\beta)}\,,
\]
with $\tilde\Omega_3(\mathcal{B}_\beta)$ as in Proposition~\ref{prp:ar1denom}. By
Lemma~\ref{lem:ar1persym}, the same holds at $\Pi b^{*}=(-\rho^{2},0,1)^\top$.
\end{proposition}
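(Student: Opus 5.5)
The plan is to compute everything directly at $b^{*}$, and then pass to the limit using continuity of the normalized solid angle under convergence of generators.

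\emph{Step 1: $b^{*}$ lies on the facet $\bar{x}_{\beta_3}=0$.} With the unscaled $A_\beta$ of this subsection we have $H=\Sigma/(1-\rho^{2})$. Applying $\Sigma$ to $b^{*}=(1,0,-\rho^{2})^\top$ gives $(1-\rho^{4},\,\rho(1-\rho^{2}),\,\rho^{2}-\rho^{2})^\top$, so
\[
\bar{x}_\beta(b^{*})=(1+\rho^{2},\,\rho,\,0)^\top .
\]
This has two positive entries and $\bar{x}_{\beta_3}=0$, so $b^{*}$ lies in the relative interior of the facet $\bar{x}_{\beta_3}=0$ of $\mathcal{B}_\beta$\,.

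\emph{Step 2: the limiting cone.} Let $b\to b^{*}$ through the interior. By Proposition~\ref{prp:ar1bounds}, $U_i=+\infty$ for every $i$, so
\[
\mathcal{K}_\beta(b)=\cone(\{v_1,v_2,v_3\})+\mathbb{R}^3_+ ,
\]
and the bound $L_i$ satisfies
\[
|L_i|=(1-\rho^{2})\min_k \frac{\bar{x}_{\beta_k}}{\rho^{|k-i|}}\leq (1-\rho^{2})\,\frac{\bar{x}_{\beta_3}}{\rho^{|3-i|}}\to0 .
\]
This holds whichever regime $b$ is in, so each $v_i\to b^{*}$. The candidate limit cone is therefore $\cone(\{b^{*},\mathbf{e}_1,\mathbf{e}_2,\mathbf{e}_3\})$. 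Since $\mathbf{e}_1=b^{*}+\rho^{2}\mathbf{e}_3$, the generator $\mathbf{e}_1$ is redundant, and the limit is the simplicial cone $\cone(\{b^{*},\mathbf{e}_2,\mathbf{e}_3\})$.

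\emph{Step 3: its solid angle.} Both $b^{*}$ and $\mathbf{e}_3$ are orthogonal to $\mathbf{e}_2$, so the limit cone is the product of the half-line $\{y_2\geq0\}$ with the planar cone in the $(y_1,y_3)$-plane spanned by $(1,-\rho^{2})$ and $(0,1)$. That planar cone has opening angle $\pi/2+\arctan\rho^{2}$. The normalized solid angle is therefore
\[
\tfrac12\cdot\frac{\pi/2+\arctan\rho^{2}}{2\pi}=\tfrac18+\tfrac1{4\pi}\arctan\rho^{2}.
\]
Dividing by the fixed denominator of Proposition~\ref{prp:ar1denom} gives $\sigma_3^{*}(\rho)$. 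The statement at $\Pi b^{*}$ then follows from Lemma~\ref{lem:ar1persym}.

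\emph{Main obstacle: justifying the passage to the limit.} I would express $\tilde\Omega_3(\mathcal{K}_\beta(b))$ as the standard Gaussian measure of the cone. I would then show that the indicator functions of $\mathcal{K}_\beta(b)$ converge pointwise to that of the limit cone off its boundary, which has measure zero; dominated convergence then gives the limit of the measures.

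For the pointwise convergence, the key is that the limit cone is pointed and full-dimensional. I would take a point $y$ in its interior and write it as a positive combination of $b^{*},\mathbf{e}_2,\mathbf{e}_3$. Replacing $b^{*}$ by $v_1$, which is close to $b^{*}$ for $b$ near $b^{*}$, perturbs this combination only slightly, so $y$ still lies in $\mathcal{K}_\beta(b)$ for $b$ close to $b^{*}$.

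Conversely, every generator of $\mathcal{K}_\beta(b)$ lies in a small neighbourhood of the generators of the limit cone. So a point strictly outside the limit cone is eventually separated from $\mathcal{K}_\beta(b)$ by a perturbed supporting hyperplane of the limit cone. I expect this separation half, which must be uniform over the three regimes, to be the only step needing care.
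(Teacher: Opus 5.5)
Your proof is correct and is essentially the paper's argument. It uses the same computation of $\Sigma b^{*}$, the same regime-free bound forcing every $L_i\to0$ and $v_i\to b^{*}$, and the same limiting cone. The only differences are that you collapse that cone via $\mathbf{e}_1=b^{*}+\rho^{2}\mathbf{e}_3$ to $\cone(\{b^{*},\mathbf{e}_2,\mathbf{e}_3\})$ and measure it as a half-line times a planar sector of angle $\pi/2+\arctan\rho^{2}$, whereas the paper writes it as $\mathbb{R}^3_+\cup\cone(\{\mathbf{e}_1,\mathbf{e}_2,b^{*}\})$ and applies the trihedral formula.

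Your dominated-convergence justification spells out what the paper only asserts. The separation half you flagged is routine. Because the limit cone is pointed, you may take the separating functional strictly positive on it away from $\mathbf{0}$, so it stays positive on every generator of $\mathcal{K}_\beta(b)$ once $b$ is near $b^{*}$. No uniformity over regimes is needed, since your bound on $|L_i|$ already holds in all of them.
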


\begin{proof}{Proof.}
As $H$ is a positive multiple of $\Sigma$, $\bar{x}_\beta(b^{*})$ is a positive
multiple of $\Sigma b^{*}=(1-\rho^{4},\,\rho(1-\rho^{2}),\,0)^\top$, so $b^{*}$ lies
on the stated facet. As $b\to b^{*}$, $\bar{x}_{\beta_3}\to0$, and because the
minimum in Proposition~\ref{prp:ar1bounds} includes the term
$\bar{x}_{\beta_3}/\rho^{|3-i|}$, every $L_i\to0$; so every $v_i\to b^{*}$ and,
by Proposition~\ref{prp:ar1bounds},
$\mathcal{K}_\beta(b)\to\cone(\{\mathbf{e}_1\,,\mathbf{e}_2\,,\mathbf{e}_3\,,b^{*}\})$.
All these cones lie in the pointed cone $\mathcal{B}_\beta$ and the limit is
full-dimensional, so their normalized solid angles converge. Finally, as
$b^{*}_1>0=b^{*}_2>b^{*}_3$\,, the limiting cone is the union of $\mathbb{R}^3_+$ and
$\cone(\{\mathbf{e}_1\,,\mathbf{e}_2\,,b^{*}\})$, which meet only in the plane
$y_3=0$: a point $a\mathbf{e}_1+c\,\mathbf{e}_2+d\,\mathbf{e}_3+\mu b^{*}$ with
$d-\mu\rho^{2}<0$ equals
$(a+d/\rho^{2})\mathbf{e}_1+c\,\mathbf{e}_2+(\mu-d/\rho^{2})b^{*}$. The first piece
has $\tilde\Omega_3=\frac18$. For the second, the classical formula for the solid angle of a trihedral cone
\citep*{VOS1983}, which for unit vectors $\hat u_1\,,\hat u_2\,,\hat u_3$ spanning a cone of
solid angle $\Omega$ reads
$\tan(\Omega/2)=|\det[\hat u_1\,,\hat u_2\,,\hat u_3]|/(1+\hat u_1\!\cdot\hat u_2
+\hat u_2\!\cdot\hat u_3+\hat u_3\!\cdot\hat u_1)$, gives
$\tan(\Omega/2)=\rho^{2}/(\sqrt{1+\rho^{4}}+1)$, that is, $\Omega=\arctan\rho^{2}$,
so $\tilde\Omega_3=\Omega/(4\pi)=\frac1{4\pi}\arctan\rho^{2}$.\Halmos
\end{proof}

So $\sigma_3^{*}(\rho)$ is an upper bound for $\inf_b\sigma_3(b)$, and it lies
strictly below the symmetry-line value of Proposition~\ref{prp:ar1worst} for every
$\rho\in(0,1)$ (by direct comparison of the closed forms on a fine grid). We
believe it is the infimum, which is then not attained. A search over the whole
simplex of $\bar{x}_\beta$ (a grid of spacing $1/120$, followed by local
optimization from $49$ starting points) at
$\rho=0.1,0.2,0.35,0.5,0.6214,0.72,0.85,0.95,0.99$ found no $b$ with
$\sigma_3(b)<\sigma_3^{*}(\rho)$; the numerical infimum agreed with
$\sigma_3^{*}(\rho)$ to six decimals at each of these $\rho$ except $0.99$,
where it agreed to five. Along the edge $\bar{x}_{\beta_3}\to0$, the minimizing right-hand side is
exactly the one with $b_2=0$, which is $b^{*}$.

\begin{figure}
\FIGURE
{\begin{tikzpicture}[scale=0.95]
  \draw[thick,->] (0,0) -- (8.85,0) node[right] {$\rho$};
  \draw[thick,->] (0,0) -- (0,7.6) node[above] {$\sigma_3$};
  \foreach \k in {1,...,10} { \pgfmathsetmacro{\xx}{\k*0.825}
    \draw (\xx,0.08) -- (\xx,-0.08); }
  \foreach \k/\lbl in {2/0.2,4/0.4,6/0.6,8/0.8,10/1.0} {
    \pgfmathsetmacro{\xx}{\k*0.825}
    \node[below, font=\scriptsize] at (\xx,-0.08) {\lbl}; }
  \node[below, font=\scriptsize] at (0,-0.08) {0.0};
  \foreach \y/\lbl in {1.4/0.2,2.8/0.4,4.2/0.6,5.6/0.8,7.0/1.0} {
    \draw (0.08,\y) -- (-0.08,\y);
    \node[left, font=\scriptsize] at (-0.14,\y) {\lbl};
    \draw[gray!25] (0,\y) -- (8.25,\y);
  }
  \draw[purple!75!black, thick] (5.1265,0) -- (5.1265,5.2535);
  \node[purple!75!black, font=\scriptsize, left] at (5.1265,0.55) {$\rho_e$};
  \draw[purple!75!black, thick] (5.8336,0) -- (5.8336,1.30);
  \node[purple!75!black, font=\scriptsize, right] at (5.8336,0.55) {$1/\sqrt2$};
  \draw[blue!70!black, thick]
    (0.206,6.907) -- (0.413,6.812) -- (0.619,6.717) -- (0.825,6.623) -- (1.031,6.530) -- (1.238,6.438) -- (1.444,6.348) -- (1.650,6.261) -- (1.856,6.176) -- (2.062,6.094) -- (2.269,6.014) -- (2.475,5.938) -- (2.681,5.864) -- (2.887,5.793) -- (3.094,5.726) -- (3.300,5.661) -- (3.506,5.599) -- (3.712,5.541) -- (3.919,5.486) -- (4.125,5.436) -- (4.331,5.390) -- (4.538,5.349) -- (4.744,5.311) -- (4.950,5.278) -- (5.127,5.253) -- (5.156,5.274) -- (5.362,5.419) -- (5.569,5.562) -- (5.775,5.702) -- (5.981,5.839) -- (6.188,5.971) -- (6.394,6.099) -- (6.600,6.221) -- (6.806,6.339) -- (7.013,6.451) -- (7.219,6.557) -- (7.425,6.657) -- (7.631,6.751) -- (7.837,6.840) -- (8.044,6.923)
    node[right, font=\scriptsize, black] {best $b$};
  \draw[red!70!black, thick]
    (0.206,6.576) -- (0.413,6.194) -- (0.619,5.848) -- (0.825,5.535) -- (1.031,5.252) -- (1.238,4.996) -- (1.444,4.764) -- (1.650,4.554) -- (1.856,4.364) -- (2.062,4.191) -- (2.269,4.035) -- (2.475,3.893) -- (2.681,3.764) -- (2.888,3.648) -- (3.094,3.542) -- (3.300,3.447) -- (3.506,3.360) -- (3.713,3.282) -- (3.919,3.211) -- (4.125,3.146) -- (4.331,3.088) -- (4.538,3.036) -- (4.744,2.989) -- (4.950,2.946) -- (5.156,2.908) -- (5.362,2.873) -- (5.569,2.842) -- (5.775,2.814) -- (5.981,2.788) -- (6.188,2.765) -- (6.394,2.745) -- (6.600,2.726) -- (6.806,2.709) -- (7.013,2.694) -- (7.219,2.679) -- (7.425,2.667) -- (7.631,2.655) -- (7.838,2.644) -- (8.044,2.634)
    node[right, font=\scriptsize, black, yshift=-4pt] {infimum};
  \draw[orange!85!black, thick, dotted]
    (0.206,6.696) -- (0.413,6.414) -- (0.619,6.152) -- (0.825,5.908) -- (1.031,5.681) -- (1.238,5.470) -- (1.444,5.273) -- (1.650,5.090) -- (1.856,4.919) -- (2.062,4.759) -- (2.269,4.610) -- (2.475,4.471) -- (2.681,4.340) -- (2.887,4.218) -- (3.094,4.104) -- (3.300,3.996) -- (3.506,3.896) -- (3.712,3.801) -- (3.919,3.713) -- (4.125,3.629) -- (4.331,3.551) -- (4.538,3.477) -- (4.744,3.408) -- (4.950,3.343) -- (5.127,3.290) -- (5.156,3.281) -- (5.362,3.224) -- (5.569,3.169) -- (5.775,3.118) -- (5.981,3.070) -- (6.188,3.024) -- (6.394,2.981) -- (6.600,2.941) -- (6.806,2.903) -- (7.013,2.867) -- (7.219,2.833) -- (7.425,2.802) -- (7.631,2.772) -- (7.837,2.743) -- (8.044,2.716)
    node[right, font=\scriptsize, black, yshift=4pt] {worst on line};
\draw[green!40!black, thick, dashed]
    (0.000,7.000) -- (0.083,6.922) -- (0.165,6.844) -- (0.413,6.620) -- (0.825,6.273) -- (1.238,5.950) -- (1.650,5.682) -- (2.062,5.440) -- (2.475,5.205) -- (2.887,5.033) -- (3.300,4.844) -- (3.712,4.742) -- (4.125,4.659) -- (4.538,4.556) -- (4.950,4.501) -- (5.362,4.460) -- (5.775,4.433) -- (6.188,4.415) -- (6.600,4.423) -- (7.013,4.463) -- (7.425,4.537) -- (7.837,4.571) -- (8.085,4.623) -- (8.168,4.589)
    node[right, font=\scriptsize, black] {average};
\end{tikzpicture}}
{Best case, infimum and average of $\sigma_3$ over $b$, for the AR(1) precision
matrix, as functions of $\rho$, with the worst case along the symmetry line
(dotted) for comparison. Solid and dotted curves are closed forms (the best case
subject to the numerical checks of \S\ref{subsec:ar1status}, the infimum
$\sigma_3^{*}$ of Proposition~\ref{prp:ar1inf} with optimality verified
numerically); the dashed one is a Monte Carlo estimate. The vertical lines mark
$\rho_e\approx0.6214$, where the best-case curve has its corner
(Remark~\ref{rem:ar1limits}), and the threshold $1/\sqrt2$ of
Proposition~\ref{prp:ar1threshold}\,.\label{fig:ar1sigma}}
{}
\end{figure}

\begin{remark}\label{rem:ar1limits}
The best case and the infimum behave quite differently at the ends of the
range. As $\rho\to0$ we have $A_\beta\to I$, so $\mathcal{B}_\beta$ tends to an
orthant and the cone rule becomes exact; accordingly both tend to $1$, the best
case numerically like $1-0.53\rho$, and the infimum like $1-(8/\pi)\rho$, because
$\tilde\Omega_3(\mathcal{B}_\beta)=\frac18+\rho/\pi+O(\rho^{2})$ by
Proposition~\ref{prp:ar1denom}. As $\rho\to1$ they separate: the best case tends
to $1$, while $\sigma_3^{*}(\rho)\to\bigl(\frac18+\frac1{16}\bigr)\big/\frac12=\frac38$,
using $\tilde\Omega_3(\mathcal{B}_\beta)\to\tfrac12$; the worst case along the
symmetry line tends instead to
$\frac{2}{\pi}\arctan\frac{1/\sqrt2}{\sqrt3-\sqrt2+1/\sqrt2}=0.3845\ldots$\,. The best-case curve is not monotone: it falls to
about $0.7505$ at $\rho_e\approx0.6214$ before returning to $1$, and it has a corner
at $\rho_e$\,: there two local maxima exchange dominance and the maximizer jumps, as
described at the end of this subsection. The curves are plotted in
Figure~\ref{fig:ar1sigma}, which also marks two values of $\rho$. The first is
$\rho_e$\,. The second is the
threshold $1/\sqrt2\approx0.7071$ of Proposition~\ref{prp:ar1threshold}, included
only for comparison: it governs the limiting cone at $t\downarrow t^{*}$ rather
than the maximizer, and it marks no feature of any plotted curve.
\end{remark}

\smallskip
\noindent{\bf The average over $\mathcal{B}_\beta$\,.} The best case and the infimum
bound $\sigma_3$ from above and below; one can also ask about its typical value.
Write $\langle\sigma_3\rangle(\rho)$ for the average of $\sigma_3$ over
$b\in\mathcal{B}_\beta$ with respect to the solid angle measure --- equivalently,
by the argument of Proposition~\ref{prp:orthantprob}, the average of $\sigma_3(Z)$
over $Z\sim N(\mathbf{0},I_3)$ conditioned on $HZ\geq\mathbf{0}$.

\begin{proposition}\label{prp:ar1average}
$\langle\sigma_3\rangle(\rho)\to1$ as $\rho\to0$; indeed $\sigma_3(b)\to1$
uniformly in $b$.
\end{proposition}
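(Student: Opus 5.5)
The plan is to prove the uniform statement $\sup_b\bigl(1-\sigma_3(b)\bigr)\to0$ as $\rho\to0$; the claim about the average then follows by bounded convergence, since $0<\sigma_3\le1$ and the conditioning law on $\{HZ\geq\mathbf{0}\}$ is a probability measure. By Proposition~\ref{prp:ar1bounds}, $\mathcal{K}_\beta(b)\supset\mathbb{R}^3_+$ for every $b$ with $\bar{x}_\beta>\mathbf{0}$. Hence
\[
\sigma_3(b)\ \geq\ \frac{\tilde\Omega_3(\mathbb{R}^3_+)}{\tilde\Omega_3(\mathcal{B}_\beta)}=\frac{1/8}{\tilde\Omega_3(\mathcal{B}_\beta)}\,,
\]
and this lower bound does not depend on $b$. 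So the whole statement reduces to showing that $\tilde\Omega_3(\mathcal{B}_\beta)\to\frac18$ as $\rho\to0$.

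For that limit, I would use the closed form of Proposition~\ref{prp:ar1denom}. As $\rho\to0$, the argument $3\rho^2/(\rho^4+\rho^2+1)$ tends to $0$. So does $\rho(\rho^2+2)/\sqrt{2\rho^6+3\rho^4+3\rho^2+1}$. Both arcsines therefore vanish, and $\tilde\Omega_3(\mathcal{B}_\beta)\to\frac18$; this is the expansion $\frac18+\rho/\pi+O(\rho^2)$ already recorded in Remark~\ref{rem:ar1limits}. A quantitative version is then immediate:
\[
1-\sigma_3(b)\ \leq\ 1-\frac{1/8}{1/8+\rho/\pi+O(\rho^2)}=\frac{8}{\pi}\rho+O(\rho^2)\quad\text{uniformly in }b.
\]
The only fallback I might need is for the interior restriction. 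If $\sigma_3$ is defined only for $\bar{x}_\beta>\mathbf{0}$, then the uniformity is over that set. This matches the averaging, which charges the boundary with measure zero, so the boundary needs no separate treatment.

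I see no real obstacle here. The one point to state carefully is that the orthant containment holds for every interior $b$, with no dependence on the regime; Proposition~\ref{prp:ar1bounds} gives $U_i=+\infty$ for all $i$ because $H>0$ entrywise. I would also note that the order of limits is unproblematic: the bound is uniform in $b$, so dominated convergence applies directly to the average $\langle\sigma_3\rangle(\rho)$ for each fixed $\rho$ before sending $\rho\to0$.
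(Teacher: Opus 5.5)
Your proposal is correct and follows the paper's own proof: the orthant containment from Proposition~\ref{prp:ar1bounds} gives the $b$-independent sandwich $\frac{1/8}{\tilde\Omega_3(\mathcal{B}_\beta)}\leq\sigma_3(b)\leq1$, and Proposition~\ref{prp:ar1denom} sends the lower bound to $1$. One small point: the conditioning law on $\{HZ\geq\mathbf{0}\}$ itself changes with $\rho$, so bounded or dominated convergence is not the right justification for the averaging step; it follows directly from the uniform sandwich, because any average of $\sigma_3$ lies between the two bounds.
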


\begin{proof}{Proof.}
By Proposition~\ref{prp:ar1bounds}, $\mathcal{K}_\beta(b)$ contains the non-negative
orthant for every $b$ with $\bar{x}_\beta>\mathbf{0}$, and $\mathcal{K}_\beta(b)\subset
\mathcal{B}_\beta$\,; hence
\[
\frac{1/8}{\tilde\Omega_3(\mathcal{B}_\beta)}\ \leq\ \sigma_3(b)\ \leq\ 1
\]
for every such $b$, and the set of $b$ with a zero entry of $\bar{x}_\beta$ has
measure zero. By Proposition~\ref{prp:ar1denom},
$\tilde\Omega_3(\mathcal{B}_\beta)\to\frac18$ as $\rho\to0$, so the lower bound tends
to $1$, uniformly in $b$, and therefore so does any average of
$\sigma_3$\,.\Halmos
\end{proof}

Because $\tilde\Omega_3(\mathcal{B}_\beta)=\frac18+\rho/\pi+O(\rho^2)$, the same bound
gives $\sigma_3(b)\geq1-(8/\pi)\rho+O(\rho^2)$ for every $b$, which is the rate of
the infimum $\sigma_3^{*}$ noted in Remark~\ref{rem:ar1limits}.

Beyond this limit, we have not found $\langle\sigma_3\rangle(\rho)$ in closed
form.
Unlike the best case, which lives on the one-dimensional symmetry line of
Lemma~\ref{lem:ar1persym}, and the infimum, which is a single boundary limit,
the average is an integral of $\sigma_3$ --- itself an $\arctan$ of quadratic
forms in the direction, varying by regime --- over the full two-dimensional
solid angle of $\mathcal{B}_\beta$\,. 
We instead
computed $\langle\sigma_3\rangle(\rho)$ numerically, by the rejection-sampling
description above; see Figure~\ref{fig:ar1sigma}. It decreases from
$1$ at $\rho=0$ to about $0.631$ near $\rho=0.75$, then rises.

The limit as $\rho\to1$ can be analyzed exactly.
Because $\det A_\beta=1-\rho^{2}$ and
$H=\Sigma/(1-\rho^{2})$ with $\Sigma\to\mathbf{e}\mathbf{e}^\top$, the
feasibility cone degenerates to the half-space
$\mathcal{B}_\infty=\{y:\mathbf{e}^{\top}y\geq0\}$, in agreement with
$\tilde\Omega_3(\mathcal{B}_\beta)\to\tfrac12$. Writing $s:=\mathbf{e}^{\top}b$,
every $\bar{x}_{\beta_k}$ is $s/(1-\rho^{2})+O(1)$, so
Proposition~\ref{prp:ar1bounds} gives $L_i\to-s$ for each $i$, and therefore
\begin{equation}\label{eq:Kinf}
\mathcal{K}_\beta(b)\;\longrightarrow\;
\mathcal{K}_\infty(b):=\cone\bigl(b-s\mathbf{e}_1\,,\,b-s\mathbf{e}_2\,,\,b-s\mathbf{e}_3\bigr)+\mathbb{R}^3_{+}\,.
\end{equation}
Each generator satisfies $\mathbf{e}^\top(b-s\mathbf{e}_i)=0$, so all
three lie in the boundary plane $\mathcal{P}:=\partial\mathcal{B}_\infty$\,.

\begin{proposition}\label{prp:ar1Kinf}
Write $p_i:=\mathbf{e}_i-\tfrac13\mathbf{e}$ for the projections of the
$\mathbf{e}_i$ into $\mathcal{P}$. Then $\cone(\{b-s\mathbf{e}_1\,,b-s\mathbf{e}_2\,,b-s\mathbf{e}_3\})=\mathcal{P}$
if and only if $b>\mathbf{0}$, and in that case
$\mathcal{K}_\infty(b)=\mathcal{B}_\infty$\,, so $\sigma_3\to1$.
\end{proposition}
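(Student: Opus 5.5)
The plan is to work entirely inside the plane $\mathcal{P}=\{y:\mathbf{e}^\top y=0\}$. Write $w_i:=b-s\mathbf{e}_i$ for $i=1,2,3$. Since $\mathbf{e}^\top b=s$, we have $w_i=b-\tfrac{s}{3}\mathbf{e}-s\,p_i$, that is, $w_i=c-s\,p_i$ with $c:=b-\tfrac{s}{3}\mathbf{e}\in\mathcal{P}$ the projection of $b$. The points $p_1,p_2,p_3$ are the vertices of an equilateral triangle centered at $\mathbf{0}$ in $\mathcal{P}$, and they satisfy $p_1+p_2+p_3=\mathbf{0}$. Hence the $w_i$ are the vertices of the reflected, scaled triangle $c-s\conv\{p_1,p_2,p_3\}$ (for $s>0$). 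A finitely generated cone in the two-dimensional space $\mathcal{P}$ equals all of $\mathcal{P}$ exactly when $\mathbf{0}$ lies in the \emph{interior} of the convex hull of the generators. So the first claim reduces to showing that $\mathbf{0}\in\operatorname{int}\conv\{w_1,w_2,w_3\}$ if and only if $b>\mathbf{0}$.

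For this I would use barycentric coordinates. Write $\mathbf{0}=\sum_i\lambda_iw_i$ with $\sum_i\lambda_i=1$. Then $\sum_i\lambda_iw_i=b-s\lambda$, where $\lambda=(\lambda_1,\lambda_2,\lambda_3)^\top$. Because the $w_i$ are affinely independent when $s\neq0$, this combination is unique, and it gives $\lambda=b/s$. The condition $\sum_i\lambda_i=1$ holds automatically since $\mathbf{e}^\top b=s$. Interior membership means every $\lambda_i>0$, that is, $b_i/s>0$ for each $i$. Since $\sum_ib_i=s$, having all $b_i/s>0$ forces $s>0$, so this is equivalent to $b>\mathbf{0}$.

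The degenerate case $s=0$ has to be dispatched separately. There all $w_i=b$, so the cone is at most a ray and cannot be $\mathcal{P}$. This is consistent with the claim, because $b>\mathbf{0}$ forces $s>0$. I would also note that only $b\in\mathcal{B}_\infty$, i.e.\ $s\geq0$, is relevant here.

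For the second claim, assume $b>\mathbf{0}$. Then $\mathcal{K}_\infty(b)=\mathcal{P}+\mathbb{R}^3_+$. Since $\mathbf{e}_1$ has $\mathbf{e}^\top\mathbf{e}_1=1>0$, the sum $\mathcal{P}+\cone(\mathbf{e}_1)$ is already the closed half-space $\mathcal{B}_\infty$. Conversely $\mathcal{K}_\infty(b)\subset\mathcal{B}_\infty$, because every generator has $\mathbf{e}^\top(\cdot)\geq0$. So $\mathcal{K}_\infty(b)=\mathcal{B}_\infty$.

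Finally, $\sigma_3\to1$ needs convergence of solid angles. The generators of $\mathcal{K}_\beta(b)$ converge by \eqref{eq:Kinf}, and the limit cone $\mathcal{B}_\infty$ is full-dimensional. Normalized solid angle is continuous under convergence of generators to a full-dimensional limit; the same continuity was used in Proposition~\ref{prp:ar1inf}. So $\tilde\Omega_3(\mathcal{K}_\beta(b))\to\tfrac12$. Together with $\tilde\Omega_3(\mathcal{B}_\beta)\to\tfrac12$ from Proposition~\ref{prp:ar1denom}, this gives the ratio tending to $1$.

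The main obstacle is this limiting step. The limit half-space is not pointed, so the solid-angle continuity argument must be phrased via indicator functions on the sphere, using dominated convergence, rather than via pointed-cone formulas. I would check pointwise convergence of these indicators off the boundary of $\mathcal{B}_\infty$, which is a measure-zero great circle.
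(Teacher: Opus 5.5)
Your argument is correct and is essentially the paper's own. The paper writes $b-s\mathbf{e}_i=b_{\mathcal{P}}-s\,p_i$ and observes that $\mathbf{0}$ is interior to the convex hull exactly when $b/s\in\operatorname{relint}\conv(\{\mathbf{e}_1,\mathbf{e}_2,\mathbf{e}_3\})$; this is your barycentric identity $\sum_i\lambda_iw_i=b-s\lambda$. It then finishes with the same splitting of a point of the half-space into a part in $\mathcal{P}$ plus a non-negative multiple of a positive vector.

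One sentence of yours is false and needs fixing: having all $b_i/s>0$ does \emph{not} force $s>0$. Take $b=-\mathbf{e}$, so that $s=-3$ and $b/s=\tfrac13\mathbf{e}>\mathbf{0}$. The three vectors $w_i=3\mathbf{e}_i-\mathbf{e}$ sum to $\mathbf{0}$ and do positively span $\mathcal{P}$, although $b\not>\mathbf{0}$. So the equivalence genuinely requires $s>0$. The restriction to $s\geq0$ that you mention afterwards is therefore not a side remark; it is exactly what makes your last step valid. The paper uses it implicitly when it divides by $s$.

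In context it holds: a non-zero $b\in\mathcal{B}_\beta$ has $\mathbf{e}^\top b>0$, because the column sums $(1-\rho,(1-\rho)^2,1-\rho)$ of $A_\beta$ are positive. Your separate treatment of $s=0$ then covers the boundary of $\mathcal{B}_\infty$.

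Your dominated-convergence justification of $\sigma_3\to1$ is more careful than the paper's proof, which leaves that step to the surrounding discussion of the limiting cone.
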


\begin{proof}{Proof.}
Decomposing $b=b_{\mathcal{P}}+\tfrac{s}{3}\mathbf{e}$ gives $b-s\mathbf{e}_i=b_{\mathcal{P}}-s\,p_i$\,.
Three vectors in the plane $\mathcal{P}$ positively span it exactly when the origin is an
interior point of their convex hull, that is when $b_{\mathcal{P}}/s=\sum_i\mu_ip_i$ for
some $\mu$ in the open simplex; equivalently $b/s\in\operatorname{relint}\conv(\{\mathbf{e}_i ~:~ i=1,2,3\})$ (the relative interior,
as $\conv(\{\mathbf{e}_i ~:~ i=1,2,3\})$ is $2$-dimensional),
equivalently $b>\mathbf{0}$. When this holds, $\mathcal{P}+\mathbb{R}^3_+=\mathcal{B}_\infty$\,,
because any $y$ with $\mathbf{e}^{\top}y>0$ splits as
$y=(y-t\mathbf{e})+t\mathbf{e}$ with $t=\mathbf{e}^{\top}y/3>0$.\Halmos
\end{proof}

As $\tilde\Omega_3$ of the non-negative orthant is $\tfrac18$ and that of
$\mathcal{B}_\infty$ is $\tfrac12$, the set of $b$ in Proposition~\ref{prp:ar1Kinf}
carries solid-angle fraction exactly $\tfrac14$ of $\mathcal{B}_\infty$\,. On the
remaining $\tfrac34$ the cone $\mathcal{K}_\infty(b)$ is a proper subcone, and its
combinatorial type varies, so we have no closed form there. Membership is however
explicit: $d\in\mathcal{K}_\infty(b)$ if and only if
\begin{equation}\label{eq:Kinfmember}
\min_{\omega\geq0}\ \Bigl[\ \sum_{i=1}^{3}\max\bigl(0,\,\omega b_i-d_i\bigr)-s\omega\ \Bigr]\ \leq\ 0,
\end{equation}
the bracket being convex and piecewise linear in $\omega$, so that the minimum is
attained at $\omega=0$ or at one of the breakpoints $\omega=d_i/b_i$\,. (For $b>\mathbf{0}$
the bracket tends to $-\mathbf{e}^{\top}d$ as $\omega\to\infty$, recovering
Proposition~\ref{prp:ar1Kinf}.) For each fixed direction $b$ with $\mathbf{e}^\top b>0$ the convergence
\eqref{eq:Kinf} holds, so by dominated convergence
$\lim_{\rho\to1}\langle\sigma_3\rangle(\rho)$ is the average of
$2\tilde\Omega_3(\mathcal{K}_\infty(b))$ over $b$ uniform on $\mathcal{B}_\infty$\,.
Evaluating \eqref{eq:Kinfmember} over
$5.9\times10^{8}$ independent pairs, with $b$ uniform on $\mathcal{B}_\infty$ and
$d$ uniform on the sphere, gives
\[
\lim_{\rho\to1}\ \langle\sigma_3\rangle(\rho)\;=\;0.66588\pm0.00004 ,
\]
where $\pm$ denotes one standard error: the estimate is $2\hat p$, with $\hat p$ the
proportion of pairs satisfying \eqref{eq:Kinfmember}, so its standard error is
$2\sqrt{\hat p(1-\hat p)/N}$ with $N=5.9\times10^{8}$. 
We stress that the limit is \emph{not} $2/3$: the two differ by about $20$ standard
errors. 

\smallskip
\noindent{\bf What is proved and what is computational.}\label{subsec:ar1status}

Propositions~\ref{prp:ar1bounds}, \ref{prp:ar1denom}, \ref{prp:ar1threshold},
\ref{prp:ar1worst}, \ref{prp:ar1inf}, \ref{prp:ar1average} and \ref{prp:ar1Kinf},
Lemma~\ref{lem:ar1persym} and Theorem~\ref{thm:ar1quartic} are proved above, as
are the limiting form \eqref{eq:Kinf} and the membership test
\eqref{eq:Kinfmember}. Three comparisons are numerical: that the limit $t\to\frac12$
is the smaller endpoint value on the symmetry line, that $\sigma_3^{*}$ lies below
it, and, more substantially, that $\sigma_3^{*}$ is the infimum of $\sigma_3$ over
all $b$, rather than only an upper bound for it. The average curve of Figure~\ref{fig:ar1sigma} is
numerical throughout, and so is its limit $0.66588$ at $\rho\to1$ --- though that
rests on \eqref{eq:Kinfmember}, an exact criterion applied at the limit itself,
rather than on extrapolation from finite $\rho$. We have no closed form for it.

Two statements about the best case we have verified computationally but cannot
presently prove. First, among the eight regimes of
Proposition~\ref{prp:ar1bounds}, the one giving \eqref{eq:ar1gens} carries the
global maximum of $\tilde\Omega_3$: at $\rho=0.72$ its maximum is $0.3393$ --- the
best-case value on the symmetry line, as it should be --- against $0.3303$ and less
for the others, which occur in $\Pi $-paired regimes. Second, that regime has no critical point off the diagonal $\xi_1=\xi_3$\,.
Because $\sigma_3$ is $\Pi $-invariant, the antisymmetric derivative
$E_-:=\partial_{\xi_1}\tilde\Omega_3-\partial_{\xi_3}\tilde\Omega_3$ vanishes on the diagonal,
so $E_-=(\xi_1-\xi_3)G$, and an off-diagonal critical point would need $G=0$ and
$E_+:=\partial_{\xi_1}\tilde\Omega_3+\partial_{\xi_3}\tilde\Omega_3=0$ at once; tracing the
whole off-diagonal branch of $E_-=0$ we found $E_+$ strictly negative on it, with
$|E_+|\geq1.4$, for $\rho=0.35,0.5,0.6,0.72,0.8,0.9,0.95$, the branch being empty
at $\rho=0.2$. This is numerical evidence, not a proof: the corresponding
elimination is a resultant computation in three radicals, in which $E_-$ and $E_+$
have degree $3$ in each and some $1500$ and $2000$ terms, and we could not carry
it out.

Granting these, the maximizer lies on the symmetry line, and its position changes
twice as $\rho$ increases. For $\rho\lesssim0.418$ the supremum is the one-sided
limit $t\downarrow t^{*}$, not attained, with the limiting cone of
Proposition~\ref{prp:ar1threshold}. For $0.418\lesssim\rho\lesssim0.6214$ the
maximizer is interior to $(t^{*},t^{**})$ but the cone there still has five
extreme rays, so Theorem~\ref{thm:ar1quartic} does not apply. At
$\rho_e\approx0.6214$ two local maxima exchange dominance and the maximizer jumps
--- from $t=0.2969$ to $t=0.3141$ --- onto a simplicial branch, and from there on
it is the root of the quartic, as we verified at
$\rho=0.625,0.65,0.68,1/\sqrt2,0.75,0.85$. (Theorem~\ref{thm:ar1quartic} itself
only locates stationary points of the simplicial expression; that its root lies on
the part of the branch where $\mathcal{K}_\beta(b)$ is in fact simplicial is part of
this verification.) Dividing by the expression in 
Proposition~\ref{prp:ar1denom} gives the best-case $\sigma_3$ in closed form for
$\rho>\rho_e$\,. We have determined neither transition exactly.

%%%%%%%%%%%%%%%%%%%%%%%%%%%

\section{The integer setting}\label{subsec:unimodular}

We next consider bases with an entirely different flavor of
special structure: $A_\beta$ \emph{integer}, together with an
\emph{integer} right-hand side $b$. We first check that restricting to
integer data does not change the picture of \S\ref{sec:general_m}: for $m\geq3$,
$\mathcal{K}_\beta(b)\not=\mathcal{B}_\beta$ persists for infinitely many interior
integer right-hand sides.

In this setting it is natural to ask whether the right comparison is between
the \emph{integer points} of the two cones rather than between the cones
themselves. It makes no difference.

\begin{proposition}\label{prp:integerpoints}
$\mathcal{K}_\beta(b)\cap\mathbb{Z}^m=\mathcal{B}_\beta\cap\mathbb{Z}^m$ if and
only if $\mathcal{K}_\beta(b)=\mathcal{B}_\beta$\,.
\end{proposition}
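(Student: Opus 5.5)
The ($\Leftarrow$) direction is immediate: intersect both sides of $\mathcal{K}_\beta(b)=\mathcal{B}_\beta$ with $\mathbb{Z}^m$. For ($\Rightarrow$) we argue by contraposition. Assume $\mathcal{K}_\beta(b)\subsetneqq\mathcal{B}_\beta$, and produce an integer point of $\mathcal{B}_\beta$ outside $\mathcal{K}_\beta(b)$. Both sets are closed convex cones; $\mathcal{K}_\beta(b)$ is finitely generated and $\mathcal{B}_\beta$ is full-dimensional.

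\textbf{Step 1: an open cone inside $\mathcal{B}_\beta\setminus\mathcal{K}_\beta(b)$.} Pick $y\in\mathcal{B}_\beta\setminus\mathcal{K}_\beta(b)$. Since $\mathcal{K}_\beta(b)$ is closed, a small ball around $y$ avoids it. Since $\mathcal{B}_\beta$ is full-dimensional, we can move $y$ slightly toward an interior point of $\mathcal{B}_\beta$ and so assume $y\in\operatorname{int}\mathcal{B}_\beta$. Hence there is an open ball $N$ around $y$ with
\[
N\subset\operatorname{int}\mathcal{B}_\beta\setminus\mathcal{K}_\beta(b).
\]
Both sets are cones, so the same holds for the open cone $C:=\{\lambda z:\lambda>0,\ z\in N\}$. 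That is, $C\subset\mathcal{B}_\beta$ and $C\cap\mathcal{K}_\beta(b)=\emptyset$.

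\textbf{Step 2: integer points in $C$.} The cone $C$ contains the balls $\lambda N$, whose radii grow linearly in $\lambda$. Once the radius exceeds $\sqrt m/2$, such a ball contains an integer point, for example the coordinatewise rounding of its center $\lambda y$. That point lies in $\mathcal{B}_\beta\cap\mathbb{Z}^m$ but not in $\mathcal{K}_\beta(b)$, which contradicts the equality of the integer points. In fact this gives infinitely many such points.

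The only delicate step is Step 1. We need closedness of $\mathcal{K}_\beta(b)$, which holds because it is a finitely generated cone (or by its definition through $\overline{\cone}$), and full-dimensionality of $\mathcal{B}_\beta$, which holds because $A_\beta$ is nonsingular. No rationality or integrality of $A_\beta$ or of $b$ is needed, so the proposition holds for arbitrary real data. This is consistent with the remark that restricting to integer points "makes no difference."
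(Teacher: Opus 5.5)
Your proof is correct and follows the paper's argument. You find a ball inside $\operatorname{int}(\mathcal{B}_\beta)\setminus\mathcal{K}_\beta(b)$, dilate it using the cone property until its radius exceeds $\sqrt{m}/2$, and obtain an integer point, with no integrality or rationality assumptions; the only cosmetic difference is that the paper gets a point of $\operatorname{int}(\mathcal{B}_\beta)\setminus\mathcal{K}_\beta(b)$ by a closure argument (if the interior lay in the closed set $\mathcal{K}_\beta(b)$, so would its closure $\mathcal{B}_\beta$), whereas you perturb a point of $\mathcal{B}_\beta\setminus\mathcal{K}_\beta(b)$ toward an interior point.
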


\begin{proof}{Proof.}
One direction is trivial. For the other, suppose that the inclusion
$\mathcal{K}_\beta(b)\subset\mathcal{B}_\beta$ is strict. We claim that
$S:=\operatorname{int}(\mathcal{B}_\beta)\setminus\mathcal{K}_\beta(b)$ is
non-empty: otherwise $\operatorname{int}(\mathcal{B}_\beta)\subset\mathcal{K}_\beta(b)$,
and taking closures gives $\mathcal{B}_\beta\subset\mathcal{K}_\beta(b)$,
contradicting strictness. Now $S$ is open, because
$\operatorname{int}(\mathcal{B}_\beta)$ is open and $\mathcal{K}_\beta(b)$ is
closed, and $S$ is closed under multiplication by positive scalars, because
both $\mathcal{B}_\beta$ and $\mathcal{K}_\beta(b)$ are cones. Pick $w\in S$ and
$\epsilon>0$ with the ball $B(w,\epsilon)\subset S$; then
$B(tw,t\epsilon)\subset S$ for every $t>0$. Choosing $t$ with
$t\epsilon>\sqrt{m}/2$ makes that ball contain a point of $\mathbb{Z}^m$, which
lies in $\mathcal{B}_\beta$ but not in $\mathcal{K}_\beta(b)$. Hence the two
integer-point sets differ.\Halmos
\end{proof}

The argument uses neither integrality nor rationality of $A_\beta$ or $b$ --- only
that $\mathcal{K}_\beta(b)$ is closed and that $\mathcal{B}_\beta$ is
full-dimensional. So Proposition~\ref{prp:integerpoints}
applies throughout the paper: every result may be read either way.

\begin{theorem}\label{thm:integer-generic}
Suppose that $m\geq3$ and $\mathcal{B}_\beta$ is not an orthant ($A_\beta$
need not be rational). Then there are infinitely many integer points $b$ in the interior
of $\mathcal{B}_\beta$ with $\mathcal{K}_\beta(b)\not=\mathcal{B}_\beta$\,;
explicitly, for the integer points $b_N$ defined in the proof,
$\mathcal{K}_\beta(b_N)\not=\mathcal{B}_\beta$ for all sufficiently large $N$.
\end{theorem}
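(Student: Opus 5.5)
The plan is to build on Theorem~\ref{thm:notequal}. Since $\mathcal{B}_\beta$ is not an orthant, some column $g^j$ of $A_\beta$ has at least two non-zero entries. For that column, every $b$ with $\mathcal{K}_\beta(b)=\mathcal{B}_\beta$ lies in the finite union $U:=\bigcup_{i=1}^m\operatorname{span}\{g^j,\mathbf{e}_i\}$. Each piece is a linear subspace of dimension $2<m$, because $m\geq3$. It therefore suffices to exhibit infinitely many integer points in $\operatorname{int}(\mathcal{B}_\beta)$ that avoid $U$. The conclusion $\mathcal{K}_\beta(b_N)\not=\mathcal{B}_\beta$ then follows from the contrapositive of Theorem~\ref{thm:notequal}. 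No rationality of $A_\beta$ is needed, which matches the statement.

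For the construction, fix a point $w\in\operatorname{int}(\mathcal{B}_\beta)$ with $w\notin U$. Such a point exists because $\operatorname{int}(\mathcal{B}_\beta)$ is a non-empty open set and $U$ is closed and nowhere dense. Choose $\epsilon>0$ such that the ball $B(w,\epsilon)$ lies in $\operatorname{int}(\mathcal{B}_\beta)$ and misses $U$; this is possible since $U$ is closed. Both $\operatorname{int}(\mathcal{B}_\beta)$ and $U$ are invariant under positive scaling ($U$ is a union of linear subspaces). Hence $B(Nw,N\epsilon)$ lies in $\operatorname{int}(\mathcal{B}_\beta)\setminus U$ for every $N>0$, exactly as in the proof of Proposition~\ref{prp:integerpoints}. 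Define $b_N$ as the integer point obtained by rounding each coordinate of $Nw$ to the nearest integer. Then $\|b_N-Nw\|\leq\sqrt m/2$, so $b_N$ lies in that ball as soon as $N\epsilon>\sqrt m/2$. This is the sense of ``sufficiently large $N$'' in the statement.

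It remains to check that the $b_N$ are infinitely many distinct points. Since $\|b_N\|\geq N\|w\|-\sqrt m/2\to\infty$, the values cannot repeat infinitely often. So the family $\{b_N\}$, for $N$ large, is an infinite set of interior integer right-hand sides with $\mathcal{K}_\beta(b_N)\not=\mathcal{B}_\beta$.

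The only delicate step is making sure the excluded set $U$ is scale-invariant and closed, so that one fixed $\epsilon$ works along the whole ray. That holds because $U$ is a finite union of linear subspaces, so I expect no real obstacle.
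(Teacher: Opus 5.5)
Your proof is correct, but it is not the argument the paper gives for this statement. The paper's proof is self-contained. It fixes an integer interior point $\tilde b$ and sets $b_N:=N^m\tilde b+(N^{m-1},N^{m-2},\ldots,N,1)^\top$. It then reruns the extreme-ray step from the proof of Theorem~\ref{thm:notequal}: a hit on the ray of $g^j$ along $\mathbf{e}_i$ would force $(\tilde b_k+N^{-k})g^j_l=(\tilde b_l+N^{-l})g^j_k$ for suitable $k,l\neq i$ with $g^j_k\neq0$. Because the coordinates carry distinct powers of $N$, this is a non-zero polynomial equation in $N$ (the coefficient of $N^{m-l}$ is $-g^j_k$). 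So only finitely many $N$ can satisfy it, over the finitely many choices of indices.

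You instead use Theorem~\ref{thm:notequal} as a black box and argue topologically. You take a ball in $\operatorname{int}(\mathcal{B}_\beta)$ that misses the finitely many planes, dilate it, and round to the lattice, exactly the device of Proposition~\ref{prp:integerpoints}. The paper mentions a route through Theorem~\ref{thm:notequal} only in a remark after its proof, as a sketch comparing lattice-point counts in balls with those in planes. Your ball-avoiding version is cleaner than that sketch, and it makes plain that every integer point of $\operatorname{int}(\mathcal{B}_\beta)$ off those planes works.

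What the paper's construction buys is a universal perturbation. Its $b_N$ is built from any integer interior $\tilde b$, for instance the nominal right-hand side, with no reference to $g^j$ or to the exceptional planes. That fits the paper's premise that $A_\beta$ is hidden, and it is the sense of ``explicitly'' in the statement. Your family depends on choosing $w$ off $\bigcup_i\operatorname{span}\{g^j,\mathbf{e}_i\}$ and on the resulting $\epsilon$, which requires knowing the basis. Your family therefore proves the theorem but is less explicit.
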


\begin{proof}{Proof.}
Fix a rational interior point of $\mathcal{B}_\beta$ and scale it to an
integer interior point $\tilde b$ (possible for any $A_\beta$\,, because the
interior of $\mathcal{B}_\beta$ is a non-empty open set and $\mathcal{B}_\beta$ is
a cone). For a positive integer $N$, define
\[
b_N := N^m\tilde b + (N^{m-1},N^{m-2},\ldots,N,1)^\top \ \in\ \mathbb{Z}^m.
\]
Because $\mathcal{K}_\beta(cx)=c\,\mathcal{K}_\beta(x)$ for $c>0$ and
$\mathcal{B}_\beta$ is itself a cone, $\mathcal{K}_\beta(b_N)=
\mathcal{B}_\beta$ if and only if $\mathcal{K}_\beta(\hat b_N)=
\mathcal{B}_\beta$\,, where $\hat b_N:=b_N/N^m=\tilde b+(N^{-1},N^{-2},
\ldots,N^{-m})$. As $\hat b_N\to\tilde b$ (an interior point) when
$N\to\infty$, $\hat b_N$ is interior for all $N\geq N_0$\,, some threshold.

As $\mathcal{B}_\beta$ is not an orthant, some column $g^j$ has at least
two non-zero entries; suppose, for contradiction, that
$\mathcal{K}_\beta(\hat b_N)=\mathcal{B}_\beta$ for some $N\geq N_0$\,. As in the proof of
Theorem~\ref{thm:notequal}, the ray of $g^j$ is an extreme ray of $\mathcal{K}_\beta(\hat b_N)$ and is not a coordinate direction, so some point $\hat b_N+\theta_i\mathbf{e}_i$ ($\theta_i\in\{L_i\,,U_i\}$), for
some index $i$, must generate it. As $m\geq3$, choose $k\not=i$ with $g^j_k\not=0$ (exists,
because $g^j$ has at least two non-zero entries and at most one coincides
with $i$) and any other $l\not=i,k$. Proportionality of $\hat b_N+
\theta_i\mathbf{e}_i$ to $g^j$ forces $(\hat b_N)_kg^j_l=(\hat b_N)_lg^j_k$\,,
that is,
\[
(\tilde b_k+N^{-k})g^j_l = (\tilde b_l+N^{-l})g^j_k\,.
\]
Multiplying through by $N^m$, this is a polynomial equation in $N$ of degree
at most $m$, and its coefficients are not all zero: the term $-g^j_kN^{-l}$
becomes $-g^j_kN^{m-l}$, and as $l\geq1$ and $l\not=k$ no other term carries this
power of $N$, so the coefficient of $N^{m-l}$ is $-g^j_k\not=0$ (by choice of $k$).
So it has at most $m$ roots. As there are only finitely many choices of
$(i,j,k,l)$, the union of all such root-sets is finite; choosing $N$
greater than $N_0$ and outside this finite set (always possible, as
$N$ ranges over all positive integers) makes every such proportionality
fail, so $\mathcal{K}_\beta(\hat b_N)\not=\mathcal{B}_\beta$ --- a
contradiction. Hence $\mathcal{K}_\beta(\hat b_N)\not=\mathcal{B}_\beta$\,,
and so $\mathcal{K}_\beta(b_N)\not=\mathcal{B}_\beta$\,, for all sufficiently
large $N$. As the $b_N$ ($N\geq N_0$) are distinct integer points in the interior
of $\mathcal{B}_\beta$\,, this proves the claim.\Halmos
\end{proof}

Theorem~\ref{thm:integer-generic} also follows from
Theorem~\ref{thm:notequal}: for $m\geq3$, the interior of $\mathcal{B}_\beta$
contains balls of every radius, and a ball of radius $R$ contains of order $R^m$
integer points, more than finitely many planes through the origin can hold, of
order $R^2$ each. The proof above has the merit of exhibiting an explicit family.

We now add \emph{integrality} and \emph{unimodularity} (that is, $A_\beta$
integer with $\det A_\beta=\pm1$) --- the classical condition (going back
to the earliest integer programming literature) ensuring that basic
solutions are automatically integral --- and ask whether this extra
structure changes anything.

\begin{proposition}[well-known and easy to check]\label{prp:unimodular}
If $A_\beta$ is integer and unimodular, then $H:=A_\beta^{-1}$ is also an
integer matrix. Consequently, if $b$ is integer, then $\bar{x}_\beta:=A_\beta^{-1}b$
is integer, and $L_i\,,U_i$ (when finite) are rational.
\end{proposition}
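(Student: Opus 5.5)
The plan is to derive integrality of $H$ from the adjugate formula, and then read off the two consequences directly from their definitions.

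\textbf{Integrality of $H$.} Since $A_\beta$ is integer and non-singular, Cramer's rule gives
\[
H=A_\beta^{-1}=\frac{1}{\det A_\beta}\operatorname{adj}(A_\beta).
\]
Every entry of $\operatorname{adj}(A_\beta)$ is, up to sign, the determinant of an $(m-1)\times(m-1)$ integer submatrix of $A_\beta$, so it is an integer. Dividing by $\det A_\beta=\pm1$ only changes signs, hence $H\in\mathbb{Z}^{m\times m}$.

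\textbf{Integrality of $\bar{x}_\beta$.} If $b\in\mathbb{Z}^m$, then $\bar{x}_\beta=Hb$ is a product of an integer matrix and an integer vector, so it is integer.

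\textbf{Rationality of the bounds.} By the definitions in \S\ref{sec:100}, each finite $L_i$ or $U_i$ is a maximum or minimum over the candidate values $-\bar{x}_{\beta_k}/h^i_k$, where $h^i_k\neq0$. Each such candidate is a ratio of integers, because $\bar{x}_{\beta_k}\in\mathbb{Z}$ and $h^i_k=H_{ki}\in\mathbb{Z}$. Every finite bound equals one of these ratios, so it is rational. When $\bar{x}_\beta$ has a zero entry, the bound may be $0$, and the convention of \S\ref{sec:100} reading it as $0^\pm$ changes only how ratios are interpreted, not the value itself, which is still rational.

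None of these steps is a real obstacle. The only care needed is to note that cofactors of an integer matrix are integers.
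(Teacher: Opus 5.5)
Your argument is correct. The paper gives no proof of this proposition, calling it well known, and the adjugate (Cramer's rule) argument you give, followed by reading off $\bar{x}_\beta=Hb$ and each finite bound as a ratio $-\bar{x}_{\beta_k}/h^i_k$ of integers, is the standard check it has in mind (rationality of the finite $L_i\,,U_i$ in fact needs only that $A_\beta$ and $b$ be rational).
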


\begin{corollary}\label{crl:unimodular-m2}
Suppose that $m=2$ and $A_\beta$ is integer and unimodular, that exactly
two of $L_1\,,L_2\,,U_1\,,U_2$ are infinite, and that one of $g^1,g^2$ lies
in quadrant $\mbox{\rm O}_{+-}$ and the other in $\mbox{\rm O}_{-+}$\,, labeled
as in Theorem~\ref{thm:nonneg}(b) so that $g^1\in\mbox{\rm O}_{+-}$\,. Then
$\mathcal{K}_\beta(b)=\mathcal{B}_\beta$ holds for some non-negative
integer $b$ if and only if $\det A_\beta=+1$ (as opposed to $-1$);
and when it holds, infinitely many integer $b$ work.
\end{corollary}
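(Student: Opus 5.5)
The plan is to reduce Corollary~\ref{crl:unimodular-m2} to Theorem~\ref{thm:nonneg}(b), and then use integrality of $A_\beta$ only to produce the required integer right-hand sides. First, Theorem~\ref{thm:nonneg}(b) requires all entries of $A_\beta$ non-zero. That holds here, since the columns lie in the \emph{open} quadrants $\mbox{\rm O}_{+-}$ and $\mbox{\rm O}_{-+}$. Since $A_\beta$ is integer and unimodular, $\det A_\beta\in\{+1,-1\}$. So the sign condition $\det A_\beta>0$ of Theorem~\ref{thm:nonneg}(b) reads exactly $\det A_\beta=+1$.

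For the ``only if'' direction, suppose $\det A_\beta=-1$ and let $b\geq\mathbf{0}$ be integer with $\mathcal{K}_\beta(b)=\mathcal{B}_\beta$. By Remark~\ref{rem:m2boundary}, if $\bar{x}_\beta$ has a zero entry then equality fails. Here every $h^i_k$ is non-zero, both finite bounds are $0$, and Theorem~\ref{thm:two} applies. Hence $\bar{x}_\beta>\mathbf{0}$, and Theorem~\ref{thm:nonneg}(b) then forbids equality when $\det A_\beta<0$. The case $b=\mathbf{0}$ is covered by Lemma~\ref{lem:zero}, since $\mathcal{B}_\beta$ is not a quadrant. Equivalently, as the remark notes, $\mathcal{B}_\beta\cap\mathbb{R}^2_+=\{\mathbf{0}\}$ when $\det A_\beta<0$, so no non-zero non-negative $b$ is even feasible.

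For the ``if'' direction, with $\det A_\beta=+1$, Theorem~\ref{thm:nonneg}(b) says that any $b$ with $\bar{x}_\beta>\mathbf{0}$ and $b>\mathbf{0}$ achieves equality. I will produce infinitely many integer such $b$. Take $b:=A_\beta x$ for positive integer vectors $x$; these are integer because $A_\beta$ is. By Proposition~\ref{prp:unimodular}, the correspondence $x\leftrightarrow b$ is a bijection of $\mathbb{Z}^2$, though only the forward direction is needed. The condition $b>\mathbf{0}$ means $t=x_2/x_1\in(r_1,r_2)$. This open interval is non-empty, since $r_1<r_2$ is equivalent to $\det A_\beta>0$, and it therefore contains a rational $p/q$ with $p,q>0$. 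Then $x=k(q,p)^\top$ for $k=1,2,\ldots$ gives infinitely many distinct integer $b=kA_\beta(q,p)^\top>\mathbf{0}$, each with $\mathcal{K}_\beta(b)=\mathcal{B}_\beta$.

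The only delicate point is the boundary bookkeeping in the ``only if'' direction: ruling out non-negative $b$ with a zero entry of $\bar{x}_\beta$, where Theorem~\ref{thm:nonneg} is silent. I would handle it by citing Remark~\ref{rem:m2boundary} directly. The rest is immediate once unimodularity turns $\det A_\beta>0$ into $\det A_\beta=1$.
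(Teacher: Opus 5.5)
Your proof is correct and follows the paper's route: you reduce to Theorem~\ref{thm:nonneg}(b), use unimodularity to turn $\det A_\beta>0$ into $\det A_\beta=+1$, and take integer witnesses $\bar{x}_\beta$ with $\bar{x}_{\beta_2}/\bar{x}_{\beta_1}$ a rational in $(r_1,r_2)$. Two small differences from the paper: your explicit treatment of non-negative $b$ with a zero entry of $\bar{x}_\beta$ (via Remark~\ref{rem:m2boundary} and Theorem~\ref{thm:two}) fills in a boundary step that the paper's proof passes over; and scaling a single rational witness, rather than taking infinitely many rationals as the paper does, already suffices for the ``infinitely many integer $b$'' claim.
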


\begin{proof}{Proof.}
By Theorem~\ref{thm:nonneg}(b), achieving $\mathcal{K}_\beta(b)=\mathcal{B}_\beta$ for some real $b\geq\mathbf{0}$ is
equivalent to $\det A_\beta>0$, that is, to $\det A_\beta=+1$, and then every $b$ with
$t=\bar{x}_{\beta_2}/\bar{x}_{\beta_1}\in(r_1\,,r_2)$ works; as $r_1\,,r_2$ are rational and
the interval is open and non-empty, it contains infinitely many rationals
$t=n_2/n_1$ in lowest terms, each giving an integer witness $\bar{x}_\beta=(n_1\,,n_2)^\top$ and
hence integer $b=n_1g^1+n_2g^2$. Geometrically, by Theorem~\ref{thm:nonneg}(b),
$\det A_\beta=+1$ says that $\mathcal{B}_\beta$ meets $\mathbb{R}^2_{++}$\,, and the
integer witnesses are then exactly the strictly positive integer points of
$\mathcal{B}_\beta$ with $\bar{x}_\beta>\mathbf{0}$.\Halmos
\end{proof}

\begin{example}\label{exa:unimodular-m2}
Take $g^1=(2,-1)^\top$, $g^2=(-1,1)^\top$, so $A_\beta=\begin{pmatrix}2&-1\\-1&1
\end{pmatrix}$, $\det A_\beta=1$, $r_1=1<r_2=2$. Taking
$\bar{x}_\beta=(2,3)^\top$ (so $t=3/2\in(1,2)$) gives $b=2g^1+3g^2=(1,1)^\top$; one checks
directly that $L_1=-2$ sends $b$ to $(-1,1)^\top=g^2$, and $L_2=-3/2$ sends
$b$ to $(1,-1/2)^\top=\tfrac12g^1$, hitting both rays, so
$\mathcal{K}_\beta(b)=\mathcal{B}_\beta$ by Lemma~\ref{lemma:main}.
\end{example}

Unimodularity does not obstruct $\mathcal{K}_\beta(b)=\mathcal{B}_\beta$ at $m=3$ either.

\begin{example}\label{exa:unimodular-m3}
Let
\[
A_\beta:=\begin{pmatrix}-1&1&1\\1&-3&2\\1&2&-5\end{pmatrix},\qquad
\det A_\beta=1,
\]
a symmetric, unimodular, integer basis with $p=1,q=1,r=2$ in the notation
of Proposition~\ref{prop:symmetric}. Taking $b:=(pq,pr,qr)^\top=(1,2,2)^\top$ gives
$\bar{x}_\beta=A_\beta^{-1}b=(35,21,15)^\top>0$ --- an interior integer point --- and one
computes $\mu_1=2,\mu_2=1,\mu_3=1$, all positive integers,
landing exactly on the three extreme rays. So
$\mathcal{K}_\beta(b)=\mathcal{B}_\beta$ for this unimodular integer basis
and this integer $b$.
\end{example}

This is no accident: the symmetric construction can achieve
$\mathcal{K}_\beta(b)=\mathcal{B}_\beta$ only when
$\det A_\beta>0$, giving an $m=3$ analogue of the sign dichotomy of
Corollary~\ref{crl:unimodular-m2}.

\begin{corollary}\label{crl:sym-det-sign}
For the symmetric construction of Proposition~\ref{prop:symmetric}, write
$X:=d_1r-pq$\,, $Y:=d_2q-pr$\,, $Z:=d_3p-qr$\,, where $d_1\,,d_2\,,d_3$ are the
diagonal entries of $A_\beta$\,. If $\bar{x}_\beta>\mathbf{0}$, then $\det A_\beta>0$.
Consequently, if $A_\beta$ is unimodular, this construction
can achieve $\mathcal{K}_\beta(b)=\mathcal{B}_\beta$ only when
$\det A_\beta=+1$ (never $-1$).
\end{corollary}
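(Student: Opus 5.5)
The plan is to rely entirely on the Cramer's-rule formula already recorded after Proposition~\ref{prop:symmetric}, namely $\bar{x}_\beta=(YZ,XZ,XY)^\top/\det A_\beta$ for $b=(pq,pr,qr)^\top$. I would first re-verify this identity by computing $\operatorname{adj}(A_\beta)\,b$ for the symmetric matrix with diagonal $d_1,d_2,d_3$ and off-diagonals $p,q,r$. For instance, the first entry of $\operatorname{adj}(A_\beta)b$ is
\[
(d_2d_3-r^2)pq+(qr-pd_3)pr+(pr-qd_2)qr.
\]
This factors as $(d_2q-pr)(d_3p-qr)=YZ$, and the other two entries follow by the evident symmetry of relabelings. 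This computation is routine, and I expect it to be the only place that needs care, namely in getting the signs of the cofactors right.

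With the formula in hand, suppose $\bar{x}_\beta>\mathbf{0}$. Multiplying the three entries gives
\[
\bar{x}_{\beta_1}\bar{x}_{\beta_2}\bar{x}_{\beta_3}=\frac{(XYZ)^2}{(\det A_\beta)^3}>0.
\]
The numerator is a square, and it is non-zero because each entry is non-zero. Hence $(\det A_\beta)^3>0$, so $\det A_\beta>0$. This sidesteps any need to first show that $X,Y,Z$ share a sign; that fact follows anyway, since the pairwise products $YZ,XZ,XY$ all carry the sign of $\det A_\beta$.

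For the consequence, if $A_\beta$ is unimodular then $\det A_\beta\in\{\pm1\}$. The construction of Proposition~\ref{prop:symmetric} certifies $\mathcal{K}_\beta(b)=\mathcal{B}_\beta$ only under the hypothesis $\bar{x}_\beta>\mathbf{0}$. By the above, that hypothesis forces $\det A_\beta=+1$. Example~\ref{exa:unimodular-m3} shows that the case $+1$ does occur.
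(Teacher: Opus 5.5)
Your proposal is correct and follows the paper's proof. Both start from the Cramer's-rule formula $\bar{x}_\beta=(YZ,XZ,XY)^\top/\det A_\beta$, multiply the three entries to obtain $(XYZ)^2/(\det A_\beta)^3>0$, conclude $\det A_\beta>0$, and then read off the unimodular consequence. Your explicit check that the first entry of $\operatorname{adj}(A_\beta)\,b$ factors as $YZ$, which the paper only asserts, is also correct.
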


\begin{proof}{Proof.}
By Cramer's rule, $\bar{x}_{\beta_1}=YZ/\det A_\beta$\,, $\bar{x}_{\beta_2}=XZ/\det A_\beta$\,,
$\bar{x}_{\beta_3}=XY/\det A_\beta$\,. If $\bar{x}_\beta>\mathbf{0}$,
then
\[
0<\bar{x}_{\beta_1}\bar{x}_{\beta_2}\bar{x}_{\beta_3}=\frac{(XYZ)^2}{(\det A_\beta)^3}\,,
\]
so $\det A_\beta>0$. Then $YZ\,,XZ\,,XY>0$, so $X,Y,Z$ are non-zero and share a sign;
conversely, if $\det A_\beta>0$ and $X,Y,Z$ share a sign, then $\bar{x}_\beta>\mathbf{0}$.
This is the test recorded after Proposition~\ref{prop:symmetric}. (In Example~\ref{exa:unimodular-m3}, $X=-3,Y=-5,Z=-7$
share the negative sign, and indeed $\det A_\beta=1>0$.)\Halmos
\end{proof}

\begin{remark}
Taken together, this section's results say that integrality and
unimodularity, like tridiagonal structure in \S\ref{subsec:tridiag}, do not
change the qualitative picture of \S\ref{sec:general_m}--\S\ref{sec:m3}: for $m\geq3$,
$\mathcal{K}_\beta(b)\not=\mathcal{B}_\beta$ still holds for infinitely many
interior integer $b$ (Theorem~\ref{thm:integer-generic}, consistent with
Theorem~\ref{thm:notequal}), while $m=2$ is
fully understood and now reduces to a clean discrete (orientation)
criterion, and some $b$ still achieves
$\mathcal{K}_\beta(b)=\mathcal{B}_\beta$ at $m=3$ compatibly with
unimodularity, with the symmetric construction obeying its own analogous
sign criterion (Corollary~\ref{crl:sym-det-sign}).
\end{remark}

%%%%%%%%%%%%%%%%%%%%%%%

\section{An experiment}\label{sec:expr}

By construction $\mathcal{K}_\beta(b)\supset\{b\}+\Delta_{100\%}$\,, and the drawings of \S\ref{sec:m2} show that this containment is typically proper; but nothing above says how much bigger $\mathcal{K}_\beta(b)$ typically is. Here we investigate this question empirically, for random bases $A_\beta$ and right-hand sides $b$, as a function of $m$. Volume is the natural yardstick, but it is not available: $\mathcal{K}_\beta(b)$ is a full-dimensional cone, so its volume is infinite. We compare the two regions instead along rays issuing from $b$.

\smallskip
\noindent{\bf Radial extents.}
For a unit vector $d\in\mathbb{R}^m$, the \emph{radial extent} of a region about $b$ in the direction $d$ is how far one can travel from $b$ along $d$ without leaving the region:
\begin{gather*}
V_{100\%}(d):=\sup\{t\geq0 ~:~ b+td\in\{b\}+\Delta_{100\%}\},
\\
V_{\mathcal{K}}(d):=\sup\{t\geq0 ~:~ b+td\in\mathcal{K}_\beta(b)\}.
\end{gather*}
Both regions are convex and contain $b$, so each contains the segment from $b$ to $b+td$ for every $t$ below its radial extent; and because $\mathcal{K}_\beta(b)\supset\{b\}+\Delta_{100\%}$\,, we always have $V_{\mathcal{K}}(d)\geq V_{100\%}(d)>0$. The value $V_{100\%}(d)$ has the closed form $1/\big(\sum_i \lambda_i(d)\big)$, using the $\lambda_i$ of \S\ref{sec:100} applied to $\delta:=d$, and is $+\infty$ when that sum is zero. We computed $V_{\mathcal{K}}(d)$ by bisection on $t$, deciding each membership $b+td\in\mathcal{K}_\beta(b)$ with the $O(m)$ algorithm of \S\ref{sec:conerule}. Either extent can be infinite; in particular, as $\mathcal{K}_\beta(b)$ is a closed cone containing $b$, $V_{\mathcal{K}}(d)=\infty$ exactly when $d\in\mathcal{K}_\beta(b)$. For computation we capped both extents at $10^4$; the cap turns out never to be active (see \emph{The cap}, below).

\smallskip
\noindent{\bf The median radial expansion factor.}
In the direction $d$, the region $\{b\}+\Delta_{100\%}$ must be dilated about $b$ by the factor $V_{\mathcal{K}}(d)/V_{100\%}(d)\geq1$ to reach the boundary of $\mathcal{K}_\beta(b)$. We summarize these factors by the \emph{median radial expansion factor}
\[
R_m ~:=~ \mathrm{median}_d\left[\left(\frac{V_{\mathcal{K}}(d)}{V_{100\%}(d)}\right)^{\!m}\,\right]
~=~ \left(\mathrm{median}_d\,\frac{V_{\mathcal{K}}(d)}{V_{100\%}(d)}\right)^{\!m},
\]
with $d$ uniform on the unit sphere; the two expressions agree because the median commutes with the increasing map $r\mapsto r^m$. The $m$-th power puts the factor on the scale of $m$-dimensional volume: if the dilation factor were the same constant $r$ in every direction, $\mathcal{K}_\beta(b)$ would be exactly the dilate $\{b\}+r\Delta_{100\%}$\,, whose volume is $r^m$ times that of $\{b\}+\Delta_{100\%}$\,. It also makes small per-direction gains compound: a typical factor of $1+c/m$ gives $R_m\approx(1+c/m)^m\approx e^c$, which stays bounded as $m$ grows, whereas a typical factor bounded away from $1$ makes $R_m$ grow exponentially in $m$.

\smallskip
\noindent{\bf Why a median, and not a ratio of volumes.}
A region $S$ is \emph{star-shaped} about $b$ if it contains the segment from $b$ to each of its points; both of our regions are. Writing $r_S(d)$ for the radial extent of $S$ about $b$ in the direction $d$, integration in polar coordinates centred at $b$ gives the \emph{ray-shooting identity}
\[
\vol(S) ~=~ \vol(\mathbb{B}^m)\;\mathbb{E}_d\big[r_S(d)^m\big],
\]
where $\mathbb{B}^m$ is the unit ball and $d$ is uniform on the unit sphere: the volume of $S$ is, up to a constant depending only on $m$, the average $m$-th power of its radial extent. (The name comes from the way it is used: to estimate a volume, shoot rays from $b$ in random directions and average the $m$-th powers of the distances to the boundary.) So, were both volumes finite, their ratio would be the ratio of $m$-th moments $\mathbb{E}_d[V_{\mathcal{K}}(d)^m]/\mathbb{E}_d[V_{100\%}(d)^m]$. But the volume of $\mathcal{K}_\beta(b)$ is infinite; and with the extents capped, the ratio of moments is the volume ratio of the two truncated regions, which is dominated by the directions $d\in\mathcal{K}_\beta(b)$, where $V_{\mathcal{K}}(d)$ sits at the cap --- however few such directions there are, since the cap is raised to the power $m$ --- and so measures the cap rather than the geometry. We therefore stress that $R_m$ is \emph{not} an estimate of a ratio of volumes, and should not be read as one. Taking the median of the $m$-th powers, rather than their mean, removes the dependence on those directions, as long as they make up less than half of the sphere. Their proportion is the normalized solid angle $\tilde\Omega_m(\mathcal{K}_\beta(b))\leq\tilde\Omega_m(\mathcal{B}_\beta)$ (see the end of this section), so this holds automatically unless $\mathcal{B}_\beta$ is close to a half-space. Comparing pairs of regions through radial or volume-type measures is in the spirit of \citep*{LM1994}.

\smallskip
\noindent{\bf Instance generation.}
For a given $m$, we drew $A_\beta\in\mathbb{R}^{m\times m}$ with independent standard-normal entries (redrawing if $A_\beta$ was numerically singular), and $\bar{x}_\beta\in\mathbb{R}^m$ with independent $\mathrm{Exponential}(1)$ entries, each shifted by $0.05$ to keep $\bar{x}_\beta$ safely bounded away from the boundary of the non-negative orthant. We set $b:=A_\beta\bar{x}_\beta$\,, which guaranteed $b\in\mathcal{B}_\beta$ with this $\bar{x}_\beta>\mathbf{0}$ as its primal solution. From $H$ and $\bar{x}_\beta$\,, we computed $L_i\,,U_i$ ($i=1,\ldots,m$) as in \S\ref{sec:100}, and from these the (at most) $2m$ generators of $\mathcal{K}_\beta(b)$ as in \S\ref{sec:conerule}.

\smallskip
\noindent{\bf Estimating $R_m$\,.}
For a sample of directions $d^{(1)},\ldots,d^{(n)}$, each drawn as a standard Gaussian vector in $\mathbb{R}^m$ and normalized to unit length (which makes it uniform on the sphere), we estimated
\[
\hat R_m ~:=~ \exp\Big(m\cdot\mathrm{median}_j\big(\log(V_{\mathcal{K}}(d^{(j)})/V_{100\%}(d^{(j)}))\big)\Big),
\]
working with logarithms to avoid overflow. We discarded directions with $V_{100\%}(d)<10^{-3}$: these are nearly tangent to a facet of $\{b\}+\Delta_{100\%}$\,, and their tiny denominators would otherwise dominate the sample. Each instance was allotted $50$~directions, and the instance itself was discarded if fewer than a third of them were usable, which happens only for ill-conditioned instances, on which nearly every direction fails the test. In the runs behind Figure~\ref{fig:expr} and Table~\ref{tab:structured}, these rules removed $1.5\%$ of the directions of the standard-normal ensemble and $7$ of the $960$ instances in the figure, all from that ensemble, and nothing from any other ensemble, so they cannot bias the comparisons below.

\smallskip
\noindent{\bf Replication.} For each $m\in\{100,200,\ldots,1600\}$ and each ensemble, we generated $30$~independent instances (fresh $A_\beta\,,b$ each time) as above, each with its own sample of $50$~directions, and reported the median of $\hat R_m$ across the instances retained. The runs at $m=100,200,400,800,1600$ also supply the first two columns of Table~\ref{tab:structured}.

\smallskip
\noindent{\bf An integer ensemble.} The same estimator applies verbatim to the
setting of \S\ref{subsec:unimodular}, and lets us ask whether unimodularity
changes the size of the advantage, not merely (as
\S\ref{subsec:unimodular} shows) its qualitative behavior. We generated
$A_\beta$ integer and unimodular as follows: starting from the identity, we
applied $\ell$ \emph{rounds}, each drawing a random perfect matching of the rows
and performing $\mathrm{row}_i \mathrel{+}= c\,\mathrm{row}_j$ with $c\in\{\pm1\}$
on each matched pair, and we finished with a random signed permutation. Within a
round the operations act on disjoint rows, so they commute, and the matrix of a
round is block-diagonal with $2\times2$ unit-triangular blocks; only the number of
rounds compounds the conditioning, which is therefore bounded independently of $m$
(at $\ell=9$ and $m=1000$ we found $\kappa(A_\beta)\approx6\cdot10^2$, with
$|\det A_\beta|=1$ exactly and almost all entries of modulus at most $2$ --- on
average about six of the $10^6$ entries have modulus $3$ or $4$). We then drew
$\bar{x}_\beta$ with positive integer entries and set $b:=A_\beta\bar{x}_\beta$\,,
so that $b$ is an integer interior point and, by
Proposition~\ref{prp:unimodular}, $H$ is integer and every finite $L_i\,,U_i$ is
rational. Note that this ensemble is sparse (about $4\%$ dense at $\ell=9$,
$m=1000$), in contrast with the dense standard-normal ensemble above.

\smallskip
\noindent{\bf Results.} Figure~\ref{fig:expr} plots the median $\hat R_m$
against $m$, for both ensembles.

\begin{figure}
\FIGURE
{\begin{tikzpicture}[scale=0.95]
  \draw[thick,->] (0,0) -- (8.85,0) node[right] {$m$};
  \draw[thick,->] (0,0) -- (0,9.0) node[above] {$\hat R_m$};
  \draw (0.00,0.08) -- (0.00,-0.08); \draw (0.55,0.08) -- (0.55,-0.08); \draw (1.10,0.08) -- (1.10,-0.08); \draw (1.65,0.08) -- (1.65,-0.08); \draw (2.20,0.08) -- (2.20,-0.08); \draw (2.75,0.08) -- (2.75,-0.08); \draw (3.30,0.08) -- (3.30,-0.08); \draw (3.85,0.08) -- (3.85,-0.08); \draw (4.40,0.08) -- (4.40,-0.08); \draw (4.95,0.08) -- (4.95,-0.08); \draw (5.50,0.08) -- (5.50,-0.08); \draw (6.05,0.08) -- (6.05,-0.08); \draw (6.60,0.08) -- (6.60,-0.08); \draw (7.15,0.08) -- (7.15,-0.08); \draw (7.70,0.08) -- (7.70,-0.08); \draw (8.25,0.08) -- (8.25,-0.08);
  \node[below, font=\scriptsize] at (0.00,-0.08) {100};
  \node[below, font=\scriptsize] at (1.65,-0.08) {400};
  \node[below, font=\scriptsize] at (3.30,-0.08) {700};
  \node[below, font=\scriptsize] at (4.95,-0.08) {1000};
  \node[below, font=\scriptsize] at (6.60,-0.08) {1300};
  \node[below, font=\scriptsize] at (8.25,-0.08) {1600};
  \foreach \y/\lbl in {1.4/1.4,2.8/1.5,4.2/1.6,5.6/1.7,7.0/1.8,8.4/1.9} {
    \draw (0.08,\y) -- (-0.08,\y);
    \node[left, font=\scriptsize] at (-0.14,\y) {\lbl};
    \draw[gray!25] (0,\y) -- (8.25,\y);
  }
  \node[left, font=\scriptsize] at (-0.14,0) {1.3};
  \draw[blue!70!black, thick, dashed]
    (0.00,8.313) -- (0.55,4.764) -- (1.10,5.441) -- (1.65,5.232) -- (2.20,4.326) -- (2.75,4.632) -- (3.30,5.521) -- (3.85,4.344) -- (4.40,4.808) -- (4.95,5.538) -- (5.50,5.398) -- (6.05,4.187) -- (6.60,4.337) -- (7.15,3.855) -- (7.70,5.327) -- (8.25,5.419)
    node[right, font=\scriptsize, black] {normal};
  \foreach \x/\y in {0.00/8.313,0.55/4.764,1.10/5.441,1.65/5.232,2.20/4.326,2.75/4.632,3.30/5.521,3.85/4.344,4.40/4.808,4.95/5.538,5.50/5.398,6.05/4.187,6.60/4.337,7.15/3.855,7.70/5.327,8.25/5.419} { \filldraw[blue!70!black] (\x,\y) circle (1.6pt); }
  \draw[orange!85!black, thick, dashed]
    (0.00,2.817) -- (0.55,2.694) -- (1.10,2.700) -- (1.65,1.444) -- (2.20,3.477) -- (2.75,2.682) -- (3.30,2.614) -- (3.85,2.789) -- (4.40,2.381) -- (4.95,2.311) -- (5.50,2.587) -- (6.05,1.974) -- (6.60,1.827) -- (7.15,3.448) -- (7.70,2.158) -- (8.25,1.748)
    node[right, font=\scriptsize, black] {unimodular};
  \foreach \x/\y in {0.00/2.817,0.55/2.694,1.10/2.700,1.65/1.444,2.20/3.477,2.75/2.682,3.30/2.614,3.85/2.789,4.40/2.381,4.95/2.311,5.50/2.587,6.05/1.974,6.60/1.827,7.15/3.448,7.70/2.158,8.25/1.748} { \filldraw[orange!85!black] (\x,\y) circle (1.6pt); }
\end{tikzpicture}}
{Median $\hat R_m$ versus $m$, for the standard-normal ensemble
(blue) and the integer unimodular ensemble (orange), each point a median over
$30$ instances of $50$ directions. Both are dashed, being Monte Carlo estimates.
The vertical axis starts at $1.3$, below the range that either curve
attains.\label{fig:expr}}
{}
\end{figure}

\smallskip
\noindent{\bf Observations.}
Across the full range from $m=100$ to $m=1600$, $\hat R_m$ exhibits
no discernible trend for either ensemble, staying confined to a tight band
(Figure~\ref{fig:expr}); the one exception is the standard-normal value at $m=100$,
somewhat higher than the rest, consistent with the approximation
$\hat R_m\approx(1+c/m)^m$ being least accurate at the smallest $m$. So the typical radial advantage of
$\mathcal{K}_\beta(b)$ over $\{b\}+\Delta_{100\%}$\,, measured on this volume-like
scale, appears to persist at scale, as a stable, dimension-independent
multiplicative factor, rather than growing or vanishing as $m$ increases. The two ensembles behave alike qualitatively, which is what
\S\ref{subsec:unimodular} would lead one to expect; quantitatively the
unimodular band sits a little lower (roughly $1.40$ to $1.55$, against $1.58$ to
$1.70$ for the standard-normal ensemble beyond $m=100$, where it is $1.89$), so integrality and unimodularity
appear to shrink the typical advantage slightly without changing its character.

\smallskip
\noindent{\bf Structured ensembles.} Both ensembles above are, in the relevant
sense, unstructured: $A_\beta$ is dense and its inverse has mixed signs. The
bases singled out by \S\ref{subsec:tridiag} behave differently. We repeated the
experiment for generic tridiagonal $A_\beta$ (all diagonal and off-diagonal
entries non-zero); for symmetric, diagonally dominant $A_\beta$ with positive
off-diagonal entries; and for the AR(1) precision matrix of
\S\ref{subsec:tridiag} at each of $\rho=0.05,0.15,0.5,0.85,0.95$. Whether or not
the sign normalization that makes $b\geq\mathbf{0}$ is applied makes no difference:
multiplying rows of $A_\beta$ by $-1$ is an isometry, carrying $\mathcal{B}_\beta$\,,
$\mathcal{K}_\beta(b)$ and $\{b\}+\Delta_{100\%}$ onto their images and preserving
the uniform distribution of directions; and the estimator needs only
$\bar{x}_\beta>\mathbf{0}$, which our construction guarantees. Because $\hat R_m$ is
enormous for some of these families, Table~\ref{tab:structured} reports
$\log_{10}\hat R_m$\,. Each entry is the median over $30$~instances of
$50$~directions, as for Figure~\ref{fig:expr}, and the first two columns come from
the same runs as that figure. No instance was dropped outside those two columns.

\begin{table}
\TABLE
{$\log_{10}\hat R_m$ for the two unstructured ensembles, for two
structured families, and for the AR(1) precision matrix at five values of
$\rho$.\label{tab:structured}}
{\begin{tabular}{r|cc|cc|ccccc}
& \multicolumn{2}{c|}{unstructured} & \multicolumn{2}{c|}{structured}
& \multicolumn{5}{c}{AR(1) structured, by $\rho$}\\
$m$ & norm. & unim. & tridiag. & symm.
& $0.05$ & $0.15$ & $0.5$ & $0.85$ & $0.95$\\
\hline
$100$    & $0.28$ & $0.18$ & $0.48$ & $38.7$ & $20.0$ & $1.6$ & $0.34$ & $0.23$ & $0.21$\\
$200$    & $0.21$ & $0.17$ & $0.79$ & $103.0$ & $42.7$ & $4.0$ & $0.34$ & $0.21$ & $0.19$\\
$400$    & $0.22$ & $0.15$ & $1.36$ & $276.1$ & $86.8$ & $7.5$ & $0.32$ & $0.20$ & $0.20$\\
$800$    & $0.21$ & $0.18$ & $2.73$ & $709.1$ & $160.4$ & $15.1$ & $0.34$ & $0.22$ & $0.18$\\
$1600$   & $0.23$ & $0.15$ & $5.38$ & $1750.9$ & $341.3$ & $27.6$ & $0.64$ & $0.23$ & $0.14$\\
\end{tabular}}
{}
\end{table}

The table separates two regimes. For the unstructured ensembles
$\log_{10}\hat R_m$ is flat down the column: the per-direction excess
$V_{\mathcal{K}}(d)/V_{100\%}(d)-1$ decays like $c/m$, so
$\hat R_m\approx(1+c/m)^m$ tends to the constant $e^c$, which is exactly the
compounding described above. For generic tridiagonal and for symmetric
diagonally dominant bases that excess does \emph{not} decay, and
$\log_{10}\hat R_m$ grows with $m$ --- roughly linearly for tridiagonal, and
faster for the symmetric family --- so $\hat R_m$ is exponential in $m$, or
worse.

The AR(1) family is interesting because it sits in both regimes, and $\rho$ is
what moves it between them. At $\rho=0.05$ and $\rho=0.15$ its columns grow with
$m$, like the other structured families. At $\rho=0.5$ the column is roughly flat up
to $m=800$ but nearly doubles at $m=1600$ (from $0.34$ to $0.64$, well outside the
sampling error of about $0.05$), so growth weakens with $\rho$ rather than
switching off abruptly. At $\rho=0.85$ and $0.95$ the column is flat and comparable
to, or slightly below, the unstructured ensembles. The
reason is that $\rho$ measures how far $\mathcal{B}_\beta$ is from an orthant. As
$\rho\to0$ we have $A_\beta\to I$, so $\mathcal{B}_\beta$ tends to the
non-negative orthant; and when $\mathcal{B}_\beta$ \emph{is} an orthant the cone
rule is exact, $\mathcal{K}_\beta(b)=\mathcal{B}_\beta$ for every $b$, as noted
in \S\ref{sec:m2}, while $\{b\}+\Delta_{100\%}$ stays bounded. The ratio must
therefore blow up as $\rho\to0$, and the table exhibits how quickly the effect
disappears once $\mathcal{B}_\beta$ is appreciably skewed. Conditioning plays no
part: $\kappa(A_\beta)$ is smallest, not largest, at the small-$\rho$ end.

One observation is worth recording because it cuts against a natural guess. For
the AR(1) precision matrix, $H$ is a positive multiple of $\Sigma$ and so entrywise
positive; hence no $h^i_k$ is negative and \emph{every} $U_i=+\infty$, at every $m$
and every $\rho$ (Proposition~\ref{prp:ar1bounds} gives this at $m=3$, and the same
argument applies for every $m$). One might expect this to leave
$\{b\}+\Delta_{100\%}$ very thin compared with its conical hull, and so to make
$\hat R_m$ enormous. It does not: at $\rho=0.85$ and $0.95$ the AR(1) columns are
as small as those of the unstructured ensembles. Having all bounds infinite on one side is evidently not
by itself what drives a large expansion factor.

\smallskip
\noindent{\bf The cap.}
The cap is never active in our experiments. In the runs behind Figure~\ref{fig:expr} and Table~\ref{tab:structured} we also recorded each radial extent without a cap ($V_{\mathcal{K}}(d)=\infty$ exactly when $d\in\mathcal{K}_\beta(b)$, which the membership test of \S\ref{sec:conerule} decides directly). Over all of them, about $1.0\times10^5$ directions, no direction had $V_{\mathcal{K}}(d)$ or $V_{100\%}(d)$ at or above $10^4$, and none had $d\in\mathcal{K}_\beta(b)$. The largest $V_{\mathcal{K}}(d)$ observed was $2.5\times10^2$, for the symmetric family at $m=1600$; for every other column it was at most $1.4$. Accordingly, recomputing every entry of the table with the cap set to $10^3$, $10^6$ or removed altogether leaves all of them unchanged; only a cap as low as $10^2$ would alter one entry (the symmetric family at $m=1600$). This is as expected: the proportion of capped directions is $\tilde\Omega_m(\mathcal{K}_\beta(b))\leq\tilde\Omega_m(\mathcal{B}_\beta)$, which is negligible for the cones and dimensions considered here.

\smallskip
\noindent{\bf How much of $\mathcal{B}_\beta$ is captured?} Throughout this paper
we have asked whether $\mathcal{K}_\beta(b)=\mathcal{B}_\beta$\,, and for
$m\geq3$ the answer has usually been negative. That answer is qualitative: it
does not say how much of $\mathcal{B}_\beta$ the cone rule \emph{does} capture.
The solid-angle ratio $\sigma_m$ of \eqref{eq:sigmam} makes this quantitative.
Unlike $R_m$\,, it is a genuine measure of size --- the fraction of
$\mathcal{B}_\beta$\,, by solid angle, that $\mathcal{K}_\beta(b)$ occupies --- it
needs no cap, and it is computable. The normalized solid angle of a simplicial cone is a
Gaussian orthant probability, for which Genz's method (see \citep*{Genz1992}) is standard
and carries no side condition; $\mathcal{K}_\beta(b)$, though not simplicial,
has a closed-form triangulation into simplicial cones. We set this out in
Appendix~\ref{app:solidangle}, where we also explain why we do not use the
hypergeometric-series approach here. For $m=3$ one can instead simply use the
classical formula for the solid angle of a trihedral cone (see \citep*{VOS1983}), applied
to each piece, and that is how the values below were
obtained.

\begin{table}
\TABLE
{The solid angle ratio $\sigma_3$ over $200$~instances per ensemble, with
$\bar{x}_\beta\sim\mathrm{Exponential}(1)+0.05$ and $b=A_\beta\bar{x}_\beta$ as in
Table~\ref{tab:structured} (not $b$ uniform by solid angle, as in
Figure~\ref{fig:ar1sigma}).\label{tab:sigma3}}
{\begin{tabular}{l|cc|cc|ccccc}
& \multicolumn{2}{c|}{unstructured} & \multicolumn{2}{c|}{structured}
& \multicolumn{5}{c}{AR(1) structured, by $\rho$}\\
& norm. & unim. & tridiag. & symm.
& $0.05$ & $0.15$ & $0.5$ & $0.85$ & $0.95$\\
\hline
median $\sigma_3$ & $0.49$ & $0.40$ & $0.45$ & $0.79$ & $0.95$ & $0.84$ & $0.62$ & $0.42$ & $0.41$\\
$10$th percentile & $0.08$ & $0.06$ & $0.10$ & $0.55$ & $0.92$ & $0.80$ & $0.52$ & $0.41$ & $0.39$\\
$90$th percentile & $0.77$ & $0.92$ & $0.77$ & $0.93$ & $0.97$ & $0.89$ & $0.73$ & $0.48$ & $0.44$\\
\end{tabular}}
{}
\end{table}

Two of the bases for which we have \emph{proved} equality serve as checks: for
the symmetric construction of Example~\ref{exa:sym3} and for the tridiagonal
witness of Example~\ref{exa:tridiag3}, we obtain $\sigma_3=1$ to six decimal
places. Table~\ref{tab:sigma3} reports $\sigma_3$ elsewhere, for the ensembles of
Table~\ref{tab:structured} at $m=3$, with $b$ drawn as there. For the unstructured ensembles and for generic
tridiagonal bases with a random interior $b$, the cone rule typically captures
about half of $\mathcal{B}_\beta$\,, with wide spread; the symmetric family fares
markedly better, at about $0.79$. The AR(1) columns
are more interesting: by Corollary~\ref{crl:mmatrix} these bases never attain
equality at any $\rho$, and $\sigma_3$ says what that costs --- across the five
values of $\rho$ in the table, a median shortfall of only about $5\%$ at $\rho=0.05$,
growing to about $59\%$ at $\rho=0.95$. This is the same mechanism seen above: as
$\rho\to0$, $\mathcal{B}_\beta$ approaches an orthant, where the cone rule is exact,
so $\sigma_3\to1$ even though it never reaches it.

The AR(1) columns of Table~\ref{tab:sigma3} should not be compared directly with the
average curve of Figure~\ref{fig:ar1sigma}, because the two use different
distributions of $b$. At $\rho=0.85$ and $0.95$ the table's medians are $0.42$ and
$0.41$, whereas Figure~\ref{fig:ar1sigma} gives averages of about $0.64$ and $0.65$
for $b$ uniform by solid angle; with $b$ drawn that way, the medians are about $0.60$
and $0.59$ ($2000$ draws each). The difference comes from the table's sampling. The
columns of the AR(1) basis at $m=3$ sum to $(1-\rho,\,(1-\rho)^2,\,1-\rho)$, so
$\mathbf{e}^\top b=(1-\rho)(\bar{x}_{\beta_1}+\bar{x}_{\beta_3})+(1-\rho)^2\bar{x}_{\beta_2}$
is small while $\|b\|$ is not: drawing $\bar{x}_\beta$ with independent bounded-mean
entries pushes $b$ toward the plane $\mathbf{e}^\top y=0$, the boundary of the
limiting cone $\mathcal{B}_\infty$ of \S\ref{subsec:ar1}, where the cone rule
captures less. The median of $\mathbf{e}^\top b/(\sqrt3\,\|b\|)$ is $0.10$ and $0.03$
under the table's sampling at $\rho=0.85$ and $0.95$, against $0.55$ and $0.52$
under solid-angle sampling. Neither distribution is the right one: the table's
follows the protocol of Table~\ref{tab:structured}, and the figure's is the natural
uniform one. In particular, the decrease of the table's medians with $\rho$ is
consistent with the non-monotone average of Figure~\ref{fig:ar1sigma}.

This is worth placing beside \S\ref{subsec:tridiag}. There we showed that
no tridiagonal basis achieves $\mathcal{K}_\beta(b)=\mathcal{B}_\beta$ once
$m\geq4$, that at $m=3$ only those satisfying the single equation (i) of
Theorem~\ref{thm:tridiagchar}(b) can, and that those with an entrywise-positive inverse --- the AR(1)
precision matrix among them --- never achieve it at all once $m\geq3$. The
experiment suggests that equality is not the only measure of usefulness: in our
runs, for the generic tridiagonal and the symmetric diagonally dominant ensembles,
the median radial expansion factor of $\mathcal{K}_\beta(b)$ over the classical
$100\%$ region grows exponentially in $m$ (Table~\ref{tab:structured}), even while
$\mathcal{K}_\beta(b)$ falls short of $\mathcal{B}_\beta$\,. This is an empirical
observation about these ensembles, and two caveats apply. First, $R_m$ is a median
dilation factor, not a ratio of volumes. Second, both ensembles produce cones close
to an orthant, where the cone rule is exact: the median cosine between a column
$g^j$ and the axis $\mathbf{e}_j$ is about $0.90$ for the tridiagonal ensemble at
every $m$, and rises from $0.994$ at $m=100$ to $1.000$ at $m=1600$ for the
symmetric one, against $0.27$ falling to $0.09$ for the dense standard-normal
ensemble. As the AR(1) columns show, closeness to an orthant alone produces growth
of this kind, so the observation may reflect how these ensembles were generated
more than tridiagonality or symmetry as such. With that caveat, the cone rule may be
most valuable precisely where the equality question is most hopeless, as for the
tridiagonal family. The AR(1) family shows that this turns on more than the sign
pattern: it fails the equality test for the sharpest available reason
(Corollary~\ref{crl:mmatrix}) at every $\rho$, yet its expansion factor is of the
same modest order as for dense random bases when $\rho\geq0.5$, and grows markedly
with $m$ only when $\rho$ is small enough that $\mathcal{B}_\beta$ is close to an
orthant.

%%%%%%%%%%%%%%%%%%%%%%%

\section{Looking forward}\label{sec:conc}
Theorem~\ref{thm:generalchar} characterizes when
$\mathcal{K}_\beta(b)=\mathcal{B}_\beta$ for interior $b$ in every dimension $m\geq3$,
for bases with no column a coordinate direction, and decides it in polynomial time.
At $m=3$, Corollary~\ref{thm:m3char} makes it explicit, Theorem~\ref{thm:m3nonneg}
says when a non-negative $b$ suffices, and Proposition~\ref{prp:m3axis} disposes of
the bases with a coordinate-direction column. Two gaps remain. One is the boundary of
$\mathcal{B}_\beta$ when $H$ has a zero entry, where the combinatorics genuinely
change (Remark~\ref{rem:m3boundary}); when $H$ has no zero entry there is nothing to
do, by Lemma~\ref{lem:interiorforced}. The other is bases with a coordinate-direction
column when $m\geq4$, to which the idea of Proposition~\ref{prp:m3axis} should
extend, though we have not carried this out.
At $m=2$, Remark~\ref{rem:m2boundary} applies that idea to bases with a zero
entry, which Theorem~\ref{thm:two} excludes.

For tridiagonal $A_\beta$\,, \S\ref{subsec:tridiag} settles the question in every
dimension: no $b$ achieves equality once $m\geq4$, and at $m=3$ an interior $b$
does exactly when the explicit conditions of Theorem~\ref{thm:tridiagchar}(b)
hold. Those conditions fail for every tridiagonal basis with an
entrywise-positive inverse, in particular for every tridiagonal M-matrix.
The two halves of this answer have different sources. The impossibility for
$m\geq4$ comes from narrow bandwidth. The possibility at $m=3$ does not: it comes
from the equation count of Remark~\ref{rem:whym3}, which applies to every basis
of that dimension. Theorem~\ref{thm:generalchar} decides the question for any given
basis, and Example~\ref{ex:two} shows that equality is achievable in every
dimension; what remains open is which natural families achieve it, at $m=3$ and
beyond, bearing in mind that for $m\geq4$ they must satisfy several conditions at
once;
\S\ref{subsec:unimodular} finds the same qualitative picture under integrality
and unimodularity.

Two further directions are suggested by \S\ref{sec:expr}. First, equality with
$\mathcal{B}_\beta$ may not be the right measure of the cone rule's value: in our
experiments, for generic tridiagonal and for symmetric diagonally dominant bases,
the median radial expansion factor $R_m$ of $\mathcal{K}_\beta(b)$ over
$\{b\}+\Delta_{100\%}$ appears to grow exponentially in $m$, including on the
tridiagonal family, for which \S\ref{subsec:tridiag} rules equality out. We have no
proof of that growth, and it may partly reflect how close the sampled cones are to an
orthant (\S\ref{sec:expr}); separating that effect from the structure of the basis is
open.
Second, when equality fails one can ask how much of $\mathcal{B}_\beta$ is
nonetheless captured; the ratio $\sigma_m$ of \eqref{eq:sigmam} makes this
precise, Appendix~\ref{app:solidangle} shows that it is computable beyond $m=3$,
and \S\ref{subsec:ar1} works out the AR(1) family at $m=3$, where the best case
(over most of the range of $\rho$) and a candidate for the infimum are available in
closed form and the average is not. Even there,
three points were settled only numerically: that the maximizer lies on the
persymmetry line, that the boundary limit $\sigma_3^{*}$ of
Proposition~\ref{prp:ar1inf} is the infimum, and the limit $0.66588\ldots$ of the
average as $\rho\to1$. Bounds on
$\sigma_m$ for the structured families of \S\ref{subsec:tridiag}, or its
asymptotics as $m$ grows, would say a good deal about when the cone rule is worth
applying.

%%%%%%%%%%%%%%%%

\begin{APPENDICES}
\section{Computing Normalized Solid Angles of $\mathcal{B}_\beta$ and
$\mathcal{K}_\beta(b)$}\label{app:solidangle}

This appendix records that the normalized solid angle of a simplicial cone is a
Gaussian orthant probability, and that $\mathcal{K}_\beta(b)$, though not
simplicial, admits a closed-form triangulation into cones to which that
observation applies. Together these make $\sigma_m$ of \eqref{eq:sigmam}
computable well beyond $m=3$. We close by explaining why we prefer this route to
the hypergeometric series of \citep*{Ribando2006}.

\begin{proposition}\label{prp:orthantprob}
Let $C=\cone(\{v_1\,,\ldots,v_m\})$ be a simplicial cone in $\mathbb{R}^m$, and let
$V:=[v_1|\cdots|v_m]$, which is non-singular. Then
\[
\tilde\Omega_m(C)=\Pr\bigl(Y\geq\mathbf{0}\bigr),
\qquad\text{where } Y\sim N\bigl(\mathbf{0},(V^{\top}V)^{-1}\bigr).
\]
\end{proposition}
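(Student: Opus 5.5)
The plan is to use the rotation invariance of the standard Gaussian: the normalized solid angle of any closed cone $C\subset\mathbb{R}^m$ equals $\Pr(Z\in C)$ for $Z\sim N(\mathbf{0},I_m)$. Indeed, $Z/\|Z\|$ is uniform on the unit sphere and independent of $\|Z\|$. Since $C$ is a cone, $Z\in C$ if and only if $Z/\|Z\|\in C$ (the event $Z=\mathbf{0}$ has probability zero). So $\Pr(Z\in C)$ is the surface-measure fraction of $C\cap S^{m-1}$, which is the definition of $\tilde\Omega_m(C)$ as the fraction of $\mathbb{R}^m$ occupied by $C$.

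Next I will rewrite membership in the simplicial cone in coordinates. Because $V$ is non-singular, $z\in C=\cone(\{v_1,\ldots,v_m\})$ if and only if the unique coefficient vector $V^{-1}z$ is non-negative. Hence $\tilde\Omega_m(C)=\Pr(V^{-1}Z\geq\mathbf{0})$. Setting $Y:=V^{-1}Z$, the vector $Y$ is a linear image of a standard Gaussian, so it is centered Gaussian with covariance
\[
V^{-1}I_m V^{-\top}=(V^{\top}V)^{-1},
\]
which gives the formula.

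There is no real obstacle. The only point deserving a sentence is the identification of "normalized solid angle" with the Gaussian measure of the cone, that is, the polar-coordinates factorization of the standard Gaussian density into a radial part and the uniform measure on the sphere. I will also note that the boundary of $C$ has Gaussian measure zero, so open versus closed cones and strict versus non-strict inequalities in $Y\geq\mathbf{0}$ do not matter. Finally, I will remark (as used in Proposition~\ref{prp:ar1denom}) that an orthant probability is unchanged by positive diagonal rescaling of $Y$, so only the correlation matrix of $(V^{\top}V)^{-1}$ matters.
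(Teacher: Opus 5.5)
Your proposal is correct and follows the same route as the paper. Rotation invariance of $Z\sim N(\mathbf{0},I_m)$ identifies $\tilde\Omega_m(C)$ with $\Pr(Z\in C)$, and the substitution $Y:=V^{-1}Z$ turns cone membership into $Y\geq\mathbf{0}$ with covariance $V^{-1}V^{-\top}=(V^{\top}V)^{-1}$; your extra remarks on the measure-zero boundary and on invariance under positive diagonal rescaling (the fact used in Proposition~\ref{prp:ar1denom}) are sound refinements that the paper leaves implicit.
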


\begin{proof}{Proof.}
Let $Z\sim N(\mathbf{0},I_m)$. Its distribution is invariant under rotations, so
$Z/\|Z\|$ is uniform on the unit sphere $S^{m-1}$ and is independent of $\|Z\|$.
As $C$ is a cone, $Z\in C$ if and only if $Z/\|Z\|\in C\cap S^{m-1}$, whence
$\Pr(Z\in C)$ equals the fraction of $S^{m-1}$ occupied by $C$, which is
$\tilde\Omega_m(C)$. Now $Z\in C$ if and only if $Z=Vy$ for some $y\geq\mathbf{0}$,
that is, if and only if $V^{-1}Z\geq\mathbf{0}$. Setting $Y:=V^{-1}Z$, we have
$Y\sim N(\mathbf{0},\Sigma)$ with
$\Sigma=V^{-1}I_m(V^{-1})^\top=(V^{\top}V)^{-1}$.\Halmos
\end{proof}

Gaussian orthant probabilities are a classical and well-implemented computation,
the standard method being that of \citep*{Genz1992}. The key point is that
$\Sigma=(V^{\top}V)^{-1}$ is positive definite for \emph{every} non-singular
$V$, so Proposition~\ref{prp:orthantprob} carries no side condition whatever. In
particular $\tilde\Omega_m(\mathcal{B}_\beta)$, which is what we need for the
denominator of $\sigma_m$\,, is directly computable in any dimension from the
generators $g^1,\ldots,g^m$, with no preprocessing at all.

The cone $\mathcal{K}_\beta(b)$ is not simplicial, having up to $2m$ generators.
It does, however, decompose explicitly, also when some bounds are infinite. Assume
that $\mathbf{0}\notin\{b\}+\Delta_{100\%}$\,; as $\mathcal{B}_\beta$ is pointed, this
holds unless some finite-bound point $b+L_i\mathbf{e}_i$ or $b+U_i\mathbf{e}_i$ is
$\mathbf{0}$. Write $c_i^{+}:=U_i$ and $c_i^{-}:=L_i$\,, and for
$\varepsilon\in\{+,-\}^m$ let $I_\varepsilon:=\{i:\ c_i^{\varepsilon_i}\text{ is
finite}\}$. For each $\varepsilon$ with $I_\varepsilon\not=\emptyset$ put
\[
F_\varepsilon:=\conv(\bigl\{\,b+c_i^{\varepsilon_i}\mathbf{e}_i \;:\; i\in I_\varepsilon\,\bigr\})
+\cone\bigl\{\,\varepsilon_i\mathbf{e}_i \;:\; i\notin I_\varepsilon\,\bigr\},
\]
so that a missing vertex is replaced by the recession direction $\varepsilon_i\mathbf{e}_i$\,.
When every bound is finite, $F_\varepsilon$ is the simplex with vertices
$b+c_i^{\varepsilon_i}\mathbf{e}_i$\,.

\begin{proposition}\label{prp:crosstriang}
The boundary of $\{b\}+\Delta_{100\%}$ is the union of the $F_\varepsilon$ with
$I_\varepsilon\not=\emptyset$, and $F_\varepsilon$ is visible from $\mathbf{0}$ if and
only if $\sum_{i\in I_\varepsilon}(-b_i)/c_i^{\varepsilon_i}>1$. For each visible
$F_\varepsilon$\,, $\cone(F_\varepsilon)$ is the simplicial cone generated by the $m$
linearly independent vectors
\[
b+c_i^{\varepsilon_i}\mathbf{e}_i\ \ (i\in I_\varepsilon)\qquad\text{and}\qquad
\varepsilon_i\mathbf{e}_i\ \ (i\notin I_\varepsilon).
\]
Moreover
\[
\mathcal{K}_\beta(b)\;=\;\bigcup\bigl\{\,\cone(F_\varepsilon)\;:\;F_\varepsilon
\text{ is visible from }\mathbf{0}\,\bigr\},
\]
and the cones in this union have pairwise disjoint interiors.
\end{proposition}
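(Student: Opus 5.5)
The plan is to work from the gauge description of \S\ref{sec:100}: $\delta\in\Delta_{100\%}$ if and only if $\sum_i\lambda_i(\delta)\leq1$. I would split $\mathbb{R}^m$ into the closed orthants $O_\varepsilon:=\{\delta:\varepsilon_i\delta_i\geq0\ \forall i\}$. On $O_\varepsilon$ the gauge is linear: $\lambda_i(\delta)=\delta_i/c_i^{\varepsilon_i}$ for $i\in I_\varepsilon$ and $\lambda_i=0$ otherwise. Set $a^\varepsilon:=\sum_{i\in I_\varepsilon}\mathbf{e}_i/c_i^{\varepsilon_i}$. Then
\[
(\{b\}+\Delta_{100\%})\cap(\{b\}+O_\varepsilon)=\{x:\ \varepsilon_i(x_i-b_i)\geq0,\ a^\varepsilon\!\cdot(x-b)\leq1\},
\]
and its face on the hyperplane $a^\varepsilon\!\cdot(x-b)=1$ is exactly $F_\varepsilon$: a simplex in the coordinates $i\in I_\varepsilon$, plus the orthant directions $\varepsilon_i\mathbf{e}_i$ for $i\notin I_\varepsilon$. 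A point is on the boundary of $\{b\}+\Delta_{100\%}$ precisely when $\sum\lambda_i=1$, so it lies in some $F_\varepsilon$; orthants with $I_\varepsilon=\emptyset$ contribute no such points. Zero bounds are read as $0^\pm$ as in \S\ref{sec:100}. Under the standing hypothesis $\mathbf{0}\notin\{b\}+\Delta_{100\%}$, I would check that they only make some $F_\varepsilon$ degenerate, and hence invisible.

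\emph{Visibility.} $F_\varepsilon$ spans the hyperplane $a^\varepsilon\!\cdot x=1+a^\varepsilon\!\cdot b$, and the polyhedron lies on the side $\leq$. So $\mathbf{0}$ is strictly on the other side iff $0>1+a^\varepsilon\!\cdot b$, which is the stated inequality $\sum_{i\in I_\varepsilon}(-b_i)/c_i^{\varepsilon_i}>1$.

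\emph{Simplicial.} The listed vectors number $|I_\varepsilon|+(m-|I_\varepsilon|)=m$. For linear independence, suppose $\sum_{i\in I_\varepsilon}\alpha_i(b+c_i^{\varepsilon_i}\mathbf{e}_i)+\sum_{i\notin I_\varepsilon}\beta_i\varepsilon_i\mathbf{e}_i=\mathbf{0}$. Apply the functional $a^\varepsilon$, which gives $1+a^\varepsilon\!\cdot b$ on each first-type vector and $0$ on each second-type vector. Visibility makes $1+a^\varepsilon\!\cdot b\neq0$, so $\sum\alpha_i=0$. The remaining relation is then a combination of distinct unit vectors with coefficients $\alpha_ic_i^{\varepsilon_i}$ and $\beta_i$, so all coefficients vanish. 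Finally, $\cone(F_\varepsilon)$ is generated by these vectors, because recession directions are kept.

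\emph{Union and disjoint interiors.} Each $F_\varepsilon$ lies in $\{b\}+\Delta_{100\%}$, so ``$\supset$'' is immediate. For ``$\subset$'', the right-hand side is a finite union of closed cones, hence closed. It therefore suffices to show that it contains $\cone(\{b\}+\Delta_{100\%})$. Take $x\in\{b\}+\Delta_{100\%}$, $x\neq\mathbf{0}$. Since $\mathbf{0}$ is outside this closed convex set, the ray $\{tx:t\ge0\}$ meets it in an interval $[t_0,t_1]$ with $t_0>0$. I would show that $t_0x$ lies in a visible $F_\varepsilon$. The point $t_0x$ is on the boundary, so $\sum_i\lambda_i(t_0x-b)=1$. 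The gauge $\phi(y):=\sum_i\lambda_i(y-b)$ is convex, equals $1$ at $t_0x$, and is $\leq1$ just beyond along the ray. Its one-sided derivative in the direction $-x$ is therefore positive. Since $\phi$ is piecewise linear, that one-sided derivative equals the directional derivative of some linear piece $a^\varepsilon\!\cdot(\cdot-b)$ whose orthant contains $t_0x-b$. Concretely, choose each $\varepsilon_i$ according to the sign of $-x_i$ whenever $t_0x_i-b_i=0$. This gives $a^\varepsilon\!\cdot x<0$, and I would then convert it into $1+a^\varepsilon\!\cdot b<0$ using $a^\varepsilon\!\cdot(t_0x-b)=1$. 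Disjointness of interiors would follow from uniqueness of the first hit point. On a visible facet the radial map $y\mapsto y/\|y\|$ is injective, because a ray that crosses the supporting hyperplane from the $\mathbf{0}$ side does so once. Moreover, if a ray from $\mathbf{0}$ through the interior of $\cone(F_\varepsilon)$ also passed through $F_{\varepsilon'}$ in $\cone(F_{\varepsilon'})$, the two first-hit points would coincide in $F_\varepsilon\cap F_{\varepsilon'}$, a common lower-dimensional face, contradicting interiority.

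The step I expect to be the main obstacle is the first-hit argument. It must deal with points lying on several $F_\varepsilon$, with unbounded facets (recession directions, where the ray may be asymptotic rather than hitting), and with zero bounds. I would handle it through the convex-piecewise-linear derivative argument above, choosing the orthant by the sign of $-x_i$ on ties.
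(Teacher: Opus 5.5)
Your proposal is correct and follows essentially the same route as the paper. Both linearize the gauge $\sum_i\lambda_i$ on each closed orthant to identify the $F_\varepsilon$ and their supporting hyperplanes, read off visibility and linear independence from $\mathbf{0}$ lying strictly beyond that hyperplane, and use the first point of $\{b\}+\Delta_{100\%}$ on each ray from $\mathbf{0}$ for both the covering and the disjointness of interiors. Your tie-breaking choice of $\varepsilon$ (by the sign of $-x_i$ where $t_0x_i=b_i$) makes explicit a step the paper only asserts: that this first point lies on a \emph{visible} $F_\varepsilon$. Note that the strict inequality $a^\varepsilon\cdot x<0$ comes from $tx$ lying outside $\{b\}+\Delta_{100\%}$ for $0\leq t<t_0$, not from the behavior beyond $t_0$. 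Applying the argument to every point of $\{b\}+\Delta_{100\%}$ and then taking closures handles the $\ccone$ cleanly.
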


\begin{proof}{Proof.}
By \S\ref{sec:100}, $\delta\in\Delta_{100\%}$ if and only if
$\sum_i\lambda_i(\delta)\leq1$, where $\lambda_i(\delta)=0$ when the bound in the
direction of $\delta_i$ is infinite. On the closed orthant of sign pattern
$\varepsilon$ this reads $\sum_{i\in I_\varepsilon}\delta_i/c_i^{\varepsilon_i}\leq1$, a
single linear inequality (if $I_\varepsilon=\emptyset$ there is no constraint, and the
whole orthant lies in $\Delta_{100\%}$). The points of that orthant attaining
equality are exactly the points of
$\conv(\{c_i^{\varepsilon_i}\mathbf{e}_i: i\in I_\varepsilon\})+\cone(\{\varepsilon_i\mathbf{e}_i:
i\notin I_\varepsilon\})$, and translating by $b$ gives $F_\varepsilon$\,, which lies in
the hyperplane $\sum_{i\in I_\varepsilon}(y_i-b_i)/c_i^{\varepsilon_i}=1$. This hyperplane
separates $\mathbf{0}$ from $\{b\}+\Delta_{100\%}$\,, so that $F_\varepsilon$ is visible from
$\mathbf{0}$, exactly when $\sum_{i\in I_\varepsilon}(-b_i)/c_i^{\varepsilon_i}>1$. In that
case $\mathbf{0}$ is not in the affine hull of $F_\varepsilon$\,, so the $|I_\varepsilon|$
points and $m-|I_\varepsilon|$ directions that span it give $m$ linearly independent
generators of $\cone(F_\varepsilon)$.

For the second claim, let $P:=\{b\}+\Delta_{100\%}$\,, a closed convex polyhedron, and
note $\mathcal{K}_\beta(b)=\cone(P)$. Write $V$ for the set of finite-bound points
and $C$ for the recession cone of $P$, so that $P=\conv(V)+C$ and
$\cone(P)=\cone(V)+C$. Every $x\in\operatorname{int}\cone(P)$ has a positive multiple
in $P$: fix $v\in V$; for small $\epsilon>0$ we have $x-\epsilon v\in\cone(P)$,
so $x=\epsilon v+\sum_{w\in V}\alpha_ww+c$ with $\alpha\geq\mathbf{0}$ and $c\in C$,
and dividing by $\epsilon+\sum_{w\in V}\alpha_w>0$ gives a point of
$\conv(V)+C=P$. So for every $x\in\operatorname{int}\cone(P)$ the ray
$\cone(\{x\})$  meets $P$. As $\mathbf{0}\notin P$, the first point of $P$ on that
ray lies on the boundary of $P$, on a facet visible from $\mathbf{0}$; it lies on a
single such facet unless it falls on a lower-dimensional face, which happens only for
$x$ in a set of measure zero. Hence the visible $\cone(F_\varepsilon)$ cover
$\mathcal{K}_\beta(b)$ and have pairwise disjoint interiors.\Halmos
\end{proof}

So no triangulation need be computed: the pieces are available in closed form, at
most $2^m$ of them arise, and in random trials at $m=3,\ldots,6$ with every bound
finite we found the number of visible facets to be about $2^{m-1}$. Combining the two propositions,
$\tilde\Omega_m(\mathcal{K}_\beta(b))$ is a sum of about $2^{m-1}$ orthant
probabilities, and the ratio $\sigma_m$ of \eqref{eq:sigmam} is computable well
beyond $m=3$. We note one practical caveat: Genz's method is a randomized
quasi-Monte Carlo scheme, so its output carries a stochastic error. In our trials
two independent runs agreed to three significant figures for $m\leq50$, but for
$m=100$ they differed by an order of magnitude on a solid angle of order
$10^{-42}$; relative accuracy degrades for very small angles.

We close by saying why we do not use the hypergeometric series of
\citep*{Ribando2006}, which is the other standard route to solid angles in
dimension above three. That series converges precisely when the \emph{associated
matrix} of the cone --- the matrix with $1$ on the diagonal and $-|v_i\cdot v_j|$
off it, for unit generators $v_i$ --- is positive definite. This is a real
restriction. For unit generators in general position $\mathbb{E}|v_i\cdot v_j|$
is of order $m^{-1/2}$, while each row has $m-1$ off-diagonal entries, so the row
sums grow like $m^{1/2}$ and positive definiteness fails for large $m$. In random
trials the proportion of cones passing the test fell from $0.57$ at $m=3$ to
$0.12$ at $m=4$, $0.01$ at $m=5$, and none at all by $m=6$. The decompositions of
 \citep*{FZ2023} are designed to repair exactly this, by splitting
a simplicial cone into at most $(m-1)!$ cones that do pass; but that factorial is
a steep price to pay, 
compared with the single Gaussian orthant probability that Proposition~\ref{prp:orthantprob} requires.

For $\mathcal{K}_\beta(b)$ the situation is worse still. Every finite-bound
generator is $b$ plus a multiple of a coordinate vector, so any two such generators
$v,w$ of a common piece of Proposition~\ref{prp:crosstriang} satisfy
$v\cdot w=\|b\|^2+c_ib_i+c_jb_j$\,, typically close to $\|b\|^2$; the
off-diagonal entries of the associated matrix are then uniformly strongly
negative. In trials at $m=3,4,5,6$ we did not encounter a single piece whose
associated matrix was positive definite. Every piece would therefore need
decomposing, and the costs compound. By contrast, Proposition~\ref{prp:orthantprob}
applies to each piece as it stands.

%%%%%%%%%%%%%%%%
\end{APPENDICES}

\ACKname{Acknowledgment}
\ACKNOWLEDGMENT{While the authors used Claude to assist in the development and preparation of this manuscript, they take full responsibility for all of the content.}

\bibliographystyle{informs2014}
\bibliography{sensitivity}

@book{BHM,
author = {Bradley, Stephen P. and Hax, Arnoldo C. and Magnanti, Thomas L.},
title = {Applied Mathematical Programming},
isbn = {020100464X},
publisher = {Addison-Wesley Publishing Company},
year = {1977},
note={\url{https://web.mit.edu/15.053/www/AMP.htm}},
}

@article{LM1994,
    AUTHOR = {Lee, Jon and Morris, Jr., Walter D.},
     TITLE = {Geometric comparison of combinatorial polytopes},
JOURNAL = {Discrete Applied Mathematics},
    VOLUME = {55},
      YEAR = {1994},
     PAGES = {163--182},
     note={\url{https://doi.org/10.1016/0166-218X(94)90006-X}},
}

@article{BFPRT1973,
  author  = {Blum, Manuel and Floyd, Robert W. and Pratt, Vaughan and Rivest, Ronald L. and Tarjan, Robert E.},
  title   = {Time bounds for selection},
  journal = {Journal of Computer and System Sciences},
  volume  = {7},
  number  = {4},
  pages   = {448--461},
  year    = {1973},
note={\url{https://doi.org/10.1016/S0022-0000(73)80033-9}},
}

@book{ziegler,
  author = {Ziegler, Günter M.},
  publisher = {Springer-Verlag},
  title = {{Lectures on Polytopes: {\rm Graduate Texts in Mathematics, 152}}},
  year = 1995,
  note={\url{https://doi.org/10.1007/978-1-4613-8431-1}},
}

@book{Gal,
    AUTHOR = {G{\'a}l, Tom{\'a}{\v s}},
     TITLE = {Betriebliche {E}ntscheidungsprobleme, {S}ensitivit\"atsanalyse
              und parametrische {P}rogrammierung},
    SERIES = {Operations Research},
 PUBLISHER = {Walter de Gruyter, Berlin-New York},
      YEAR = {1973},
     PAGES = {330},
     note={\url{https://www.degruyterbrill.com/document/doi/10.1515/9783110835700/html#contents}},
}

@InProceedings{Cai,
author="Cai, Pu
and Cai, Jin-Yi",
editor="Jiang, Tao
and Lee, Der-Tsai",
title="On the 100{\%} rule of sensitivity analysis in linear programming",
booktitle="Computing and Combinatorics",
year="1997",
publisher="Springer",
pages="460--469",
note={\url{https://doi.org/10.1007/BFb0045113}},
}

@article{Wendell2,
 author = {Wendell, Richard E.},
 journal = {Management Science},
 number = {5},
 pages = {564--578},
 title = {The tolerance approach to sensitivity analysis in linear programming},
 volume = {31},
 year = {1985},
 note={\url{https://doi.org/10.1287/mnsc.31.5.564}},
}

@article{Wendell1,
author = {Wendell, Richard E.},
title = {Using bounds on the data in linear programming: the tolerance approach to sensitivity analysis},
year = {1984},
volume = {29},
number = {3},
note={\url{https://doi.org/10.1007/BF02591999}},
journal = {Mathematical Programming},
pages = {304--322}
}

@article{Wondolowski,
author = {Wondolowski Jr., Frank R.},
title = {A generalization of {W}endell's tolerance approach to sensitivity analysis in linear programming},
journal = {Decision Sciences},
volume = {22},
number = {4},
pages = {792--811},
note={\url{https://doi.org/10.1111/j.1540-5915.1991.tb00365.x}},
year = {1991}
}

@article{Filippi,
title = {A fresh view on the tolerance approach to sensitivity analysis in linear programming},
journal = {European Journal of Operational Research},
volume = {167},
number = {1},
pages = {1--19},
year = {2005},
issn = {0377-2217},
note={\url{https://doi.org/10.1016/j.ejor.2004.01.050}},
author = {Filippi, Carlo}
}

@article{Wang,
title = {Multi-parametric analysis of the maximum tolerance in a linear programming problem},
journal = {European Journal of Operational Research},
volume = {67},
number = {1},
pages = {75--87},
year = {1993},
note={\url{https://doi.org/10.1016/0377-2217(93)90323-F}},
author = {Wang, Hsiao-Fan and Huang, Chen-Sheng}
}

@BOOK{LeeLP,
  AUTHOR =       {Lee, Jon},
  TITLE =        {A First Course in Linear Optimization ({F}ourth Edition, version 4.08)},
  PUBLISHER =    {Reex Press},
  YEAR =         {2024},
  note = {\url{https://github.com/jon77lee/JLee_LinearOptimizationBook}},
}

@book{BP1994,
  author = {Berman, Abraham and Plemmons, Robert J.},
  title = {Nonnegative Matrices in the Mathematical Sciences},
  publisher = {SIAM},
  year = {1994},
  note = {\url{https://doi.org/10.1137/1.9781611971262}},
}

@article{Ribando2006,
  author = {Ribando, Jason M.},
  title = {Measuring solid angles beyond dimension three},
  journal = {Discrete \& Computational Geometry},
  volume = {36},
  number = {3},
  pages = {479--487},
  year = {2006},
  note = {\url{https://doi.org/10.1007/s00454-006-1253-4}},
}

@misc{FZ2023,
  author = {Fitisone, Allison and Zhou, Yuan},
  title = {Solid angle measure of polyhedral cones},
  year = {2023},
  note = {\url{https://arxiv.org/abs/2304.11102}},
}

@article{Genz1992,
  author = {Genz, Alan},
  title = {Numerical computation of multivariate normal probabilities},
  journal = {Journal of Computational and Graphical Statistics},
  volume = {1},
  number = {2},
  pages = {141--149},
  year = {1992},
  note = {\url{https://doi.org/10.1080/10618600.1992.10477010}},
}

@book{Tong1990,
  author = {Tong, Y. L.},
  title = {The Multivariate Normal Distribution},
  series = {Springer Series in Statistics},
  publisher = {Springer-Verlag, New York},
  year = {1990},
  note = {\url{https://doi.org/10.1007/978-1-4613-9655-0}},
}

@article{David1953,
  author = {David, F. N.},
  title = {A Note on the Evaluation of the Multivariate Normal Integral},
  journal = {Biometrika},
  volume = {40},
  number = {3/4},
  pages = {458--459},
  year = {1953},
  note = {\url{https://doi.org/10.2307/2333360}},
}

@article{VOS1983,
  author = {Van Oosterom, A. and Strackee, J.},
  title = {The Solid Angle of a Plane Triangle},
  journal = {IEEE Transactions on Biomedical Engineering},
  volume = {BME-30},
  number = {2},
  pages = {125--126},
  year = {1983},
  note = {\url{https://doi.org/10.1109/TBME.1983.325207}},
}

@book{hillier2001introduction,
  title={Introduction to Operations Research},
  author={Hillier, Frederick S. and Lieberman, Gerald J.},
  edition={7th},
  year={2001},
  publisher={McGraw-Hill Higher Education},
  address={New York},
  isbn={9780072461213}
}

@book{winston2004operations,
  title={Operations Research: Applications and Algorithms},
  author={Winston, Wayne L. and Goldberg, Jeffrey B.},
  edition={4th},
  year={2004},
  publisher={Thomson/Brooks/Cole},
  address={Belmont, CA},
  isbn={9780534423629}
}

@book{bertsimas1997introduction,
  title     = {Introduction to linear optimization},
  author    = {Bertsimas, Dimitris and Tsitsiklis, John N.},
  year      = {1997},
  publisher = {Athena Scientific},
  address   = {Belmont, MA},
  isbn      = {978-1-886529-19-9},
  totalpages= {608}
}

@book{chvatal1983linear,
  title     = {Linear Programming},
  author    = {Chv{\'a}tal, Va{\v{s}}ek},
  year      = {1983},
  publisher = {W. H. Freeman and Company},
  address   = {New York},
  isbn      = {9780716711957}
}

%%%%%%%%%%%%%%%%%
\end{document}